\tikzset{
	set arrow inside/.code={\pgfqkeys{/tikz/arrow inside}{#1}},
	set arrow inside={end/.initial=>, opt/.initial=},
	/pgf/decoration/Mark/.style={
		mark/.expanded=at position #1 with
		{
			\noexpand\arrow[\pgfkeysvalueof{/tikz/arrow inside/opt}]{\pgfkeysvalueof{/tikz/arrow inside/end}}
		}
	},
	arrow inside/.style 2 args={
		set arrow inside={#1},
		postaction={
			decorate,decoration={
				markings,Mark/.list={#2}
			}
		}
	},
}
\newcommand{\intB}{\int_{B_1(0)}}
\renewenvironment{proof}[1][]{{\noindent\textbf{\proofname{#1}}}}{\qed}
\newcommand{\eps}{\varepsilon}
\theoremstyle{plain}
\newtheorem{theorem}{Theorem}[section]
\newtheorem*{theorem*}{Theorem}
\newtheorem{lemma}[theorem]{Lemma}
\newtheorem*{lemma*}{Lemma}
\newtheorem{proposition}[theorem]{Proposition}
\newtheorem*{proposition*}{Proposition}
\newtheorem{corollary}[theorem]{Corollary}
\newtheorem{remark}[theorem]{Remark}
\newcommand{\laplace}{\Delta}
\newcommand{\grad}{\nabla}
\renewcommand{\L}{\mathcal{L}}
\newcommand{\la}{\langle}
\newcommand{\ra}{\rangle}
\newcommand{\mel}{\MoveEqLeft}
\newcommand{\N}{\mathbb{N}}
\newcommand{\R}{\mathbb{R}}
\renewcommand{\S}{\mathbb{S}}
\theoremstyle{definition}
\newtheorem*{definition*}{Definition}
\newtheorem*{beispiel*}{Beispiel}
\newtheorem*{bemerkung*}{Bemerkung}
\newtheorem*{erinnerung*}{Erinnerung}
\newtheorem*{assumption*}{Assumption}
\DeclareMathOperator\dist{dist}
\DeclareMathOperator\Span{span}
\DeclareMathOperator\supp{supp}
\DeclareMathOperator{\Lip}{Lip}
\DeclareMathOperator{\huelle}{span}
\DeclareMathOperator\const{\mathit{const}}
\newcommand{\vertiii}[1]{{\left\vert\kern-0.25ex\left\vert\kern-0.25ex\left\vert #1 
		\right\vert\kern-0.25ex\right\vert\kern-0.25ex\right\vert}}
\begin{document}
	\title{Invariant Manifolds for the Thin Film Equation}
	
	\author{
		Christian Seis \qquad Dominik Winkler}
	\affil{\textit{Institut f\"ur Analysis und Numerik, Westf\"alische Wilhelms-Universit\"at M\"unster}}
	\date{\today}
	
	\maketitle
	
	\begin{abstract}
	The large-time  behavior of solutions to the thin film equation with linear mobility in the complete wetting regime on  $\mathbb{R}^N$ is examined: We investigate the higher order asymptotics of solutions converging towards self-similar Smyth--Hill solutions under certain symmetry assumptions on the initial data. The analysis is based on a construction of  finite-dimensional invariant manifolds that solutions approximate to an arbitrarily prescribed order. 
	\end{abstract}
	
	\textbf{Statements and Declarations:} The  authors state that there is no conflict of interest. Data sharing is not applicable to this article as no data sets were generated or analyzed during the current study.

\section{Introduction}

In this work, we investigate the thin film equation with linear mobility  in arbitrary space dimensions, that is, the partial differential equation
\begin{equation}\label{TFE}
\partial_\tau u+\nabla \cdot \left(u\nabla\Delta u\right)=0
\end{equation}
in the whole space $\mathbb{R}^N$. This equation models the flow of an $N+1$ dimensional viscous fluid with high surface tension over a flat substrate, and thus, the real physical three-dimensional  setting corresponds to the case $N=2$. 
The evolving scalar variable $u=u(\tau ,y)$ in \eqref{TFE} represents the height of the liquid film, and is assumed to be nonnegative \cite{RevModPhys.69.931,MR1642807}. In the $1+1$ dimensional case, equation \eqref{TFE} can also be seen as the lubrication approximation in a two-dimensional  Hele--Shaw cell \cite{GiacomelliOtto03}.

The thin film equation is degenerate parabolic in the sense that the  diffusion flux decreases to  zero where $u$ vanishes. It follows that the speed of propagation is finite and  thus droplet configurations stay compactly supported for all times. 
On a mathematical level, we are thus concerned with a free boundary problem. We will be focusing on a setting in which droplet solutions are slowly spreading over the full space, a regime that is commonly referred to as \emph{complete wetting.} This is obtained mathematically by prescribing the contact angle at the droplet boundary $\partial\{u>0\}$ to be zero, that is, $\grad u=0$. 

A reference spreading droplet configuration is given by Smyth and Hill's 
self-similar solution \cite{SmythHill,MR1148286,MR1479525}
\begin{equation}
\label{smythhillsolution}
u_*(\tau ,y) = \frac{1}{\tau^{\frac{N}{N+4}}}\alpha_N\left(\sigma_M-\frac{\vert y\vert^2}{\tau^{\frac{2}{N+4}}}\right)^2_+,
\end{equation}
where $\alpha_N=\frac{1}{8(N+4)(N+2)}$ and $\sigma_M$ is a positive constant only depending on the mass constraint
\[
\int\limits_{\R^N} u_*dy=M.
\]
Moreover, we write $(s)_+$ for the positive part $ \max\!\left\{0,s\right\}\!$ of a quantity $s$. These source-type solutions \eqref{smythhillsolution} play a distinguished role in the theory since they are, similar to related parabolic problems, believed to describe the large time asymptotic behavior of \emph{any} solution of mass $M$ to the thin film equation, i.e.,
\begin{equation}
\label{119}
u(\tau ,y)\approx u_*(\tau ,y)\quad \mbox{for any }\tau \gg1.
\end{equation}
This convergence has been  proved for strong solutions  in the one-dimensional setting ($N=1$) via entropy methods by Carrillo and Toscani \cite{CarrilloToscani02} and for  minimizing movement solutions in arbitrary dimensions via gradient flow techniques by Matthes, McCann and Savar\'e \cite{MatthesMcCannSavare09}. Both contributions provide sharp rates of convergence and exploit the intimate relation between the thin film equation \eqref{TFE} and the porous medium equation
\begin{equation}
\label{118}
\partial_{\tau} u - \laplace u^{m}=0
\end{equation}
in the case $m=3/2$. In fact, up to a suitable rescaling, the Smyth--Hill solutions \eqref{smythhillsolution} coincide with the self-similar Barenblatt solutions \cite{Zelcprimedovic1950,Barenblatt1952,Pattle1959} of the porous medium equation \eqref{118}, and the surface energy, which is dissipated by the thin film equation \eqref{TFE}, coincides with the rate of dissipation of the Tsallis entropy under the  porous medium flow \eqref{118}. See \cite{MatthesMcCannSavare09,McCannSeis15} for a clean formulation of this entropy-information relation from a gradient flow perspective.

The link between the two equations can be further exploited in order to get  deeper insights into the large time behavior of solutions to \eqref{TFE}: When linearizing both equations about the self-similar solutions, it turns out that the linear porous medium operator $\L$ translates into the linear thin film operator in a simple algebraic way, namely $\L^2 +N\L$ \cite{McCannSeis15}. It immediately follows that the eigenfunctions of both operators agree, while the transformation of the eigenvalues from the porous medium setting to the thin film setting obeys the same algebraic formula. The operator $\L$ was diagonalized in \cite{ZeldovicBarenblatt58,Seis14}, and thus, the full spectral information is also available for the thin film equation \cite{McCannSeis15}, see Theorem \ref{spektrum} below. The spectrum of the one-dimensional operator was computed earlier in \cite{BernoffWitelski02}.

The knowledge of the complete spectrum does not  only give information on the sharp rate of convergence (for which information about the spectral gap would be sufficient), but also on the geometry of all modes through the knowledge of all eigenfunctions. One may thus analyze in detail the role played by affine symmetries such as dilations, rotations or shears, and we will in this paper  obtain improved rates of convergence for the thin film equation \eqref{TFE} by quoting out such symmetries. Further details on the large time asymptotics can be formulated after a suitable change of variables. 

Higher order large time asymptotics for the porous medium equation  \eqref{118} with $m>1$  were obtained in one dimension by Angenent \cite{Angenent1988}, building up on the spectral information in \cite{ZeldovicBarenblatt58} and, more recently, in any dimension by the first author \cite{Seis15}, building up on \cite{Seis14}. While Angenent derived fine series expansions around the limiting solution, the later multidimensional contribution takes a geometric point of view by constructing  finite-dimensional invariant manifolds that the solutions approximate   to any given order. In the present work, we will derive a parallel theory for the thin film equation.
Invariant manifold studies  can be found in numerous applications in the field of nonlinear partial differential equations,  for instance, \cite{HirschPughShub77,Carr83,FoiasSaut84,ChowLu88,FoiasSellTemam88,
	ConstantinFoiasNicolaenko89,EckmannWayne91,ChowLinLu91,
	VanderbauwhedeIooss92,Wayne97,GallayWayne02}.
What is particularly challenging in \cite{Seis15} and the present paper is the moving free boundary at which  solutions cease to be smooth.

What is needed for linking the spectrum of the linear operators to the nonlinear dynamics \eqref{TFE} or \eqref{118} is a regularity framework in which solutions depend differentiably on the initial configuration. This is necessary since the precise rate of convergence in the limit \eqref{119} is dictated by the particular choice of the initial datum. Identifying such a framework is far from being trivial. A crucial first step is a nonlinear change of variables that transforms the free boundary problem into an evolution equation on a fixed domain, which can be chosen as the unit ball. The linear leading order part of the equation can then be seen as a degenerate parabolic equation, whose degeneracy can be cured by interpreting the the dynamics as a fourth-order heat flow on a weighted Riemannian manifold.
For the porous medium equation, such this setting was proposed by Koch in his habilitation thesis \cite{KochHabilitation},  further refined in the work of Kienzler \cite{Kienzler16} and then adapted in \cite{Seis15}. An analogous theory for the thin film equation was derived by John in \cite{John15} and later adapted by the first author in \cite{SeisTFE}. After some necessary refinements, the latter  will be the starting point for the present study.

We also like to mention the related studies  by Denzler, Koch and McCann \cite{DenzlerMcCann05,DenzlerKochMcCann15} and Choi, McCann and the first author \cite{ChoiMcCannSeis22},  who derived some improved large time asymptotics for the fast diffusion equation, i.e., \eqref{118} with $m<1$, in the full space and a bounded domain, respectively. The full space setting is particularly challenging due to the occurrence of continuous spectrum, which arises from the fact that the associated Barenblatt profile possesses a finite number of moments, while in a bounded domain, in which solutions extinct in finite time, negative (unstable) eigenvalues challenge the leading order asymptotics \cite{BonforteFigalli21}. 

Before giving in the next section a specific description of our setting and of our main results,   we want to finish this  introductory section with a brief discussion about the state of the art in the mathematical theory for thin film equation.
Existence of nonnegative weak solutions was established with the help of compactness arguments and estimates on the free surface energy by Bernis and Friedman \cite{MR1031383}. This approach is not adequate to prove a general uniqueness result even though the regularity of these solutions could be improved, see \cite{MR1328475,MR1371925,MR1616558}. In a neighborhood of  stationary solutions (of infinite mass), well-posedness and regularity of one-dimensional solutions could be established in  a weighted Sobolev setting \cite{GiacomelliKnupferOtto08} and in H\"older spaces \cite{GiacomelliKnupfer10}. Moreover, the aforementioned work \cite{John15} deals with the multidimensional case and lowers the regularity requirements to Lipschitz norms and Carleson-type measures. The latter approach was adapted to neighborhoods of the  Smyth--Hill self-similar solution in \cite{SeisTFE}. The one-dimensional setting was also considered in \cite{Gnann15} using  weighted Hilbert spaces.    We finally remark that for nonlinear mobilities, solutions are in general not smooth, see \cite{GGKH14,Gnann16}.

\medskip

\textbf{Organization of the paper.} In the following section, we state and discuss our results on the large time asymptotics in self-similar variables. In Section \ref{newvariables}, we rewrite the thin film equation as a perturbation equation around the self-similar Smyth--Hill solution and present our main theorems of this paper, including the Invariant Manifold Theorem. We will describe in Section \ref{S4} how these results for the perturbation equation translate into the large-time asymptotics for the thin film equation. Section \ref{theoryforperturbationequation} collects information on the well-posedness of the perturbation equation and improves on known regularity estimates. The subsequent Section \ref{S5} deals with a truncated version of the perturbation equation. Well-posedness and regularity estimates are provided. Moreover, we introduce and discuss the time-one mapping that will be our main object of consideration in our construction of invariant manifolds in Section \ref{dynamicalsystemarguments}. The final Section \ref{applicationinvariantmanifolds} exploits the invariant manifold theory to prove the large-time asymptotic expansions for the perturbation equation. We conclude with two appendices, one with a derivation of the perturbation equation, one with  inequalities for weighted Sobolev spaces.

\section{Higher order asymptotics for the thin film equation}\label{section2}

In order to study the convergence towards self-similar solutions, it is customary to perform a self-similar change of variables. In view of the particular form of the Smyth--Hill solution \eqref{smythhillsolution}, we choose
\begin{align}\label{121}
	x = \frac{1}{\sqrt{\sigma_M}}\frac{1}{\tau^{\frac{1}{N+4}}}y, \quad t= \gamma^{-1} \log\left(\tau^{\frac{1}{N+4}}\right) \quad \text{ and } \quad v = \frac{(N+4)\gamma}{\sigma_M^2}\tau^{\frac{N}{N+4}}u,
\end{align} 
where $\gamma = 2(N+2)$, which transforms equation \eqref{TFE} into the \emph{confined} thin film equation
\begin{align}\label{FPE}
	\partial_t v +  \nabla \cdot \left(v\grad\laplace v\right)- \gamma\grad\cdot \left(xv\right)=0,
\end{align}
and turns the self-similar solution \eqref{smythhillsolution} into a stationary one,
\begin{align}\label{123}
	v_*(x) = \frac{1}{4}\left(1-\vert x \vert^2\right)_+^2.
\end{align}
We remark that under this  change of variables, the initial time will be transferred from $\tau =0$ to $t=-\infty$. As we are interested into the solutions' large time behavior only, we will hereafter treat $0$ as the initial time for the transformed equation. Moreover, the rescaling incorporates the total mass $M$ through $\sigma_M$ in such a way that the stationary $v_*$ is the limiting solution only if $v$ and $v_*$ have the same total mass. In what follows, we will assume that this is always the case be requiring that
\begin{equation}
\label{120}
\int\limits_{\R^N} v_0\, dx = \int\limits_{\R^N} v_*\, dx,
\end{equation}
if $v_0$ is the initial configuration for the evolution in \eqref{FPE}.

The theory in \cite{SeisTFE} guarantees that the confined thin film equation \eqref{FPE} has a unique regular solution provided that $v_0$ and $v_*$ are sufficiently close in the sense
\begin{equation}
\label{o1}
\|\sqrt{v_0} -  V_* \|_{W^{1,\infty}(\supp v_0)} \ll1,
\end{equation}
where $V_*(  x) = \frac12(1-|  x|^2)$ is the  (unsigned) extension of $\sqrt{ v_*}$ to $\R^N$. This condition actually yields strong estimates between $v_0$ and and the exact stationary solution $v_*$ as will be explained in the following remark.

\begin{remark}\label{R10}
Choosing the globally decaying $V_*$ over $\sqrt{v_*}$ in \eqref{o1} has the advantage that we can infer from it simultaneously  an information on the support of $v_0$,  a global estimate on the difference of $v_0$ and $v_*$, and  a bound on the slope of $v_0$.

Indeed, regarding the first, restricting to the boundary of the support, where $v_0$ vanishes, and noticing that $V_*(x)\sim \dist(x,\partial B_1(0))$, we directly deduce 
	\begin{align}\label{301}
		\sup \limits_{x\in \partial \supp v_0}\dist(x, \partial B_1(0)) \ll1.
	\end{align}

Next, we observe that   $V_*=\sqrt{v_*}$ inside the ball $B_1(0)$. Outside of $B_1(0)$   it holds that $0\le  \sqrt{v_0} -  \sqrt{v_*}  = \sqrt{v_0} \le \sqrt{v_0}-V_*$, and thus, we find 
	\begin{equation}\label{303}
	\| \sqrt{v_0} - \sqrt{v_*}\|_{L^{\infty}(A)} \ll1 
	\end{equation}
	on the set $A = \supp v_0$ as a consequence of \eqref{o1}. Moreover, since $V_*-  \sqrt{v_0}= V_* =  \sqrt{v_*}$ on $B_1(0)\cap \partial \supp v_0$ and since $v_*$ is decaying towards the boundary, the  estimate \eqref{303} holds true on also on $A = B_1(0)  \setminus \supp v_0 $. It remains to notice that $v_0 = v_*=0$ on the remaining set $A=B_1(0)^c\cap (\supp v_0)^c$, and thus \eqref{303} is proved to be true with $A=\R^N$.	
We immediately deduce that
	\begin{align}\label{304}
		\left\|v_0-v_*\right\|_{L^\infty\left(\R^N\right)}\ll1,
	\end{align} 
because $|v_0-v_*| = |\sqrt{v_0}-\sqrt{v_*}|(\sqrt{v_0}+\sqrt{v_*}) \lesssim|\sqrt{v_0}-\sqrt{v_*}|$	where the last identity is true because $v_0$ and $v_*$ are bounded. 
		
	Finally, we can also extract a condition on the slope of $v_0$, namely
	\begin{align}\label{305}
		\left\|\nabla v_0 + 2x \sqrt{v_0}\right\|_{L^\infty\left(\R^N\right)} \ll1.
	\end{align}
	To establish \eqref{305}, we first note that the left-hand side vanishes provided that $x$ does not lie in the support of $v_0$. Inside $ \supp v_0$, we have
	$|\nabla v_0+ 2x\sqrt{v_0}| =  2\sqrt{v_0}|\nabla \sqrt{v_0}- \nabla V_*|\lesssim |\nabla \sqrt{v_0}- \nabla V_*|$, because $v_0$ is bounded. Condition \eqref{o1} then yields the claim. Notice that the left-hand side in \eqref{305} vanishes precisely for $v_0=v_*$ (under the mass constraint \eqref{120}).
\end{remark}
The main results of the referred work \cite{SeisTFE} are repeated in more details later in Section \ref{theoryforperturbationequation}. This section also contains the main results of the present work. At this stage, we present some consequences of that general theory for the confined thin film equation \eqref{FPE}, which provide  exemplary improved convergence rates towards equilibrium by quoting out symmetries.

The rates of relaxation being intimately related to the spectrum of the linear operator $\L^2+N\L$ associated to the confined equation \eqref{FPE}, see Section \ref{newvariables} below, for a better understanding of our results presented in the sequel, we recall the findings of the spectral analysis from the literature.

\begin{theorem}[\cite{BernoffWitelski02,McCannSeis15}]\label{spektrum}
	The operator $\mathcal{L}^2+N\mathcal{L}$ has a purely discrete spectrum consisting of the eigenvalues
	\begin{align}
	\mu_{l,k}= \lambda_{l,k}^2+N\lambda_{l,k},
	\end{align}
	where the $\lambda_{l,k}$ are the eigenvalues of $\mathcal{L}$. They are given by 
	\begin{align}
	\lambda_{l,k}= 2\left(l+2k\right)+2k\left(k+l+\frac{N}{2}-1\right),
	\end{align}
	for $(l,k)\in \mathbb{N}_0\times \mathbb{N}_0$ if $N\geq2$ and $(l,k) \in \{0,1\}\times \mathbb{N}_0$ if $N=1$.
	The corresponding eigenfunctions are polynomials 	of degree $l+2k$, namely
	\begin{align}
	\psi_{l,n,k}(x) = {}_2F_1\left(-k,1+l+\frac{N}{2}+k;l+\frac{N}{2};|x|^2\right)Y_{l,n}\left(\frac{x}{|x|}\right)|x|^{l},
	\end{align}
where $n\in \{1,\dots,N_l\}$ with $N_0=1$ or $N_1=N$ and $N_l=\frac{\left(N+l-3\right)!\left(N+2l-2\right)}{l!\left(N-2\right)!}$ if $l\geq 2$. Besides, ${}_2F_1(a,b;c;d)$ is a hypergeometric function and $Y_{l,n}$ is a spherical harmonic (of degree $l$) if $N\geq 2$, corresponding to the eigenvalue $l(l+N-2)$ of $-\Delta_{\S^{N-1}}$ with multiplicity $N_l$. If $N=1$ it is $Y_{l,n}\left(\pm1\right)= \left(\pm1\right)^l.$
\end{theorem}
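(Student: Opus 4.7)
The plan is to reduce the spectral problem for the thin film operator $\mathcal{L}^2+N\mathcal{L}$ to the spectral problem for the underlying porous medium operator $\mathcal{L}$, exploiting the algebraic identity highlighted in the introduction. If $\mathcal{L}\psi=\lambda\psi$, then $(\mathcal{L}^2+N\mathcal{L})\psi=(\lambda^2+N\lambda)\psi$, so the two operators share eigenfunctions while the eigenvalues transform according to $\mu=\lambda^2+N\lambda$. The first step is to establish this factorization carefully on a dense subspace (polynomials), which is where both operators are manifestly well-defined, and then invoke a self-adjointness/closure argument in the natural weighted $L^2$-space associated with the equilibrium measure $v_*\,dx$, where $\mathcal{L}$ has compact resolvent. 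This reduces everything to diagonalizing $\mathcal{L}$ and transferring the result.

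Next, I would diagonalize $\mathcal{L}$ by separation of variables in spherical coordinates, writing eigenfunctions as $\psi(x)=f(|x|)Y_{l,n}(x/|x|)$. Because $Y_{l,n}$ is a spherical harmonic of degree $l$, the angular part contributes the factor $-l(l+N-2)$ from $-\Delta_{\mathbb{S}^{N-1}}$, so that the eigenvalue equation $\mathcal{L}\psi=\lambda\psi$ reduces to a radial ODE for $f$. Substituting $s=|x|^2\in[0,1]$ and $f(|x|)=|x|^l g(|x|^2)$ to strip off the expected regularity at the origin, the ODE becomes a hypergeometric equation of the type $s(1-s)g''+(c-(a+b+1)s)g'-ab\,g=0$, with parameters $c=l+\frac{N}{2}$ fixed by dimension and $a,b$ depending on $\lambda$. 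Polynomial solutions occur precisely when $a=-k$ for some $k\in\mathbb{N}_0$, which is an index-raising selection rule that yields the quantization of $\lambda$; solving for $\lambda$ from $a=-k$ produces exactly the formula $\lambda_{l,k}=2(l+2k)+2k(k+l+\frac{N}{2}-1)$, and the corresponding polynomial eigenfunction is the Gauss hypergeometric series $_2F_1(-k,1+l+\frac{N}{2}+k;l+\frac{N}{2};|x|^2)Y_{l,n}(x/|x|)|x|^l$. The dimension $N_l$ of the $l$-th spherical harmonic subspace accounts for the multiplicities.

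The final pieces are to verify that these polynomials indeed form a complete orthogonal system in the weighted $L^2$-space, so that no continuous or residual spectrum appears. Completeness follows from the density of polynomials under the weight $v_*$ (a finite measure supported on the unit ball with integrable density $(1-|x|^2)^2$), combined with orthogonality, which is automatic once one shows $\mathcal{L}$ is symmetric on polynomials in this inner product. The one-dimensional case is handled separately: spherical harmonics degenerate to the two signs $Y_{l,n}(\pm 1)=(\pm 1)^l$, and only $l=0,1$ (even/odd parity) yield independent modes, which explains the restriction $(l,k)\in\{0,1\}\times\mathbb{N}_0$. Transferring back through $\mu=\lambda^2+N\lambda$ then delivers the full statement.

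The main obstacle is not the algebra of the hypergeometric reduction, which is classical, but rather the functional-analytic setup needed to justify that $\mathcal{L}$ (and hence $\mathcal{L}^2+N\mathcal{L}$) has a \emph{purely discrete} spectrum on the correct weighted Sobolev space, given the degeneracy of the weight at the free boundary $|x|=1$. This requires identifying the appropriate form domain, verifying symmetry, and establishing compactness of the resolvent—tasks that were carried out in \cite{Seis14} and adapted to the thin film setting in \cite{McCannSeis15}, and one would simply invoke those frameworks here.
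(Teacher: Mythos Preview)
The paper does not prove this theorem; it is quoted from the literature with citations to \cite{BernoffWitelski02,McCannSeis15}, and the surrounding text merely remarks that the linear operator was computed in \cite{McCannSeis15} via the gradient flow interpretation and that $\mathcal{L}$ was diagonalized in \cite{ZeldovicBarenblatt58,Seis14}. Your sketch is a faithful outline of how those references actually proceed---reduction to $\mathcal{L}$ via $\mu=\lambda^2+N\lambda$, separation into spherical harmonics, hypergeometric radial ODE with polynomial termination giving the quantization of $\lambda_{l,k}$, and completeness via density of polynomials under the weight $\rho$---so there is nothing to compare against in the present paper itself.
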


The computation of the linear operator in \cite{McCannSeis15} was rather formal and was derived from the gradient flow interpretation of \eqref{FPE} with respect to the Wasserstein metric tensor \cite{Otto98,MR1865003,MatthesMcCannSavare09}. It occurs naturally after suitable rescaling inn the perturbation equation \eqref{perturbationequation}

In the statement of the theorem, the linear operator is analyzed with respect to the Hilbert space introduced in \eqref{124} below, and the eigenfunctions $\psi_{l,n,k}$ give rise to  an orthogonal basis of that Hilbert space.

We recall that hypergeometric functions can be written as power series of the form
\begin{align}
{}_2F_1(a,b;c;z)=\sum\limits_{j=0}^{\infty} \frac{(a)_j(b)_j}{(c)_jj!}z^j,
\end{align}
where $a,b,c,z \in \mathbb{R}$ and $c$ is not an non-positive integer, see, e.g.~\cite{Rainville71}. The definition uses extended factorials, also known as Pochhammer symbols,
\begin{align}
(s)_j=s(s+1)\cdots (s+j-1), \quad \text{ for } j\geq 1 \text{ and } (s)_0=1.
\end{align}
The hypergeometric functions with $z = |x|^2$   reduce to a polynomial of degree $2k$ if we plug in $-k$ for $a$. In this case, they can be expressed as Jacobi polynomials.

\begin{figure}[t]\begin{center}
\includegraphics[width=1\textwidth]{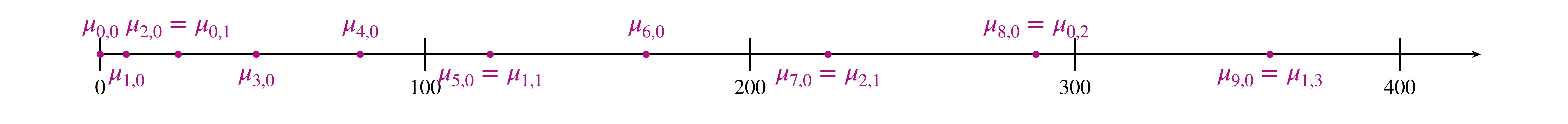}
\end{center}\caption{The spectrum of the linear operator with multiples in the range $[0,400]$ in the $2+1$ dimensional setting ($N=2$). }\label{fig1}
\end{figure}
In the one-dimensional setting, all eigenvalues have multiplicity one. In higher dimensions, all eigenvalues with $l\ge 2$ have a dimension dependent multiplicity that stems from the multiplicity of the eigenvalue $l(l+N-2)$ associated with  the spherical harmonics, i.e., the eigenfunctions of the Laplace--Beltrami operator $\laplace_{\S^{N-1}}$. In addition, there are certain intersections between the eigenvalues $\mu_{\cdot, k}$ and $\mu_{\cdot, k+n}$. For instance, in two dimensions, it holds $\mu_{l,k} = \mu_{l(k+1) + k(k+2),0}$ for any $k,l$, see Figure \ref{fig1}.

\subsection{Leading order asymptotics}

Apparently, $\mu_{0,0}=0$ is the smallest eigenvalue. It corresponds to a situation in which the convergence in \eqref{119} fails, which is precisely the case if the equal mass condition \eqref{120} is not satisfied. Conversely, by requiring that \eqref{120} holds, this eigenvalue is automatically eliminated. The exact leading order asymptotics are then governed by the second smallest eigenvalue $\mu_{1,0}=4+2N$, which is  
our first result for solutions to the confined thin film equation.  We will derive it from a more general statement in Theorem \ref{Whoeheremoden} in Section \ref{newvariables} and present it thus as a corollary here.
 
 \begin{corollary}[Exact leading order asymptotics]\label{exactleadingorder}
 	Let $v$ be the solution to \eqref{FPE} with initial data $v_0$ satisfying the mass constraint \eqref{120} and being sufficiently  close to $v_*$ in the sense of \eqref{o1}. Then it holds that
 	\begin{align}
 		\left\|\sqrt{v(t)}-V_*\right\|_{W^{1,\infty}\left(\supp v(t)\right)} &\lesssim e^{-(4+2N) t} \quad \text{ for all } t\geq 0.
 	\end{align}
 \end{corollary}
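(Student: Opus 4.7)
The plan is to deduce this corollary from the general higher-order expansion result stated in Theorem~\ref{Whoeheremoden} by specializing to the case in which no affine symmetries are quotiented out beyond the mass constraint. First I would rewrite equation \eqref{FPE} as a perturbation equation for a quantity $w$ that measures the deviation of $\sqrt{v}$ from the limiting profile $V_*$ on a fixed domain (essentially the unit ball), as outlined in Section~\ref{newvariables}. The linearization of that perturbation equation is precisely the degenerate fourth-order operator $\mathcal{L}^2+N\mathcal{L}$ whose discrete spectrum is described in Theorem~\ref{spektrum}, acting on a weighted $L^2$-space in which the eigenfunctions $\psi_{l,n,k}$ form an orthogonal basis.

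Next I would exploit the mass constraint \eqref{120}. Since $\psi_{0,0,0}\equiv \mathrm{const}$, the zero mode in the spectral decomposition of $w$ corresponds, at linearized level, exactly to the mass of $v - v_*$. The equal-mass assumption thus eliminates the eigenvalue $\mu_{0,0}=0$, and once this obstruction is removed the next eigenvalue controlling the decay is $\mu_{1,0}= 4+2N$, corresponding to the three linear modes $\psi_{1,n,0}$ which are associated with translations of $v_*$. Since the corollary does not quotient out translations, these modes are allowed to be present and are responsible for the sharp rate in the estimate. Applied to this set-up, Theorem~\ref{Whoeheremoden} then yields, in an appropriate weighted Hilbert norm, an estimate of the form $\|w(t)\|\lesssim e^{-(4+2N)t}$ for the perturbation, provided the smallness hypothesis \eqref{o1} is met (which by Remark~\ref{R10} is strong enough to place $v_0$ in the regularity class for which the perturbation theory applies).

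It remains to translate this Hilbert space bound into the $W^{1,\infty}$-statement of the corollary. Here I would invoke the improved regularity estimates for the perturbation equation established in Section~\ref{theoryforperturbationequation}, which interpolate between the natural weighted $L^2$-control and Lipschitz-type bounds on $\sqrt v-V_*$ up to the free boundary. In particular, the reformulation of the norm $\|\sqrt{v(t)}-V_*\|_{W^{1,\infty}(\mathrm{supp}\, v(t))}$ in terms of the perturbation variable, already used to encode \eqref{o1}, combined with the parabolic smoothing inherent in the higher-order theory, converts the exponential decay of the Hilbert-space norm into the desired exponential bound in the stronger norm, uniformly in $t\ge 0$.

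The main obstacle, as usual in this free-boundary setting, is not the spectral step, but the passage between the Hilbert-space norm in which the invariant-manifold/spectral theory is naturally formulated and the geometric $W^{1,\infty}$-norm (taken on the \emph{evolving} support of $v$) appearing in the statement. This requires careful use of the weighted Sobolev inequalities from the appendix and the regularity theory of Section~\ref{theoryforperturbationequation}, together with a verification that the change-of-variables constructed there remains controlled uniformly in time once one has the exponential decay of the Hilbert norm at hand.
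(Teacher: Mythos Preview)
Your overall architecture is right---reduce to Theorem~\ref{Whoeheremoden} with $K=0$ and translate back---but two points need correction.

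First, a misreading: Theorem~\ref{Whoeheremoden} already concludes $\|w(t)\|_{W^{1,\infty}}\lesssim e^{-\mu_{K+1}t}$, not merely a weighted Hilbert bound. Your third paragraph, devoted to upgrading a Hilbert-space estimate to $W^{1,\infty}$ via parabolic smoothing, is therefore unnecessary. What \emph{is} needed at the end is the algebraic translation from $\|w(t)\|_{W^{1,\infty}}$ on the fixed ball to $\|\sqrt{v(t)}-V_*\|_{W^{1,\infty}(\supp v(t))}$, and the paper does this directly from the transformation identities \eqref{transformationz}--\eqref{transformationw}: one computes $\sqrt{v}-V_*=w+\tfrac12 w^2$ and $\nabla(\sqrt{v}-V_*)=\frac{1+w}{1+w+z\cdot\nabla w}\,\nabla w$, so the bound transfers immediately once $w$ is small.

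Second, and more substantively, you gloss over the one nontrivial step in the paper's proof: verifying hypothesis~\eqref{h2} for $k=0$, namely $\lim_{t\to\infty}\int_{B_1(0)} w(t)\rho\,dz=0$. You say the mass constraint eliminates the zero mode ``at linearized level,'' but \eqref{h2} is a statement about the nonlinear solution. Under the change of variables the mass identity $\int v=\int v_*$ becomes the \emph{nonlinear} condition $\int_{B_1(0)}\bigl((1+w)^{N+4}-1\bigr)\rho\,dz=0$, which does not by itself give $\int w\rho\to 0$. The paper closes this gap by first invoking Theorem~\ref{localmanifolds} with $K=0$: there the center manifold coincides with $E_c\cong\R$ (the constants), so every small solution satisfies $\|w(t)-a\|_W\lesssim e^{-\mu t}$ for some constant $a$. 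Passing to the limit in the nonlinear mass identity then forces $(1+a)^{N+4}=1$, hence $a=0$ since $|a|\le\varepsilon$, and \eqref{h1} follows. Without this argument (or an equivalent one), the hypothesis of Theorem~\ref{Whoeheremoden} has not been checked.
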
 

The result entails the convergence of  $v(t)$ towards $v_*$ as outlined in Remark \ref{R10}.

The same rate of convergence was established earlier in terms of the relative Tsallis entropy and the $L^1$ norm by Carrillo and Toscani \cite{CarrilloToscani02} in the one-dimensional setting and by Matthes, McCann and Savar\'e in any dimension (if one takes into account the difference in the time scaling that we introduced in \eqref{FPE} through the $\gamma^{-1}$ factor). It corresponds to an  $O(\tau^{-(N+1)/(N+4)})$ convergence in the limit \eqref{119} for the original thin-film equation \eqref{TFE}.

The convergence rate in this theorem is sharp and  is saturated by spatial translations of the stationary solution $v_*$. Indeed, for every  vector  $b\in \mathbb{R}^N$, the function $v(t,x)=v_*(x-e^{-\gamma t}b)$ solves the confined thin film equation exactly   and approaches $v_*(x)$ with exponential rate $\gamma  =4+2N$, as can be readily checked via Taylor expansion.
However, because the original  equation \eqref{TFE} is invariant under spatial translations, the convergence in \eqref{119} with rate $O(\tau^{-(N+1)/(N+4)})$ remains true for any shifted version of the Smyth--Hill solution, i.e., $u(\tau,y)\approx u_*(\tau,y-b)$, and the significance of this rate is thus an artifact of this symmetry. Indeed, the above arguing shows that the convergence in Corollary \ref{exactleadingorder} is sharp only if we are not willing to pick the ``correctly'' centered Smyth--Hill solution. We may equivalently adjust the initial datum by a suitable translation in $\R^N$. As we will see, the ``correct'' choice for  $b$ is the  center of mass, which is preserved under the original evolution \eqref{TFE} and pushed towards the origin by the confined equation,
\begin{equation}\label{motioncenterofmass}
\int xv(t,x)dx = e^{- \gamma t}b_0,\quad b_0=\int x v_0(x)dx,
\end{equation}
for all $t\geq 0$, because our rescaling \eqref{121} has eliminated the translation invariance. Supposing that $v_0$ is centered at the origin, $b_0=0$,  the eigenvalue $\mu_{1,0}$ drops out of the spectrum and we obtain a better rate of convergence, namely by the next smallest eigenvalue, which is $\mu_{0,1}=30$ if $N=1$, and $\mu_{2,0} = 16+4N$ if $N\ge 2$. 

\begin{corollary}\label{firstordercorrection}
	Let $v$ be as in Corollary \ref{exactleadingorder} and assume in addition that $v_0$ is centered at the origin, i.e., $b_0=0$ in \eqref{motioncenterofmass}.  Then, it holds that
	\begin{align}
	\left\|\sqrt{v(t)}-V_*\right\|_{W^{1,\infty}\left(\supp v(t)\right)}&\lesssim e^{- 30t} \quad \text{ for all } t\geq 0
	\end{align}
	if $N=1$, and
	\begin{align}
	\left\|\sqrt{v(t)}-V_*\right\|_{W^{1,\infty}\left(\supp v(t)\right)}&\lesssim e^{-(16 +4N)t} \quad \text{ for all }t\geq 0
	\end{align}
	if $N\ge 2$.
\end{corollary}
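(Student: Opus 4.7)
My plan is to deduce this corollary from the general invariant manifold Theorem \ref{Whoeheremoden} of Section \ref{newvariables}, from which Corollary \ref{exactleadingorder} itself follows: once the low-order spectral modes of the perturbation are suppressed, the decay rate should improve to the next eigenvalue in the spectrum of $\L^2+N\L$. The task reduces to identifying the eigenmodes that belong to $\mu_{1,0}=4+2N$ and to showing that the centering hypothesis $b_0=0$ eliminates exactly those modes.

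First, I would read off the relevant eigenfunctions from Theorem \ref{spektrum}: for $(l,k)=(1,0)$ they are $\psi_{1,n,0}(x)=Y_{1,n}(x/|x|)|x|$, that is, up to orthonormal rotation, the coordinate functions $x_1,\dots,x_N$. Geometrically, they span the tangent space at $v_*$ of the $N$-dimensional family of translates $\{v_*(\cdot-b):b\in\R^N\}$. This family consists of stationary solutions of the unconfined equation \eqref{TFE} and, under the self-similar rescaling \eqref{121}, it relaxes onto $v_*$ at exactly the rate $e^{-\gamma t}=e^{-(4+2N)t}$, which is precisely why $\mu_{1,0}$ appears in the spectrum. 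Inspecting Theorem \ref{spektrum} one sees that the next eigenvalue above $\mu_{1,0}$ is $\mu_{0,1}=30$ when $N=1$ (no $l\ge 2$ modes are allowed in one dimension) and $\mu_{2,0}=16+4N$ when $N\ge 2$, which matches the two rates claimed in the corollary.

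To kill the $\mu_{1,0}$-modes I would use the conservation of the center of mass: by \eqref{motioncenterofmass}, the assumption $b_0=0$ forces $\int x v(t,x)\,dx\equiv 0$ for all $t\ge 0$. In the invariant manifold formalism, this identity means that the solution $v(t)$ stays on the nonlinear slice $S_0=\{v:\int xv\,dx=0\}$, which is transverse to the translate manifold and meets it only at $v_*$. Hence the restriction to $S_0$ of the invariant manifold constructed around $v_*$ in Theorem \ref{Whoeheremoden} carries no translation component, and its spectral expansion begins at the next eigenvalue. Applying Theorem \ref{Whoeheremoden} with $\mu$ chosen slightly below $\mu_{0,1}$ (resp.\ $\mu_{2,0}$) should then yield the claimed decay, in full analogy with the derivation of Corollary \ref{exactleadingorder}.

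The hard part will be the nonlinear bookkeeping needed to reconcile the \emph{exact} moment condition $\int xv\,dx=0$ with the \emph{linear} projection onto the eigenfunctions $\psi_{1,n,0}$ that the invariant manifold construction demands. After the nonlinear change of variables in Section \ref{newvariables}, I would need to verify that the vanishing first moment translates into the vanishing of the $(1,n,0)$-coordinates of the perturbation on the invariant manifold -- equivalently, that the parametrisation of the manifold can be chosen compatibly with the translation symmetry, so that the quadratic corrections produced by the nonlinear change of variables do not leak back into the translation direction. This is the step that will require the regularity estimates of Section \ref{theoryforperturbationequation} and the explicit form of the eigenfunctions from Theorem \ref{spektrum}.
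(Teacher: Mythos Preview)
Your strategy is exactly the paper's: verify the hypothesis \eqref{h2} of Theorem \ref{Whoeheremoden} for $K=1$ using the centering condition, then read off the improved rate. You have also correctly identified the next eigenvalue in each dimension.

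The ``hard part'' you flag is resolved more simply than you anticipate, and it is worth spelling out the mechanism since your last paragraph drifts toward unnecessary machinery. The hypothesis of Theorem \ref{Whoeheremoden} is not that $\langle \psi_{1,n,0},w(t)\rangle$ vanish exactly, but only that $e^{\mu_{1,0}t}\langle \psi_{1,n,0},w(t)\rangle\to 0$. The paper's Lemma \ref{4} (a direct Taylor expansion under the change of variables \eqref{transformationz}--\eqref{transformationw}) gives
\[
\int_{\R^N} x_i\, v(t,x)\,dx \;=\; 2\langle z_i, w(t)\rangle \;+\; \mathcal{O}\bigl(\|w(t)\|_{L^\infty}^2\bigr).
\]
Since the left-hand side is identically zero by \eqref{motioncenterofmass} and $b_0=0$, and since Corollary \ref{exactleadingorder} already gives $\|w(t)\|_{L^\infty}\lesssim e^{-\mu_{1,0}t}$, you get $|\langle z_i,w(t)\rangle|\lesssim e^{-2\mu_{1,0}t}$, which is more than enough. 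So the quadratic leakage you worry about is real, but it is harmless precisely \emph{because} you already have the leading-order rate in hand; no special compatibility of the manifold parametrisation with the translation symmetry is needed, and the regularity theory of Section \ref{theoryforperturbationequation} plays no role here beyond what is already baked into Theorem \ref{Whoeheremoden}. One minor correction: Theorem \ref{Whoeheremoden} is indexed by $K$, not by a continuous parameter $\mu$; you apply it with $K=1$ and obtain the rate $\mu_{K+1}$ directly.
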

This rate of convergence is again sharp for solutions that start, if $N\ge 2$, from affine transformations of the stationary solution, and if $N=1$, from dilated stationary solutions. Because we will discuss dilated stationary solutions later also in the multi-dimensional case, we will restrict ourselves here to the setting $N\ge 2$. Solutions starting from affine transformations of $v_*$ are then to leading order (modulo rescaling to fit the mass constraint) described by $v(t,x)\approx v_*(x - e^{-\mu_{2,0}t} Ax)$ for a symmetric and trace-free matrix $A$. The validity of this  asymptotics is best understood in terms of the perturbation equation, that we will introduce in the subsequent section. 

The occurrence of such affine transformations can be explained on the level of the eigenfunctions computed in Theorem \ref{spektrum}: The finite displacements $v_s$  generated by an eigenfunction $\psi$ are described by
\begin{equation}
\label{122}
v_s( x+s\grad\psi(x)) \det(I+s\grad^2 \psi(x))  = v_*(x),
\end{equation}
provided that $|s|\ll 1$. For $k=0$, the eigenfunctions are homogeneous harmonic polynomials of degree $l$, namely $\psi_{l,n,0}(x) = Y_{l,n}(x/|x|) |x|^l$. If $l=2$, the generating polynomials are quadratic, and thus of the form $\psi(x) = x\cdot Ax$  for a symmetric and trace-free matrix $A$. In this case, \eqref{122} defines  affine transformations.

For further improvements on the rate of convergence, we have to quote out affine transformations.

\subsection{Higher order corrections and the role of symmetries}

In order to improve on the convergence rates even further, we exploit symmetry invariances of the thin film equation in conjunction with symmetry properties of spherical harmonics, which determine  the angular   modulations of our eigenfunctions, see Theorem \ref{spektrum}. More precisely, we will obtain higher order convergence rates by assuming that the initial datum $v_0$ is invariant under certain orthogonal transformations. Because such transformations leave the thin film equation invariant and thanks to the uniqueness of solutions near self-similarity \cite{SeisTFE}, the invariance under those orthogonal transformations is inherited by the solution for all times. We will show that the orthogonality condition leads to a selection among the eigenfunctions forcing a large class of eigenmodes to remain \emph{inactive} during the evolution. The slowest active mode will then govern the large-time asymptotics.
\begin{figure}
\begin{picture}(95,90)
\put(0,0){\includegraphics[scale=0.25]{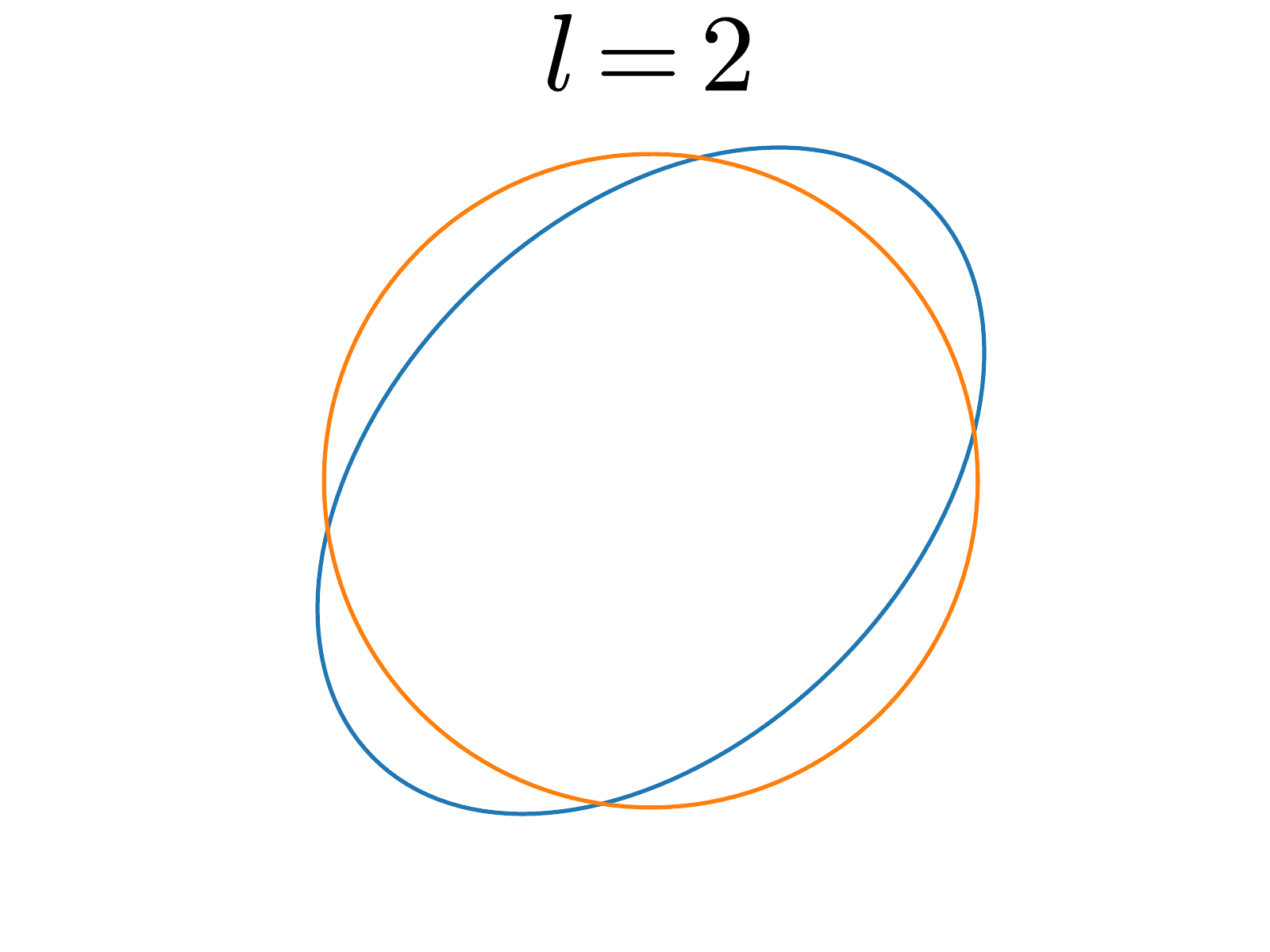}}
\end{picture}
\begin{picture}(95,90)
\put(0,0){\includegraphics[scale=0.25]{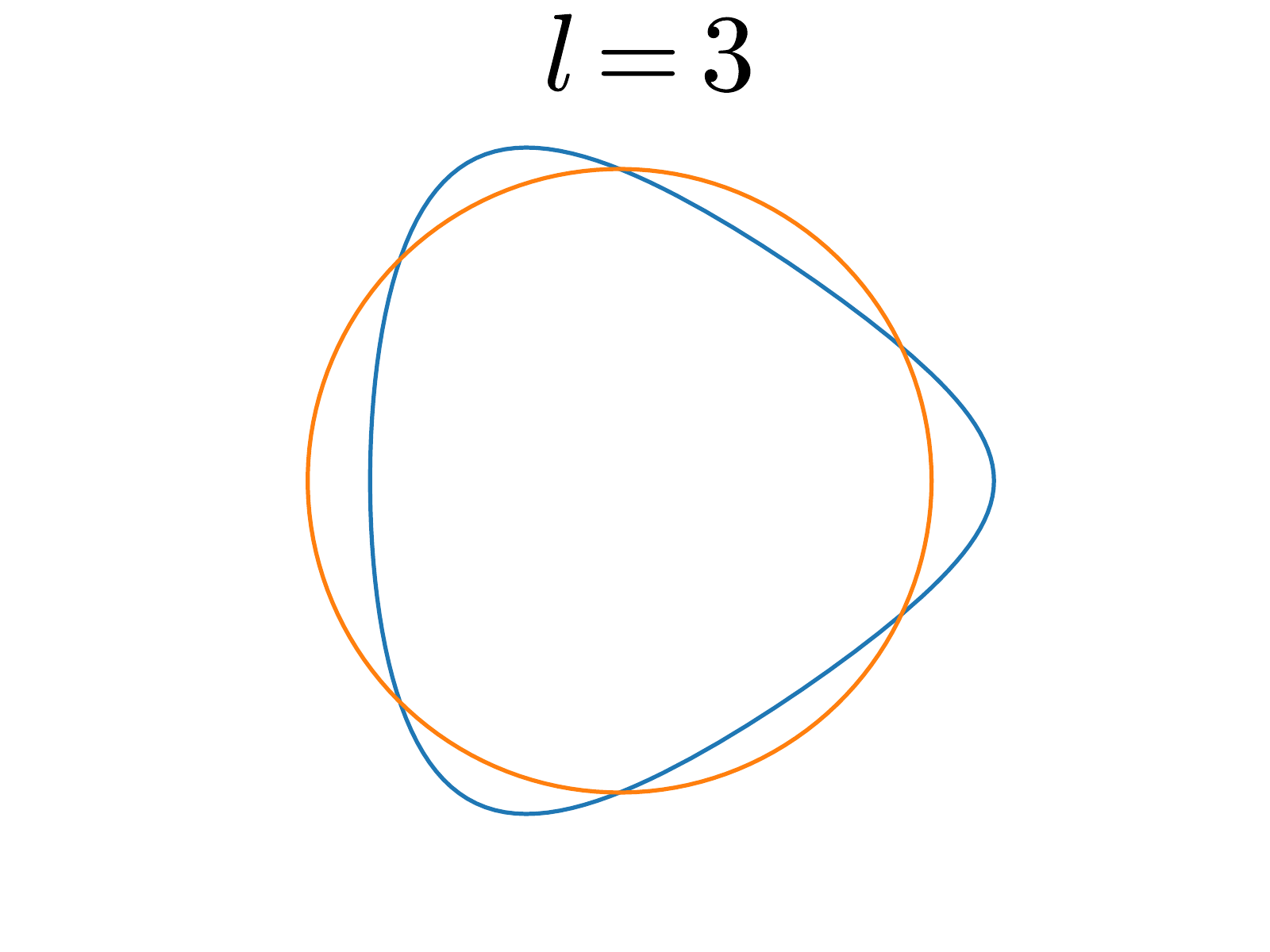}}
\end{picture}
\begin{picture}(95,90)
\put(0,0){\includegraphics[scale=0.25]{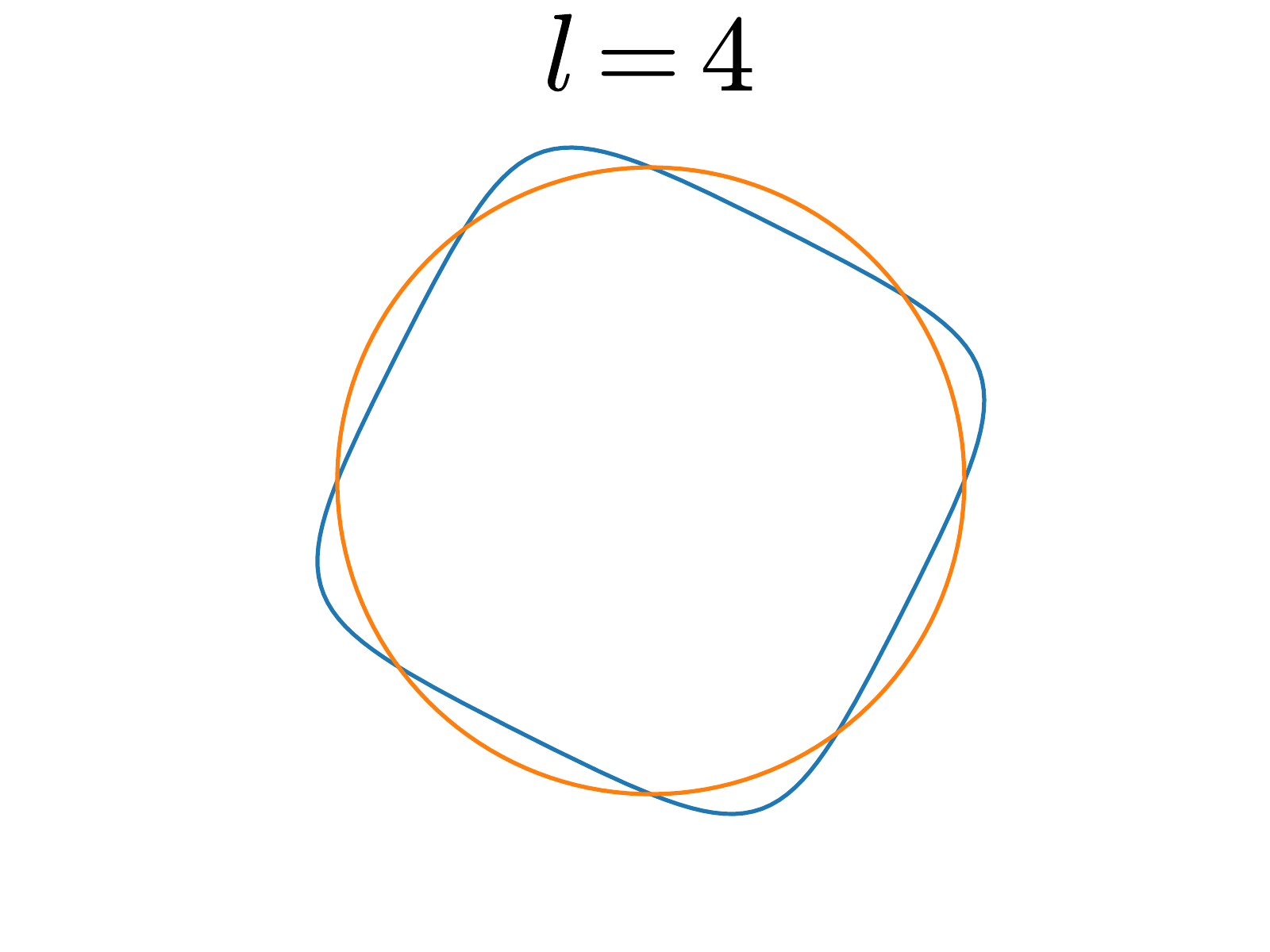}}
\end{picture}
\begin{picture}(95,90)
\put(0,0){\includegraphics[scale=0.25]{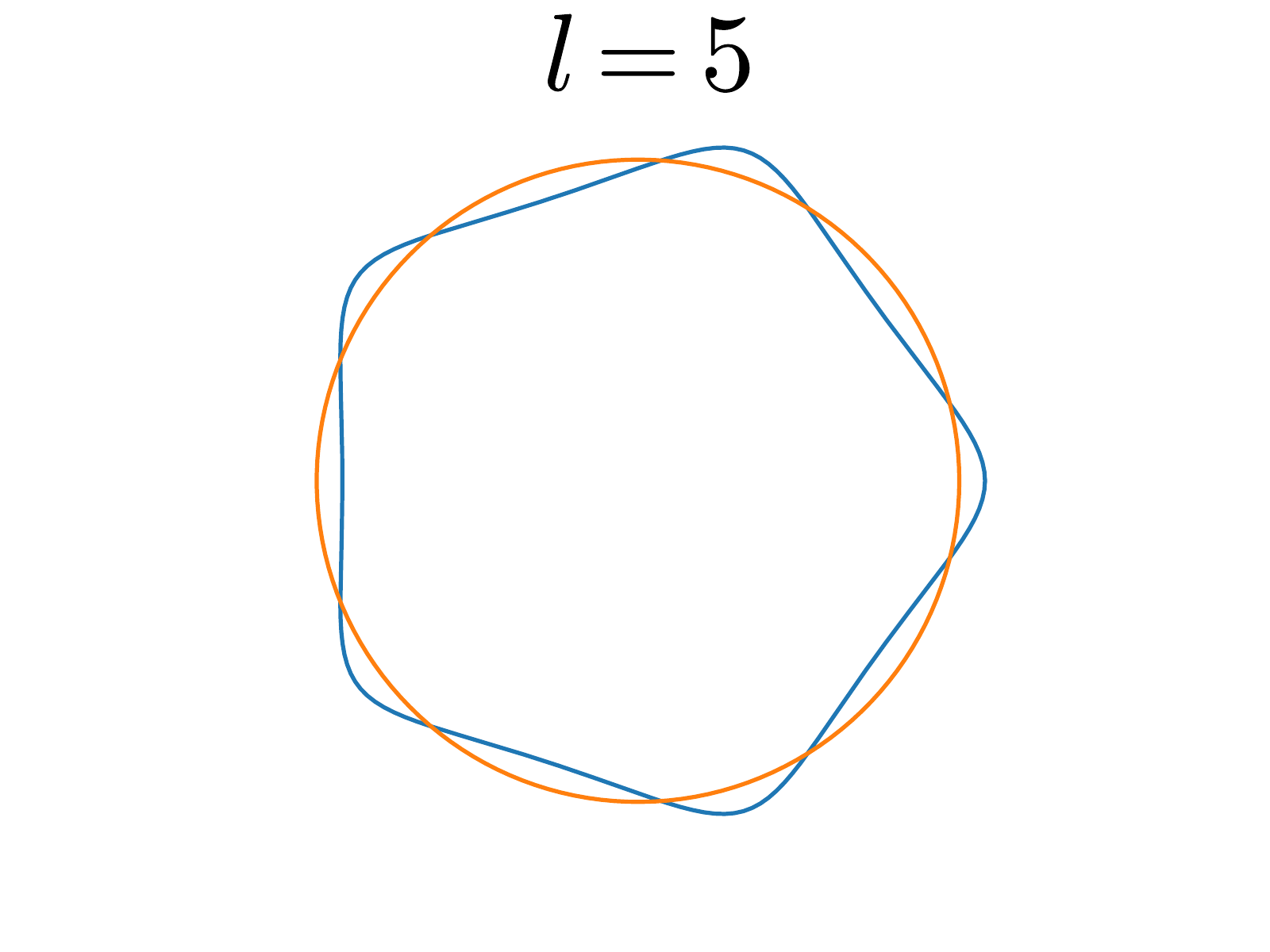}}
\end{picture}
\caption{The finite displacements of $v_*$ generated by the eigenfunctions $\psi_{l,n,0}$ in the physical case $N=2$.}\label{fig0}
\end{figure}

To motivate our approach for modding out certain modes, it is enlightening to study briefly the situation in two space dimensions, $N=2$. In Figure \ref{fig0}, we have plotted some finite displacements, cf.~\eqref{122}, generated by  eigenfunctions $\psi_{l,n,0}$ with $l\in\{1,\dots, N_l\}$. Apparently, displacements generated by $\psi_{l,n,0}$ (and then also by any polynomial of the form $p(|x|)Y_{l,n}(x/|x|)$ including $\psi_{l,n,k}$) share precisely the symmetry properties of a regular $l$-polygon. Under the assumption that the solution has the symmetry properties such a regular $l$-polygon, all eigenmodes generated by $\psi_{m,n,k}$ with $m<l$ are necessarily inactive. In Remark \ref{R2} below, we will discuss the short elementary argument that rigorously supports this observation.

In higher space dimensions, the situation gets more involved and the structure of the spherical harmonics is more complex. In order to mod out eigenmodes, taking a more abstract approach is strongly advised. We choose a group theoretical approach, noticing that  the symmetry group of a regular $l$-polygon is a finite subgroup of the group of orthogonal transformations $O(N)$. Our goal is to determine geometric conditions on an arbitrary function, more precisely,  invariances under the action of a given finite subgroup of $O(N)$,   which guarantee that the $L^2$-projections of that function onto all spherical harmonics of a given degree $l$ vanish. 
To achieve this goal, we will eventually apply tools originating from the field of representation theory of groups, see, e.g.,  \cite{Mukai2003} or \cite{tomDieck1995} for elementary considerations.

The space of square integrable functions on the unit sphere $L^2\left(\mathbb{S}^{N-1}\right)$  can be decomposed into a direct Hilbert sum over the eigenspaces of $\laplace_{\S^{N-1}}$,
\[
L^2\left(\mathbb{S}^{n-1}\right) = \bigoplus\limits_{l\in \N_0}H_l,
\]
where the eigenspace $H_l$ is spanned by the spherical harmonics of degree $l$ and its dimension is given by $N_l$, see Theorem \ref{spektrum}. 
We remark that every eigenspace $H_l$ is invariant under the action of orthogonal transformations. More precisely, given an orthogonal matrix $g\in O(N)$, for every $f\in H_l $ we have that $f\circ g^{-1} \in H_l$.

If  $E$ is a finite subgroup of $O(N)$, we denote by  $H_l^E$ the subspace of $H_l$ consisting of all functions that are invariant under the action of all elements of $E$, i.e.,  $f\circ g^{-1}=f$ for any $f\in H_l^E$ and $g\in E$. The eigenmodes corresponding to an eigenvalue $\mu_{l,k}$ are all modded out by the action of elements in $E$ if that subspace is trivial, $\dim(H_l^E)=0$. We present and discuss our final convergence result under such an abstract condition and will discuss thereafter some specific choices of $E$, for which we will need some deeper insights from the representation theory of finite groups.

\begin{corollary}\label{grouptheorycorrections}
	Let $N\geq2$ and $v$ be given as in Corollary \ref{firstordercorrection} satisfying 
	\begin{equation}\label{401}
	\left\|\sqrt{v(t)}-V_*\right\|_{L^\infty(\supp v(t))}\lesssim e^{-\mu_{l,k}t} \quad \text{ for all }t\geq 0
	\end{equation}
	for some $l\in\N$ and $k\in \mathbb{N}_0$, such that the multiplicity of $\mu_{l,k}$ is given by $N_l$. Assume in addition that $v_0$ is invariant under the action of a finite subgroup $E$ of $O(N)$ such that
	\begin{align} \label{400}
		\dim\left(H_l^E\right)=0.
	\end{align}
	Then it holds that
	\begin{align}\label{h7}
	\left\|\sqrt{v(t)}-V_*\right\|_{W^{1,\infty}(\supp v(t))}&\lesssim e^{-\mu_+t} \quad \text{ for all }t\geq 0,
	\end{align}
	where $\mu_+$ is the next largest eigenvalue following $\mu_{l,k}$.
\end{corollary}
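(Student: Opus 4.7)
The plan is to combine three ingredients: propagation of the $E$-invariance from the initial data to $v(t)$ for all times, an application of the higher-order asymptotic result (Theorem \ref{Whoeheremoden}) one refinement level beyond the hypothesized bound \eqref{401}, and a short representation-theoretic argument using \eqref{400} that kills the remaining leading mode.

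\textbf{Propagation of symmetry.} First I would observe that the confined thin film equation \eqref{FPE} is covariant under $O(N)$: both the transport term $\gamma\nabla\cdot(xv)$ and the fourth-order flux $\nabla\cdot(v\nabla\Delta v)$ are rotationally invariant, so for any $g\in E$ the map $(t,x)\mapsto v(t,gx)$ is again a solution of \eqref{FPE}. Since $v_0\circ g=v_0$ by hypothesis, the uniqueness of solutions near $v_*$ established in \cite{SeisTFE} forces $v(t,gx)=v(t,x)$ for all $t\ge 0$ and every $g\in E$. The nonlinear change of variables leading to the perturbation variable of Section \ref{newvariables} is itself $O(N)$-equivariant, so the same $E$-invariance is inherited by the perturbation.

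\textbf{Invoking the invariant manifold at the next level.} Next I would apply Theorem \ref{Whoeheremoden} with target rate $\mu_+$. This yields an expansion of the perturbation into a finite-dimensional invariant-manifold approximation plus a remainder of size $e^{-\mu_+ t}$. Hypothesis \eqref{401} already excludes all modes decaying strictly slower than $\mu_{l,k}$, so the only remaining obstruction to \eqref{h7} lives in the eigenspace
\[
V_{l,k}:=\ker\bigl(\mathcal{L}^2+N\mathcal{L}-\mu_{l,k}\bigr),
\]
which, by the multiplicity assumption, coincides with $\Span\{\psi_{l,n,k}:n=1,\dots,N_l\}$. It therefore suffices to show that the projection of any $E$-invariant function onto $V_{l,k}$ vanishes.

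\textbf{Elimination of $V_{l,k}$ and conclusion.} The eigenfunctions $\psi_{l,n,k}(x)=p_{l,k}(|x|^2)|x|^l Y_{l,n}(x/|x|)$ share a common radial prefactor, hence $V_{l,k}$ is isomorphic as an $O(N)$-representation to the space $H_l$ of spherical harmonics of degree $l$; in particular the subspace of $E$-invariants satisfies $V_{l,k}^E\cong H_l^E=\{0\}$ by \eqref{400}. Because $O(N)$ commutes with $\mathcal{L}^2+N\mathcal{L}$ and preserves the weighted inner product \eqref{124}, the orthogonal projection onto $V_{l,k}$ commutes with the action of $E$; the image of any $E$-invariant function under this projection therefore lies in $V_{l,k}^E=\{0\}$. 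Equivalently, a short group-averaging computation $\langle w,\psi_{l,n,k}\rangle=\langle w,|E|^{-1}\sum_{g\in E}\psi_{l,n,k}\circ g^{-1}\rangle$ collapses the angular factor to an $E$-invariant spherical harmonic of degree $l$, which must vanish by \eqref{400}. The $\mu_{l,k}$-mode is thus absent from the expansion, and Theorem \ref{Whoeheremoden} delivers \eqref{h7}.

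The step I expect to be the main obstacle is the second one: one must make sure that the $W^{1,\infty}$-type hypothesis \eqref{401} on the physical variable $v$ translates cleanly into the $L^2$-based spectral framework underlying Theorem \ref{Whoeheremoden}, so that ``only modes of rate $\ge\mu_{l,k}$ are present'' is a statement one may legitimately use to pass to the next eigenvalue. Once this analytic bridge is in place, the representation-theoretic cancellation in the third step is essentially immediate.
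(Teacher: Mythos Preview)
Your proposal is correct and shares the overall architecture with the paper's proof: translate \eqref{401} into a decay bound on the perturbation $w$, verify condition \eqref{h2} of Theorem \ref{Whoeheremoden} for all eigenvalues up to and including $\mu_{l,k}$, and translate the resulting $W^{1,\infty}$ bound on $w$ back to $v$. The substantive difference is in how the $\mu_{l,k}$-mode is eliminated. The paper stays in the physical variable: it uses the $E$-invariance of $v(t)$ to show $\int_{\R^N}(v(t)-v_*)\psi_{l,n,k}\,dx=0$, and then invokes Lemma \ref{4} to deduce $|\langle w(t),\psi_{l,n,k}\rangle|\lesssim\|w(t)\|_{L^\infty}^2\lesssim e^{-2\mu_{l,k}t}$, which is enough for \eqref{h2}. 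You instead transfer the $E$-invariance to $w$ itself, noting that the change of variables \eqref{transformationz}--\eqref{transformationw} is $O(N)$-equivariant, and then your group-averaging argument gives $\langle w(t),\psi_{l,n,k}\rangle=0$ identically. Your route is more direct and bypasses Lemma \ref{4}; the paper's route keeps the symmetry discussion at the level of the original equation. Both are valid.

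Two small points. Your phrasing of Theorem \ref{Whoeheremoden} as giving an ``expansion'' is slightly inaccurate: it is a pure decay statement once \eqref{h2} is verified. And the step you flag as the main obstacle is in fact immediate: the identity $w+\tfrac12 w^2=\sqrt{v}-V_*$ from \eqref{transformationz}--\eqref{transformationw} converts \eqref{401} into $\|w(t)\|_{L^\infty}\lesssim e^{-\mu_{l,k}t}$, whence $e^{\mu t}\langle\psi,w(t)\rangle\to0$ for every eigenvalue $\mu<\mu_{l,k}$. You should also spell out the final passage from the bound on $w$ back to \eqref{h7}, as in the proof of Corollary \ref{exactleadingorder}.
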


We shall briefly comment on the assumptions on $v(t)$ in the latter corollary. 
\begin{remark}\label{R3}
	It may be surprising that it suffices to demand the decay of $v(t)-\sqrt{V_*}$ in $L^\infty$ instead of $W^{1,\infty}$, what would be the expected setting due to the previous results. Due to the regularizing properties of the equation and the Lipschitz bound \eqref{o1} for the initial time, we will eventually see, that both assumptions are in fact equivalent in the given situation. We will discuss this phenomenon shortly in the proof of Corollary \ref{grouptheorycorrections}.
\end{remark}

Not every  eigenfunction corresponds to an orthogonal transformation and thus, a symmetry condition like \eqref{400} is in general not sufficient to jump from one eigenvalue to another. Indeed, all eigenfunctions $\psi_{0,1,k}$ are radially symmetric polynomials, and the slowest of the corresponding modes is generated by delayed Smyth--Hill solutions $u_*(\tau + \tau_0,y)$   of \eqref{TFE}, which turn into the dilations $\lambda(t)^{-N}v_*(\lambda(t)^{-1}x)$ with $\lambda(t)\approx 1 + \frac{1}{N+4}\tau_0 e^{-\mu_{0,1}t} $ solving the confined equation \eqref{FPE}, and converging towards the stationary $v_*$ with exponential rate $\mu_{0,1}$. We do not know if these modes can be eliminated  by a reasonable assumption on the initial configuration nor do we see how they can be suitably controlled during the evolution.  Therefore, in order to raise the convergence rates beyond eigenvalues $\mu_{0,k}$, the decay hypothesis \eqref{401}  seems necessary  to ensure that the respective  radial modes are inactive. We have to demand that the  multiplicity of the eigenvalue $\mu_{l,k}$ in \eqref{401} is precisely $N_l$, in order to  exclude possible resonances with  \emph{any} spherical harmonics of different  order (such that $\mu_{l,k} = \mu_{\tilde l,\tilde k}$).

To conclude the discussion about higher order asymptotics on the level of the confined thin film equation, we remark that the number of eigenvalues we are able to remove from the spectrum before reaching  $\mu_{0,1}$ (provided we find a suitable subgroup of $O(N)$) depends on the space dimension:  If the dimension is odd, $ N=2m-1$, then $\mu_{0,1}$ is the $(m+2)$th eigenvalue and has multiplicity one. In even dimensions, $N=2m$,  it coincides with $\mu_{m+2,0} $. 

We finally recall from the introduction that further and, in fact, much stronger statements on the large time asymptotics can be derived after a customary change of variables. These will be presented and discussed in the following section.

It remains to identify finite subgroups $E$ of $O(N)$, which mod out spherical harmonics of a given order $l$ in the sense of \eqref{400}. We will do that by applying a surprisingly helpful tool, the Molien series, which originates from the field of representation theory of groups. It was suggested to us by our colleague Linus Kramer.

The subspace $H_l\subseteq L^2\left(\S^{N-1}\right)$ of spherical harmonics of degree $l$ can be identified with the space of symmetric, trace-free tensors of rank $l$ that we will further denote by $H_l$ as well.
The generating function $h_E(t)$ for the dimensions $
\dim\left(H_l^E\right)$ of the subspace of $L^2(\S^{N-1})$ that is  invariant under the action of $E$ can be formally expressed as the power series
\begin{align}\label{generatingfunction}
		h_E(s)=\sum \limits_{l=0}^{\infty}\dim\left(H_l^E\right)s^l,
\end{align}
which is called Molien series or Hilbert series in the literature, cf.\ \cite[p.~11]{Mukai2003} or \cite[p.~479]{Stanley1979}.
A beautiful and functional way that is often used to compute this series explicitly is given by Molien's formula
\begin{align}
	h_E(s)=\frac{1}{|E|}\sum\limits_{g\in E}\frac{1-s^2}{\det\left(I-sg\right)},
\end{align} 
see \cite{Mendes1975}, \cite{BurnettMeyer1954}.
In the physical case $N=2$, the Molien series is known for all  finite subgroups of $O(2)$, as will be discussed in the following.

\begin{itemize}
\item \emph{Cyclic groups.} The first class of subgroups, $\mathfrak{S}_n$ for $n\in \mathbb{N}$, is generated by rotations by an angle of $2\pi/n$. The corresponding Molien series is given by
\begin{align}
	h_{\mathfrak{S}_n}(s)=\frac{1+s^n}{1-s^n}=(1+s^n)\sum\limits_{l=0}^\infty s^{ln} = 1 +2s^{n}+2s^{2n}+2s^{3n}+\dots,
\end{align}
see \cite[p.~143]{BurnettMeyer1954}. In view of the Hilbert series representation \eqref{generatingfunction} of $h_{\mathfrak{S}_n}(s)$, this formula proves that the corresponding invariant subspaces   must be trivial \eqref{400} precisely if $l$ is not divisible by $n$.
In other words, the projection of a function that is invariant under rotations of an angle of $2\pi/n$ onto the subspaces spanned by spherical harmonics of degree $l$ has to vanish if $l$ is not divisible by $n$. Moreover, if non-trivial, $H_l^{\mathfrak{S}_n}$ has dimension $2$, and thus,  recalling that for $N=2$ each of the tensor spaces $H_l$ with $l\geq 1$ is two-dimensional  $N_l=2$, it is $H_l^{\mathfrak{S}_n} = H_l$.

\item \emph{Dihedral groups.}
The second class of finite subgroups, $\mathfrak{D}_n$ for $n\in \mathbb{N}$, is generated by two elements. Again a rotation of the angle $2\pi/n$ and additionally a reflection. In this case the Molien series reads as
\begin{align}
	H_{\mathfrak{D}_n}(s)=\frac{1}{1-s^n}=\sum_{l=0}^\infty s^{ln}=1+s^{n}+s^{2n}+s^{3n}+\dots,
\end{align}
see \cite[p.~59]{Humphreys1990}.
If a function is invariant under the action of $\mathfrak{D}_n$ instead of $\mathfrak{S}_n$, the projection onto $H_l$ vanishes for the same $l$ as before. This time, however, the nontrivial subspace are one-dimensional.
\end{itemize}

We remark that the zeroth order term $s^0=1$ in the Molien series does not affect the convergence rates since the mass of the initial datum $v_0$ is already fixed.

In higher dimensions, classifying the finite subgroups of $O(N)$ becomes more complicated.
For $N=3$, we discuss the subgroups of $O(3)$ that only consist of rotations in more detail. The following results, together with more far-reaching ones, can be found in \cite[p.~143]{BurnettMeyer1954}.
\begin{itemize}
	\item \emph{Cyclic groups.} The class $\mathfrak{S}_n$ for $n\in \N$ is generated by rotations by an angle of $2\pi/n$ around a fixed axis. The corresponding Molien series is given by
	\begin{align}
		h_{\mathfrak{S}_n}(s)= \frac{1}{1-s}\frac{1+s^n}{1-s^n}=\left(1+s+s^2+s^3+\dots\right)\left(1 +2s^{n}+2s^{2n}+2s^{3n}+\dots\right).
	\end{align}
	This formula shows that no invariant subspace is ensured to be trivial in this case.
	\item \emph{Dihedral groups.} In three dimensions, the dihedral group $\mathfrak{D}_n$ is generated by two rotations: A rotation by an angle of $2\pi/n$ around a fixed axis and a rotation by an angle of $\pi$ around an axis perpendicular to the first one. The corresponding Molien series is given by
	\begin{align}
		h_{\mathfrak{D}_n}(s) = \frac{1}{1-s^2}\frac{1+s^{n+1}}{1-s^n}= \left(1+s^{n+1}\right)\left(1+s^2+s^4+\dots\right)\left(1+s^n+s^{2n}+\dots\right).
	\end{align}
	In this case, the invariant subspace $H_l^E$ becomes trivial if and only if $l \neq (n+1)k+nm_1+2m_2$ for all $k\in \left\{0,1\right\}$ and $m_1,m_2\in\N_0$.
	\item \emph{Platonic solids.} The last group is given by the three rotation groups of the platonic solids. The tetrahedral group $\mathfrak{T}$ (the rotation group of the tetrahedron) has the Molien series
	\begin{align}
		h_{\mathfrak{T}}(s) = \frac{1}{1-s^4}\frac{1+s^{6}}{1-s^3}=\left(1+s^6\right)\left(1+s^4+s^8+\dots\right)\left(1+s^3+s^6+\dots\right).
	\end{align}
	In this case, the invariant subspace $H_l^E$ becomes trivial if and only if $l \neq 4m_1+3m_2$ for all $m_1,m_2\in \N_0$.
	
	The octahedral group $\mathfrak{O}$ (the rotation group of the cube or the octahedron) has the Molien series
	\begin{align}
	h_{\mathfrak{O}}(s) = \frac{1}{1-s^4}\frac{1+s^{9}}{1-s^6}=\left(1+s^{9}\right)\left(1+s^4+s^8+\dots\right)\left(1+s^6+s^{12}+\dots\right).
	\end{align}
	In this case, the invariant subspace $H_l^E$ becomes trivial if and only if $l \neq 9k+4m_1+6m_2$ for all $k\in \left\{0,1\right\}$ and $m_1,m_2\in\N_0$.
	
	The isocahedral group $\mathfrak{I}$ (the rotation group of the cube or the dodecahedron or the isocahedron) has the Molien series
	\begin{align}
	h_{\mathfrak{I}}(s) = \frac{1}{1-s^{10}}\frac{1+s^{15}}{1-s^6}=\left(1+t^{15}\right)\left(1+s^{10}+s^{20}+\dots\right)\left(1+s^6+s^{12}+\dots\right).
	\end{align}
	In this case, the invariant subspace $H_l^E$ becomes trivial if and only if $l \neq 15k+10m_1+6m_2$ for all $k\in \left\{0,1\right\}$ and $m_1,m_2\in\N_0$.
\end{itemize}
Regarding the four dimensional case $O(4)$, extensive results can be found in \cite{Mendes1975}.
Besides, various results regarding the Molien series in general dimensions are available, see for example \cite{Humphreys1990}.

\begin{remark}
	We remark that some of the references given above do not work in exactly the same setting that we consider here.
	In fact, it is not necessary to decompose the space $L^2\left(\S^{N-1}\right)$ into eigenspaces of $\Delta_{\S^{N-1}}$. Instead, one could also decompose it into spaces of homogeneous polynomials of fixed degree, 
	\begin{align}
		L^2\left(\S^{N-1}\right) = \bigoplus \limits_{l\in \N_0}P_l,
	\end{align}
	where $P_l$ is the space of homogeneous polynomials of degree $l$. Given a finite subgroup $E$ of $O(N)$, we similarly denote by $P_l^E$ the subspace of $P_l$ consisting of all functions that are invariant under the action of all elements of $E$. Let
	\begin{align}
		p_E(s) = \sum_{l=0}^\infty \dim \left(P_l^E\right)s^l
	\end{align}
	be the corresponding generating function. In this situation Molien's formula has to be adapted, namely
	\begin{align}
		p_E(s)= \frac{1}{|E|}\sum\limits_{g\in E}\frac{1}{\det\left(I-sg\right)},
	\end{align}
	see for example \cite[p.~13]{Mukai2003}.
	We obtain $h_E(s)=(1-s^2)p_E(s)$, what enables us to transfer results to the given setting.
\end{remark}

\section{New variables and main results}\label{newvariables}

As announced earlier, one of the main analytical challenges in deriving fine large time asymptotics for the (confined) thin film equation is the free moving boundary. Following Koch \cite{KochHabilitation}, we perform a von Mises-type change of dependent and independent variables, which brings the equation into a setting in which solutions depend differentiably on the initial datum \cite{SeisTFE}. The transformation applies when the solution is Lipschitz close to the stationary solution in the sense of \eqref{o1}, cf.~\cite{Seis15,SeisTFE}. The underlying geometric procedure is the following, which is also illustrated in Figure \ref{fig2}: 
\begin{figure}[t]
\begin{center}
\includegraphics[width=0.9\textwidth]{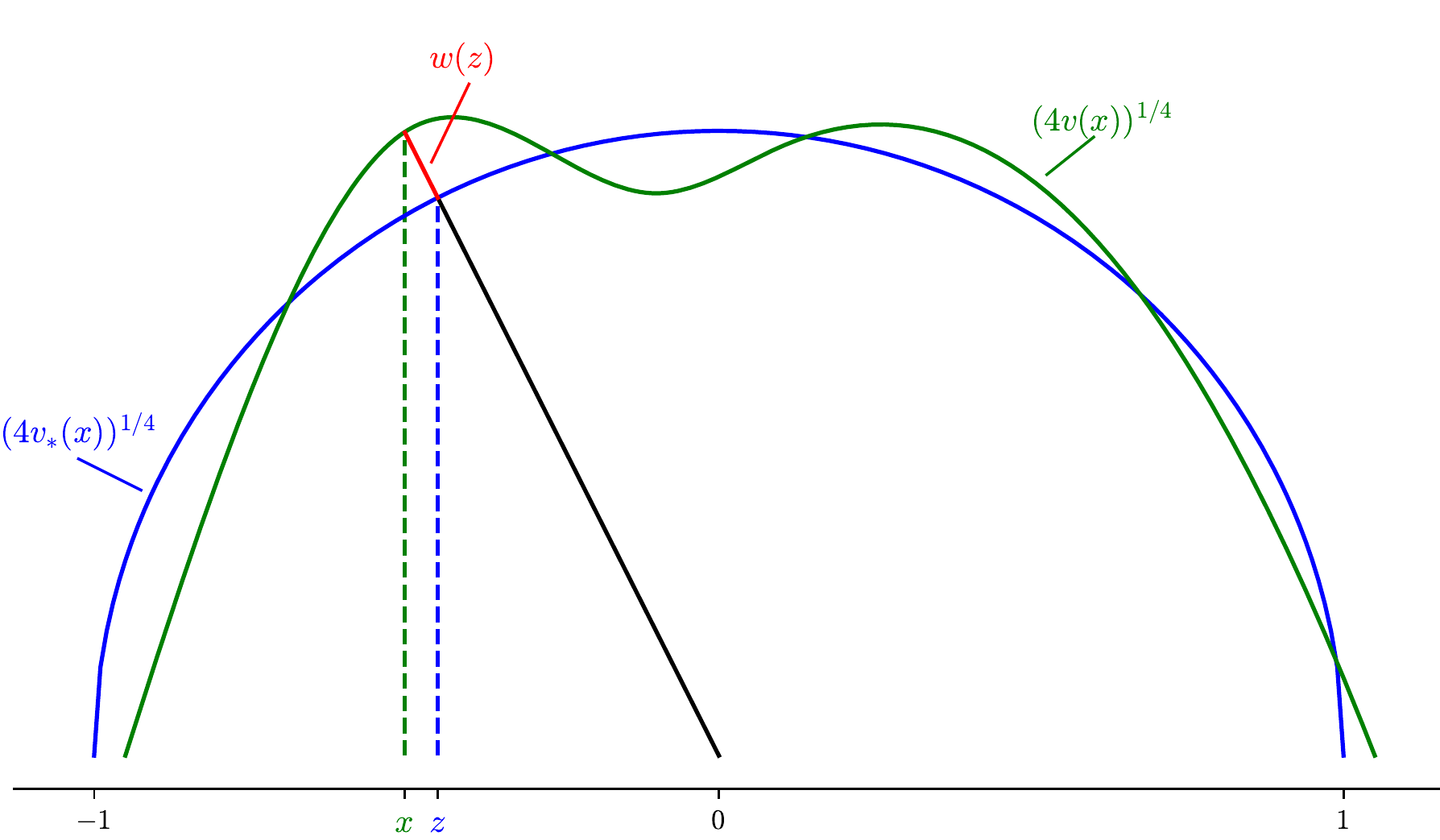}
\end{center}\caption{The change of variables from $(x,v(x))$ to $(z,w(z))$.}\label{fig2}
\end{figure}
The stationary $(4 v_*)^{1/4}$ describes a hemisphere over the  $N$-dimensional unit ball $B=B_1(0)$. We orthogonally project each point $(x,(4 v(x))^{1/4})$ of the graph of $(4 v)^{1/4}$ onto the closest point $(z,(4 v_*(z))^{1/4})$ on the hemisphere and denote by $w(z)$ the (minimal) distance. Analytically this amounts to the choice 
\begin{align}\label{transformationz}
	z=\frac{x}{\sqrt{2( v(t,x))^{1/2}+|x|^2}}
\end{align}
for the new independent variable, and we see that $x=z$ precisely if $v$ is the stationary solution \eqref{123}. The formula for the dependent variables reads
\begin{align}\label{transformationw}
	1+w(t,z)= \sqrt{2( v(t,x))^{1/2}+|x|^2},
\end{align}
and thus $w$ vanishes if $v$ is $v_*$. We will accordingly refer to $w$ as the \emph{perturbation}.

The transformation is applicable also in situations in which $v$ and $v_*$ have not the same mass. This observation is reflected by the fact that  $\mu_{0,0}=0$ occurs in the spectrum of the linear operator, see Theorem \ref{spektrum}. We will not eliminate this eigenvalue on the level of the perturbation, but only for the original variables through the mass constraint \eqref{120}. For the general theory that we perform in terms of the perturbation, any constant solution $w\equiv\const$ is admissible and corresponds to a Smyth--Hill solution \eqref{smythhillsolution} of arbitrary mass $M$. 

The derivation of an evolution equation for the new variable $w$ is lengthy and tedious. It has been described in detail already in \cite{SeisTFE}, using the sloppy $\star$ notation, see \eqref{nonlinearityperturbationequation} below. For our purposes it is necessary to rederive the transformed equation in a way that carries more structure than the formulation chosen in \cite{SeisTFE}. We postpone these computations to the appendix and state here our findings only.
The \emph{perturbation equation} for the $w$ variables is
\begin{align}\label{perturbationequation}
	\partial_tw+\mathcal{L}^2w+N\mathcal{L}w = \frac{1}{\rho} \nabla \cdot \left(\rho^2F[w]\right)+\rho F[w ] \quad \text{ on } \left(0,\infty \right)\times B_1(0),
\end{align}
where $\rho(z) = \frac12(1-|z|^2)$ is a weight function degenerating at the boundary,  $\mathcal{L}w=-\rho^{-1}\grad\cdot\left( \rho^2\nabla w\right)= -\rho \Delta w +2z\cdot \nabla w$ is the building block of the thin film linear  operator  and
\begin{align}\label{nonlinearityperturbationequation}
F[w] = p\star R[w]\star\left(\rho\nabla^3w\star\nabla w+ \rho(\nabla^2w)^{2\star} + \nabla^2w\star\nabla w+ (\nabla w)^{2\star}\right),
\end{align}
is the nonlinearity. The star product $a\star b$ denotes an \emph{arbitrary} linear combination of entries of the tensors $a$ and $b$, and thus, in particular, the above  $F[w]$ defines a class of nonlinearities and  both representatives in \eqref{nonlinearityperturbationequation} may be different from each other. We write $a^{k\star} = a \star \cdots \star a$, where the $\star$-product has $k$ factors.  Moreover, $p$ is a polynomial tensor in $z$, which might have zero entries. The rational factors $R[w]$ are tensors of the form
\[
R[w] = \frac{(\grad w)^{k\star}}{(1+w+z\cdot \grad w)^l},
\]
for some $k\in \N_0$ and $l\in\N$. Finally, the distributive property respects only the tensor class, e.g.~$p\star(a+b) = p\star a +\tilde p\star b$ with two possibly different polynomial tensors $p$ and $\tilde p$.
This shortened $\star$ notation is suitable in the present work because the exact form of the nonlinearity is not important for our analysis. We finally recall from our introduction that the linear operator $\L$ also occurs in the context of the porous medium equation \eqref{118} with $m=\frac{3}{2}$, and was analyzed, for instance, in \cite{Seis14,Seis15}. It is readily checked that $\L$ is symmetric (and, in fact, self-adjoint \cite{Seis14}) with respect to the inner product
\begin{equation}
\label{124}
\la w,\tilde w\ra  = \int_{B_1(0)} w\tilde w\, \rho dz,
\end{equation}
which induces a Hilbert space with norm $\|\cdot\|$ in the obvious way.

The perturbation equation \eqref{perturbationequation} is well-posed for small Lipschitz initial data $w_0$,
\begin{equation}\label{126}
\|w_0\|_{W^{1,\infty}} \ll1,
\end{equation}
as was proved in \cite{SeisTFE}. We will recall the precise statement in Theorem \ref{maintheoremseis} below. The above smallness condition is equivalent to \eqref{o1} under the change of variables.

It follows from the statement of Theorem \ref{spektrum} that the order of the eigenvalues $\mu_{l,k}$ depends on the space dimension $N$. For us, it only plays a role when we want to determine conditions on the initial datum $v_0$ that lead to  improvements in the convergence rates for the confined thin film equation, see Corollaries \ref{exactleadingorder}, \ref{firstordercorrection}, and \ref{grouptheorycorrections} presented above. On the level of the perturbation equation, it is more convenient to    rename the eigenvalues $\left\{\mu_k\right\}_{k\in \mathbb{N}_0}$  and order them in a strictly increasing way, that is $\mu_k<\mu_{k+1}$. Correspondingly, we denote by $\psi_{k,n}$ all eigenfunctions corresponding to $\mu_k$ for $n\in \left\{1,\dots,\tilde{N}_k\right\}$. We note that the multiplicity of $\mu_k$ may change due to intersections between the eigenvalues, see Figure \ref{fig1}. We mostly stick to this notation for the  remaining work. 

All announced asymptotic results for solutions $v$ to the confined thin film equation will be derived from the following theorem that fully describes the higher order asymptotics of the perturbation equation. It is one of the two main results of the present work. Its proof can be found in Section \ref{applicationinvariantmanifolds}.

\begin{theorem}\label{Whoeheremoden}
	For any fixed $K\in \mathbb{N}_0$, there exists an $\varepsilon_0>0$ with the following properties: Let $w$ be a solution to \eqref{perturbationequation} with initial datum $w_0$ satisfying $\|w_0\|_{W^{1,\infty}}\leq \varepsilon_0$. Then, under the assumption  
	\begin{align}\label{h2}
	\lim \limits_{t\rightarrow \infty}e^{\mu_kt}\langle \psi_{k,n},w(t)\rangle  = 0 \quad \text{ for all } k\in\left\{0,\dots,K\right\} \text{ and } n\in\left\{1,\dots,\tilde{N}_k\right\},
	\end{align}
it holds that
	\begin{align}\label{403}
	\left\|w(t)\right\|_{W^{1,\infty} }\lesssim e^{-\mu_{K+1}t} \text{ for all } t\geq 0.
	\end{align}
\end{theorem}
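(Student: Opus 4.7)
The plan is to deduce Theorem \ref{Whoeheremoden} from the Invariant Manifold Theorem that will be constructed in Sections \ref{S5}--\ref{dynamicalsystemarguments}. That theorem should produce, for each $K\in\N_0$, a locally invariant, finite-dimensional $C^1$ manifold $\mathcal{M}_K$ near $w\equiv 0$, tangent at $0$ to $\Span\{\psi_{k,n}: 0\le k\le K,\ 1\le n\le \tilde N_k\}$, together with a transverse stable foliation: every sufficiently small solution $w(t)$ can be written as $w(t)=\tilde w(t)+r(t)$ with $\tilde w(t)\in\mathcal{M}_K$ an orbit of the reduced finite-dimensional dynamics and $\|r(t)\|\lesssim e^{-\mu_{K+1}t}$ in the Hilbert norm induced by \eqref{124}. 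My goal is then to use hypothesis \eqref{h2} to force the shadow $\tilde w$ to vanish identically.

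To this end, I would coordinatize $\mathcal{M}_K$ by the spectral coefficients $a_{k,n}(t)=\langle \psi_{k,n},\tilde w(t)\rangle/\|\psi_{k,n}\|^2$ for $k\le K$, $n\le \tilde N_k$. Because $\mathcal{M}_K$ is tangent to the slow spectral subspace at the origin, the reduced flow takes the form $\dot a_{k,n}=-\mu_k a_{k,n}+N_{k,n}(a)$ with $N_{k,n}$ vanishing quadratically at $0$. Together with the strict ordering $\mu_0<\mu_1<\cdots<\mu_{K+1}$, a standard Duhamel/bootstrap argument for this finite-dimensional ODE shows that the only orbit $a$ tending to $0$ with $a_{k,n}(t)=o(e^{-\mu_k t})$ for every $(k,n)$ is $a\equiv 0$. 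Projecting the decomposition $w=\tilde w+r$ against $\psi_{k,n}$ for $k\le K$ yields
\[
\langle \psi_{k,n}, w(t)\rangle = \|\psi_{k,n}\|^2 a_{k,n}(t) + \langle \psi_{k,n}, r(t)\rangle,
\]
where the last term is $O(e^{-\mu_{K+1}t})=o(e^{-\mu_k t})$ by the estimate on $r$; combined with \eqref{h2} this gives $a_{k,n}(t)=o(e^{-\mu_k t})$, hence $a\equiv 0$, hence $\tilde w\equiv 0$ and $w=r$.

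What remains is to upgrade the $r$-estimate from the Hilbert norm \eqref{124} to the Lipschitz norm $W^{1,\infty}$ appearing in \eqref{403}. For this I would apply the decay bound on the shifted interval $[t,t+1]$ and invoke a smoothing/regularity estimate for the perturbation equation of the sort developed in Section \ref{theoryforperturbationequation}, trading integrability for differentiability while exploiting that the nonlinearity $F[w]$ in \eqref{nonlinearityperturbationequation} is at least quadratic and therefore absorbs the unavoidable loss in the bootstrap. I expect the main obstacle to lie precisely at this interface between the Hilbert-space invariant manifold picture and the Lipschitz well-posedness framework: one must track carefully the interplay between the degeneracy of $\mathcal{L}$ at $\partial B_1$, the regularity of the time-one map from Section \ref{dynamicalsystemarguments}, and the polynomial weights encoded in the $\star$-structure of \eqref{nonlinearityperturbationequation}, in order to close the argument without losing a derivative. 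A secondary subtlety is verifying that the reduced ODE excludes resonant orbits for which some slow coefficient would decay strictly faster than its own eigenvalue rate; this should follow from the strict spectral gaps between the $\mu_k$'s and a Hartman--Grobman-type normal-form reduction.
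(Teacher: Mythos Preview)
Your overall architecture---approximate $w$ by a shadow $\tilde w$ on the center manifold, kill $\tilde w$ using hypothesis \eqref{h2}, then upgrade to $W^{1,\infty}$ via smoothing---is close to the paper's, but two concrete points do not close as written.

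First, the Invariant Manifold Theorem \ref{localmanifolds} only delivers $\|w(t)-\tilde w(t)\|_W\lesssim e^{-\mu t}$ for some fixed $\mu\in(\mu_K,\mu_{K+1})$, not rate $\mu_{K+1}$. Even if you succeed in showing $\tilde w\equiv 0$, you are left with $\|w(t)\|_W\lesssim e^{-\mu t}$, which falls short of \eqref{403}. The paper closes this last gap by a separate energy estimate: testing the equation for $P_sw$ against $P_sw$ in $H$ and using the spectral bound $\langle P_sw,(\mathcal L^2+N\mathcal L)P_sw\rangle_H\ge\mu_{K+1}\|P_sw\|_H^2$ together with the quadratic control $|\rho F[w]|\lesssim\|w\|_W^2$ yields $\|P_sw(t)\|_H\lesssim e^{-\mu_{K+1}t}$, provided one already knows $\|w(t)\|_W\lesssim e^{-\mu t}$ with $3\mu>2\mu_{K+1}$.

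Second, and more structurally, the paper never tries to prove $\tilde w\equiv 0$ via the reduced ODE on the manifold. Your Duhamel bootstrap for $\dot a_{k,n}=-\mu_k a_{k,n}+N_{k,n}(a)$ needs an a priori decay rate on $|a(t)|$ to control the quadratic $N$, but the manifold theorem alone does not supply one (recall $\mu_0=0$, so orbits on $W^c_\varepsilon$ need not decay without further input). The paper instead proceeds by induction on $K$: the induction hypothesis gives $\|w(t)\|_W\lesssim e^{-\mu_K t}$, hence $|\rho F[w]|\lesssim e^{-2\mu_K t}$. Projecting the \emph{full PDE} (not the reduced ODE) onto $\psi_{k,n}$ then yields $\frac{d}{dt}\langle\psi_{k,n},w\rangle+\mu_k\langle\psi_{k,n},w\rangle=O(e^{-2\mu_K t})$, and \eqref{h2} forces $|\langle\psi_{k,n},w\rangle|\lesssim e^{-2\mu_K t}$. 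Combined with $\|w-\tilde w\|_W\lesssim e^{-\mu t}$ and the graph estimate $\|\theta_\varepsilon(P_c\tilde w)\|_W\lesssim\vertiii{P_c\tilde w}$ from Proposition \ref{centermanifold}, this controls $\tilde w$, and hence $w$, at rate $\min\{2\mu_K,\mu\}$. If $2\mu_K<\mu$ one simply repeats, doubling the rate until it exceeds $\mu$; only then does the energy argument above kick in. Your normal-form remark is thus replaced by this elementary doubling, which uses nothing about the reduced dynamics beyond the regularity of $\theta_\varepsilon$.
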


To clarify the meaning of this Theorem, we first consider the case $K=0$. The smallest eigenvalue $\mu_K= \mu_0= 0$, corresponds to the constant eigenfunction $1$, and thus, condition \eqref{h2} turns into the requirement
\begin{align}\label{h1}
\lim\limits_{t\rightarrow \infty} \int_{B_1(0)} w(t,z)\rho(z)dz=0.
\end{align} 
As we will see in the proof of Corollary \ref{exactleadingorder}, the latter is equivalent to the  mass constraint  \eqref{120} for the $v$ variable. By imposing a condition of the solution's mass, we rule out $\mu_0=0$ as a relevant  eigenvalue  for the evolution, or, in other words, the corresponding mode is inactive. It follows that the leading order asymptotics are dominated by the next eigenvalue in order, $\mu_1$, in the sense that it determines the rate of convergence and governs the evolution towards the stationary $v_*$.

The theorem states that this procedure can be iterated. Because the mappings $\la \psi_{k,n},\cdot\ra$ act as projections onto the respective eigenspaces, condition \eqref{h2} ensures that the first $K$ modes (with their multiplicities) are inactive during the evolution, that is,   the modes  do not affect the long-time behavior anymore.  We can thus improve the rate of convergence  and the theorem shows that the leading order asymptotics is then    governed by the smallest active mode. 
In the proofs of Corollaries \ref{firstordercorrection} and \ref{grouptheorycorrections} we identify symmetry conditions for solutions to the thin film equation which ensure the decay \eqref{h2} for the perturbation equation.

The proof for the higher-order asymptotics of the perturbation variable $w$ in Theorem \ref{Whoeheremoden} is based on the construction of invariant manifolds, which are localized around the stationary solution $w\equiv 0$. This is our second main result, which is of independent interest. To state it properly, we have to introduce some further notation. 

First, we denote by $S^t(g)$ the flow generated by the perturbation equation, that is $S^t(g)=w(t,\cdot)$ where $w(t,z)$ solves the perturbation equation with initial datum $g$. 
We consider the Hilbert space $H$ that is induced by the inner product
\begin{align}
\langle v,w\rangle_H = \langle v,w\rangle + \langle \mathcal{L}v,w\rangle =\langle v,w\rangle+  \langle v,\mathcal{L}w\rangle = \langle v,w\rangle +\langle \sqrt{\rho}\nabla v,\sqrt{\rho} \nabla w \rangle
\end{align}
and  the norm
\begin{align}
\left\|w\right\|^2_H = \left\|w\right\|^2+ \|\mathcal{L}^{1/2}w\|^2= \left\|w\right\|^2+\|\sqrt{\rho}\nabla w\|^2,
\end{align}
where $\|\cdot\|$ was defined via \eqref{124}.  It is equivalent to a scale invariant Hilbert space norm,
\begin{equation}
\label{103}
\|w\|_H^2 \sim \left\|w\right\|^2_{L^2}+\left\|\rho\nabla w\right\|^2_{L^2},
\end{equation}
as can be seen with the help of Hardy's inequality, cf.~Lemma \ref{hardytypeinequality} in the appendix. Furthermore, $E_c$ is the eigenspace  spanned by the eigenfunctions $\psi_{k,n}$ for $k\leq K$ and $n\in \left\{1,\dots,\tilde{N}_k\right\}$ with $K\in\N_0$ fixed and $E_s$ denotes its orthogonal complement in $H$, such that $H=E_c \oplus E_s$. In the following theorem, $E_c$ and $E_s$ are the center and stable eigenspaces, respectively. 
We finally have to refine the analysis from \cite{SeisTFE} by considering
\begin{equation}
\label{125}
\|w\|_{W}   = \|w\|_{L^{\infty}} + \|\grad w\|_{L^{\infty}}  + \|\rho \grad^2w \|_{L^{\infty}} + \|\rho^2\grad^3 w\|_{L^{\infty}},
\end{equation}
instead of the Lipschitz norm only. The necessity of considering (scale-invariant) higher-order norms is a crucial observation in our definition and analysis of the truncated equation \eqref{c3}. We will comment on it further in Section \ref{S5}.

\begin{theorem}\label{localmanifolds}
	For any fixed $K\in\N_0$ and $\mu \in \left( \mu_K,\mu_{K+1}\right)$, there exist  two constants $\varepsilon >\varepsilon_0>0$ (with $\varepsilon_0$ possibly smaller than in Theorem \ref{Whoeheremoden}),  and a  Lipschitz continuous mapping $\theta_\varepsilon:E_c\rightarrow E_s$ that is differentiable at zero with $\theta_\varepsilon(0)=0$ and $D\theta_\varepsilon(0)=0$ such that $W_{loc}^c $ given by
	\[
	W_{loc}^c = \left\{g\in H: g=g_c+\theta_\varepsilon\left(g_c\right), g_c\in E_c, \|g\|_{H}\leq \varepsilon \right\}\]
has the following properties:
	\begin{enumerate}
		\item For every $g\in W_{loc}^c$ with $\|g\|_{H}\leq \varepsilon_0$ it holds that $S^t(g)\in W_{loc}^c$ for all $t\geq 0$.
		\item For every $g\in H$ with $\|g\|_{W}\leq \varepsilon_0$ there exists a unique $\tilde{g} \in W_{loc}^c$  such that
		\begin{align}
		\left\|S^t\left(g\right)-S^t\left(\tilde{g}\right)\right\|_{W} \lesssim e^{-\mu t}
		\end{align}
		for every $t\geq 1$. 
	\end{enumerate}
\end{theorem}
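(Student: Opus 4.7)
The plan is to follow the Hadamard--Perron strategy adapted to the degenerate parabolic setting, passing through the truncated time-one map developed in Section~\ref{S5}. First I would replace $F[w]$ by a cut-off nonlinearity $F_\eps[w]$ that agrees with $F[w]$ on $\{\|w\|_W \le \eps\}$ and vanishes once $\|w\|_W \ge 2\eps$, so that the associated truncated flow $\tilde S^t$ is globally defined on $H$, enjoys a regularising bound of the schematic form $\|\tilde S^1(g)\|_W \lesssim \|g\|_H$, and has Lipschitz constant $O(\eps)$ on $H$ after one unit of time. This bridging between the spectral Hilbert norm $\|\cdot\|_H$ and the nonlinear norm $\|\cdot\|_W$ is precisely the role of Section~\ref{S5}: the scale-invariant third-order norm~\eqref{125} is tailored so that the nonlinearity~\eqref{nonlinearityperturbationequation}, which involves $\rho^2\grad^3 w$ and $\rho\grad^2 w$, becomes Lipschitz there.

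Next I would decompose $H=E_c\oplus E_s$ via the spectral projectors $P_c,P_s$ of $\L^2+N\L$. Self-adjointness and finite-dimensionality of $E_c$ yield, for $t\ge 0$, a lower bound $\|e^{-t(\L^2+N\L)}v\|\ge e^{-\mu_K t}\|v\|$ for $v\in E_c$, while $\|e^{-t(\L^2+N\L)}|_{E_s}\|\le e^{-\mu_{K+1}t}$; fixing $\mu\in(\mu_K,\mu_{K+1})$ opens a spectral gap. I would apply the discrete graph transform to the truncated time-one map $T \coloneqq \tilde S^1$: in the complete metric space of Lipschitz maps $\theta\colon E_c\to E_s$ with $\theta(0)=0$ and Lipschitz constant bounded by a small fixed constant, define $(\Gamma\theta)(g_c)=P_s T(g_c'+\theta(g_c'))$, where $g_c'\in E_c$ is the unique preimage determined by $P_c T(g_c'+\theta(g_c'))=g_c$ (invertible thanks to the gap and smallness of $\eps$). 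The gap combined with the $O(\eps)$ Lipschitz constant of the truncated nonlinearity renders $\Gamma$ a contraction; its fixed point is $\theta_\eps$, and differentiability at zero with $D\theta_\eps(0)=0$ follows from the at least quadratic vanishing of $F[\cdot]$ near the origin.

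For property~(1), invariance of the graph under $\tilde S^t$ is automatic; to upgrade this to invariance of $W^c_{loc}$ under the original flow $S^t$, I need a solution starting at $g\in W^c_{loc}$ with $\|g\|_H\le\eps_0\ll\eps$ to stay within the truncation radius $\|w(t)\|_W\le\eps$ for all $t\ge 0$. On the graph, the dynamics reduces to a small perturbation of the linear flow on $E_c$, and a Gronwall argument gives $\|\tilde S^t(g)\|_H\lesssim\eps_0$ uniformly in $t$; iterating the smoothing estimate then yields $\|\tilde S^{t+1}(g)\|_W\lesssim\eps_0\ll\eps$, so $\tilde S^t=S^t$ on $W^c_{loc}$. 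For property~(2), I would run a second Lyapunov--Perron fixed-point argument in an exponentially weighted $W$-norm: seek $\tilde g=\tilde g_c+\theta_\eps(\tilde g_c)\in W^c_{loc}$ so that $u(t)\coloneqq S^t(g)-S^t(\tilde g)$ satisfies $\|u(t)\|_W\lesssim e^{-\mu t}$ for all $t\ge 1$. Projecting the Duhamel formula for $u$ onto $E_c$ and $E_s$, the stable component is contracted by the semigroup at rate $\mu_{K+1}>\mu$, while $\tilde g_c$ is pinned down as the unique fixed point of a contraction on the finite-dimensional space $E_c$; uniqueness of $\tilde g$ comes from the same estimate.

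The principal obstacle is the mismatch between the natural Hilbert norm $\|\cdot\|_H$, in which the spectral decomposition, self-adjointness, and the orthogonal splitting $E_c\oplus E_s$ are clean, and the norm $\|\cdot\|_W$ in which the nonlinearity $F[w]$ is actually Lipschitz. Every estimate in the two fixed-point arguments must recover two to three derivatives (with the compensating weights $\rho,\rho^2$) from the degenerate linear semigroup, and one must carefully interpolate between the two norms via the smoothing property of the truncated time-one map. Establishing and exploiting this smoothing rigorously is the analytical heart of the argument and is the reason the construction is done at the level of the discrete time-one map $T$ rather than directly for the continuous semiflow.
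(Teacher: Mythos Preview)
Your truncation, as you describe it, will not work. A cut-off that depends on the \emph{global} quantity $\|w\|_W$---so that $F_\eps[w]=F[w]$ whenever the function $w$ satisfies $\|w\|_W\le\eps$ and $F_\eps[w]=0$ once $\|w\|_W\ge 2\eps$---is not even well-defined as a map on $H$, since a generic $w\in H$ has $\|w\|_W=\infty$. Even restricting to $w\in W$, such a cut-off does not give an $O(\eps)$ Lipschitz bound for the nonlinear part of the time-one map in the $H$-norm, because $|\,\|w_1\|_W-\|w_2\|_W\,|$ is not controlled by $\|w_1-w_2\|_H$. The paper truncates \emph{pointwise}: $\eta_\eps=\hat\eta(w^2/\eps^2)\hat\eta(|\nabla w|^2/\eps^2)\hat\eta(|\rho\nabla^2 w|^2/\eps^2)\hat\eta(|\rho^2\nabla^3 w|^2/\eps^2)$, and this is what makes $F_\eps[w]=\eta_\eps F[w]$ a genuine local nonlinearity with the pointwise bound $|\rho F_\eps[w_1]-\rho F_\eps[w_2]|\lesssim\eps(|w_1-w_2|+\dots+|\rho^2\nabla^3(w_1-w_2)|)$, from which the $H$-Lipschitz estimate of Lemma~\ref{lipschitzR} and the smoothing Lemmas~\ref{L1}--\ref{Lipschitzglaettung} follow. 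The need to truncate up to third order (rather than first) is precisely so that every factor appearing in $F$ is individually cut off.

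Apart from this, your route is viable but differs from the paper's in two respects. For the center manifold the paper does not use the graph transform; following Koch, it runs a Lyapunov--Perron contraction on the Banach space of two-sided orbits $\{w_k\}_{k\in\mathbb Z}$ of the truncated time-one map with an exponentially weighted norm (Proposition~\ref{centermanifold}), defining $\theta_\eps(g_c)=P_s w_0$ for the unique bi-infinite orbit with $P_c w_0=g_c$; the stable leaves are built analogously on one-sided sequences (Proposition~\ref{stablemanifold}). For property~(2) the paper does not attempt a fixed-point argument in a weighted $W$-norm: it stays in $H$ throughout, obtains $\tilde g$ as the unique point of $M^\eps_g\cap W^c_\eps$ with $\|S^k(g)-S^k(\tilde g)\|_H\lesssim e^{-\mu k}$, and then upgrades to the $W$-norm in a single post-processing step via the Lipschitz smoothing estimate $\|S^{\hat t}(g)-S^{\hat t}(\tilde g)\|_W\lesssim\|g-\tilde g\|_H$ of Lemma~\ref{Lipschitzglaettung}. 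This cleanly decouples the spectral argument (done in $H$, where the projections and semigroup bounds are sharp) from the regularity argument (one application of parabolic smoothing), whereas your direct approach would have to carry both simultaneously.
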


The first property simply states that the \emph{local center manifold} $W^c_{loc}$ is locally invariant under the nonlinear evolution \eqref{perturbationequation}. From the properties of $\theta_{\eps}$ we infer that this manifold touches the center eigenspace $E_c$ tangentially at the origin. The second property provides a finite-dimensional approximation at a given rate by solutions in $W_{loc}^c$ for any given solution with sufficiently small initial datum. It is this feature that we exploit in order to derive fine large time asymptotics for the thin film equation.

The invariant manifold theorem is interesting on its own as it provides a nonlinear finite-dimensional object which solutions approximate at a given rate in the large time limit. In other words, once a rate of convergence is  determined,  any sufficiently small solution belonging to an infinite-dimensional function space can be approximated with the prescribed rate by a solution on  a finite-dimensional manifold. As outlined in the introduction, similar results have been derived earlier. What is particularly challenging here is the delicate degenerate parabolicity of the fourth-order equation \eqref{perturbationequation} modeling a free boundary problem whose mathematical understanding is still poor.

The construction of the invariant manifolds will be done in Section \ref{dynamicalsystemarguments}, and will be carried out for a truncated version of the perturbation equation first. In fact, our analysis provides even more information, that we omit here because they are not relevant for the large time asymptotics. For instance, we will show that the finite-dimensional approximation emerges from foliation of the Hilbert space $H$ over a global invariant manifold.

\section{From invariant manifolds to higher order asymptotics}\label{S4}

The goal in this section is the derivation of  the main results for the thin film equation stated in Corollaries \ref{exactleadingorder}, \ref{firstordercorrection}	and \ref{grouptheorycorrections} from Theorem \ref{Whoeheremoden} on the mode-by-mode asymptotics for the perturbation equation.

We start by noting that the transformations \eqref{transformationz} and \eqref{transformationw} yield that
\begin{equation}
\label{131}
v(x) = \rho(\Phi(x))^2\left(1+w(\Phi(x)\right)^4,
\end{equation}
where $\Phi(x)=z$ is the diffeomorphism introduced in \eqref{transformationz}.
%
%
%

In our proof of the leading order asymptotics, we apply Theorems \ref{Whoeheremoden} and \ref{localmanifolds} with $K=0$.

\begin{proof}[ of Corollary \ref{exactleadingorder}]
In a first step, we have to ensure that the mass constraint \eqref{120} implies the vanishing mean condition \eqref{h1}, which is the $K=0$ version of \eqref{h2}. We start by rewriting \eqref{120} with the help of the change of variables formula \eqref{131} and the expression for the Jacobian determinant \eqref{130} in the appendix, 
\begin{equation}\label{302}
	\begin{aligned}
	 \int_{\R^N} v_*(x)\, dx &=  \int_{\R^N} v(t,x)\, dx\\
	 & =    \int\limits_{B_1(0)} \rho(z)^2 (1+w(t,z))^{N+3}\left(1+w(t,z)+z\cdot\nabla w(t,z)\right)\, dz.
	\end{aligned}
	\end{equation}
The  term on the right-hand side can be simplified via an integration by parts,
	\begin{align}\mel
	\int\limits_{B_1(0)} \rho^2 (1+w)^{N+3}\left(1+w+z\cdot\nabla w\right)dz\\
	&= \int\limits_{B_1(0)} \rho^2 (1+w)^{N+4}dz + \frac{1}{N+4}\int\limits_{B_1(0)} \rho^2 z\cdot \nabla \left(1+w\right)^{N+4}dz &\\&
	=\frac{1}{N+4}\int\limits_{B_1(0)}\rho \left(1+w\right)^{N+4}\left(\left(N+4\right)\rho-N\rho+2|z|^2\right)dz\\
	& = \frac{2}{N+4}\int\limits_{B_1(0)} \left(1+w\right)^{N+4}\rho\, dz,
	\end{align}
	where we have used that $4\rho(z) +2|z|^2=2$ in the last identity. In particular, as $v_*$ is mapped onto $w_*=0$ under the change of variables, the latter identity entails that
	\begin{align}
	 \int_{\R^N} v_*\, dx  = \frac{2}{N+4}\int\limits_{B_1(0)} \rho\,  dz\quad  \left(= \frac{2|B_1(0)|}{(N+2)(N+4)}\right),
	\end{align}
	which can also be verified via an elementary computation.
Hence, we may cancel this term on both sides of \eqref{302} to obtain
	\begin{align}
	\frac{2}{N+4}\int\limits_{B_1(0)}\left(\left(1+w(t,z)\right)^{N+4}-1\right) \rho(z) dz=0 \quad \text{ for all } t\geq 0.
	\end{align}

Now we notice that any solution to the perturbation equation  $w(t)$ converges to leading order to a constant $a\in\R$. Indeed, if $K=0$, the local center manifold constructed in Theorem \ref{localmanifolds} is simply a ball $B_{\eps}(0)$ in $\R$. (We comment on this simple fact briefly in the proof of Theorem \ref{stabilityw}.) Hence passing to the large time limit, the previous identity translates into $ (1+a)^{N+4}-1=0$, where $|a|\leq \varepsilon$ and thus $a=0$. This proves \eqref{h1}.

Applying now Theorem \ref{Whoeheremoden} gives the decay estimate
	\begin{align}
	\left\|w(t)\right\|_{W^{1,\infty}}\lesssim e^{-\mu_{1,0}t}.
	\end{align}
	Using the transformation formulas \eqref{transformationz} and \eqref{transformationw}, we see that
	\[\label{407}
	w(t,\Phi(x)) + \frac12w(t,\Phi(x))^2 = \sqrt{v(t,x)} - V_*(x),
	\]
for any $x\in \supp v(t)$, and the quadratic term on the left-hand side is of higher order because $w(t)$ is small. Therefore, the decay estimate for $w(t)$ implies the first part of the statement
\[
\|\sqrt{v(t)}-V_*\|_{L^{\infty}(\supp v(t))} \lesssim e^{-\mu_{1,0}t}.
\]
Lastly, we turn to the decay of the first derivatives. With help of \eqref{407} we derive 
$
\partial_i \left(\sqrt{v(x)}-V_*\right) = \left(1+w\right) \nabla w \cdot \partial_i\Phi
$. Recalling the transformation formulas \eqref{transformationz} and \eqref{transformationw}, we compute \begin{align}\partial_i\Phi(x) = \frac{e_i}{1+w} + \Phi(x)\frac{\partial_i\left(\sqrt{v(t)}-V_*\right)}{(1+w)^2}\end{align} and thus obtain
\begin{align}\label{409}
\nabla \left(\sqrt{v(t,x)}-V_*(x)\right) = \frac{1+w(t,\Phi(x))}{1+w(t,\Phi(x))+\Phi(x)\cdot w(t,\Phi(x))}\nabla w(t,\Phi(x))
\end{align}
for all $x\in \supp v(t)$. Having this identity at hand, the decay estimate for $w(t)$ in $W^{1,\infty}$ directly yields the second part of the statement,
\begin{align}
\left\|\nabla \left(\sqrt{v(t)}-V_*\right)\right\|_{L^\infty(\supp v(t))}\lesssim e^{-\mu_{1,0}t}.
\end{align}
\end{proof}

The  proofs of Corollaries  \ref{firstordercorrection}	and \ref{grouptheorycorrections} will build up on the fact that the $L^2$-projections of solutions $v(t)$ of the confined thin film equation onto eigenspaces generated by  certain eigenfunctions $\psi_{l,n,k}$ vanish for all times if they vanish initially. The following lemma illustrates how exactly this condition can be translated to the perturbation equation.

\begin{lemma}\label{4}
	Let $\psi_{l,n,k}$ be an eigenfunction of the linear operator $\mathcal{L}^2+N\mathcal{L}$ as given in Theorem \ref{spektrum}. Then it holds that
	\begin{align}
	\int_{\R^N} \left(v(x)-v_*(x)\right)\psi_{l,n,k}(x)\, dx = 2\int_{B_1(0)} w(z)\psi_{l,n,k}(z)\rho(z)\, dz + \mathcal{O}\left(\|w\|^2_{L^{\infty}}\right), 
	\end{align}
	provided that $w$ small in the sense of \eqref{126}.
	\end{lemma}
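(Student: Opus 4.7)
I start by using the change of variables $x=\Phi^{-1}(z)=(1+w(z))z$ to rewrite both integrals as integrals over the unit ball. As in the derivation of \eqref{302}, the Jacobian $(1+w)^{N-1}(1+w+z\cdot\nabla w)$ combines with the factor $(1+w)^{4}$ coming from $v=\rho^2(1+w)^4\circ\Phi$ to yield
\[
\int_{\R^N}v\,\psi_{l,n,k}\,dx \;=\; \int_{B_1(0)}\rho^2(1+w)^{N+3}(1+w+z\cdot\nabla w)\,\psi_{l,n,k}((1+w)z)\,dz.
\]
Since $v_*=\rho^2$ on $B_1(0)$, the same formula specialised to $w\equiv 0$ reproduces $\int_{\R^N}v_*\psi_{l,n,k}\,dx = \int_{B_1(0)}\rho^2\psi_{l,n,k}\,dz$, so the lemma reduces to linearising the right-hand side above in $w$.

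Next, I Taylor expand the integrand around $w=0$. Because $\psi_{l,n,k}$ is a polynomial in $z$, one has $\psi_{l,n,k}((1+w)z)=\psi_{l,n,k}(z)+w\,z\cdot\nabla\psi_{l,n,k}(z)+\mathcal{O}(w^2)$, while $(1+w)^{N+3}(1+w+z\cdot\nabla w) = 1+(N+4)w+z\cdot\nabla w+\mathcal{O}(w^2,\,w\,\nabla w)$. Multiplying these expansions, the linear part of the integrand reads
\[
\rho^2\Bigl\{(N+4)\,w\,\psi_{l,n,k}+(z\cdot\nabla w)\,\psi_{l,n,k}+w\,z\cdot\nabla\psi_{l,n,k}\Bigr\}.
\]

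The algebraic core of the lemma is an integration by parts of the middle term above. Using the two identities $\nabla\rho=-z$ and $|z|^2=1-2\rho$, a direct computation gives
\[
\nabla\cdot(\rho^2\,z\,\psi_{l,n,k}) = -2\rho\,\psi_{l,n,k}+(N+4)\rho^2\,\psi_{l,n,k}+\rho^2\,z\cdot\nabla\psi_{l,n,k},
\]
where the boundary term vanishes thanks to the factor $\rho$ on $\partial B_1(0)$. Substituting into the linear contribution, the $(N+4)\int\rho^2 w\,\psi_{l,n,k}\,dz$ piece is exactly cancelled, as is $\int\rho^2 w\,z\cdot\nabla\psi_{l,n,k}\,dz$, and one is left precisely with $2\int w\,\psi_{l,n,k}\,\rho\,dz$.

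Finally, I control the remainder. Because $1+w+z\cdot\nabla w$ is affine in $\nabla w$, the remainder is a finite sum of terms of the form (polynomial in $z$)$\times w^a(z\cdot\nabla w)^b$ with $a+b\ge 2$ and $b\in\{0,1\}$, multiplied by smooth functions of $w$. The purely algebraic pieces ($b=0$) are immediately $\mathcal{O}(\|w\|_{L^\infty}^2)$. The mixed pieces $w^a(z\cdot\nabla w)$ can be rewritten as $\tfrac{1}{a+1}z\cdot\nabla(w^{a+1})$ and integrated by parts once more; the derivative then falls onto the polynomial factors in $z$ (bounded on $B_1(0)$) and onto the smooth dependence in $w$, and the new $\nabla w$ factors produced this way are absorbed by the smallness of $\|w\|_{W^{1,\infty}}$ guaranteed by \eqref{126}. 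Carrying out this bookkeeping of the error is the only moderately technical step of the argument; all nontrivial cancellations come from the two identities $\nabla\rho=-z$, $|z|^2=1-2\rho$, and from the polynomial nature of $\psi_{l,n,k}$.
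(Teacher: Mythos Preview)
Your proof is correct and takes a genuinely different route from the paper's. The paper exploits the explicit structure $\psi_{l,n,k}(x)=\sum_j c(l,k,j)|x|^{2j}\psi_l(x)$ with $\psi_l$ homogeneous of degree $l$, so that under the change of variables the eigenfunction transforms exactly as $\psi_l(x)|x|^{2j}=(1+w)^{l+2j}\psi_l(z)|z|^{2j}$. Combined with an integration by parts using Euler's identity $z\cdot\nabla\psi_l=l\psi_l$, this collapses the whole integral to the closed form
\[
\int_{\R^N} v\,\psi_l|x|^{2j}\,dx=\frac{2}{4+l+2j+N}\int_{B_1(0)}\rho\,(1+w)^{4+l+2j+N}\psi_l(z)|z|^{2j}\,dz,
\]
from which the Taylor expansion $(1+w)^m=1+mw+\mathcal{O}(\|w\|_{L^\infty}^2)$ gives the result immediately, with no gradient of $w$ surviving in the remainder.

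Your approach is more robust: you never decompose $\psi_{l,n,k}$ and only use that it is a polynomial. The price is that the remainder contains cross terms $w^a(z\cdot\nabla w)$, which you correctly eliminate by a second integration by parts via $w^a(z\cdot\nabla w)=\tfrac{1}{a+1}z\cdot\nabla(w^{a+1})$. One small clarification: since the integrand is actually a polynomial in $w$ and $z\cdot\nabla w$ (not merely smooth in $w$), after this second integration by parts no new $\nabla w$ factors appear at all, so the appeal to the smallness of $\|\nabla w\|_{L^\infty}$ is unnecessary; the remainder is directly $\mathcal{O}(\|w\|_{L^\infty}^2)$.
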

	The lemma, 	in particular, entails  that
	\[
	\left|\la w, \psi_{l,n,k}\ra \right|  \lesssim \left\| w\right\|_{L^\infty}^2,
	\] 
	provided that $\int _{\R^N} v \psi_{l,n,k}dx =\int _{\R^N} v_* \psi_{l,n,k}dx$. We will exploit this observation in the sequel.

\begin{proof}
	Theorem \ref{spektrum} shows that every eigenfunction $\psi_{l,n,k}(x)$ is given as a product of a polynomial in $|x|^2$ and a homogeneous harmonic polynomial of degree $l$, that is
	\begin{align}
		\psi_{l,n,k} = \sum \limits_{j=1}^{k} c(l,k,j)|x|^{2j}\psi_{l}(x),
	\end{align}
	where $\psi_{l}$ denotes an arbitrary homogeneous harmonic polynomial of degree $l$ and $c(l,k,j)$ a real-valued coefficient.
	Due to this structure of the eigenfunctions, the problem boils down to proving
	\begin{align}\label{225}
	\int_{\R^N} \left(v(x)-v_*(x)\right)\psi_l(x)|x|^{2j}dx = 2\int_{B_1(0)} w(z)\psi_l(z)|z|^{2j}\rho(z)dz + \mathcal{O}\left(\|w\|^2_{L^{\infty}}\right),
	\end{align}
	for any integer $j\le k$.
	
	To address \eqref{225}, we  first notice that  by our choice of the perturbation variables \eqref{transformationz} and \eqref{transformationw}, it holds that $\psi_l(x) = (1+w(z))^l\psi_l(z)$ and $|x| = (1+w(z))|z|$. Therefore, we find with the help of the transformation identities \eqref{131} and \eqref{130} that
	\begin{align}
	 \int_{\R^N} v\psi_l |x|^{2j}dx &= \int_{B_1(0)} \rho^2(1+w)^{3+l+2j+N} \psi_l(z) |z|^{2j}\left(1+w+z\cdot \nabla w \right)\,dz\\
	&=\int_{B_1(0)} \rho^2(1+w)^{4+l+2j+N}\psi_l(z)|z|^{2j}dz\\
	&\quad  + \int_{B_1(0)} \rho^2(1+w)^{3+l+2j+N} \psi_l(z)|z|^{2j}z\cdot\nabla w\, dz.
	\end{align}
	In the last term on the right-hand side, we integrate by parts and find after a short computation that
	\begin{align*}
 \mel \int_{B_1(0)} \rho^2 (1+w)^{3+l+2j+N} \psi_l(z)|z|^{2j}z\cdot\nabla w\, dz\\
& = -\int_{B_1(0)} \rho^2 (1+w)^{4+l+2j+N} \psi_l(z)|z|^{2j}\, dz \\
&\quad + \frac2{4+l+2j+N}\int_{B_1(0)} \rho (1+w)^{4+l+2j+N} \psi_l(z)|z|^{2j}\, dz
	\end{align*}
		where we have used the identities $z\cdot \grad \psi_l = l\psi_l$, which holds true because $\psi_{l}$ is a homogeneous polynomial of degree $l$, and  $2\rho+|z|^2=1$. It follows that
		\[
		\int_{\R^N} v\psi_l|x|^{2j}\, dx = 	\frac2{4+l+2j+N}\int_{B_1(0)} \rho (1+w)^{4+l+2j+N} \psi_l(z)|z|^{2j}\, dz.
		\]
	Next, we take into account the identity $(1+w)^m=1+mw+\mathcal{O}\left(\|w\|^2_{L^\infty}\right)$, which holds for $m\in \mathbb{N}$ and $\|w\|_{L^{\infty}}$ small by Taylor expansion, and derive
	\begin{align}\mel
	\int_{\R^N} v\psi_l|x|^{2j}dx=2 \int_{B_1(0)}  \rho w \psi_l|z|^{2j}dz+\frac{2}{4+l+2j+N}\int_{B_1(0)} \rho\psi_l|z|^{2j} dz + \mathcal{O}\left(\|w\|_{L^{\infty}}^2\right).
	\end{align}
	
	It remains to show that 
	\[
	\frac{2}{4+l+2j+N}\int_{B_1(0)} \rho \psi_l|z|^{2j}dz = \int_{\R^N} v_*\psi_l|x|^{2j}dx= \frac{1}{4}\int_{B_1(0)} \psi_l\left(1-|x|^2\right)^2|x|^{2j}\rho dx.
	\]
	 In the case $l\geq1$ both terms vanish thanks to the orthogonality of the eigenfunctions with respect to the inner product introduced in \eqref{124}. Indeed, the harmonic polynomial $\psi_l$ can be written as a linear combination of the eigenfunctions $\psi_{l,n,0}$ with $n\in \{1,\dots,N_l\}$, while the radial weights $|z|^{2j}$ and $ \left(1-|x|^2\right)^2|x|^{2j}$ lie  in the spaces $\huelle\left\{\psi_{0,0,i}:\:i\leq j\right\}$ and $\huelle\left\{\psi_{0,0,i}:\:i\leq j+2\right\}$, respectively.
 For $l=0$, it holds that $\psi_0=1$ and the claim follows via an elementary computation. This establishes \eqref{225} and thus the proof is finished.
\end{proof}

With help of the previous lemma, the proof of Corollary \ref{firstordercorrection} reduces to an easy combination of the already established results.

\begin{proof}[ of Corollary \ref{firstordercorrection}]
	As the solution of the confined thin film equation remains centered at the origin provided its initial data is, cf.~\eqref{motioncenterofmass}, we can make use of Lemma \ref{4} with $\psi_{l,n,k} = \psi_{1,n,0}$ to obtain,
	\begin{align}
		0 = \int_{\R^N} x_iv(t,x)dx =2 \int_{B_1(0)} z_i w(t,z)\rho dz +\mathcal{O}\left(\|w\|^2_{L^{\infty}}\right),
	\end{align}
	for every $i \in \left\{1,\dots,N\right\}$.
	In the proof of Corollary \ref{exactleadingorder} we already established convergence rates for $w$, namely $\|w\|_{L^{\infty}}\lesssim e^{-\mu_{1,0}t}$. This directly yields
	\begin{align}
		\lim \limits_{t\rightarrow \infty}e^{\mu_{1,0}t} \intB zw(t,z)\rho dz =0,
	\end{align}
	which makes Theorem \ref{Whoeheremoden} applicable. We therefore obtain
	\begin{align}
	\|w(t)\|_{W^{1,\infty}}\lesssim e^{-\mu t},
	\end{align}
	where $\mu$ is the next eigenvalue in line, which is $\mu = \mu_{0,1} =30$ if $N=1$ and $\mu = \mu_{2,0} = 16+4N$ if $N\ge 2$. 
	
	It remains to translate the convergence result for the perturbation equation into a convergence result for the confined thin film equation. The argument proceeds in  exactly the same way as the proof of Corollary \ref{exactleadingorder}. We drop the details. \end{proof}

The last proof of this section is based on similar ideas and exploits Lemma \ref{4} in more generality. 

\begin{proof}[ of Corollary \ref{grouptheorycorrections}]
	In a first step we establish the uniform decay estimate 
 $\left\|w(t)\right\|_{L^\infty}\lesssim e^{-\mu_{l,k}t}$, which directly implies  $\lim \limits_{t\rightarrow \infty}e^{\mu t}\langle \psi,w(t)\rangle  = 0 $	for all $\mu < \mu_{l,k}$ and their corresponding eigenfunctions $\psi$. Towards this uniform estimate, we notice that on the one hand it holds  $|w(t,z)| \lesssim \left|w(t,z)+\frac{1}{2}w(t,z)^2\right|$, because $w(t)$ is small as a consequence of the leading order asymptotics in Corollary \ref{exactleadingorder}. On the other hand, we deduce from the transformation formulas \eqref{transformationz} and \eqref{transformationw}  that $\left|w(t,z)+\frac{1}{2}w(t,z)^2\right| = \left| \sqrt{v(t,x)}-V_*(x)\right|$. A combination of both and \eqref{401} gives the estimate on $w(t)$.
	
Before we continue with the proof, we insert a short discussion about the assumptions on the decay of $v(t)-\sqrt{V_*}$, c.\ f.\  Remark \ref{R3}. Since all eigenmodes corresponding to eigenvalues $\mu$ smaller than $\mu_{l,k}$ decay fast enough, Theorem \ref{Whoeheremoden} provides a decay estimate for $w(t)$ in $W^{1,\infty}$, namely $\left\|w(t)\right\|_{W^{1,\infty}}\lesssim e^{-\mu_{l,k}t}$. Proceeding in the same way as in the proof of Corollary \ref{exactleadingorder}, we obtain $\left\|v(t)-\sqrt{V_*}\right\|_{W^{1,\infty}\left(\supp v(t)\right)}\lesssim e^{-\mu_{l,k}t}$. This shows that extending norm in the decay assumption in Corollary \ref{grouptheorycorrections} from $L^\infty$ to $W^{1,\infty}$ eventually provides an equivalent condition.
	
	Let us now turn back to the actual proof.
	To deduce a better convergence rate for $w(t)$ from Theorem \ref{Whoeheremoden}, we also have to show that the eigenmodes corresponding to $\mu_{l,k}$ are inactive, that is
	\begin{align}\label{224}
	\lim \limits_{t\rightarrow \infty}e^{\mu_{l,k} t}\langle \psi_{l,n,k},w(t)\rangle  = 0 \quad \text{ for all } n\in \left\{1,\dots,N_l\right\}.
	\end{align}
	Once this is proved, we obtain with help of Theorem \ref{Whoeheremoden} that $\|w\|_W \lesssim e^{-\mu_+ t}$, where $\mu_+$ is the next largest eigenvalue following $\mu_{l,k}$.
	From this point on, the proof proceeds in the same way as before.
	
	Let us now turn to the proof of \eqref{224}. Recalling   that $\left\|w\right\|_{L^\infty}\lesssim e^{-\mu_{l,k}t}$ and Lemma \ref{4}, it suffices to prove
	\begin{align}\label{226}
		\int_{\R^N} \left(v(t,x)-v_*(x)\right)\psi_{l,n,k}(x)\, dx =0 \quad \text{ for all }n\in \left\{1,\dots,N_l\right\},
	\end{align}
	for all $t\geq 0$. The argument for this identity is based on the invariance of $v(t)$ under orthogonal transformations contained in $E$. Since the confined thin film equation is invariant under orthogonal transformations, uniqueness of solutions to this equation guarantees that the solution $v(t)$ inherits this property from its initial datum $v_0$ for every time $t$.
	
	By the right choice of $E$, this geometric invariance ensures that the projection of $v(t)$ onto every homogeneous, harmonic polynomial of degree $l$ vanishes. The same trivially holds true for $v_*$.
	In order to exploit this fact, we have a closer look at the structure of the eigenfunctions $\psi_{l,n,k}$ appearing in \eqref{226}. Due to the condition that $\mu_{l,k}$ has multiplicity $N_l$, we know from Theorem \ref{spektrum} that every $\psi_{l,n,k}$ has the form
	\begin{align}
		\psi_{l,n,k} = \sum \limits_{j=1}^{k} c(l,k,j)|x|^{2j}\psi_{l}(x),
	\end{align}
	where $\psi_l$ denotes an homogeneous harmonic polynomial of degree $l$.
	
	Note that the product $v(t) \sum c(l,k,j)|x|^{2j}$ satisfies the same geometrical properties as $v(t)$ and thus its projection onto every homogeneous harmonic polynomial vanishes as well, i.\ e.\,
	\begin{align}
	0 = \int_{\R^N} v(t,x)\sum \limits_{j=1}^{k} c(l,k,j)|x|^{2j}\psi_{l}(x)\,dx = \int_{\R^N}v(t,x)\psi_{l,n,k} \,dx.
	\end{align}
	Again, the same holds true for $v_*$ and thus the proof of \eqref{226} is completed.
\end{proof}

\begin{remark}\label{R2}
	In the two-dimensional case $N=2$, Corollary \ref{grouptheorycorrections} can also be easily proved in a more direct way thanks to the fact that both, the spherical harmonics and the orthogonal transformations have a handy, explicit form in two dimensions. The spherical harmonics of degree $l$ are given by (in polar coordinates) $cos(l\varphi)$ and $sin(l\varphi)$. Recalling the form of a rotation or reflection matrix, a straightforward computation yields the same results as Corollary \ref{grouptheorycorrections}.
	
	However, this strategy becomes impracticable in higher dimensions, particularly because there is no longer such a convenient representation for general orthogonal projections. 
\end{remark}

\section{Theory for the perturbation equation}\label{theoryforperturbationequation}

In this section, we will recall  main aspects of the theory for the perturbation equation \eqref{perturbationequation} derived earlier in \cite{SeisTFE}, and we will provide higher order regularity estimates. Such estimates will be an important tool in our invariant manifold theory, which we will develop in the subsequent sections.

%

We start by recalling that the operator $\mathcal{L}$ is symmetric in $L^2(\rho)$ and satisfies the maximal regularity estimate
\begin{align}\label{b2}
\|\nabla w\|+\|\rho\nabla^2w\| \lesssim \|\mathcal{L}w\|.
\end{align} 
Indeed, such an estimate holds true for the more general class of degenerate elliptic operators   
\begin{equation}\label{504}
\mathcal{L}_\sigma \coloneqq -\rho^{-\sigma}\nabla \cdot \left(\rho^{\sigma+1}\nabla w\right),
\end{equation}
that naturally occur in the context of the  porous medium equation, see \cite{KochHabilitation,Kienzler16,Seis14,Seis15}. In this case, the underlying Hilbert space is $L^2\left(\rho^\sigma\right)$. 
We state the corresponding maximal regularity  estimate for the  fourth order linear problem associated to the perturbation equation \eqref{perturbationequation}, that is,
\begin{align}\label{b1}
\begin{cases}
\partial_tw+\mathcal{L}^2w+ N \mathcal{L}w &= f \quad \text{ in } (0,\infty)\times B_1(0)\\
w(0,\cdot)&=w_0 \quad \text{ in } B_1(0).
\end{cases}
\end{align}
This problem  is well-posed for $L^2(\rho)$ initial data and $L^2((0,T);L^2(\rho))$ inhomogeneities, see Lemma 7 in \cite{SeisTFE}. In the case with zero initial data, $w_0=0$, there is the maximal regularity estimate
\begin{equation}\label{112}
\begin{aligned}
\mel	\|\partial_tw\|_{L^p\left(\left(0,T\right);L^p(\rho^\sigma)\right)}+\|\nabla^2w\|_{L^p\left((0,T);L^p(\rho^\sigma)\right)}+\|\rho\nabla^3w\|_{L^p\left((0,T);L^p(\rho^\sigma)\right)}+\|\rho^2\nabla^4w\|_{L^p\left((0,T);L^p(\rho^\sigma)\right)} \\
&\lesssim \|f\|_{L^p\left((0,T);L^p(\rho^\sigma)\right)},
\end{aligned} 
\end{equation}
which holds true for any $p\in \left(1,\infty\right)$, $\sigma >0$ and $T>0$, see Lemma 8 and Proposition 19 (and its proof) in \cite{SeisTFE}. 

In order to motivate the results that are collected and derived in the following, we  have a closer look at the nonlinearity occurring in \eqref{nonlinearityperturbationequation}. The natural framework to prove well-posedness of the nonlinear problem \eqref{perturbationequation} is the class $C^{0,1}(B_1(0))$, in which the  singular terms $R_l[w]$ can be suitably controlled, at least, if $w$ is small in that class. Moreover, in such a situation, the nonlinearity is of the same regularity order as the linear elliptic operator $\L^2 $, and the inhomogeneity can thus be treated as a quadratic perturbation term. We will carry this out in a simple Hilbert space setting later in Section \ref{S5} (after a necessary truncation). A complete theory for the nonlinear equation \eqref{perturbationequation} forces us to construct  higher order norms that match the scaling of the (homogeneous) Lipschitz norm. This naturally leads to considering Carleson or Whitney measures, more precisely
\begin{align}
\|w\|_{X(p)}&=\sum\limits_{(l,k,|\beta|)\in \mathcal{E}} \sup\limits_{\substack{z\in\overline{B_1(0)}\\0<r\leq1}} \frac{r^{4k+|\beta|-1}}{\theta(r,z)^{2l-|\beta|+1}}\left|Q_r^d(z)\right|^{-\frac{1}{p}}\|\rho^l\partial_t^k\partial_z^\beta w\|_{L^p\left({Q_r^d(z)}\right)}\\
&\quad +\sum\limits_{(l,k,|\beta|)\in \mathcal{E}} \sup\limits_{T\geq1}
\|\rho^l\partial_t^k\partial_z^\beta w\|_{L^p\left(Q(T)\right)},\\
\|f\|_{Y(p)}&= \sup\limits_{\substack{z\in\overline{B_1(0)}\\0<r\leq1}} \frac{r^3}{\theta(r,z)}\left|Q_r^d(z)\right|^{-\frac{1}{p}}\|f\|_{L^p\left(Q_r^d(z)\right)} +\sup\limits_{T\geq1}  \|f\|_{L^p\left(Q(T)\right)},
\end{align}
where $
\mathcal{E}=\left\{ (0,1,0),(0,0,2),(1,0,3),(2,0,4) \right\}$ and $\theta(r,z) = \max\{r,\sqrt{\rho(z)}\}$. Moreover, $Q_r^d(z)$ is the Whitney cube $(r^4/2,r^4)\times B_r^d(z)$ and $Q(T) = (T,T+1)\times B_1(0)$. We remark that the balls $B_r^d(z)=\left\{z'\in \overline{B_1(0)}: d(z,z')<r\right\}$ are not defined with respect to the Euclidean metric on $B_1(0)$ but the semi-distance 
\begin{align}\label{500}
	d(z,z')\coloneqq \frac{|z-z'|}{\sqrt{\rho(z)}+\sqrt{\rho(z')}+\sqrt{|z-z'|}}.
\end{align}

The occurrence of this semi-distance can be motivated by interpreting the parabolic problem \eqref{b1} as a (fourth order) heat flow on a weighted Riemannian manifold $(\mathcal{M},\textbf{g},\omega \textbf{vol})$, cf.~\cite{Grigoryan06}. Indeed, considering
$\textbf{g}=\rho^{-1}(dx)^2$ as the Riemannian metric on the disc $B$ and choosing a suitable weight $\omega$ on the volume form, the elliptic operator $\L$ turns out to be the Laplace--Beltrami operator on $(\mathcal{M},\textbf{g},\omega \textbf{vol})$. On this manifold, the  induced geodesic distance is equivalent to $d(z,z')$ in \eqref{500}.

 Considering this \emph{intrinsic metric} is helpful as the theories for heat flows are often also available on weighted manifolds \cite{Grigoryan06}.
For the subsequent computations, we recall some properties of the intrinsic distance from \cite{Seis15}: The intrinsic balls are equivalent to  Euclidean balls, more precisely there exists a positive constant $C$ such that 
\begin{align}\label{equivalenceballs}
	B_{C^{-1}r\theta(r,z)}(z)\subseteq B_r^d(z)\subseteq B_{Cr\theta(r,z)}(z)
\end{align}
for every $z$ in $\overline{B_1(0)}$ and any $r$. Furthermore, it holds for any $r$ that
\begin{align}
	\sqrt{\rho(z')}\lesssim r \quad \Rightarrow\quad \sqrt{\rho(z)}\lesssim r \quad \text{ for all } z\in B_r^d(z') 
\end{align}
and 
\begin{align}
	\sqrt{\rho(z')}\gg r \quad \Rightarrow \quad \rho(z)\sim \rho(z') \quad \text{ for all } z\in B_r^d(z')
\end{align}
which, in particular, implies that 
\begin{equation}\label{502}
\theta(r,\cdot)\sim \theta(r,z')\quad\mbox{in }B_r^d(z').
\end{equation}

 Variants of these norms were considered earlier in the treatment of the Navier--Stokes equations, a class of geometric flows, the porous medium equation and the thin film equation \cite{KOCH200122,Koch2012,Kienzler16,John15,Seis15}, see also the review in \cite{KochLamm15}. The choice of the large time contributions is rather arbitrary, see also Remark \ref{R1}.

Still on the level of the linear equation \eqref{b1}, it is proved in \cite{SeisTFE} that for any $p>N+4$, the solution to \eqref{b1} satisfies the estimate
\begin{align}\label{113}
\|w\|_{W^{1,\infty}} +\|w\|_{X(p)}\lesssim \|f\|_{Y(p)} + \|w_0\|_{W^{1\infty}},
\end{align}
provided that the right-hand side is finite. The well-posedness theory for the perturbation equation \eqref{perturbationequation} and our   higher-order regularity estimate below do heavily rely on that bound.

For further reference, we recall the main results for \eqref{perturbationequation} from the literature. 
\begin{theorem}[\cite{SeisTFE}]\label{maintheoremseis}
	Let $p>N+4$ be given. There exists $\varepsilon_0 >0$ such that for every $w_0\in W^{1,\infty}$ with $
	\|w_0\|_{W^{1,\infty}} \leq \varepsilon_0$ 
	there exists a solution $w$ to the nonlinear equation \eqref{perturbationequation} with initial datum $w_0$ and $w$ is unique among all solutions with  $\|w\|_{L^\infty(W^{1,\infty})} + \|w\|_{X(p)} \lesssim \varepsilon_0$. Moreover, this solution $w$ satisfies the estimate
	\begin{align}\label{estimateagainstinitialdata}
	\|w\|_{L^\infty(W^{1,\infty})} + \|w\|_{X(p)}\lesssim \|w_0\|_{W^{1,\infty}}
	\end{align}
	and	is smooth, and analytic in time and angular direction.
\end{theorem}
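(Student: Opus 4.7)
The plan is a Banach fixed point argument built on the linear maximal regularity estimate \eqref{113}. Working in the Banach space $Z = \{w : \|w\|_Z := \|w\|_{L^\infty(W^{1,\infty})} + \|w\|_{X(p)} < \infty\}$, I would define, for $\tilde w$ in a small ball $B_\delta \subset Z$, the image $\Psi(\tilde w)$ as the solution to the linear Cauchy problem \eqref{b1} with initial datum $w_0$ and right-hand side $f = \tfrac{1}{\rho}\nabla\cdot(\rho^2 F[\tilde w]) + \rho F[\tilde w]$. Estimate \eqref{113} then gives
\[
\|\Psi(\tilde w)\|_Z \lesssim \|f\|_{Y(p)} + \|w_0\|_{W^{1,\infty}},
\]
so the whole scheme hinges on establishing the quadratic nonlinear bound $\|f\|_{Y(p)} \lesssim \|\tilde w\|_Z^2$ for $\|\tilde w\|_Z \ll 1$, together with the analogous bilinear difference estimate for $f[\tilde w_1] - f[\tilde w_2]$. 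Once these are in hand, choosing $\eps_0 \ll \delta \ll 1$ turns $\Psi$ into a contraction of $B_\delta$ into itself, producing the unique fixed point $w$ with $\|w\|_Z \lesssim \|w_0\|_{W^{1,\infty}}$.

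The main obstacle is precisely this nonlinear estimate. The nonlinearity is a finite sum of terms of the form $p(z)\, R[\tilde w]\cdot(\text{quadratic monomial in derivatives of }\tilde w)$. The rational factors $R[\tilde w]$ are controlled uniformly by a Neumann series once $\|\tilde w\|_{W^{1,\infty}} \ll 1$, since then $1 + \tilde w + z\cdot\nabla\tilde w \ge 1/2$; the polynomial prefactors $p$ are bounded on $\overline{B_1(0)}$. The real work is matching $\rho$-weights and derivative orders with the Whitney-cube slots $(l,k,|\beta|) \in \mathcal{E}$ appearing in $X(p)$ and $Y(p)$. For each of the four building-block terms in \eqref{nonlinearityperturbationequation} — $\rho\nabla^3\tilde w\star\nabla\tilde w$, $\rho(\nabla^2\tilde w)^{2\star}$, $\nabla^2\tilde w\star\nabla\tilde w$, $(\nabla\tilde w)^{2\star}$ — one places one factor in $L^\infty$ (estimated via $\|\cdot\|_{L^\infty(W^{1,\infty})}$, using that the Morrey-type threshold $p > N+4$ yields $X(p) \hookrightarrow W^{1,\infty}$) and the other in the matching $L^p$-Carleson slot of $X(p)$, using Hölder on $Q_r^d(z)$ and the near-constancy $\theta(r,\cdot) \sim \theta(r,z)$ from \eqref{502}. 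A short bookkeeping check verifies that the factor of $r^3/\theta(r,z)$ coming from $Y(p)$ exactly balances the product of the $X(p)$ weights carried by the two derivative factors, yielding the quadratic bound.

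Uniqueness in the class $\|w\|_Z \lesssim \eps_0$ follows from the same bilinear machinery: the difference $w_1 - w_2$ solves a linear equation with zero initial data and right-hand side bounded by $(\|w_1\|_Z + \|w_2\|_Z)\|w_1-w_2\|_Z$, and \eqref{113} together with smallness forces $w_1 = w_2$. Estimate \eqref{estimateagainstinitialdata} is a direct consequence of the fixed-point construction.

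For smoothness and analyticity in time and angular direction, I would run a symmetry-based bootstrap in the spirit of Angenent--Masuda. The equation is invariant under the action of $SO(N)$, whose generators $x_i\partial_j - x_j\partial_i$ are tangential to $\partial B_1(0)$ and commute with $\mathcal L$, and it is autonomous, so $\partial_t$ commutes with the flow as well. Differentiating the fixed-point equation along these generators produces systems of the same type satisfied by the derivatives, to which \eqref{113} applies iteratively; standard induction then yields $C^\infty$-regularity along these directions. For analyticity, one extends to complex time $t \mapsto t + i\tau$ (resp.\ complex rotation angles) on a small strip and controls the Taylor series via uniform estimates obtained by differentiating in the complex parameter; the crucial point is that rotations and time translations preserve the free boundary $\partial B_1(0)$, so no boundary obstruction arises, unlike for normal or radial derivatives.
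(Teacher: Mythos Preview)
This theorem is quoted from \cite{SeisTFE} and not proved in the present paper; nevertheless your outline captures the correct architecture---a fixed point in $Z$ built on the linear estimate \eqref{113}, followed by an Angenent--Masuda style bootstrap using the commuting symmetries $\partial_t$ and the $SO(N)$-generators for smoothness and analyticity---and this is indeed how \cite{SeisTFE} proceeds.

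There is, however, a genuine gap in your nonlinear estimate. You propose to handle every quadratic building block by placing one factor in $L^\infty$ (via the $W^{1,\infty}$-control) and the other in an $X(p)$-slot. This works for $(\nabla w)^{2\star}$, $\nabla^2 w\star\nabla w$, and $\rho\nabla^3 w\star\nabla w$, but it \emph{fails} for $\rho(\nabla^2 w)^{2\star}$: neither $\nabla^2 w$ nor $\rho\nabla^2 w$ is controlled in $L^\infty$ by the $Z$-norm, and the $X(p)$-slot for $\nabla^2 w$ is only $L^p$, not $L^{2p}$, so a H\"older split does not close. Moreover, you have only listed the four terms of $F$ itself, not of the actual inhomogeneity $f = \rho^{-1}\nabla\cdot(\rho^2 F) + \rho F$; once the divergence is expanded (and $\nabla R[w]$ computed), further terms appear---for instance $\rho^2\nabla^2 w\star\nabla^3 w$, $\rho^2\nabla w\star\nabla^4 w$, and the cubic $\rho^2(\nabla^2 w)^{3\star}$---several of which present the same obstruction.

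The missing ingredient is a localized Gagliardo--Nirenberg interpolation on Whitney cubes, essentially Lemma~\ref{interpolationintequality} applied to $\zeta = \eta\nabla w$ (and, for the higher-order theorem, $\zeta = \eta\rho\nabla^2 w$), which trades the forbidden $L^{2p}$-norm of a middle derivative for the product of an $L^\infty$-norm of a lower derivative and an $L^p$-norm of a higher one. You can see this carried out explicitly in the proof of Theorem~\ref{dritteableitung} here: the estimates of the block $J$, in particular \eqref{117} and \eqref{115}, are precisely of this type, and the authors note that the details already appear in \cite{SeisTFE}. Without this interpolation step the bound $\|f\|_{Y(p)}\lesssim\|w\|_Z^2$ is not available and the contraction does not close.
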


Strictly speaking, the result described here slightly differ from \cite{SeisTFE}.

\begin{remark}\label{R1}
	For accuracy, we remark that in \cite{SeisTFE}, the linear bound \eqref{113} and the nonlinear theory in  	  Theorem \ref{maintheoremseis} were derived  for  slightly different $X(p)$ and $Y(p)$ norms. Indeed, in this earlier work the large time contributions $
	\|\rho^l\partial_t^k\partial_z^\beta w\|_{L^p\left(Q(T)\right)}$ and $   \|f\|_{L^p\left(Q(T)\right)}$ came both with a factor $T$. With regard to the theory developed in the present paper, dropping this factor is more convenient.
\end{remark}

In the present paper, we have to extend the theory   from $C^{0,1}$ data to a higher regularity setting. Indeed, it turns out that the truncation that we introduce on the level of the nonlinearity in Section \ref{S5} needs to cut-off derivatives up to third order. In order to subsequently relate the truncated equation  to the original one \eqref{perturbationequation}, these derivatives need to be controlled by the initial data. 
We will chose the uniform  higher-order norms whose homogeneous parts have the same scaling as the homogeneous Lipschitz norm at the boundary, $\|\cdot\|_W$, which we introduced in \eqref{125}.

Our main contribution in the present section is the following higher order regularity result.

\begin{theorem}\label{dritteableitung}
	There exists $\varepsilon_0>0$, possibly smaller than in Theorem \ref{maintheoremseis}, such that for every $w_0\in W^{1,\infty}$ with
	$
	\|w_0\|_{W}\leq \varepsilon_0$, 	the unique solution $w$ from from Theorem \ref{maintheoremseis}  satisfies
	\begin{align}
	\|w\|_W \lesssim \|w_0\|_W.
	\end{align} 
\end{theorem}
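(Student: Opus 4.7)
The solution $w$ is already known to exist, be unique and smooth by Theorem~\ref{maintheoremseis}, since $\|w_0\|_{W^{1,\infty}}\leq \|w_0\|_W \leq \varepsilon_0$. The task is therefore purely to upgrade the a priori bound \eqref{estimateagainstinitialdata} to the higher-order scale-invariant norm $W$. I would view \eqref{perturbationequation} as the inhomogeneous linear problem \eqref{b1} with right-hand side
\[
f \;=\; \rho^{-1}\nabla\cdot(\rho^2 F[w]) + \rho F[w] \;=\; -2z\cdot F[w] + \rho\,\nabla\cdot F[w] + \rho\,F[w],
\]
establish a linear maximal regularity estimate that is tailored to the $W$-norm, and close the argument by exploiting the smallness of $w$ and the quadratic character of the nonlinearity.

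\emph{Step 1: linear maximal regularity in $W$.} The first and central task is to prove an estimate of the form
\[
\|w\|_W \;\lesssim\; \|w_0\|_W + \|f\|_{\widetilde Y}
\]
for solutions of \eqref{b1}, where $\widetilde Y$ is a scale-invariant norm on the forcing adapted to the weight $\rho^2$ sitting in front of $F[w]$ in $f$ and including the relevant weighted first derivatives coming from the divergence term. I would derive this exactly in the spirit of \eqref{113}: localize on a Whitney cube $Q_r^d(z)$, use the equivalence \eqref{502} to observe that $\rho \sim \theta(r,z)^2$ is essentially constant there so that, after rescaling to a unit cube, \eqref{b1} becomes a uniformly parabolic fourth-order problem, then apply interior parabolic Schauder/$L^p$ theory to extract pointwise control of $\nabla w$, $\rho\nabla^2 w$ and $\rho^2\nabla^3 w$. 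The weights $\rho^j$ appearing in $\|\cdot\|_W$ are exactly the ones that render each term scale invariant in the intrinsic geometry of $\mathcal{L}$, which is what allows the local estimates to patch into a global $L^\infty$ bound.

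\emph{Step 2: quadratic control of the nonlinearity and closing.} The star-notation structure of $F[w]$ in \eqref{nonlinearityperturbationequation} is manifestly quadratic in the derivatives of $w$ up to order three, multiplied by a polynomial $p(z)$ and a rational factor $R[w]$ satisfying $\|R[w]\|_{L^\infty}\lesssim 1$ under the smallness hypothesis. Each of the four archetypical terms can be estimated by distributing the available $\rho$-weights, for instance
\[
\|\rho\,\nabla^3 w \star \nabla w\|_{L^\infty} \;\lesssim\; \rho^{-1}\,\|\rho^2\nabla^3 w\|_{L^\infty}\|\nabla w\|_{L^\infty},\qquad \|\rho(\nabla^2 w)^{2\star}\|_{L^\infty}\;\lesssim\;\rho^{-1}\|\rho\nabla^2 w\|_{L^\infty}^2,
\]
and analogously for the remaining two terms; the singular factor $\rho^{-1}$ is absorbed by the weight $\rho^2$ in front of $F[w]$ in $f$. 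Derivatives falling on $R[w]$ produce further copies of $\nabla w,\nabla^2 w,\nabla^3 w$ which remain controlled by $\|w\|_W$. After summing, one obtains $\|f\|_{\widetilde Y}\lesssim \|w\|_W^2$. Combining with Step 1 gives
\[
\|w\|_W \;\lesssim\; \|w_0\|_W + \|w\|_W^2,
\]
and, after possibly shrinking $\varepsilon_0$, a continuation/bootstrap argument based on the already known smoothness of $w$ allows one to absorb the quadratic term into the left-hand side, yielding the claim $\|w\|_W\lesssim\|w_0\|_W$.

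\emph{Main obstacle.} The genuinely hard part is Step~1: extending the Whitney-cube machinery of \cite{SeisTFE}, originally designed to recover $W^{1,\infty}$ bounds from $X(p)$-type averaged estimates, to also capture the weighted third derivative $\rho^2\nabla^3 w$ uniformly up to $\partial B_1(0)$. The degeneracy of $\mathcal{L}^2+N\mathcal{L}$ at the free boundary is the source of the difficulty, but the scale invariance of $W$ with respect to the intrinsic semi-distance \eqref{500} and the fact that the maximal regularity estimate \eqref{112} already carries a $\rho^2\nabla^4$ term on the left-hand side strongly suggest that the higher-order pointwise bound can be read off by exactly the same localization-and-rescaling procedure, so that no substantially new analytic input is required.
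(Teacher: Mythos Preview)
Your approach differs substantially from the paper's, and Step~1 as you sketch it contains a genuine gap.

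The paper never derives a new linear maximal-regularity estimate with the $W$-norm on the left. Instead it \emph{differentiates the equation}: one writes down the evolution of $\rho\partial_i w$ (and then of $\rho\partial_j(\rho\partial_i w)$), which again has the form \eqref{b1} with a new right-hand side consisting of $\rho\partial_i f[w]$ plus explicit commutators $E[\cdot]$. The existing estimate \eqref{113} is then applied to these differentiated unknowns, so that $\|\rho\partial_i w\|_{W^{1,\infty}}$ delivers the missing $\|\rho\nabla^2 w\|_{L^\infty}$ bound, and a second pass gives $\|\rho^2\nabla^3 w\|_{L^\infty}$. All the work goes into bounding $\|\rho\partial_i f[w]\|_{Y(p)}$ and the commutators in the Carleson norm $Y(p)$; the delicate terms (e.g.\ $\rho^3|\nabla^3 w|^2$, $\rho^2|\nabla^2 w|^3$) are handled by the weighted interpolation inequalities of Lemma~\ref{interpolationintequality} together with the already-available control of $\|w\|_{X(p)}$ from Theorem~\ref{maintheoremseis}. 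The upshot is an inequality of the shape
\[
\|\rho\nabla w\|_{W^{1,\infty}}+\|\rho\nabla w\|_{X(p)} \;\lesssim\; \|w_0\|_{W^{1,\infty}}+\|\rho\nabla^2 w_0\|_{L^\infty} + \varepsilon_0\bigl(\|\rho\nabla w\|_{W^{1,\infty}}+\|\rho\nabla w\|_{X(p)}\bigr),
\]
which closes for small $\varepsilon_0$.

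Your Step~1, by contrast, asks for a linear estimate of the form $\|w\|_W\lesssim \|w_0\|_W+\|f\|_{\widetilde Y}$ with $\widetilde Y$ an $L^\infty$-type norm on the (divergence-form) forcing. This is precisely where the $X(p)/Y(p)$ Carleson machinery in \cite{SeisTFE} earns its keep: pointwise parabolic maximal regularity with merely $L^\infty$ forcing generally fails (already for the heat equation one does not get $\nabla^2 u\in L^\infty$ from $f\in L^\infty$), and the degenerate fourth-order operator here is no better. So a purely $L^\infty$-based $\widetilde Y$ will not produce the $\rho^2\nabla^3 w$ control you need, and if you upgrade $\widetilde Y$ to a Carleson norm you are back in the $Y(p)$ framework---at which point bounding the nonlinearity requires exactly the interpolation arguments the paper carries out, and you have not actually avoided them. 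Relatedly, if in Step~2 you expand the divergence to treat $f$ as a scalar forcing, the term $\rho\nabla\!\cdot F[w]$ produces $\rho^2\nabla^4 w\star\nabla w$, which is not controlled by $\|w\|_W^2$; keeping the divergence structure forces Step~1 into a form that is not a corollary of \eqref{113} and is not proved in the paper.
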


\begin{proof} \emph{Step 1. Second order derivatives.} We will prove the slightly stronger bound
	\begin{align}\label{114}
	\|\rho \grad^2 w\|_{L^{\infty}} + \|\rho \nabla w\|_{X(p)} \lesssim \|w_0\|_{W^{1,\infty}}+\|\rho \nabla^2w_0\|_{L^\infty}.
	\end{align}
	For this purpose, for every $i=1,\dots,N$, we consider the dynamics of $\rho \partial_i w$ under the nonlinear equation \eqref{perturbationequation}, that is,
	\[
	\partial_t(\rho \partial_iw)+\mathcal{L}^2(\rho \partial_iw)+N\mathcal{L}(\rho \partial_iw)=\rho \partial_if[w] + NE[w]+\mathcal{L}E[w] + E[\mathcal{L}w], 
	\]
	where $E[v] = -\rho z_i \Delta v - 2\rho \partial_iv
	+ (N\rho -2|z|)\partial_iv+ 2\rho z \cdot \nabla\partial_iv$ is the commutator of the operators $\rho\partial_i $ and $ \L$, and this equation is equipped with the initial datum $\rho \partial_iw_0.$
	From the a priori bound in \eqref{113}, we know that
	\[
	\|\rho \partial_iw\|_{W^{1,\infty}} +\|\rho \partial_iw\|_{X(p)}\lesssim \|\rho \partial_if[w]\|_{Y(p)} + \|NE[w]+\mathcal{L}E[w] + E[\mathcal{L}w]\|_{Y(p)}+ \|\rho \partial_iw_0\|_{W^{1,\infty}}.
	\]
	In view of the bound from Theorem \ref{maintheoremseis}, in order to prove \eqref{114} it suffices thus  to prove that
	\begin{equation}\label{a3}
	\begin{aligned}
	\mel	\|\rho \partial_if[w]\|_{Y(p)} + \|NE[w]+\mathcal{L}E[w] + E[\mathcal{L}w]\|_{Y(p)}\\
	& \lesssim \|w\|_{W^{1,\infty}} + \|w\|_{X(p)} + \varepsilon_0 \left( \|\rho \nabla w\|_{W^{1,\infty}} + \|\rho \nabla  w\|_{X(p)} \right),
	\end{aligned}
	\end{equation}
	and to choose $\varepsilon_0$ sufficiently small.
	
	From \cite{SeisTFE} we are aware of another form of the nonlinearity $f[w]$ of the perturbation equation \eqref{perturbationequation}, namely $f[w]=f^1[w]+f^2[w]+f^3[w],$ where
	\begin{align}
	f^1[w] &= p \star R[w]\star\left(\left(\nabla w\right)^{2\star}+\nabla w \star \nabla^2w \right),\\
	f^2[w] & = p\star R[w]\star \rho \left(\left(\nabla^2w\right)^{2\star}+ \nabla^3w\star \nabla w\right),\\
	f^3[w]& =p \star R[w]\star \rho^2 \left(\left( \nabla^2w\right)^{3\star} +\nabla^2w\star \nabla^3w+ \nabla w\star \nabla^4w \right),
	\end{align}
	and 
	\[
	R[w]= \frac{\left(\nabla w\right)^{k\star}}{\left( 1+w +z\cdot \nabla w\right)^{l}}
	\]
	for some $k \in \mathbb{N}_0$, $l\in\N$, whose values may be different in any occurrence of $R[w]$. (Of course, the reader may derive this presentation also directly from \eqref{perturbationequation} and \eqref{nonlinearityperturbationequation}.) The computation of derivatives of these expressions is tedious but straightforward. As an auxiliary result we  notice that $\grad R [w] = p\star R  + p\star R\star \grad^2 w$.  Here are the final formulas:
	\begin{align*}
	\partial_i f^1[w]&  =p\star R[w]\star\left( (\grad w)^{2\star} +\grad w\star \grad^2 w + (\grad^2 w)^{2\star} +\grad w\star\grad^3 w\right),\\
	\partial_i f^2[w]  &=p\star R[w]\star\left( (\grad^2 w)^{2\star} +\grad w\star \grad^3 w + \rho (\grad^2 w)^{3\star} +\rho \grad^2 w\star\grad^3 w + \rho\grad w\star\grad^4w\right),\\
	\partial_i f^3[w] & = p\star R[w]\star\left(\rho(\grad^2 w)^{3\star} + \rho\grad^2 w\star\grad^3 w+\rho\grad w\star\rho^4w  + \rho^2 (\grad^2 w)^{4\star} \right.\\
	&\quad +\left. \rho^2 \grad w\star \grad^5 w+ \rho^2 (\grad^2 w)^{2\star}\star \grad^3w +\rho^2 (\grad^3 w)^{2\star} +\rho^2 \grad^2 w\star\grad^4 w\right).
	\end{align*}
	Combining them, and multiplying by $\rho$, we thus find that
	\[
	\rho\partial_i f[w] = p\star R[w]\star\left( I + J  \right),
	\]
	where 
	\begin{align*}
	I & = (\grad w)^{2\star} +\rho \grad w\star\grad^2 w +\rho (\grad^2 w)^{2\star} +\rho \grad w\star\grad^3 w +\rho^2 \grad^2 w \star\grad^3 w\\
	&\quad + \rho^2 \grad w\star \grad^4 w +\rho^2 \grad^2 w\star\grad^3 w + \rho^3 \grad^2 w\star\grad^4 w +  \rho^3 \grad w\star\grad^5 w,\\
	J & = \rho^3 (\grad^3 w)^{2\star} +\rho^2 (\grad^2 w)^{3\star}  +\rho^3 (\grad^2 w)^{2\star} \star\grad^3 w + \rho^3 (\grad^2 w)^{4\star} .
	\end{align*}
	Because $|p\star R[w]|\lesssim 1$ thanks to the control of $w$ and $\grad w$ during the evolution, in our estimate of $\rho \partial_i f[w]$ it is enough to control $I$ and $J$. Here, the first term is much easier to handle. Indeed, using the fact that $\|\grad w\|_{Y(p)} \lesssim \|\grad w\|_{L^{\infty}}$ and $\|\nabla^2 w\|_{Y(p)} + \|\rho \nabla^3w\|_{Y(p)}+\|\rho^2\nabla^4w\|_{Y(p)} \lesssim \|w\|_{X(p)} $, which comes directly out of the definition of the $Y(p)$ norm, and invoking the a priori estimate in Theorem \ref{maintheoremseis}, we readily find that
	\begin{align*}
	\|I\|_{Y(p)} &\lesssim \left(\|\grad w\|_{L^{\infty}} + \|w\|_{X(p)}\right)\left(\|\grad w\|_{L^{\infty}} + \|\rho \grad^2 w\|_{L^{\infty}} + \|\rho^3 \grad^5 w\|_{Y(p)}\right)\\
	& \lesssim  \|w_0\|_{W^{1,\infty}} + \eps_0 \left( \|\rho \grad^2 w\|_{L^{\infty}} + \|\rho^3 \grad^5 w\|_{Y(p)}\right).
	\end{align*}
	
	The estimates of the terms appearing in $J$ are more involved as we have to make use of suitable interpolations. Some  were already discussed in  \cite{SeisTFE}, but we present the ideas here for the convenience of the reader.  	Let $\eta$ be a smooth cut-off function satisfying $\eta = 1$ in $B_r^d(z_0)$ and $\eta =0$ outside $B_{2r}^d(z_0)$ for $r\leq 1$. Inside of the ball $B_r^d(z_0)$, we then have that
	\[
	\rho^3 |\grad^3 w|^2 \lesssim \rho |\grad \zeta|^2 + \rho |\grad^2 w|^2,
	\]
	if $\xi =\eta \rho \nabla^2w$. It follows that
	\begin{align*}
	\|\rho^3 |\grad^3 w|^2\|_{L^p\left(B_r^d(z_0)\right)}  & \lesssim \|\rho |\grad \zeta|^2\|_{L^p}  + \|\rho |\grad^2 w|^2\|_{L^p\left(B_r^d(z_0)\right)}.
	\end{align*}
	To estimate the first term on the right hand side, we make use of the interpolation inequality \eqref{408} with $m=2$ and $i=1$ in  Lemma \ref{interpolationintequality} of the appendix and find
	\begin{align*}
	\|\rho |\grad\zeta|^2\|_{L^p} & = \|\grad \zeta\|_{L^{2p}\left(\rho^p\right)}^2 \lesssim \|\zeta\|_{L^{\infty}} \|\grad^2\zeta\|_{L^p\left(\rho^p\right)}.
	\end{align*}
	We then deduce from the definition of $\zeta$, by using Leibniz' rule and  the fact that $|\grad^k \eta|\lesssim r^{-k}\theta(r,z_0)^{-k}$, which follows from the behavior of the intrinsic balls in \eqref{equivalenceballs}, that
	\begin{align}
	\|\rho |\grad\zeta|^2\|_{L^p} & \lesssim \|\rho \grad^2 w\|_{L^{\infty}} \left( \|\rho^2 \grad^4 w\|_{L^p\left(B_{2r}^d(z_0)\right)} + \|\rho \grad^3 w\|_{L^p(B_{2r}^d(z_0))} \right.\\
	&\quad  +\frac1{r\theta(r,z_0)}\|\rho \grad^2 w\|_{L^p\left(B_{2r}^d(z_0)\right)}+\frac1{r\theta(r,z_0)}\|\rho^2 \grad^3 w\|_{L^p\left(B_{2r}^d(z_0)\right)}\\
	&\quad \left.+\frac1{r^2\theta(r,z_0)^2}\|\rho^2 \grad^2 w\|_{L^p\left(B_{2r}^d(z_0)\right)}\right).
	\end{align}
	The $\rho$'s can we always pulled out of the norms by estimating against $\theta(r,z_0)^2$, because $\theta(r,z_0)\sim \theta(r,z)=\max\left\{r,\sqrt{\rho(z)}\right\}$ by \eqref{502}. In view of the definitions of the $Y(p)$ and $X(p)$ norms, we then deduce that
	\[
	\|\rho^3 |\grad^3 w|^2 \|_{Y(p)} \lesssim \|w\|_{X(p)} \|\rho \grad^2 w\|_{L^{\infty}} + \|\rho |\grad^2 w|^2\|_{Y(p)},
	\]
	and the second term can be estimated as in our bound for  $I$, so that we find
	\begin{equation}
	\label{117}	
	\|\rho^3 |\grad^3 w|^2 \|_{Y(p)} \lesssim \eps_0 \|\rho \grad^2 w\|_{L^{\infty}} 
	\end{equation}	 
	thanks to the estimates from Theorem \ref{maintheoremseis}
	
	The second term in $J$ can be estimated very similarly. This time we choose $\zeta = \eta \grad w$ and eventually arrive at
	\begin{equation}
	\label{115}
	\|\rho^2 |\grad^2 w|^3\|_{Y(p)} \lesssim \|\grad w\|_{L^{\infty}}^2  \left(\|\grad w\|_{L^{\infty}} + \|w\|_{X(p)}\right) \lesssim \|w_0\|_{W^{1,\infty}},
	\end{equation}
	thanks to the a priori estimates in Theorem \ref{maintheoremseis}. (Notice that details for this estimate can be found in \cite{SeisTFE}.)
	The latter bound also entails an estimate for the fourth term in $J$. Indeed, we have	
	\begin{equation}
	\label{116}
	\|\rho^3 |\grad^2 w|^4\|_{Y(p)} \le \|\rho \grad^2 w\|_{L^{\infty}} \|\rho^2 |\grad^2 w|^3\|_{Y(p)}\lesssim  \|g\|_{W^{1,\infty}}\|\rho \grad^2 w\|_{L^{\infty}} \le \eps_0 \|\rho \grad^2 w\|_{L^{\infty}}.
	\end{equation}
	Finally, in order to bound the third term in $J$, we interpolate between \eqref{117} and \eqref{116}. Altogether, we find the estimate
	\[
	\|J\|_{Y(p)} \lesssim \|w_0\|_{W^{1,\infty}} + \eps_0 \|\rho \grad^2 w\|_{L^{\infty}}.
	\]

	Our estimates on $I$ and $J$ yield the desired control on $\rho \partial_i f[w]$. 	To prove the full statement in \eqref{a3}, it remains only to choose $\eps_0$ small enough and to notice that
	\begin{align}
	|NE[w]+\mathcal{L}E[w] + E[\mathcal{L}w]| \lesssim |\rho^2\nabla^4w|+|\rho \nabla^3w|+|\nabla^2w|+|\nabla w|,
	\end{align}
	which provides 
	\[
	\|NE[w]+\mathcal{L}E[w] + E[\mathcal{L}w]\|_{Y(p)} \lesssim \|w\|_{W^{1,\infty}} + \|w\|_{X(p)} \lesssim \|w_0\|_{W^{1,\infty}}
	\]
	in a similar manner as before. This finishes the proof.

	\medskip
	
	\emph{Step 2. Third order derivatives.} The prove of the estimates proceeds analogously to the first step, only this time, much more terms have to be considered. 
	For every $i,j=1,\dots,N$ we consider the dynamics of $\rho \partial_j(\rho \partial_i w)$, that is,
	\begin{align}\MoveEqLeft
	\partial_t\left(\rho \partial_j(\rho \partial_iw)\right) + \mathcal{L}^2\left(\rho \partial_j(\rho \partial_iw)\right)+N\mathcal{L}\left(\rho \partial_j(\rho \partial_iw)\right) \\
	&=\rho \partial_j\left(\rho \partial_if[w]\right) +\rho \partial_j \left(NE[w]+\mathcal{L}E[w] + E[\mathcal{L}w]\right) \\
	&\quad + NE[\rho \partial_iw]+\mathcal{L}E[\rho \partial_iw] + E[\mathcal{L}(\rho \partial_iw)],
	\end{align}
	which is equipped with the initial datum $\rho \partial_j(\rho \partial_iw_0)$.
	Again, thanks to the a priori bound \eqref{113}, we know that
	\begin{align}\MoveEqLeft
	\|\rho\partial_j\left(\rho \partial_iw\right)\|_{W^{1,\infty}}+ \|\rho\partial_j\left(\rho \partial_iw\right)\|_{X(p)}\\
	&\lesssim \|\rho \partial_j\left(\rho \partial_if[w]\right)\|_{Y(p)}+\|\rho \partial_j \left(NE[w]+\mathcal{L}E[w] + E[\mathcal{L}w]\right)\|_{Y(p)}\\
	&\quad +\|NE[\rho \partial_iw]+\mathcal{L}E[\rho \partial_iw] + E[\mathcal{L}(\rho \partial_iw)]\|_{Y(p)}+\|\rho \partial_j\left(\rho \partial_iw_0\right)\|_{W^{1,\infty}},
	\end{align}
	which can be rewritten as
	\begin{align}\MoveEqLeft
	\|\rho^2  \partial_{ij}^2 w \|_{W^{1,\infty}}+ \|\rho^2  \partial_{ij}^2 w   \|_{X(p)}\\
	&\lesssim \|\rho^2  \partial_{ij}^2 f[w] \|_{Y(p)}+\|\rho \partial_j \left(NE[w]+\mathcal{L}E[w] + E[\mathcal{L}w]\right)\|_{Y(p)}\\
	&\quad +\|NE[\rho \partial_iw]+\mathcal{L}E[\rho \partial_iw] + E[\mathcal{L}(\rho \partial_iw)]\|_{Y(p)}+\| w_0\|_{W },
	\end{align}
	by the virtue of the second order derivative \eqref{114}. The linear terms are, again, relatively easy to bound, as we have 
	\begin{align*}
	\mel |NE[\rho\partial_i w] +\L E[\rho\partial_i w] + E[\L(\rho\partial_i w)]| + |\rho\partial_j \left(NE[w] + \L E[w] +E[\L w]\right)|\\
	& \lesssim \rho^3 |\grad^5 w| +\rho^2 |\grad^4 w| + \rho |\grad^3 w| + |\grad^2 w| + |\grad w|,
	\end{align*}
	and thus, the $Y(p)$ norm of the linear terms is controlled by the $X(p)$ and $L^{\infty}$ norms of $w$ and $\rho\grad w$, which are in turn bounded by $\|w_0\|_W$ by the virtue of Theorem \ref{maintheoremseis} and the second order estimates in \eqref{114}. 
	
	Let's thus focus on the nonlinear terms. They take the form
	\begin{align*}
	\rho^2 \partial_{ij}^2 f[w] & = p\star R[w]\star K ,
	\end{align*}
	where
	\begin{align*}
	K & = \rho (\grad w)^{2\star} + \rho \grad w\star \grad^2 w + \rho (\grad^2 w)^{2\star} + \rho \grad w \star \grad^3 w + \rho^2 (\grad^2 w)^{3\star} +\rho^2 \grad^2 w\star\grad^3 w \\
	&\quad+ \rho^2 \grad w\star\grad^4 w+ \rho^3 (\grad^2 w)^{2\star} \star\grad^3 w+\rho^3 (\grad^2 w)^{4\star} + \rho^3 \grad^2 w\star \grad^4 w +\rho^3 \grad w\star\grad^5 w \\
	&\quad + \rho^4 \grad w \star\grad^6 w+ \rho^4 \grad^2 w \star \grad^5 w + \rho^4 (\grad^2 w)^{2\star} \star \grad^4 w+ \rho^4 (\grad^2 w)^{3\star}\star\grad^3 w\\
	&\quad  +\rho^4 (\grad^2 w)^{3\star}\star \grad^3 w+ \rho^4 (\grad^2 w)^{5\star} +   \rho^3 (\grad^3 w)^{2\star}  + \rho^4 \grad^2 w \star (\grad^3 w)^{2\star}  + \rho^4 \grad^3 w\star\grad^4 w ,  
	\end{align*}
	as the reader may check in a lengthy but straightforward exercise. The bound of $K$ is surprisingly simple as, thanks to the second order estimates \eqref{114}, no interpolations have to be performed. We simply have
	\begin{align*}
	\mel	\|K\|_{Y(p)}\\
	& \lesssim \left(\|\grad w\|_{L^{\infty}} + \sum_{k=1}^4\|\rho \grad^2 w\|_{L^{\infty}}^k\right) \left(\|\grad w\|_{L^{\infty}} + \|w\|_{X(p)} + \|\rho \grad w\|_{X(p)}\right) \\
	&\quad + \|\grad w\|_{L^{\infty}} \|\rho^4 \grad^6 w\|_{Y(p)} + \|w\|_{X(p)} \|\rho^2 \grad^3 w\|_{L^{\infty}}  + \|\rho \grad^2 w\|_{L^{\infty}} \|w\|_{X(p)} \|\rho^2 \grad^3 w\|_{L^{\infty}}\\
	&\lesssim \|w_0\|_{W^{1,\infty}} + \|\rho\grad^2 w_0 \|_{L^{\infty}} + \eps_0\left( \|\rho^2 \grad^2 w\|_{L^{\infty}} + \|\rho^2 \grad^2 w\|_{X(p)}\right),
	\end{align*}
	where we invoked the second order estimates \eqref{114} and the a priori estimates from Theorem \ref{maintheoremseis} in the second inequality. We derive the statement of the theorem by choosing $\eps_0$ sufficiently small.
\end{proof}

\section{The truncated problem}\label{S5}
The particular form of the nonlinearity limitates the well-posedness theory for the Cauchy problem for \eqref{perturbationequation} to a small neighborhood of the trivial solution $w\equiv 0$. It follows that the resulting semi-flow is  necessarily {\em local}. In order to construct a \emph{global} semi-flow, whose existence simplifies the construction of invariant manifolds significantly, it is customary to consider a truncated version of the perturbation equation. We thus introduce a cut-off function that eliminates the nonlinear terms  (locally) near points where the solution $w$, or one of its (suitably weighted) derivatives, is too large. This way, the equation becomes linear at these points. The cut-off remains inactive as long as the solution is globally  small with respect to $\|\cdot \|_W$, which is the case for solutions of the perturbation equation for sufficiently small initial datum due to Theorem \ref{dritteableitung}.

To make this truncation more precise
we recall that the perturbation equation reads
\begin{align}\label{100}
\partial_tw+\mathcal{L}^2w+N\mathcal{L}w=\rho^{-1}\nabla \cdot\left( \rho^2 F[w]\right)+\rho F[w],
\end{align}
where the nonlinear terms are schematically given by
\begin{align}\label{101}
F[w] = p\star R_l[w]\star\left(\rho\nabla^3w\star\nabla w+ \rho(\nabla^2w)^{2\star} + \nabla^2w\star\nabla w+ (\nabla w)^{2\star}\right),
\end{align}
cf.~\eqref{perturbationequation} and \eqref{nonlinearityperturbationequation}. Let $\hat{\eta}:[0,\infty) \rightarrow [0,1]$ be a smooth cut-off function that is supported on $[0,2)$ with $\hat{\eta}(x) =1$ if $0 \leq x\leq1$. For $\varepsilon \in (0,1)$, we define
\begin{align}
\eta_\varepsilon = \eta_\varepsilon\left[w,\nabla w,\rho\nabla^2w,\rho^2\nabla^3w\right] \coloneqq \hat{\eta}\left(\frac{w^2}{\varepsilon^2}\right)\hat{\eta}\left(\frac{|\nabla w|^2}{\varepsilon^2}\right)\hat{\eta}\left(\frac{\left|\rho\nabla^2w\right|^2}{\varepsilon^2}\right)\hat{\eta}\left(\frac{\left|\rho^2\nabla^3w\right|^2}{\varepsilon^2}\right).
\end{align}
The truncated problem we consider now is the following:
\begin{align}\label{c3}
\partial_tw+\mathcal{L}^2w+N\mathcal{L}w=\rho^{-1}\nabla \cdot\left( \rho^2 F_{\eps}[w]\right)+\rho F_{\eps}[w],\quad F_{\eps} = \eta_{\eps}F.
\end{align}
It is clear that this equation coincides with \eqref{perturbationequation} as long as all terms $\left|w\right|$, $\left|\nabla w\right|$, $\left|\rho \nabla^2w\right|$ and $\left|\rho^2 \nabla^3w\right|$ are globally bounded from above by $\varepsilon$. As we already know for solutions $w(t)$ of the full perturbation equation \eqref{perturbationequation} that $\left\|w(t)\right\|_{W}$ is controlled by $ \left\|w_0\right\|_{W  },$ provided that the initial datum $w_0$ is sufficiently small, the solutions of both equations coincide  if $\left\|w_0\right\|_{W}\ll \varepsilon$. Thus, in this situation the truncation does not change the dynamics, even though it has the advantage that we end up with a globally well-posed equation, see Theorem \ref{wellposednessH1}.
We remark that the choice of a pointwise truncation is necessary in order to ensure the differentiability of the nonlinearity in $w$. It has, however, the drawback that the regularity estimates from \cite{SeisTFE} seem not to carry over to the truncated problem. The technical difficulties arise from the fact that derivatives are falling onto the cut-off functions and the resulting terms fail to be controlled in a way analogously to the nonlinear terms in the original problem.

Moreover, it is crucial that derivatives up to third order are suitable truncated. This looks at first glance surprising because the original theory \cite{SeisTFE} for the perturbation equation \eqref{100} requires only the control of Lipschitz norms. However, it turns out that the well-posedness theory for a truncated equation becomes unexpectedly subtle if the truncation is performed only up to first order.

We will prove well-posedness of \eqref{c3} in the Hilbert space $H$, which, as we  will see, appears very naturally in the treatment of the truncated equation.
Even though it is in general not necessary to work in a Hilbert space setting to construct invariant manifolds, see, e.g., \cite{ChenHaleTan97}, this choice will be extremely convenient. Moreover, we can take advantage of the spectral analysis developed in \cite{McCannSeis15} in a nearly identical setting.

In order to prove well-posedness of the truncated problem in $H$, we need to extend   the maximal regularity result \eqref{b2} for the operator $\mathcal{L}$  to the Hilbert space $H$.
\begin{lemma}\label{maxregH}
	The operator $\mathcal{L}$ satisfies the maximal regularity estimate
	\begin{align}
	\left\|\nabla w\right\|_H+\|\rho\nabla^2 w\|_H \lesssim \left\|\mathcal{L}w\right\|_H.
	\end{align}
\end{lemma}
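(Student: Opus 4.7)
The starting point is the scale-invariant equivalence \eqref{103}, which reads $\|w\|_H^2 \sim \|w\|_{L^2}^2 + \|\rho \nabla w\|_{L^2}^2$. Applied to $w$, $\nabla w$, $\rho\nabla^2 w$, and $\mathcal{L} w$ in turn, this reduces the lemma to establishing the scale-invariant bound
\[
\|\nabla w\|_{L^2}^2 + \|\rho \nabla^2 w\|_{L^2}^2 + \|\rho^2 \nabla^3 w\|_{L^2}^2 \;\lesssim\; \|\mathcal{L} w\|_{L^2}^2 + \|\rho \nabla \mathcal{L} w\|_{L^2}^2,
\]
up to lower-order terms that will be absorbed.

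The first two terms on the left would be handled by invoking an $L^2$-maximal regularity estimate for the divergence-form operator $\mathcal{L}_0 = -\nabla \cdot (\rho\nabla \cdot\,)$ (either as the limiting case $\sigma \to 0$ of the family \eqref{504}, or via a density argument from the $\sigma>0$ estimate in \eqref{b2}). Combined with the algebraic identity $\mathcal{L}_0 w = \mathcal{L} w - z\cdot\nabla w$, this gives $\|\nabla w\|_{L^2} + \|\rho\nabla^2 w\|_{L^2} \lesssim \|\mathcal{L} w\|_{L^2}$ after absorbing the lower-order $\|\nabla w\|_{L^2}$ that appears on the right.

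For the third-order contribution I rewrite $\|\rho^2\nabla^3 w\|_{L^2} = \|\rho\nabla^3 w\|_{L^2(\rho^2)}$ and apply the $\sigma = 2$ version of \eqref{b2} componentwise to $\partial_i w$, then sum over $i$, obtaining
\[
\|\nabla^2 w\|_{L^2(\rho^2)} + \|\rho^2\nabla^3 w\|_{L^2} \;\lesssim\; \Bigl(\sum_i \|\mathcal{L}_2\partial_i w\|_{L^2(\rho^2)}^2\Bigr)^{1/2}.
\]
Using $\mathcal{L}_2 = \mathcal{L} + z\cdot\nabla$ together with the commutator $[\mathcal{L},\partial_i]w = -z_i\Delta w - 2\partial_i w$ (a direct consequence of $\mathcal{L} = -\rho\Delta + 2z\cdot\nabla$), each $\mathcal{L}_2\partial_i w$ is rewritten as $\partial_i\mathcal{L} w$ plus lower-order expressions in $\nabla w$, $\Delta w$, and $\nabla^2 w$. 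The leading term contributes exactly $\|\rho\nabla\mathcal{L} w\|_{L^2}$, which is the $\|\sqrt{\rho}\nabla\cdot\|$-part of $\|\mathcal{L} w\|_H$, while the remaining commutator and lower-order terms are absorbed using the already-established second-order bounds from the previous step.

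The main obstacle I anticipate is the careful bookkeeping when switching between $\mathcal{L}_\sigma$ at different values of $\sigma$ and absorbing the cascade of lower-order error terms into the left-hand side. The key structural observation motivating the whole argument is that the additional $\rho$-weight in the definition of the $H$-norm effectively shifts the natural functional setting for $\mathcal{L}$ from $L^2(\rho)$ to $L^2(\rho^2)$, which is precisely the setting in which $\mathcal{L}_2$ enjoys maximal regularity; hence the family $\{\mathcal{L}_\sigma\}$ from \eqref{504}, already central to the porous-medium theory, provides the exact flexibility needed to climb from the scalar estimate \eqref{b2} up to its $H$-valued analogue.
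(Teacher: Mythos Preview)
Your overall strategy is in the same spirit as the paper's: both rely on the family $\mathcal{L}_\sigma$ from \eqref{504} and on commutator identities to lift the $L^2(\rho)$ maximal regularity estimate \eqref{b2} to the $H$-level. The paper, however, decomposes derivatives into tangential and radial components, whereas you work with Cartesian $\partial_i$. That difference matters.

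The gap is in your absorption steps. In Step~1 you write $\mathcal{L}_0 w=\mathcal{L} w-z\cdot\nabla w$ and claim that the resulting $\|z\cdot\nabla w\|_{L^2}\le\|\nabla w\|_{L^2}$ can be absorbed into the left-hand side. But the constant picked up is the maximal-regularity constant $C$ for $\mathcal{L}_0$, and there is no reason for it to be less than~$1$; the term $z\cdot\nabla w$ is of the \emph{same} order as $\nabla w$, not lower order in any sense that permits absorption. The same problem recurs in Step~2: the commutator pieces $z_i\Delta w$ and $z\cdot\nabla\partial_i w$ contribute $\|\rho\nabla^2 w\|_{L^2}=\|\nabla^2 w\|_{L^2(\rho^2)}$, which is exactly one of the quantities on the left-hand side you are trying to bound, again with no smallness.

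The paper's tangential/radial splitting is designed precisely to avoid this. Because $\rho$ is radial, tangential derivatives commute \emph{exactly} with every $\mathcal{L}_\sigma$, so no commutator error appears at all in the tangential directions. Only the single radial derivative produces an error, and that error takes the structured form $\partial_r\mathcal{L}_\sigma w=\mathcal{L}_{\sigma+1}\partial_r w+\text{lower order}$, where the shift $\sigma\to\sigma+1$ matches the extra weight in the $H$-norm and the genuinely lower-order remainder is controllable by \eqref{b2}. Your Cartesian approach scrambles the radial and tangential directions together, turning what should be one controllable commutator into $N$ same-order terms that cannot be absorbed.
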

For the proof we refer to the theory for the operator $\mathcal{L}_\sigma $ in \eqref{504} and its derivatives developed in \cite{SeisTFE}, more precisely Lemmas 1,2 and 4 and their proofs.
The proof of Lemma \ref{maxregH} can be done analogously. It mainly relies on the observation that the operator $\mathcal{L}_\sigma$ commutates with tangential derivatives and its radial derivative $\partial_r\mathcal{L}_\sigma w$ can be rewritten in terms of $\mathcal{L}_{\sigma+1}\partial_r w$ and lower order terms. This makes the maximal regularity estimate for $\mathcal{L}_\sigma$,  equation \eqref{b2}, applicable.

The proof of well-posedness of the truncated problem exploits a fixed point argument. For this it is necessary to control the Lipschitz constants of the nonlinear terms $F_{\eps}$ in a suitable way.
\begin{lemma}\label{DifferenzF}
	It holds that
	\begin{align}
	\mel	\left\|\sqrt{\rho} F_{\eps} \left[w_1\right]-\sqrt{\rho}F_{\eps} \left[w_2\right]\right\| \\  
	&\lesssim \varepsilon \left( \left\|\rho \nabla^2 w_1-\rho \nabla^2 w_2 \right\|_H+\left\|\nabla w_1-\nabla w_2\right\|_H + \left\|w_1-w_2\right\|_H \right).
	\end{align}
\end{lemma}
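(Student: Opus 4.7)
The plan is to view $F_\eps$ as a smooth function of the four pointwise ``slot'' arguments $(w,\nabla w,\nabla^2 w,\nabla^3 w)$ — this is legitimate since, for $\eps$ small enough, the truncation $\eta_\eps$ is compactly supported well inside the region where $R[\cdot]$ is regular, so $F_\eps$ extends smoothly by zero — and to apply the fundamental theorem of calculus along the segment $w_s=(1-s)w_1+sw_2$, $s\in[0,1]$, giving
\begin{align}
F_\eps[w_1]-F_\eps[w_2]=\int_0^1\sum_{k=0}^{3}\partial_{\nabla^k w}F_\eps[w_s]\cdot\nabla^k(w_1-w_2)\,ds.
\end{align}
Taking the $\sqrt\rho\,\|\cdot\|$-norm and invoking Minkowski in integral form reduces the problem to uniform-in-$s$ pointwise bounds on $\sqrt\rho\,\partial_{\nabla^k w}F_\eps[w_s]$.

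Two observations drive the pointwise estimates. First, on the support of $\eta_\eps[w_s]$ the cut-off forces $|w_s|,|\nabla w_s|,|\rho\nabla^2 w_s|,|\rho^2\nabla^3 w_s|\lesssim\eps$; plugging these bounds into \eqref{101} and differentiating slot-by-slot I would obtain $|F[w_s]|\lesssim\eps^2/\rho$, $|\partial_w F|\lesssim\eps^2/\rho$, $|\partial_{\nabla w}F|\lesssim\eps/\rho$, $|\partial_{\nabla^2 w}F|\lesssim\eps$, $|\partial_{\nabla^3 w}F|\lesssim\rho\,\eps$. Second, since the map $f\mapsto\hat\eta(f^2/\eps^2)$ is globally Lipschitz with constant $\lesssim 1/\eps$, the chain rule gives $|\partial_w\eta_\eps|,|\partial_{\nabla w}\eta_\eps|\lesssim 1/\eps$, $|\partial_{\nabla^2 w}\eta_\eps|\lesssim\rho/\eps$, $|\partial_{\nabla^3 w}\eta_\eps|\lesssim\rho^2/\eps$. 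The crucial cancellation I would exploit is that the $1/\eps$ produced by differentiating the cut-off is exactly absorbed by the $\eps^2$ smallness of $F$ on the truncation support, so the product rule $\partial F_\eps=(\partial\eta_\eps)F+\eta_\eps(\partial F)$ yields
\begin{align}
|\sqrt\rho\,\partial_w F_\eps|\lesssim \tfrac{\eps}{\sqrt\rho},\;\; |\sqrt\rho\,\partial_{\nabla w}F_\eps|\lesssim \tfrac{\eps}{\sqrt\rho},\;\; |\sqrt\rho\,\partial_{\nabla^2 w}F_\eps|\lesssim \sqrt\rho\,\eps,\;\; |\sqrt\rho\,\partial_{\nabla^3 w}F_\eps|\lesssim \rho^{3/2}\eps.
\end{align}

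Squaring these bounds, multiplying by the squared slot-difference, and integrating against $\rho\,dz$, each of the four terms in the Minkowski estimate becomes $\eps^2$ times one of $\|w_1-w_2\|_{L^2}^2$, $\|\nabla(w_1-w_2)\|_{L^2}^2$, $\|\rho\nabla^2(w_1-w_2)\|_{L^2}^2$, $\|\rho^2\nabla^3(w_1-w_2)\|_{L^2}^2$. Appealing to the equivalent scale-invariant characterization \eqref{103} — in particular $\|\rho\nabla^2 f\|_H^2\sim\|\rho\nabla^2 f\|_{L^2}^2+\|\rho^2\nabla^3 f\|_{L^2}^2$ and $\|\nabla f\|_H^2\sim\|\nabla f\|_{L^2}^2+\|\rho\nabla^2 f\|_{L^2}^2$ — the first three contributions are dominated by $\eps^2$ times $\|w_1-w_2\|_H^2$, $\|\nabla(w_1-w_2)\|_H^2$ and $\|\rho\nabla^2(w_1-w_2)\|_H^2$, respectively, while the fourth is also controlled by $\eps^2\|\rho\nabla^2(w_1-w_2)\|_H^2$; summing these four estimates delivers the statement. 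The main technical obstacle, and the reason the truncation in \eqref{c3} is applied up to third order with the specific weights $\rho$ and $\rho^2$, is the delicate matching of $\rho$-weights: the structural $\rho$-factors in front of $\nabla^2 w$ and $\nabla^3 w$ in $G[w]$, the $\rho^k$-weights built into the arguments of the cut-off, the prefactor $\sqrt\rho$, and the $H$-norm scaling on the right-hand side must conspire exactly, since a cruder bound ignoring either the $\eps^2$ smallness of $F$ or the $\rho$-weights of the cut-off derivatives would lose the overall factor $\eps$ and produce a spurious $1/\rho$ blow-up at the free boundary $\partial B_1(0)$.
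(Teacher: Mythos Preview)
Your proof is correct and follows essentially the same route as the paper. The paper simply asserts the pointwise Lipschitz estimate
\[
\left|\rho F_{\eps}[w_1]-\rho F_{\eps}[w_2]\right|\lesssim \eps\left(|\rho^2\nabla^3(w_1-w_2)|+|\rho\nabla^2(w_1-w_2)|+|\nabla(w_1-w_2)|+|w_1-w_2|\right)
\]
as ``readily checked'' and then passes to $L^2$ and invokes the equivalence \eqref{103}, whereas you have spelled out the verification of that pointwise bound via the fundamental theorem of calculus in the slot variables, tracking the $\rho$-weights and the $\eps$-cancellation between $\partial\eta_\eps$ and $F$ explicitly; the two arguments coincide once your slot-derivative bounds are combined. (One typographical slip: near the end you write $G[w]$ where $F[w]$ is meant.)
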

\begin{proof}
	This is a straightforward computation embarking from the pointwise estimate 
	\begin{align}
	\mel \left|\rho  F_{\eps} [w_1]-\rho  F_{\eps} [w_2]\right|\\
	& \lesssim \varepsilon \left(\left|\rho^2 \nabla^3w_1-\rho^2\nabla^3w_2\right|+\left|\rho\nabla^2w_1-\rho\nabla^2w_2\right|
	+\left|\nabla w_1-\nabla w_2\right|+\left|w_1-w_2\right|\right),
	\end{align}
	which in turn can be readily checked. 
	Indeed, the latter implies that
	\begin{align*}
	\mel 	\left\|\sqrt{\rho} F_{\eps} [w_1] - \sqrt{\rho} F_{\eps} [w_2]\right\| \\
	& =\left\|\rho  F_{\eps} [w_1] - \rho F_{\eps} [w_2]\right\|_{L^2}\\
	& \lesssim \varepsilon\left( \left\|\rho^2 \nabla^3w_1-\rho^2\nabla^3w_2\right\|_{L^2}+\left\|\rho\nabla^2w_1-\rho\nabla^2w_2\right\|_{L^2}+\left\|\nabla w_1-\nabla w_2\right\|_{L^2}+\left\|w_1-w_2\right\|_{L^2}\right)
	\\
	&\lesssim \varepsilon\left( \left\| \nabla^2w_1-\nabla^2w_2\right\|_{H}+\left\|\nabla w_1-\nabla w_2\right\|_{H}+ \left\|w_1-w_2\right\|_{H}\right),
	\end{align*}	 
	where we have used \eqref{103} in the last inequality.
\end{proof}

With this preparation, we are in the position to derive well-posedness.

\begin{theorem}[Global well-posedness in $H$]\label{wellposednessH1}
	There exists $\varepsilon^*>0$ such that for every $\varepsilon\leq \varepsilon^*$ and every initial datum $w_0\in H$ the truncated problem \eqref{c3} has a unique global solution $w$. Moreover, the solution $w$ satisfies
	\begin{align}
	\left\|w\right\|_{L^\infty\left((0,\infty);H\right)}+\left\|\nabla w\right\|_{L^2\left((0,\infty);H\right)}+\|\rho\nabla^2w\|_{L^2\left((0,\infty);H\right)}\lesssim \left\|w_0\right\|_{H}.
	\end{align}
\end{theorem}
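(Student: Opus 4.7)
My plan is to reformulate \eqref{c3} as a fixed-point problem in the Banach space
\[
\mathcal{X}_T := \bigl\{w\colon [0,T]\to H :\ \|w\|_{\mathcal{X}_T} := \|w\|_{L^\infty((0,T);H)} + \|\mathcal{L}w\|_{L^2((0,T);H)} < \infty\bigr\},
\]
whose norm is, by Lemma \ref{maxregH}, equivalent to the one appearing in the theorem. The first step is a linear maximal regularity estimate in $H$: for a source of the form $f=\rho^{-1}\nabla\cdot(\rho^2 G)+\rho G$ and $w$ solving the linear Cauchy problem $\partial_t w+\mathcal{L}^2 w+N\mathcal{L}w=f$ with initial datum $w_0\in H$, testing with $w$ in the $H$ inner product and exploiting the self-adjointness of $\mathcal{L}$ yields
\[
\tfrac{1}{2}\tfrac{d}{dt}\|w\|_H^2 + \|\mathcal{L}w\|_H^2 + N\|\mathcal{L}^{1/2}w\|_H^2 = \langle f,w\rangle_H.
\]
Several integration-by-parts computations, together with the identity $\|\mathcal{L}^{1/2}u\|=\|\sqrt{\rho}\nabla u\|$ applied to $u=w$ and $u=\mathcal{L}w$, reduce the right-hand side to
\[
|\langle f,w\rangle_H| \lesssim \|\sqrt{\rho}G\|\bigl(\|w\|_H+\|\mathcal{L}w\|_H\bigr).
\]
Young's inequality and Lemma \ref{maxregH} then give the linear a priori bound
\[
\|w\|_{\mathcal{X}_T}^2 \lesssim \|w_0\|_H^2 + \|\sqrt{\rho}G\|^2_{L^2((0,T);L^2(\rho))},
\]
with a constant independent of $T$.

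Second, I would define the nonlinear map $\Phi:\mathcal{X}_T\to\mathcal{X}_T$ sending $w$ to the solution $\tilde w$ of the above linear problem with source $G=F_\eps[w]$. Since $F_\eps[0]=0$, Lemma \ref{DifferenzF} provides the time-integrated Lipschitz bound
\[
\|\sqrt{\rho}F_\eps[w_1]-\sqrt{\rho}F_\eps[w_2]\|_{L^2((0,T);L^2(\rho))}\lesssim \eps\,\|w_1-w_2\|_{\mathcal{X}_T}.
\]
Chained with the linear estimate, this turns $\Phi$ into a strict contraction on a closed ball of radius comparable to $\|w_0\|_H$ in $\mathcal{X}_T$, provided $\eps\leq \eps^*$ is chosen small enough, and its fixed point is the unique solution of \eqref{c3}. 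Because the contraction constant is independent of $T$, Banach's fixed-point theorem together with the standard extension argument produces the unique global solution and the claimed $T$-uniform estimate.

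The main obstacle is the uniform-in-$T$ closure of this scheme: unlike $\|\nabla w\|_H$ and $\|\rho\nabla^2 w\|_H$, the $\|w\|_H$ term appearing on the right of Lemma \ref{DifferenzF} is only controlled by the $L^\infty_t H$ part of $\|\cdot\|_{\mathcal{X}_T}$ and does not carry an $L^2_t$ piece, so integrating it in time naively costs a factor of $\sqrt{T}$. To sidestep this I plan to decompose $w=\bar w+w^\perp$ according to the kernel of $\mathcal{L}$: on $w^\perp$ the Poincaré-type bound $\|w^\perp\|_H\lesssim \|\mathcal{L}^{1/2}w^\perp\|_H$ (available because $\mathcal{L}$ has a spectral gap on the orthogonal complement of constants, cf.~Theorem \ref{spektrum}) absorbs the contribution into the dissipation, whereas the scalar mean $\bar w$ satisfies the ODE $\tfrac{d}{dt}\bar w = c\int \rho^2 F_\eps[w]\,dz$ (the divergence part has vanishing total mass), so that $|\bar w(t)|-|\bar w(0)|$ is controlled by a time integral of $\|\sqrt{\rho}F_\eps\|$ and hence, by Lemma \ref{DifferenzF} and Cauchy--Schwarz in time, by $\eps$ times the $\mathcal{X}_t$-norm of $w$. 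Combining the two mode-wise estimates closes the global bound without factors of $T$ and completes the contraction argument.
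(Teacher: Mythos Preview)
Your setup through local well-posedness mirrors the paper: the energy identity in $H$, the bound $|\langle f,w\rangle_H|\lesssim\|\sqrt{\rho}G\|(\|w\|_H+\|\mathcal{L}w\|_H)$, and a contraction argument via Lemma~\ref{DifferenzF} all appear there. The paper even explicitly notes that at this stage the constant \emph{does} depend on $T$ (through Gronwall), so your claim that the linear a~priori bound holds ``with a constant independent of $T$'' is premature; the $\|w\|_H$ factor on the test-function side already forces a Gronwall step.

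The real difficulty, as you recognize, is the global extension, and here your proposed fix does not close. The ODE for the mean gives
\[
|\bar w(t)-\bar w(0)|\lesssim\int_0^t\|\sqrt{\rho}F_\eps[w]\|\,ds,
\]
and whether you invoke Lemma~\ref{DifferenzF} or the direct quadratic structure of $F$, passing to the $\mathcal{X}_t$-norm via Cauchy--Schwarz in time costs at least $\sqrt{t}$; the $\|w\|_H$ contribution inside Lemma~\ref{DifferenzF} costs $t$. Feeding this back into the energy inequality leaves a right-hand side of order $\eps^2|\bar w|^2$ that still grows in $t$, so the loop does not close uniformly. The zero mode is genuinely undamped, and controlling it by the ODE alone cannot compensate.

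The paper's resolution is conceptually different: rather than refine the bound on $\|\sqrt{\rho}F_\eps\|$, it exploits the truncation on the \emph{test-function} side. On $\supp F_\eps$ one has $|w|+|\nabla w|+|\rho\nabla^2 w|+|\rho^2\nabla^3 w|\lesssim\eps$, which in turn yields $|w|+|\nabla w|+|\mathcal{L}w|+\rho|\nabla\mathcal{L}w|\lesssim\eps$. Since $\langle f,w\rangle_H$ expands into pairings of $\rho F_\eps$ with exactly these four quantities, one gets
\[
|\langle f,w\rangle_H|\lesssim \eps\,\|\rho F_\eps[w]\|_{L^1}\lesssim \eps\,\|\nabla w\|_H\bigl(\|\nabla w\|_H+\|\rho\nabla^2 w\|_H\bigr),
\]
the last step using the explicit quadratic structure \eqref{106} and Cauchy--Schwarz. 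This bounds the forcing purely by dissipation terms, with no $\|w\|_H$ anywhere, and yields the clean differential inequality $\tfrac{d}{dt}\|w\|_H^2+c\|\mathcal{L}w\|_H^2\le 0$. That is the missing idea: the cut-off controls not only the nonlinearity but also the solution itself wherever the nonlinearity acts, which is what kills the zero-mode obstruction.
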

\begin{proof}
	We commence by considering the linear initial value problem
	\begin{align}\label{a6}
	\begin{cases}
	\partial_t\tilde{w}+\mathcal{L}^2\tilde{w}+N\mathcal{L}\tilde w &= \rho^{-1} \nabla \cdot\left( \rho^2 \tilde{F}\right)+\rho^2 \tilde{F}\\
	\tilde{w}(0,\cdot)&=w_0
	\end{cases}
	\end{align}
	for fixed $\tilde{F}   \in L^2((0,\infty);L^2(\rho^2 ))$. The problem \eqref{a6} has a unique weak solution $\tilde{w}$ on the time interval $(0,T)$, see Lemma 7 in \cite{SeisTFE}. It satisfies the estimate
	\begin{equation}
	\label{a7}
	\begin{aligned}
	\mel \left\|\tilde{w}\right\|_{L^\infty\left((0,T);H\right)}+\|\nabla \tilde{w}\|_{L^2\left((0,T);H\right)}+\left\|\rho\nabla^2\tilde{w}
	\right\|_{L^2\left((0,T);H\right)}\\
	&\le C_T\left(\left\|\rho\tilde{F}\right\|_{L^2\left((0,T);L^2\right)} +\left\|w_0\right\|_{H} \right).
	\end{aligned}\end{equation}
	To derive \eqref{a7}, we test the equation with $w$ in the inner product $\langle \cdot,\cdot \rangle_H$ and obtain after multiple integration by parts
	\begin{align}\label{104}
	\frac{1}{2}\frac{d}{dt}\left\|\tilde w\right\|^2_H + \left\|\mathcal{L}\tilde w\right\|^2_H+N\left\|\mathcal{L}^{1/2}\tilde w\right\|^2_H= - \langle \rho \tilde{F},\nabla \tilde w\rangle_H+\langle \rho \tilde{F},\tilde w\rangle_H.
	\end{align}
	Using the Cauchy-Schwarz inequality in the energy space $L^2(\rho)$, we furthermore notice that
	\begin{align}
	\left|\langle \rho \tilde{F},\nabla \tilde w\rangle_H\right|&\leq\left|\la \rho\tilde F,\grad \tilde w\ra\right| +\left| \la \rho\tilde F,\grad\L \tilde w\ra\right| \\& \le \|\sqrt{\rho} \tilde F\|\left( \|\sqrt{\rho} \grad \tilde w\| + \|\sqrt{\rho} \grad \L \tilde w\|\right)\
	 \le \|\sqrt{\rho} \tilde F \|\left(\|\tilde w\|_H + \|\L\tilde  w\|_{H}\right)
	\end{align}
	and
	\begin{align}
	\left| \langle \rho \tilde{F},\tilde w \rangle_H \right| & \le \left| \langle \rho \tilde{F}, \tilde w\rangle \right| + \left| \langle \rho \tilde{F},\mathcal{L}\tilde  w\rangle\right|  \\
	& \leq \left\|\rho \tilde{F}\right\|\left(\left\|\tilde  w\right\|+\left\|\mathcal{L}\tilde  w\right\|\right)\leq \left\|\rho \tilde{F}\right\|\left(\left\| \tilde w\right\|_H+\left\|\mathcal{L}^{1/2}\tilde  w\right\|_H\right).
	\end{align}
	We now invoke Young's inequality and the fact that $\rho\le 1$ and we drop the non-negative lower-order term on the left-hand side to derive the differential inequality
	\[
	\frac{d}{dt}\left\|\tilde w\right\|^2_H + \left\|\mathcal{L}\tilde w\right\|^2_H \lesssim \left\|\sqrt{\rho}\tilde{F}\right\|^2 + \left\| \tilde w\right\|^2_H.
	\]
	We deduce \eqref{a7} with help of the maximal regularity result of Lemma \ref{maxregH}   and a Gronwall type argument.

	To show well-posedness for the nonlinear problem, we apply a fixpoint argument. The estimate in Lemma \ref{DifferenzF} shows that the nonlinearity $F_{\eps}[w]$ belongs to $ L^2((0,T);L^2(\rho^2))$ whenever  $w\in L^\infty((0,T);H)$ is given such that $\nabla w, \rho \nabla^2w \in L^2((0,T);H)$. By the linear theory, 
	there exists thus a solution $\tilde{w}=\tilde{w}(w,w_0)$ to the Cauchy problem \eqref{a6} with $\tilde{F} = F_{\eps}[w]$, and the estimate \eqref{a7} and 
	Lemma \ref{DifferenzF} (applied to  $w_1=\tilde{w}$ and $w_2=0$)   yield
	that
	\begin{align}\MoveEqLeft
	\left\|\tilde{w}\right\|_{L^\infty\left((0,T);H\right)}+\left\|\nabla \tilde{w}\right\|_{L^2\left((0,T);H\right)}+\left\|\rho \nabla^2\tilde{w}
	\right\|_{L^2\left((0,T);H\right)}\\&\le  C_T\varepsilon\left( \left\|\rho \nabla^2w\right\|_{L^2\left((0,T);H\right)}+\left\|\nabla w\right\|_{L^2\left((0,T);H\right)}  +\left\|w\right\|_{L^\infty\left(\left(0,T\right);H\right)}\right)+C_T\left\|g\right\|_{H}.	
	\end{align}
	Similarly, given $w_1$ and $w_2$ in the same class of functions, the difference of the corresponding solutions $\tilde{w}_1\left(w_1,g\right)$ and $\tilde{w}_2\left(w_2,g\right)$ to the associated linear problems is bounded by
	\begin{align}\MoveEqLeft
	\left\|\rho\nabla^2 \tilde{w}_1-\rho\nabla^2 \tilde{w}_2\right\|_{L^2\left(H\right)}+\left\|\nabla \tilde{w}_1-\nabla \tilde{w}_2\right\|_{L^2\left(H\right)}+\left\|\tilde{w}_1-\tilde{w}_2\right\|_{L^\infty(H)} \\ 
	&\le  C_T\varepsilon\left(\left\|\rho\nabla^2 w_1-\rho\nabla^2 w_2\right\|_{L^2\left(H\right)}+\left\|\nabla w_1-\nabla w_2\right\|_{L^2\left(H\right)}+\left\|w_1-w_2\right\|_{L^\infty(H)}\right).
	\end{align}
	We conclude that, for $\varepsilon$ sufficiently small, the mapping $w\mapsto \tilde{w}(w,w_0)$ is a contraction on the space $\left\{w\in L^\infty\left((0,T); H \right)\text{ with }\nabla w \in L^2\left((0,T); H\right) \text{ and } \rho \nabla^2w \in L^2\left((0,T); H\right)\right\}$. An application of Banach's fixed point theorem shows that there exists a unique solution $w$ to the truncated problem \eqref{perturbationequation} with initial datum $w_0\in H$. We stress that the constructed solution is defined locally in time and that the size of the admissible $\eps$ is dependent on $T$. In the following, we choose $\eps$ for $T=1$ and show that the constructed solution can be extended globally in time.

	Our starting point is the estimate for the linear problem \eqref{104}, in which we choose $\tilde w=w$ and $\tilde F=F_{\eps}[w]$. In order to avoid a time-dependency in the estimate for $w$, we should estimate the nonlinearities slightly differently as above. We notice that the nonlinearity obeys the pointwise estimate
	\begin{equation}\label{106}
	|F_{\eps}[w]|   \lesssim \rho |\grad w||\grad^3 w| + \rho |\grad^2 w|^2 + |\grad w| |\grad^2 w| + |\grad w|^2 ,
	\end{equation}
	which implies that
	\[
	\|\rho F_{\eps}[w]\|_{L^1} \lesssim \|\grad w\|_{L^2} \|\rho^2 \grad^3 w\|_{L^2} + \|\rho \grad^2 w\|_{L^2}^2 + \|\grad w\|_{L^2} \|\rho \grad^2 w\|_{L^2} + \|\grad w\|_{L^2}^2
	\]
	via the Cauchy--Schwarz inequality.	In view of the norm characterization in \eqref{103}, the latter can be rewritten as
	\[
	\|\rho F_{\eps}[w]\|_{L^1} \lesssim \|\grad w\|_H\left(\|\grad w\|_H + \|\rho\grad^2 w\|_H\right).
	\]
	We also notice that
	\[
	|w| + |\grad w| + |\L w|+  \rho |\grad \L w| \lesssim |w| + |\grad w| + \rho |\grad^2 w| + \rho^2 |\grad^3 w| \lesssim \eps
	\]
	in the support of the nonlinearity $F_{\eps}$ by our choice of the cut-off. Thanks to the previous two bounds, the nonlinear terms on the right-hand side of \eqref{104} are estimated as follows:
	\begin{align*}
	\mel \left| \langle \rho {F}_{\eps}[w],\nabla  w\rangle_H\right|+\left|\langle \rho {F}_{\eps}[w], w\rangle_H\right| \\
	& = \left| \langle \rho {F}_{\eps}[w],  w\rangle\right|+\left|  \langle \rho {F}_{\eps}[w],\nabla  w\rangle\right|+\left|  \langle \rho {F}_{\eps}[w],\L  w\rangle\right|+ \left| \langle \rho {F}_{\eps}[w],\nabla \L w\rangle\right|\\
	& \lesssim \eps \|\rho F_{\eps}[w]\|_{L^1}\\
	& \lesssim \eps \|\grad w\|_H\left(\|\grad w\|_H + \|\rho\grad^2 w\|_H\right).
	\end{align*}
	Substitution into \eqref{104} thus yields
	\[
	\frac{d}{dt} \|w\|_H^2 + \|\L w\|_{H}^2 \lesssim \eps \|\grad w\|_H\left(\|\grad w\|_H + \|\rho \grad^2 w\|_H\right),
	\]
	where we have again dropped  the lower order term on the left-hand side. In view of the maximal regularity estimate from Lemma \ref{maxregH}, the right-hand side can be absorbed into the left-hand side provided that $\eps$ is chosen sufficiently small. This gives
	\[
	\frac{d}{dt} \|w\|_H^2 + \frac1C\|\L w\|_{H}^2 \le 0,
	\]
	for some $C>1$, and the local solution can thus be extended globally for all times. The estimate in the assertion of the theorem follows. 
	%
	%
	%
	%
	%
\end{proof}

It will be crucial for our analysis to have some smoothing properties established for the truncated equation \eqref{c3}. This will be achieved in the following two lemmas. 

\begin{lemma}\label{L1}
	There exists $\eps^*$ possibly smaller than in Theorem \ref{wellposednessH1}, such that for any $0< \eps\le \eps^*$ the following holds: If $w$ is the solution to the truncated equation \eqref{c3} with initial datum $w_0\in H$	then it holds that
	\begin{align}\label{e8}\MoveEqLeft
	\left\|\partial_tw\right\|_{L^q\left((1/4,2);L^q(\rho)\right)} +\left\|w\right\|_{L^q\left((1/4,2);L^q(\rho)\right)}+\left\|\nabla w\right\|_{L^q\left((1/4,2);L^q(\rho)\right)}\\&+ \left\|\nabla^2w\right\|_{L^q\left((1/4,2);L^q(\rho)\right)} +\left\|\rho\nabla^3w\right\|_{L^q\left((1/4,2);L^q(\rho)\right)}+\left\|\rho^2\nabla^4w\right\|_{L^q\left((1/4,2);L^q(\rho)\right)} \lesssim \left\|w_0\right\|_{H}
	\end{align}
	for any $q\in (1,\infty)$.
\end{lemma}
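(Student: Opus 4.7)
The proof combines the linear maximal regularity estimate \eqref{112} with the global $H$-bound from Theorem \ref{wellposednessH1}, via a time cutoff and nonlinear absorption. First, I introduce a smooth time cutoff $\chi \in C_c^\infty([0,3))$ with $\chi \equiv 1$ on $[1/4, 2]$ and $\chi \equiv 0$ on $[0, 1/8]$, so that $\tilde w := \chi w$ satisfies $\tilde w(0) = 0$ and
\[
\partial_t \tilde w + \L^2 \tilde w + N \L \tilde w = \chi'(t)\, w + \chi(t)\, h[w], \quad h[w] := \rho^{-1}\nabla \cdot (\rho^2 F_\eps[w]) + \rho F_\eps[w].
\]
Applying the maximal regularity estimate \eqref{112} with $p=q$ and $\sigma=1$ on $(0,3)$, together with the fact that $\chi\equiv 1$ on $[1/4,2]$, reduces the problem to showing
\[
\|\chi' w\|_{L^q((0,3); L^q(\rho))} + \|\chi\, h[w]\|_{L^q((0,3); L^q(\rho))} \lesssim \|w_0\|_H.
\]
The missing low-order norms $\|w\|$ and $\|\nabla w\|$ on the left-hand side of the lemma follow from $\tilde w(0)=0$ by integrating $\partial_t\tilde w$ in time and a standard parabolic embedding.

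The heart of the argument is the nonlinear estimate. Expanding $\rho^{-1}\nabla(\rho^2 F_\eps) = \rho \nabla\cdot F_\eps - 2 z \cdot F_\eps$ and using the pointwise cutoff bounds $|w|, |\nabla w|, |\rho \nabla^2 w|, |\rho^2 \nabla^3 w| \le \sqrt{2}\,\eps$ on the support of $\eta_\eps$, a direct computation yields
\[
|h[w]| \lesssim \eps \bigl(|\nabla w| + |\nabla^2 w| + \rho |\nabla^3 w| + \rho^2 |\nabla^4 w|\bigr).
\]
The computation is delicate when derivatives fall on $\eta_\eps$: we have $|\nabla \eta_\eps| \lesssim 1 + \eps^{-1}(|\nabla^2 w| + \rho|\nabla^3 w| + \rho^2|\nabla^4 w|)$, but $\rho|F|$ is of size $\lesssim \eps^2$ on the cutoff support, producing exactly the factor of $\eps$ needed for absorption. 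Integrating and choosing $\eps^* = \eps^*(q)$ small enough, the higher-order terms $\|\nabla^2 w\|, \|\rho \nabla^3 w\|, \|\rho^2 \nabla^4 w\|$ appearing in $\|\chi h[w]\|_{L^q(L^q(\rho))}$ can be absorbed into the left-hand side of maximal regularity.

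The remaining terms, $\|\chi' w\|$ and $\eps\|\nabla w\|$, are controlled by combining the $H$-bound of Theorem \ref{wellposednessH1}, which gives $w \in L^\infty(L^2(\rho))$ and $\nabla w \in L^2(L^2(\rho))$ with norms $\lesssim \|w_0\|_H$, together with the pointwise cutoff bounds $|w|, |\nabla w| \le \sqrt{2}\,\eps$. A bootstrap in $q$ closes the argument: the case $q=2$ follows directly, while higher values of $q$ are obtained by iteration, using the gained $L^{q'}$-regularity of the higher derivatives together with weighted Sobolev embeddings (provided by the Hardy-type inequalities in the appendix) to successively improve the integrability of $w$ and $\nabla w$ at each step.

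The main obstacle is the nonlinear estimate in the second paragraph: one must track every $\eps$-power and every $\rho$-weight introduced by differentiating $F_\eps$, including terms where derivatives hit $\eta_\eps$, and verify that all resulting contributions fit into the scale of weighted derivatives controlled by \eqref{112}. A secondary subtlety is the bootstrap in $q$, where the embedding must be applied carefully because of the weighted nature of the spaces; any mismatch of $\rho$-weights in either step would prevent absorption and break the closure of the estimate.
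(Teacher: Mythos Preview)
Your overall strategy---time cutoff to kill the initial datum, maximal regularity \eqref{112}, pointwise nonlinear estimate with absorption via smallness of $\eps$, then bootstrap in $q$---is exactly the paper's approach, and your handling of the nonlinearity (including the observation that $|\rho F|\lesssim\eps^2$ on $\supp\eta_\eps$ compensates the $\eps^{-1}$ from $\nabla\eta_\eps$) matches the paper line for line.

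There is, however, a genuine gap in your bootstrap. With a \emph{single} cutoff $\chi$, the improved $L^{q_1}$ regularity you gain from the first step lives only where $\chi\equiv 1$, namely on $[1/4,2]$; but the forcing term $\chi' w$ that you must control at the next integrability level is supported on $[1/8,1/4]$, where you have only the original $L^2$ information. The cutoff bounds $|w|\le\sqrt2\,\eps$ cannot help here either, since they hold only on $\supp\eta_\eps$, not globally. The paper resolves this by introducing a \emph{sequence} of nested cutoffs $\phi_1,\phi_2,\dots$, each activated strictly later than the previous one, so that $\phi_{k+1}'$ is supported where $\phi_k\equiv 1$ and the $L^{q_k}$ bound from step $k$ controls $\|w\phi_{k+1}'\|_{L^{q_k}}$. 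Since each Sobolev step (Lemma~\ref{sobolevinequality}) raises the exponent by a fixed dimension-dependent amount, finitely many iterations suffice, and consequently $\eps^*$ depends only on $N$, not on $q$---your $\eps^*(q)$ is a symptom of the same issue. Once you replace the single cutoff by this nested family, your argument goes through.
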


\begin{proof}We will perform an iterative argument for which it is convenient to localize time on an arbitrary scale. For this purpose, we fix   $T\in(0,2)$ and introduce a smooth cut-off function $\phi_1:\mathbb{R}^+_0\rightarrow [0,1]$, satisfying $\phi_1(t)=0$ if $t\leq T$ and $\phi_1(t)=1$ if $t\ge 2T$. Of course, its growth rate is inversely proportional to the cut-off scale $T$, but having this quantity uniformly finite throughout the proof, we will simply write $|\phi'_1|\lesssim 1$ for convenience. Smuggling $\phi_1$ into the truncated equation \eqref{c3} gives
	\begin{align}
	\partial_t(w \phi_1) + \mathcal{L}^2(w\phi_1) + N\mathcal{L}(w\phi_1) = \rho^{-1}\nabla \cdot \left(\rho^2F_{\eps}[w]\right)\phi_1 +\rho F_{\eps}[w]\phi_1 + w\phi_1'.
	\end{align}
	We note that $w\phi_1$ has zero initial datum, which makes the maximal regularity theory for $\mathcal{L}^2+N\mathcal{L}$ applicable: From \eqref{112} and elementary computations we infer the maximal regularity estimate
	\begin{equation}
	\label{c4}
	\begin{aligned}
	\MoveEqLeft 
	\left\|\partial_t (w\,  \phi_1)\right\|_{L^2\left(L^2(\rho)\right)}+\left\|\nabla^2w\,\phi_1\right\|_{L^2\left(L^2(\rho)\right)}+\left\|\rho\nabla^3w\,\phi_1\right\|_{L^2\left(L^2(\rho)\right)} + \left\|\rho^2\nabla^4w\, \phi_1\right\|_{L^2\left(L^2(\rho)\right)} \\
	& \lesssim \|\eta_{\eps} F\, \phi_1\|_{L^2(L^2(\rho))} + \|\rho \grad \eta_{\eps}\, F  \phi_1\|_{L^2(L^2(\rho))} + \|\rho \eta_{\eps} \grad F\, \phi_1\|_{L^2(L^2(\rho))} + \left\|w\phi_1'\right\|_{L^2\left( L^2(\rho)\right)},
	\end{aligned}
	\end{equation}
	where we have set $F=F_{\eps}[w]$ for brevity. For brevity, we have dropped the time interval $(0,2)$ in the norms. The final term on the right-hand side is easily controlled via the a priori estimates from Theorem \ref{wellposednessH1} and the defining properties of the temporal cut-off $\phi_1$: It holds that	
	\[
	\left\|w\phi_1'\right\|_{L^2\left(L^2(\rho)\right)} \lesssim   \left\|w\right\|_{L^\infty\left(L^2(\rho)\right)} \le \|w\|_{L^{\infty}(H)} \lesssim  \left\|w_0\right\|_{H}.
	\] 	
	For the first and the second term, we use the pointwise bound on the nonlinearity on the support of $\eta_{\eps}$,
	\begin{equation}
	\label{107}
	|F[w]| \lesssim \eps \left(|\grad w| + |\grad^2 w|  + \rho|\grad^3 w|\right)\lesssim \rho^{-1}\eps^2,
	\end{equation}
	cf.~\eqref{106}. More precisely, plugging the first of the two estimates into the first term on the right-hand side of \eqref{c4}, we find that
	\begin{align*}
	\|\eta_{\eps} F\, \phi_1\|_{L^2(L^2(\rho))} &\lesssim \eps\left(\|\grad w \phi_1\|_{L^2(L^2(\rho))}+ \|\grad^2 w \phi_1\|_{L^2(L^2(\rho))} + \|\rho \grad^3 w \phi_1\|_{L^2(L^2(\rho))}\right).
	\end{align*}
	We interpolate the first term with the help of Lemma \ref{interpolationintequality} in the appendix, so that
	\begin{align*}
	\|\eta_{\eps} F\, \phi_1\|_{L^2(L^2(\rho))} &\lesssim \eps\left(\| w \phi_1\|_{L^2(L^2(\rho))}+ \|\grad^2 w \phi_1\|_{L^2(L^2(\rho))} + \|\rho \grad^3 w \phi_1\|_{L^2(L^2(\rho))}\right).
	\end{align*}
	The two last terms on the right-hand side can be absorbed into the left-hand side of \eqref{c4} if $\eps$ is chosen sufficiently small, while the first term  is controlled by the initial datum through the energy estimate of Theorem \ref{wellposednessH1}.

	To estimate the second term on the right-hand side of \eqref{c4}, we notice that
	\[
	|\grad\eta_{\eps}| \lesssim 1 + \frac1{\eps}\left(|\grad^2 w| + \rho |\grad^3 w| +\rho^2 |\grad^4 w|\right),
	\] 
	and thus, using that $\rho\le 1$ and the second estimate in \eqref{107}, we find 
	\begin{align*}
	\mel
	\|\rho \grad \eta_{\eps}\, F  \phi_1\|_{L^2(L^2(\rho))}\\
	& \lesssim \|\chi_{\supp \eta_{\eps} } \, F  \phi_1\|_{L^2(L^2(\rho))}\\
	&\quad + \eps\left(\| \grad w \phi_1\|_{L^2(L^2(\rho))} +\| \grad^2 w    \phi_1\|_{L^2(L^2(\rho))} +\|\rho \grad^3 w   \phi_1\|_{L^2(L^2(\rho))} \right).
	\end{align*}
	The first term can be estimated as before and the second one can be absorbed into the left-hand side of \eqref{c4} if $\eps$ is sufficiently small.
	
	It remains to study the third term on the right-hand side of \eqref{c4}. Here, we find after a small computation that
	\[
	\rho|\grad F| \lesssim |F| +\eps\left(|\grad w|+ |\grad^2 w| + \rho|\grad^3w | +\rho^2|\grad^4w|\right).
	\]
	Hence, in view of the bound in \eqref{107}, the only new term we have to deal with is the fourth-order term. This one, however, can be controlled as the second- and third-order term before by absorption into the left-hand side of \eqref{c4}.
	
	Combining all the estimates that we discussed, adding the lower order term from the energy inequality in Theorem \ref{wellposednessH1} to the left-hand side, making use of the interpolation inequality in Lemma \ref{interpolationintequality} in the appendix to include the first order spatial gradient and finally  dropping all higher order terms, we arrive at
	\begin{equation}
	\label{108}
	\left\| w\,  \phi_1 \right\|_{L^2\left(L^2(\rho)\right)}+ 	\left\|\partial_t (w\,  \phi_1)\right\|_{L^2\left(L^2(\rho)\right)}+\left\|\nabla( w\,\phi_1)\right\|_{L^2\left(L^2(\rho)\right)}
	\lesssim \|w_0\|_H.
	\end{equation}

	We are now in the position to invoke the Sobolev inequality Lemma \ref{sobolevinequality} in the appendix, namely
	\begin{align}
	\left\|w\right\|_{L^q\left(L^q(\rho)\right)}\lesssim\left\|\partial_tw\right\|_{L^p\left(L^{p}(\rho)\right)} + \left\|w\right\|_{L^p\left(L^{p}(\rho)\right)} + \left\|\nabla w\right\|_{L^p\left(L^{p}(\rho)\right)},
	\end{align}
	where the integrability exponents $1\leq p\leq q<\infty$ are such that
	\begin{align}
	1-\frac{N+2}{p} = -\frac{N+2}{q}.
	\end{align}
	In our situation, that is $p=p_1=2$,   we deduce from \eqref{108} the inequality 
	\begin{equation}
	\label{109}
	\left\|w\phi_1\right\|_{L^{q_1}\left(L^{q_1}(\rho)\right)} \lesssim   \left\|w_0\right\|_{H},
	\end{equation}
	where now $ q= q_1 = \frac{2(N+2)}{N}$.

	In order to further increase the order of integrability, we have to use the maximal regularity estimate in $L^q$, see \eqref{112}. 
	We introduce a new smooth cut-off function $\phi_2:\mathbb{R}^+_0\rightarrow [0,1]$, such that $\phi_2(t)=0$ if $t\leq 2T$ and $\phi_2(t)=1$ if $t\ge 3T$. Using the maximal regularity estimate for $w\phi_2$ and $q_1$, we get
	\begin{align*}
	\mel	\left\|\partial_t(w\phi_2)\right\|_{L^{q_1}\left(L^{q_1}(\rho)\right)} + \left\|\nabla^2(w\phi_2)\right\|_{L^{q_1}\left(L^{q_1}(\rho)\right)}+\left\|\rho\nabla^3(w\phi_2)\right\|_{L^{q_1}\left(L^{q_1}(\rho)\right)}+\left\|\rho^2\nabla^4(w\phi_2)\right\|_{L^{q_1}\left(L^{q_1}(\rho)\right)} \\ 
	&\lesssim  \|\eta_{\eps} F\, \phi_2\|_{L^{q_1}(L^{q_1}(\rho))} + \|\rho \grad \eta_{\eps}\, F  \phi_2\|_{L^{q_1}(L^{q_1}(\rho))} + \|\rho \eta_{\eps} \grad F\, \phi_2\|_{L^{q_1}(L^{q_1}(\rho))}+ \left\|w\phi_2'\right\|_{L^{q_1}\left(L^{q_1}(\rho)\right)}.
	\end{align*}	
	The treatment of the right-hand side is almost identical to the $p=2$ case, only that now equation \eqref{109} is invoked where before the energy equation was used. We eventually arrive at
	\[
	\left\| w\,  \phi_2 \right\|_{L^{q_1}\left(L^{q_1}(\rho)\right)}+ 	\left\|\partial_t (w\,  \phi_2)\right\|_{L^{q_1}\left(L^{q_1}(\rho)\right)}+\left\|\nabla( w\,\phi_2)\right\|_{L^{q_1}\left(L^{q_1}(\rho)\right)}
	\lesssim \|w_0\|_H,
	\]
	and we may use the Sobolev inequality once more with $p_2 \le \min\{q_1,N+2\}$. By iterating this procedure, the order of integrability can be further increased.  After finitely many steps, depending only on the space dimension, and by choosing $T$ carefully, the statement follows.
\end{proof}

Theorem \ref{wellposednessH1} shows that the truncated equation generates a global semiflow in the Hilbert space setting. 
We define $S^t_\varepsilon : H \rightarrow H$ as the corresponding flow map,
\[
S_{\eps}^t(w_0) = w(t,\cdot)
\]
where $w$ is the unique solution to the truncated nonlinear problem \eqref{c3} with initial datum $w_0$. 
Our invariant manifold construction is based on that flow. More accurately, we choose to consider a discrete time setting by working with the time-one map rather than with the continuous flow. Compared to constructing the manifolds for the semiflow directly, this has the advantage, that the differentiability of the time-one map is a weaker property than its counterpart for flows, the variation of constants formula. We write $S_\varepsilon \coloneqq S_\varepsilon^1$.

The main regularity results for the perturbation variable $w$ are stated  uniformly in time and space, while  our invariant manifold theory will rely on Hilbert spaces. The connection of both necessitates to establish  suitable  smoothing estimates. We will do so in the following lemma which we improve after one time step. As we are interested in the long-time behavior, such a delayed smoothing statement does not cause any problems.

\begin{lemma}\label{smoothingestimate}
	Let $\varepsilon^*$ be as in lemma \ref{L1} and $\varepsilon\leq \varepsilon^*$. For any $w_0\in H$ the following holds: If $w(t)= S_\varepsilon^t(w_0)$ is the solution to the truncated equation, then 
	\begin{align}
	\left\|w(t)\right\|_{L^\infty} +\left\|\nabla w(t)\right\|_{L^\infty} + \left\|\rho \nabla^2w\right\|_{L^\infty}+\left\|\rho^2\nabla^3w\right\|_{L^\infty} \lesssim \left\|w_0\right\|_{H}
	\end{align}
	for all $t\geq 1/2$. In particular, this yields $\|S_\varepsilon(w_0)\|_{W}\lesssim \left\|w_0\right\|_{H}$. Moreover, there exists $\varepsilon^0\leq \min \left\{\varepsilon,\varepsilon_0\right\}$ such that $S^t_\varepsilon\left(S_\varepsilon(w_0)\right)=S^t\left(S_\varepsilon(w_0)\right)$ for $t>0$, provided that $\left\|w_0\right\|_H\leq \varepsilon^0$.
\end{lemma}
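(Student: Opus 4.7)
The plan has three parts, mirroring the three claims: a $W$-smoothing estimate for $w(t)$ at all $t \geq 1/2$, the bound $\|S_\eps(w_0)\|_W \lesssim \|w_0\|_H$ as its $t=1$ special case, and then the coincidence of the two flows via Theorem \ref{dritteableitung}.

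For the smoothing claim, I will combine Lemma \ref{L1} with a parabolic Morrey embedding. Given any $t \geq 1/2$, set $s = t - 1/2 \geq 0$. The energy bound of Theorem \ref{wellposednessH1} yields $\|w(s)\|_H \lesssim \|w_0\|_H$, so restarting the evolution at time $s$ and applying Lemma \ref{L1} provides, for arbitrarily large $q$, the weighted $L^q$-control
\[
\|\partial_\tau w\|_{L^q(L^q(\rho))} + \|w\|_{L^q(L^q(\rho))} + \|\nabla w\|_{L^q(L^q(\rho))} + \|\nabla^2 w\|_{L^q(L^q(\rho))} + \|\rho\nabla^3 w\|_{L^q(L^q(\rho))} + \|\rho^2\nabla^4 w\|_{L^q(L^q(\rho))} \lesssim \|w_0\|_H
\]
on an interval containing $t$. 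A parabolic Morrey-type embedding, performed locally on intrinsic Whitney cubes $Q_r^d(z_0)$ and exploiting that $\rho$ is essentially constant on each such cube (cf.\ \eqref{502}) together with the equivalence \eqref{equivalenceballs} of intrinsic and Euclidean balls, converts these $L^q$-bounds into pointwise control of $w$, $\nabla w$, $\rho\nabla^2 w$ and $\rho^2\nabla^3 w$ whenever $q$ exceeds the parabolic dimension $N+4$ associated with $\partial_\tau + \L^2$. This is exactly the desired $W$-bound at time $t$; specializing to $t=1$ gives the second claim $\|S_\eps(w_0)\|_W \lesssim \|w_0\|_H$. The main subtlety is tracking how the $\rho$-weights of different orders match the anisotropic $r \leftrightarrow r^4$ scaling on Whitney cubes, but the required embedding is essentially encoded in the $X(p)/Y(p)$ framework underlying the linear estimate \eqref{113}.

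For the third claim, the observation is that the cut-off $\eta_\eps$ equals one pointwise whenever $|w|$, $|\nabla w|$, $|\rho\nabla^2 w|$ and $|\rho^2\nabla^3 w|$ all stay strictly below $\eps$, in which case the truncated equation \eqref{c3} coincides with the untruncated perturbation equation. From the second claim we have $\|S_\eps(w_0)\|_W \leq C\|w_0\|_H \leq C\eps^0$ for some constant $C>0$. Choosing $\eps^0$ so small that $C\eps^0 \leq \eps_0$, where $\eps_0$ is the smallness threshold in Theorem \ref{dritteableitung}, the untruncated flow $S^t(S_\eps(w_0))$ is well-defined and satisfies $\|S^t(S_\eps(w_0))\|_W \lesssim \|S_\eps(w_0)\|_W \leq C\eps^0$ for every $t>0$. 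Shrinking $\eps^0$ once more so that this upper bound is strictly less than $\eps$, the cut-off remains inactive along the entire trajectory; consequently $S^t(S_\eps(w_0))$ also solves \eqref{c3} with the same initial datum, and the uniqueness statement in Theorem \ref{wellposednessH1} forces $S^t_\eps(S_\eps(w_0)) = S^t(S_\eps(w_0))$.
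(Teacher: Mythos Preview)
Your treatment of the third claim is fine and matches the paper's logic. The gap is in the first step, where your route diverges from the paper's in a way that leaves a genuine hole.

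You propose a direct parabolic Morrey embedding on Whitney cubes to pass from the space--time $L^q$ bounds of Lemma~\ref{L1} to $L^\infty$ control of $w,\nabla w,\rho\nabla^2 w,\rho^2\nabla^3 w$ at each fixed time. But Lemma~\ref{L1} only gives you $\partial_t w$ in $L^q(L^q(\rho))$, not $\partial_t\nabla w$ or $\partial_t(\rho\nabla^2 w)$; so a time--Sobolev embedding will yield $t\mapsto \|w(t)\|_{L^q(\rho)}$ continuous, but says nothing about continuity in $t$ of the higher spatial norms you need to feed into the spatial Morrey inequality \eqref{505}. Your appeal to the $X(p)/Y(p)$ framework does not close this: those norms are built on \emph{small-time} Whitney cubes $(r^4/2,r^4)\times B_r^d(z_0)$ and encode scaling near $t=0$, whereas here you are working on a unit-order time interval $(1/4,2)$ with no smallness in $r$. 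The Carleson structure behind \eqref{113} is simply not the right tool for converting an $L^q((1/4,2);L^q(\rho))$ bound into a pointwise-in-time statement.

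The paper sidesteps this entirely. It applies only the \emph{spatial} Morrey inequality~\eqref{505}, which reduces $\|w(t)\|_W$ to a sum of $L^q(\rho)$ norms at fixed $t$; then a Chebyshev argument on the function $t\mapsto \sum_k\|\rho^{k-2}\nabla^k w(t)\|_{L^q(\rho)}$ extracts a single time $\hat t\in(1/4,1/2)$ at which these norms are controlled by the $L^q((1/4,1/2);L^q(\rho))$ bound from Lemma~\ref{L1}. This yields $\|w(\hat t)\|_W\le C\|w_0\|_H$. The extension to all $t\ge 1/2$ is then not an embedding at all: one observes that for $\|w_0\|_H$ small, $w(\hat t)$ is small enough in $W$ that the truncated and untruncated flows agree from $\hat t$ onward, and Theorem~\ref{dritteableitung} propagates the $W$-bound forward in time. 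So the pointwise-in-time control for $t\ge 1/2$ comes from the \emph{dynamics}, not from a parabolic embedding.
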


\begin{proof}
	Due to the Morrey-type embedding inequality \ref{morreyinequality} in the appendix, we have that $\left\|w\right\|_{L^\infty}\lesssim \left\|w\right\|_{L^q(\rho)}+\left\|\nabla w\right\|_{L^q(\rho)}$, provided that $q$ is sufficiently large. We can extend this estimate to higher order derivatives and find
	\begin{equation}\label{505}
	\|w\|_{W} \lesssim  \left\|w\right\|_{L^q(\rho)}+\left\|\nabla w\right\|_{L^q(\rho)} + \|\grad^2 w\|_{L^{q}(\rho)} +  \|\rho\grad^3 w\|_{L^{q}(\rho)}+ \|\rho^2\grad^4 w\|_{L^{q}(\rho)}.
	\end{equation}
	Thus, in order to establish the asserted estimate, we have to improve the estimate in  Lemma \ref{L1} to a pointwise-in-time statement. For this, we invoke a simple construction.
	
	For an arbitrarily given function $f\in L^q(1/4,1/2)$, we consider the set
	\[
	J_f=\left\{t\in (1/4,1/2) : \left|f(t)\right|>8\left\|f\right\|_{L^q(1/4,1/2)}\right\}.
	\]
By Chebyshev's inequality,  it holds that 
\[
\left\|f\right\|_{L^q(1/4,1/2)}\geq \left\|f\right\|_{L^q(J_f)}  \geq 8\left\|f\right\|_{L^q(1/4,1/2)} |J_f|^{1/q},
\]
 where $\left|\cdot\right|$ denotes the Lebesgue measure, and thus,  $|J_f|\leq \left(1/8\right)^q$. Moreover, since $q\ge 1$, we have also an estimate on the complementary set in $(1/4,1/2)$, namely $|J_f^c|\geq1/4-\left(1/8\right)^q \geq 1/8$.
Applying this estimate to the function $f(t)=\left\|w(t)\right\|_{L^q(\rho)}+\left\|\nabla w(t)\right\|_{L^q(\rho)}+\left\|\nabla^2 w(t)\right\|_{L^q(\rho)}+\left\|\rho\nabla^3w(t)\right\|_{L^q(\rho)}+\left\|\rho^2\nabla^4w(t)\right\|_{L^q(\rho)}$ and using the above estimate \eqref{505}, we find that
	\begin{align}
	\left\|w\right\|_{L^\infty(J_f^c;W)}&\lesssim \|f \|_{L^{\infty}(J_f^c)} \lesssim \|f\|_{L^{q}(1/4,1/2)}\\
&	\lesssim \left\|w\right\|_{L^q\left((1/4,1/2);L^q(\rho)\right)}+\left\|\nabla w\right\|_{L^q\left((1/4,1/2);L^q(\rho)\right)}+ \left\|\nabla^2w\right\|_{L^q\left((1/4,1/2);L^q(\rho)\right)}\\
	&\quad +\left\|\rho\nabla^3w\right\|_{L^q\left((1/4,1/2);L^q(\rho)\right)}+\left\|\rho^2\nabla^4w\right\|_{L^q\left((1/4,1/2);L^q(\rho)\right)} .
	\end{align}
By the virtue of Lemma \ref{L1}, the right-hand side is bounded by $\|w_0\|_H$.	This shows that there exists a time $\hat{t}\in (1/4,1/2)$ such that 
\begin{align}\label{p4}
\|w(\hat{t})\|_{W} = \|S^{\hat{t}}_\varepsilon(w_0)\|_{W} \leq C \|w_0\|_{H}.
\end{align}

	Now suppose that $\left\|w_0\right\|_H\leq \varepsilon^0$
	From Theorems \ref{maintheoremseis} and  \ref{dritteableitung} we know, that the nonlinear flow $S^t(w_0)$ can be controlled in $W$ by its initial data $g$ in the $W$-norm, i.e., $\|S^t(w_0)\|_{W} \leq \tilde{C}\|w_0\|_{W}$ for every $t\geq0$, provided that $\|w_0\|_{W}$ is sufficiently small. 
	If we now choose $\eps^0$ in a way such that $\tilde{C}C\eps^0\leq \eps$, we obtain that $S^t_\varepsilon\left(S^{\hat{t}}_\varepsilon(w_0)\right) = S^t\left(S_\varepsilon^{\hat{t}}(w_0)\right)$ for every $t\geq 0 $ and thus
	\begin{align}
	\|S^{\hat{t}+t}_\varepsilon(w_0)\|_{W} = \|S^t\left(S_\varepsilon^{\hat{t}}(w_0)\right)\|_{W}\leq \tilde{C}\|S^{\hat{t}}_\varepsilon(w_0)\|_{W}\leq \tilde{C}C\|w_0\|_{H}
	\end{align}
	for every $t\geq0$. Since $\hat{t}\in (1/4,1/2)$, this gives the result.
\end{proof}

By construction of the solution in Theorem \ref{wellposednessH1}, we know that $S_\varepsilon$ is Lipschitz-continuous. We decompose the global flow $S_\varepsilon$ into a linear and nonlinear part
\begin{align}
S_\varepsilon = L + R_\varepsilon, \quad \text{ where } L\coloneqq e^{-\left(\mathcal{L}^2+N\mathcal{L}\right)}.
\end{align}
As a difference of Lipschitz continuous functions, $R_\varepsilon$ is Lipschitz continuous as well.
Actually, its Lipschitz constant can be estimated in terms of $\varepsilon$ and becomes thus a contraction if $\varepsilon$ is sufficiently small.
\begin{lemma}\label{lipschitzR}
	Let $\varepsilon^*>0$ as in Lemma \ref{L1} and $0<\varepsilon\leq \varepsilon^*$. Then, for any $g$, $\tilde{g} \in H$ it holds that
	\begin{align}
	\left\|R_\varepsilon(g)-R_\varepsilon(\tilde{g})\right\|_{H}\lesssim \varepsilon\left\|g-\tilde{g}\right\|_{H}.
	\end{align}
\end{lemma}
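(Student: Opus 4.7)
The plan is to exploit the variation of constants formulation of the time-one map: if $w(t) = S_\varepsilon^t(g)$ and $\tilde w(t)=S_\varepsilon^t(\tilde g)$, then the difference $\phi(t) := w(t)-\tilde w(t) - L^t(g-\tilde g)$ satisfies the linear inhomogeneous problem
\begin{align}
\partial_t \phi + \mathcal{L}^2\phi + N\mathcal{L}\phi = \rho^{-1}\nabla \cdot\bigl(\rho^2(F_\varepsilon[w]-F_\varepsilon[\tilde w])\bigr) + \rho\bigl(F_\varepsilon[w]-F_\varepsilon[\tilde w]\bigr)
\end{align}
with zero initial datum, and evaluating at $t=1$ gives exactly $\phi(1)=R_\varepsilon(g)-R_\varepsilon(\tilde g)$. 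Hence it suffices to estimate $\|\phi(1)\|_H$ in terms of $\varepsilon$ times $\|g-\tilde g\|_H$.

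First, I would rerun the energy estimate used in Theorem \ref{wellposednessH1}: testing the $\phi$-equation with $\phi$ in $\langle\cdot,\cdot\rangle_H$ and integrating by parts reproduces the bound
\begin{align}
\frac{d}{dt}\|\phi\|_H^2 + \|\mathcal{L}\phi\|_H^2 \lesssim \bigl\|\sqrt{\rho}(F_\varepsilon[w]-F_\varepsilon[\tilde w])\bigr\|^2 + \|\phi\|_H^2,
\end{align}
since the nonlinear source enters the inner product as either $\langle\rho(F_\varepsilon[w]-F_\varepsilon[\tilde w]),\phi\rangle_H$ or $\langle\rho(F_\varepsilon[w]-F_\varepsilon[\tilde w]),\nabla\phi\rangle_H$, both handled by Cauchy--Schwarz exactly as in \eqref{104}. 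Gronwall on $[0,1]$ and absorption via Lemma \ref{maxregH} then yield
\begin{align}
\|\phi(1)\|_H^2 \lesssim \bigl\|\sqrt{\rho}(F_\varepsilon[w]-F_\varepsilon[\tilde w])\bigr\|_{L^2((0,1);L^2)}^2.
\end{align}

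Next I would invoke Lemma \ref{DifferenzF} pointwise in time to obtain
\begin{align}
\bigl\|\sqrt{\rho}(F_\varepsilon[w(t)]-F_\varepsilon[\tilde w(t)])\bigr\| \lesssim \varepsilon\bigl(\|\rho\nabla^2\delta(t)\|_H + \|\nabla\delta(t)\|_H + \|\delta(t)\|_H\bigr),
\end{align}
where $\delta = w-\tilde w$. Squaring, integrating over $(0,1)$, and plugging into the previous display reduces the task to controlling $\delta$ in the space $L^\infty((0,1);H)\cap L^2((0,1);H^2_\rho)$ by the initial data $g-\tilde g$. But $\delta$ itself solves the same inhomogeneous problem as $\phi$ but with initial datum $g-\tilde g$, so the identical energy argument from Theorem \ref{wellposednessH1} applied to $\delta$ (taking $\varepsilon$ small enough to absorb the nonlinear terms into the left-hand side using Lemma \ref{maxregH}) gives
\begin{align}
\|\delta\|_{L^\infty((0,1);H)} + \|\nabla\delta\|_{L^2((0,1);H)} + \|\rho\nabla^2\delta\|_{L^2((0,1);H)} \lesssim \|g-\tilde g\|_H.
\end{align}
Combining the last three displays yields $\|R_\varepsilon(g)-R_\varepsilon(\tilde g)\|_H \lesssim \varepsilon\,\|g-\tilde g\|_H$, as required.

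The only delicate point I expect is bookkeeping around absorbing the quadratic error into the energy: to make the energy inequality for $\delta$ closed, the factor $\varepsilon$ multiplying $\|\rho\nabla^2\delta\|_H+\|\nabla\delta\|_H$ must be small enough for the maximal regularity estimate (Lemma \ref{maxregH}) to absorb these terms into $\|\mathcal{L}\delta\|_H^2$ on the left. This is guaranteed by choosing $\varepsilon\le\varepsilon^*$ sufficiently small, which is already within the regime of Lemma \ref{L1}. Everything else is a routine repetition of the arguments already developed for well-posedness in Theorem \ref{wellposednessH1}.
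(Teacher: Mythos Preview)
Your proof is correct and follows essentially the same approach as the paper: both set up the difference $\phi=v-\tilde v$ (equivalently $w-\tilde w-L^t(g-\tilde g)$) as a solution of the linear problem with zero initial datum and source $F_\varepsilon[w]-F_\varepsilon[\tilde w]$, apply the energy estimate \eqref{a7}, and then invoke Lemma~\ref{DifferenzF} together with the Lipschitz dependence of $S_\varepsilon^t$ on initial data established in Theorem~\ref{wellposednessH1}. The only difference is that you spell out the Lipschitz bound on $\delta=w-\tilde w$ explicitly via a second energy argument, whereas the paper simply cites the Lipschitz continuity already obtained in the fixed-point construction.
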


\begin{proof}
	Let $g$, $\tilde{g}\in H$ be given. Then $w(t,x)=S_\varepsilon^t(g)$ and $\tilde{w}(t,x)=S_\varepsilon(\tilde{g})$ solve the truncated problem \eqref{c3} with initial data $g$ or  $\tilde{g}$, respectively. We set $v(t)=w(t)-L^tg$, where $L^tg$ is the solution to the linear problem with initial datum $g$, so that, in particular $v(1,x)=R_\varepsilon(g)$. Analogously we define $\tilde{v}$. Then $v-\tilde{v}$ solves the equation 
	\begin{align*}
	\mel	\partial_t(v-\tilde{v}) + \mathcal{L}^2(v-\tilde{v})+N\mathcal{L}(v-\tilde{v}) \\ 
	&= \frac{1}{\rho}\nabla \cdot \left( \rho^2\left(F_{\eps}[w]- 
	F_{\eps}[\tilde{w}] \right)\right) + \rho \left(F_{\eps}[w]-F_{\eps}[\tilde{w}]\right),
	\end{align*}
	with zero initial datum. 
	With the help of estimate \eqref{a7} from the proof of Theorem \ref{wellposednessH1} we deduce that 
	\begin{align*}\MoveEqLeft
	\left\|v(1)-\tilde{v}(1)\right\|_{H} + \left\|\nabla v-\nabla \tilde{v}\right\|_{L^2\left((0,1);H\right)} + \left\|\rho\nabla^2v-\rho\nabla^2\tilde{v}\right\|_{L^2\left((0,1);H\right)}\\
	&\lesssim \left\|\rho F_{\eps}[w] -\rho F_{\eps}[\tilde{w}]\right\|_{L^2\left((0,1);L^2\right)} \\
	&\lesssim \varepsilon\left( 	\left\|w-\tilde{w}\right\|_{L^\infty\left((0,1);H\right)} + \left\|\nabla w-\nabla \tilde{w}\right\|_{L^2\left((0,1);H\right)} + \left\|\rho\nabla^2w-\rho\nabla^2\tilde{w}\right\|_{L^2\left((0,1);H\right)} \right),
	\end{align*}
	where we used Lemma \ref{DifferenzF} in the last step. Since $S_\varepsilon$ is Lipschitz continuous, the right-hand side is controlled by $\varepsilon \|g-\tilde{g}\|_{H}$. This finishes the proof.
\end{proof}

Additionally we would like to know that $R_\varepsilon$ is quadratic near the origin. The superlinear behavior entails the differentiability of $R_\varepsilon$ in the origin, with derivative zero. Neither this information nor the regularity will be necessary for our construction of the invariant manifolds. However, as we will see, it provides the additional geometric insight that the center manifold $W_\varepsilon^c$ touches the stable Eigenspace $E_c$ tangentially, see Theorem \ref{centermanifold}. The proof of the quadratic estimate is rather technical and exploits smoothing properties of the nonlinear flow. We are able to show the quadratic behavior after a regularizing time step, in a  similar way as in  Lemma \ref{smoothingestimate}, what still is sufficient for our purpose.
\begin{lemma}\label{quadratischeabschaetzung}
	Let $\varepsilon^*$ be as in Lemma \ref{L1}. For all $0\leq \varepsilon\leq \varepsilon_*$ and every $g\in H$ it holds that 
	\begin{align}
	\left\|R_\varepsilon\left(S_\varepsilon(g)\right)\right\|_{H} \lesssim \left\|g\right\|^2_{H}.
	\end{align}
\end{lemma}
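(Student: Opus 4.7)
The plan is to combine the smoothing estimate from Lemma~\ref{smoothingestimate} with the linear maximal regularity estimate \eqref{a7} applied to the equation satisfied by the difference between the nonlinear and linear flows. Write $\tilde g = S_\varepsilon(g)$ and $w(t) = S_\varepsilon^t(\tilde g) = S_\varepsilon^{t+1}(g)$, and set
\begin{align}
v(t) := w(t) - L^t \tilde g.
\end{align}
By definition of $R_\varepsilon$, we have $v(0) = 0$ and $v(1) = S_\varepsilon(\tilde g) - L\tilde g = R_\varepsilon(S_\varepsilon(g))$, while $v$ solves the linear Cauchy problem \eqref{a6} with forcing $\tilde F = F_\varepsilon[w]$ and zero initial data. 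The estimate \eqref{a7} on the interval $(0,1)$ therefore yields
\begin{align}
\left\|R_\varepsilon(S_\varepsilon(g))\right\|_H \lesssim \left\|\rho F_\varepsilon[w]\right\|_{L^2((0,1);L^2)}.
\end{align}

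The next step is a pointwise bound on the nonlinearity. Since $|p\star R[w]|\lesssim 1$ once $\|w\|_{W^{1,\infty}}$ is small (which is assured by the cut-off $\eta_\varepsilon$ and $\varepsilon\le\varepsilon^*$), the structure \eqref{nonlinearityperturbationequation} of $F[w]$ together with $|F_\varepsilon[w]|\leq |F[w]|$ gives
\begin{align}
\rho\left|F_\varepsilon[w]\right| \lesssim \rho^2|\nabla w||\nabla^3 w| + \rho^2|\nabla^2 w|^2 + \rho|\nabla w||\nabla^2 w| + \rho|\nabla w|^2 \lesssim \|w\|_W^2,
\end{align}
where each of the four terms is estimated by writing it as a product of factors of the form $|\nabla w|$, $\rho|\nabla^2 w|$ or $\rho^2|\nabla^3 w|$ (using $\rho\le 1/2$), each of which is dominated pointwise by $\|w\|_W$.

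To conclude, I would apply Lemma~\ref{smoothingestimate} directly to the initial datum $g$, which gives $\|S_\varepsilon^s(g)\|_W \lesssim \|g\|_H$ for every $s\ge 1/2$. Since $w(t) = S_\varepsilon^{t+1}(g)$ with $t\in(0,1)$ corresponds to $s=t+1\ge 1\ge 1/2$, this yields the uniform bound $\|w(t)\|_W \lesssim \|g\|_H$ on the whole interval $(0,1)$. Integrating the pointwise estimate over the bounded set $(0,1)\times B_1(0)$ then produces
\begin{align}
\left\|\rho F_\varepsilon[w]\right\|_{L^2((0,1);L^2)} \lesssim \sup_{t\in(0,1)}\|w(t)\|_W^2 \lesssim \|g\|_H^2,
\end{align}
which combined with the linear estimate above gives the claim. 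The essential point—and the reason the statement involves $R_\varepsilon\bigl(S_\varepsilon(g)\bigr)$ rather than $R_\varepsilon(g)$—is precisely that the one unit of delay created by composing with $S_\varepsilon$ shifts the relevant time slice $(0,1)$ of the trajectory into the regime where the smoothing estimate of Lemma~\ref{smoothingestimate} upgrades $H$-control to pointwise $W$-control; without this delay, the pointwise bound at small times would not be available in terms of $\|g\|_H$ alone.
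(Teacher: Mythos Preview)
Your proof is correct and follows the same overall structure as the paper's --- set up $v$ as the difference between the nonlinear and linear flows starting from $S_\varepsilon(g)$, apply the linear estimate \eqref{a7} with zero initial data, and then show the forcing is quadratically small --- but the final step is handled differently. The paper bounds $\|\sqrt{\rho}F_\varepsilon[w]\|$ via the pointwise estimate \eqref{106} and Young's inequality, reducing to $L^4((1,2);L^4(\rho^2))$ norms of $\nabla w$, $\nabla^2 w$, $\rho\nabla^3 w$, and then invokes the $L^q$ smoothing of Lemma~\ref{L1} with $q=4$. You instead pass through the stronger pointwise smoothing of Lemma~\ref{smoothingestimate} to get $\|w(t)\|_W\lesssim\|g\|_H$ uniformly on the shifted interval, which allows the crude pointwise bound $\rho|F_\varepsilon[w]|\lesssim\|w\|_W^2$. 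Your route is slightly shorter and more transparent; the paper's route is marginally more self-contained in that it appeals only to Lemma~\ref{L1} rather than to its corollary Lemma~\ref{smoothingestimate}. Both are perfectly valid, and your closing remark about the role of the one-step delay is exactly the point.
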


\begin{proof}
	Let $w(t,x)=S^t_\varepsilon(g)$ and set $W(t,x) = w(t+1,x)$, which yields $W(0,\cdot)=S_\varepsilon(g)$. Let $v$ solve the  initial value problem
	\begin{align}
	\begin{cases}
	\partial_tv+\mathcal{L}^2v+N\mathcal{L}v &= \frac{1}{\rho}  \nabla\cdot \left(\rho^2F_{\eps}[W]\right)  +\rho F_{\eps}[W]\quad \text{ in } (0,\infty)\times B_1(0)\\
	v(0,\cdot)&=0 \quad \text{ in } B_1(0),
	\end{cases}
	\end{align} 
	so that $ v(1,\cdot) = R_\varepsilon \left(W(0,\cdot)\right)=  R_\varepsilon\left(S_\varepsilon(g)\right)$.
	Thanks to the proof of Theorem \ref{wellposednessH1}, more precisely estimate \eqref{a7}, we know that
	\begin{align*}
	\left\|v(1)\right\|^2_{H}  &\lesssim \int \limits_0^1 \left\|\sqrt{\rho}F_{\eps}[W]\right\|^2\, dt  = \int \limits_1^2 \left\|\sqrt{\rho}F_{\eps}[w]\right\|^2\, dt,
	\end{align*}
	and by the virtue of the pointwise estimate \eqref{106} and Young's inequality, we deduce
	\begin{align*}\left\|v(1)\right\|_{H} &\lesssim \left\|\rho\nabla^3w\right\|_{L^4\left((1,2);L^4(\rho^2)\right)}^2 +\left\|\nabla^2w\right\|_{L^4\left((1,2);L^4(\rho^2)\right)}^2+\left\|\nabla w\right\|_{L^4\left((1,2);L^4(\rho^2)\right)}^2.
	\end{align*}
	It thus remains to invoke the smoothing property from Lemma \ref{L1} with $q=4$ and the bound $\rho\le 1$
	in order to prove the lemma.
\end{proof}

\begin{lemma}\label{Lipschitzglaettung}
	Let $\varepsilon^*$ be as in Lemma \ref{L1} and $\varepsilon\leq \varepsilon^*$. Let $\varepsilon^0 \leq \min \left\{\varepsilon, \varepsilon^*\right\}$ be as in Lemma \ref{smoothingestimate}. Then, for any $g, \tilde{g}\in H^1_{1,2}$ with $\left\|g\right\|_{H},\left\|\tilde{g}\right\|_{H}\leq \eps^0$ it holds that
	\begin{align}
	\left\| S^{\hat{t}}_\varepsilon(g)-S_\varepsilon^{\hat{t}}(\tilde{g}) \right\|_{W} \lesssim \left\|g-\tilde{g}\right\|_{H}
	\end{align}
	for some $\hat t\in(\frac45,1)$.
\end{lemma}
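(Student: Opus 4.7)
The plan is to mimic the iterative smoothing scheme of Lemmas \ref{L1} and \ref{smoothingestimate} but applied to the difference $v(t) = S^t_\varepsilon(g) - S^t_\varepsilon(\tilde g)$ instead of to a single solution. Writing $w = S^t_\varepsilon(g)$ and $\tilde w = S^t_\varepsilon(\tilde g)$, the function $v$ satisfies the linear-looking equation
\[
\partial_t v + \mathcal{L}^2 v + N\mathcal{L} v = \rho^{-1}\nabla\cdot\left(\rho^2\bigl(F_\varepsilon[w]-F_\varepsilon[\tilde w]\bigr)\right) + \rho\bigl(F_\varepsilon[w]-F_\varepsilon[\tilde w]\bigr),
\]
with initial datum $v(0) = g-\tilde g$. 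From Lemma \ref{DifferenzF} together with the energy identity \eqref{104} applied to $v$ (exactly as in the proof of Theorem \ref{wellposednessH1}), we get first of all
\[
\|v\|_{L^\infty((0,1);H)} + \|\nabla v\|_{L^2((0,1);H)} + \|\rho\nabla^2 v\|_{L^2((0,1);H)} \lesssim \|g-\tilde g\|_H,
\]
after absorbing the $\varepsilon$-small source terms into the left-hand side.

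Next, I would run the bootstrap of Lemma \ref{L1} for $v$: introduce a smooth temporal cut-off $\phi_1$ vanishing near $0$ and equal to $1$ after some small $T>0$, so that $v\phi_1$ has zero initial data, and apply the $L^p$-maximal regularity estimate \eqref{112}. The essential new input is a pointwise Lipschitz estimate for $F_\varepsilon$ of the form
\[
|F_\varepsilon[w]-F_\varepsilon[\tilde w]| \lesssim \varepsilon\bigl(|v| + |\nabla v| + |\nabla^2 v| + \rho|\nabla^3 v|\bigr),
\]
valid on $\supp\eta_\varepsilon[w]\cup\supp\eta_\varepsilon[\tilde w]$ and obtained from the explicit form of $F$ combined with smoothness of $\hat\eta$; analogous bounds hold for $\rho\nabla(F_\varepsilon[w]-F_\varepsilon[\tilde w])$ and for $\rho\,\nabla\eta_\varepsilon\star(F[w]-F[\tilde w])$, with an extra factor $\varepsilon$ coming from the cut-off. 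Feeding these into the maximal regularity estimate, the highest-order terms on the right-hand side can be absorbed into the left (since $\varepsilon$ is small), and the remaining contribution $\|v\phi_1'\|_{L^p(L^p(\rho))}$ is controlled by the $H$-energy bound above. Iterating the Sobolev embedding of Lemma \ref{sobolevinequality} as in the proof of Lemma \ref{L1}, with fresh cut-offs supported slightly later each time, upgrades the estimate to
\[
\sum_{k=0}^4 \|\rho^{\max(k-2,0)}\nabla^k v\|_{L^q((4/5,1);L^q(\rho))} \lesssim \|g-\tilde g\|_H
\]
for arbitrarily large $q$.

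Finally, I apply the Morrey embedding \ref{morreyinequality} combined with Chebyshev's trick, exactly as in Lemma \ref{smoothingestimate}: setting
\[
f(t) = \|v(t)\|_{L^q(\rho)} + \|\nabla v(t)\|_{L^q(\rho)} + \|\nabla^2 v(t)\|_{L^q(\rho)} + \|\rho\nabla^3 v(t)\|_{L^q(\rho)} + \|\rho^2\nabla^4 v(t)\|_{L^q(\rho)},
\]
the set $\{t\in(4/5,1): f(t)> 8\|f\|_{L^q(4/5,1)}\}$ has measure at most $(1/8)^q < 1/5$, so there exists $\hat t\in(4/5,1)$ with $f(\hat t)\lesssim \|g-\tilde g\|_H$. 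Since $\|v(\hat t)\|_W$ is controlled by $f(\hat t)$ via Morrey (for $q$ large), this yields the claim.

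The main obstacle is obtaining the pointwise Lipschitz bounds for $F_\varepsilon$ and its first derivative that are required by the maximal regularity step. One must carefully split $F_\varepsilon[w]-F_\varepsilon[\tilde w]$ into contributions from the cut-off $\eta_\varepsilon$ (where the difference of the cut-offs lands on one factor while the other factor provides the small $\varepsilon$ via $|F[w]|\lesssim\rho^{-1}\varepsilon^2$) and from the polynomial/rational nonlinearity $p\star R\star(\cdots)$ (where a telescoping argument turns the difference into a linear expression in derivatives of $v$ with coefficients controlled by derivatives of $w$, $\tilde w$, hence by $\varepsilon$ on the cut-off support). Keeping track of these terms through the $L^q$ bootstrap up to order four is the only technical point; the scheme itself is a straightforward adaptation of Lemmas \ref{L1} and \ref{smoothingestimate}.
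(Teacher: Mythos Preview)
Your overall plan---bootstrap the $L^q$ maximal regularity for the difference $v=w-\tilde w$ and finish with Morrey plus Chebyshev---is exactly the scheme the paper uses. The gap is in the Lipschitz bound you assert for the \emph{derivative} of the nonlinearity. Differentiating $F_\varepsilon=\eta_\varepsilon F$ once in space brings in $\nabla_z\eta_\varepsilon$, which depends on $\rho^2\nabla^4 w$ (since $\eta_\varepsilon$ is a function of $\rho^2\nabla^3 w$). When you then take the difference, a telescoping in the cut--off variable produces, schematically,
\[
\rho\bigl(\nabla F_\varepsilon[w]-\nabla F_\varepsilon[\tilde w]\bigr)\ \supset\ \bigl(|v|+|\nabla v|+|\rho\nabla^2 v|+|\rho^2\nabla^3 v|\bigr)\cdot \rho^2|\nabla^4 w_s|,
\]
with \emph{no} factor of $\varepsilon$ in front: the second $\xi$--derivative of $\eta_\varepsilon F$ is only $O(\rho^{-1})$, not $O(\rho^{-1}\varepsilon)$, and $\rho^2\nabla^4 w$ is not among the quantities cut off by $\eta_\varepsilon$. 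The same term appears even if the truncation is already inactive, since $\rho\nabla F$ itself contains $\rho^2\nabla^4 w\star\nabla w$ and differencing gives $\rho^2\nabla^4 w\star\nabla v$. You therefore cannot absorb this term into the left--hand side of the $L^{p}$ maximal regularity estimate: one factor is a $v$--quantity in $L^{p}$, the other is $\rho^2\nabla^4 w$, for which Lemma~\ref{L1} and Lemma~\ref{smoothingestimate} give only $L^q$ bounds for finite $q$, not $L^\infty$. H\"older then forces a strictly higher integrability on the $v$--factor than the one currently available, and the iteration stalls.

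The paper resolves this not by a sharper splitting of $F_\varepsilon$, but by inserting an extra smoothing step before the bootstrap: Lemma~\ref{smoothingestimate} makes the truncation inactive for $t\ge 1/2$, and then Theorem~2 of \cite{SeisTFE} (applied with $w(1/2)$ as initial datum) yields the pointwise bound
\[
\|\nabla^2 w(t)\|_{L^\infty}+\|\rho\nabla^3 w(t)\|_{L^\infty}+\|\rho^2\nabla^4 w(t)\|_{L^\infty}\ \lesssim\ \varepsilon^0\qquad(t\ge 3/4),
\]
i.e.\ estimate~\eqref{330}. With this in hand all coefficient factors in $\rho(\nabla F[w]-\nabla F[\tilde w])$---including the troublesome $\rho^2\nabla^4 w$---are uniformly $O(\varepsilon^0)$, so the source is genuinely of the form $\varepsilon^0\cdot(\text{absorbable $v$--terms})$ and your bootstrap goes through verbatim on $(3/4,1)$. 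In short, your argument needs the additional input \eqref{330} from \cite{SeisTFE}; without it the asserted pointwise Lipschitz bound on $\rho\nabla(F_\varepsilon[w]-F_\varepsilon[\tilde w])$ is false.
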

\begin{proof}
	Similar to the previous proof, we will make use of a maximal regularity estimate for the linear equation. However, this proof will be less technical, because the previous lemma, combined with a result of \cite{SeisTFE}, will allow us to consider the flow without the cut-off function $\eta_\varepsilon$.
	
	Let $w(t)$ denote $S^t_\varepsilon(g)$ and $\tilde w(t)=S^t_\varepsilon(\tilde{g})$ respectively. Then, by Lemma \ref{smoothingestimate} we know that $\left\|w(t)\right\|_{W} \lesssim \left\|g\right\|_{H}$ for every $t\geq 1/2$. At this point we invoke Theorem 2 of \cite{SeisTFE} to also achieve even better control (in terms of $\rho$) on the higher derivatives: It guarantees  that the unique solution $w$ of the full nonlinear perturbation equation \eqref{perturbationequation} with (of course small) initial data $g$ satisfies $\left|\nabla^2w(x,t)\right|+\left|\rho\nabla^3w(x,t)\right|+\left|\rho^2\nabla^4w(t,x)\right| \lesssim t^{-\kappa} \left\|g\right\|_{W^{1,\infty}}$ for some positive $\kappa>0$. If we apply this result with $w(1/2)$ as the initial data, we obtain the estimate
	\begin{align}\MoveEqLeft\label{330}
	\left\|w(t)\right\|_{L^\infty}+\left\|\nabla w(t)\right\|_{L^\infty}+\left\|\nabla^2 w(t)\right\|_{L^\infty}+\left\|\rho\nabla^3w(t)\right\|_{L^\infty}+\left\|\rho^2\nabla^4w(t)\right\|_{L^\infty}&\\&\lesssim \left\|g\right\|_{H}\leq \eps^0
	\end{align}
	uniformly in time for every $t\geq 3/4$. The same holds true for $\tilde w(t)$ and $\tilde{g}$.
	That is, for $t\geq3/4$ both $w(t)$ and $\tilde w(t)$ solve the full nonlinear equation. 
	
	We  now introduce $v=w-\tilde w$, which solves the initial value problem
	\begin{align}
	\begin{cases}
	\partial_t v+\mathcal{L}^2v+N\mathcal{L}v &= \rho^{-1} \nabla \cdot\left( \rho^2 \left(F_1[w]-F_1[\tilde w]\right)\right) +\rho \left(F_2[w]-F_2[\tilde w]\right)  ,\\
	v(0,\cdot)&=0.
	\end{cases}
	\end{align}
	Arguing very similarly as in the proof of Lemma \ref{L1}, but using \eqref{330} instead of the truncation, we arrive at
	\begin{align}\MoveEqLeft
	\left\|\partial_t v\right\|_{L^q\left((4/5,1);L^q(\rho)\right)} +\left\|v\right\|_{L^q\left((4/5,1);L^q(\rho)\right)}+\left\|\nabla v\right\|_{L^q\left((4/5,1);L^q(\rho)\right)}\\&+ \left\|\nabla^2v\right\|_{L^q\left((4/5,1);L^q(\rho)\right)} +\left\|\rho\nabla^3 v \right\|_{L^q\left((4/5,1);L^q(\rho)\right)}+\left\|\rho^2\nabla^4v\right\|_{L^q\left((4/5,1);L^q(\rho)\right)} \lesssim \left\|g-\tilde{g}\right\|_{H}
	\end{align}
	for any $q\in (1,\infty)$. Lastly, we proceed as in the proof of Lemma \ref{smoothingestimate} to prove the existence of a $\hat{t}\in (4/5,1)$, such that
	\[
	\|w(\hat t) - \tilde w(\hat t)\|_{W} = \|v(\hat t)\|_{W}  \lesssim \left\|g-\tilde{g}\right\|_{H}.
	\]
\end{proof}

\section{Dynamical System Arguments}\label{dynamicalsystemarguments}
In this chapter we will construct invariant manifolds and prove Theorem \ref{localmanifolds}. We want to draw a heuristic picture of the concept, see als Figure \ref{fig3}
\begin{figure}
	\begin{center}
		\includegraphics[width=\textwidth]{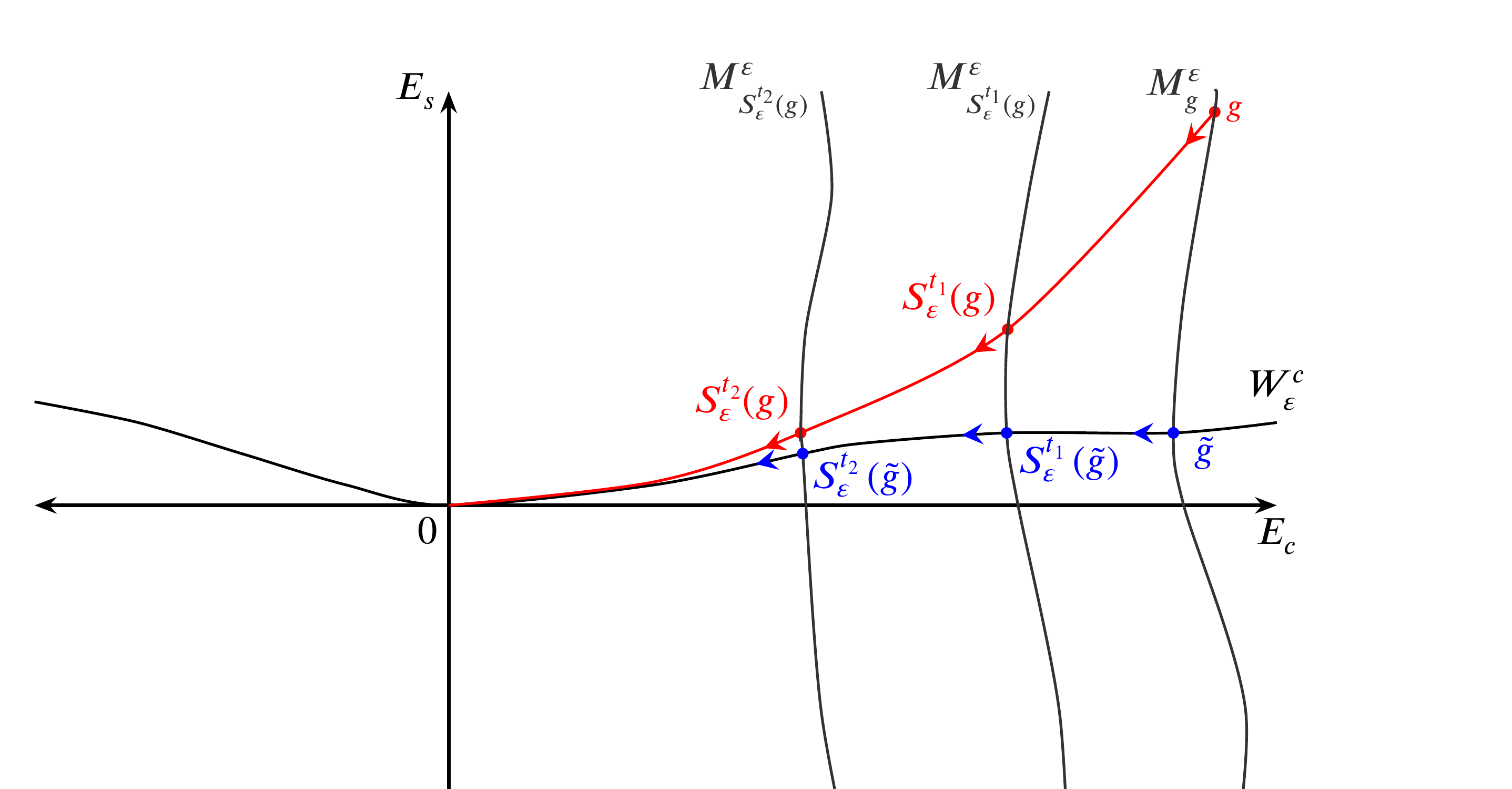}
	\end{center}
\caption{The flow $S^t_\varepsilon(g)$ starting from an arbitrary $g\in H$ approaches the flow starting from the unique intersection point $\tilde{g}$ of $W_\varepsilon^c$ and $M_g^\varepsilon$, $S^t\varepsilon(\tilde{g})$, that stays on the center manifold and whose longtime behavior dominates the asymptotics of $S^t_\varepsilon(g)$.}\label{fig3}
\end{figure}
for a geometric illustration. The center manifold, see Theorem \ref{centermanifold}, can be represented as the graph of a Lipschitz continuous function over the finite-dimensional   center eigenspace, and it touches the center eigenspace tangentially at the origin.  Here, the center eigenspace  is the subspace of $H$ spanned by the eigenfunctions of the first $K+1$ eigenvalues of $\mathcal{L}^2+N\mathcal{L}$, where $K$ is an arbitrarily fixed nonnegative integer.   Solutions to the truncated flow that lie on the center manifold remain on it for all subsequent times. 
The stable manifolds, see Theorem \ref{stablemanifold}, intersect with the center manifold in exactly one point,  and they form thus a foliation of the underlying Hilbert space $H$ over the center manifold. This foliation is invariant under the flow.  The stable manifolds can be described as  (displaced) graphs over the stable eigenspace, that is, the orthogonal complement of the center eigenspace. Given an arbitrary solution to the truncated perturbation equation, our construction provides a solution that approximates the given one with an exponential rate of at least $\mu_{K}$.





Throughout this section, we fix $\varepsilon^*$ as in Lemma \ref{L1} and choose some $
\varepsilon^0 \leq \min \left\{\varepsilon, \varepsilon_0\right\}$ as in Lemma \ref{smoothingestimate}. With these choices, all results from the previous two sections are admissible.


The linear operator $\mathcal{L}^2+N\mathcal{L}$ and the associated semi-flow operator  $L = e^{-\mathcal{L}^2-N\mathcal{L}}$  share the same eigenfunctions and an eigenvalue $\mu$ of $\mathcal{L}^2+N\mathcal{L}$  turns into the eigenvalue $e^{-\mu}$ of  $L$.  We recall that all spectrum information is contained in Theorem \ref{spektrum}. The  fact that the spectrum is discrete will facilitate our analysis substantially.

In our construction of the invariant manifolds, we follow an approach by Koch, see \cite{Kochoncentermanifolds}, and mainly stick to his notation. From now on we keep $K\in \mathbb{N}_0$ fixed, and we denote by $E_c$ the finite-dimensional subspace of $H$ spanned by the eigenfunctions corresponding to the eigenvalues $\{\mu_0,\dots,\mu_K\}$, that we call the \emph{center eigenspace}. The projection of $H$ onto the space $E_c$ is given by $P_c$. 
The \emph{stable eigenspace} $E_s$ is defined as the orthogonal complement of the center eigenspace, that is $E_s \coloneqq E_c^\perp$, such that $H=E_c \oplus E_s$, and $P_s=1-P_c$.
We denote the restriction of $L$ to $E_s$ by $L_s$; it can be estimated via $\left\|L_s\right\|_H\leq e^{-\mu_{K+1}}$. Indeed, for $w \in H$ it holds
\begin{align}
\left\|L_sw\right\|^2_{H} = \sum \limits_{k>K}\sum_{l} \langle  L w, \psi_{k,l}\rangle^2_H = \sum \limits_{k>K}\sum_{l}  e^{-2\mu_k} \langle w, \psi_{k,l} \rangle^2_H \leq e^{-2\mu_{K+1}} \left\|w\right\|^2_{H},
\end{align}
if the $\psi_{k,l}$'s are the eigenfunctions corresponding to $\mu_k$. 
For $L_c$, the restriction of $L$ onto $E_c$, we similarly obtain $\left\|L_c^{-1}\right\| \leq e^{\mu_K}$. Indeed,  we have
\begin{align}
\left\|L_c^{-1}w\right\|^2_{H}  = \sum \limits_{k\leq K} \sum_{l}\langle L^{-1} w,\psi_k \rangle^2_H = \sum \limits_{k \leq K}\sum_l e^{2\mu_k}\langle  w, \psi_k\rangle^2_H \leq e^{2\mu_k}\left\|w\right\|^2_{H}.
\end{align}
We define
\begin{align}
\Lambda_c = e^{-\mu_K}, \quad \Lambda_s = e^{-\mu_{K+1}} \quad \text{ and } \Lambda_{max}=1
\end{align}
and conclude
\begin{align}\label{e2}
\begin{split}
\left\|L_c^{-1}\right\|&\leq \Lambda_c^{-1}\quad \text{ or }\quad  \Lambda_c\left\|w\right\|_{H} \leq \left\|Lw\right\|_{H} \text{ for all } w\in E_c, \\
\left\|L_s\right\| &\leq \Lambda_s \quad\ \  \text{ or }\ \ \quad \left\|Lw\right\|_{H} \leq \Lambda_s\left\|w\right\|_{H} \text{ for all } w \in E_s,\\
\text{ and }\quad  \left\|L\right\| &\leq \Lambda_{max} \ \ \text{ or }\quad \ \ \left\|Lw\right\|_{H} \leq \left\|w\right\|_{H} \text{ for all }w\in H.
\end{split}
\end{align}
We arbitrarily choose $\Lambda_s<\Lambda_- = e^{-\mu_-} <\Lambda_c$ with 
$\mu_- < \mu_{K+1}<2\mu_-$  and $\Lambda_{max} < \Lambda_+$ 
and introduce the following norms, that will be used for the construction of the manifolds:
\begin{itemize}
	\item For $w \in H$ we define $\vertiii{w} \coloneqq \max \left\{ \left\|P_cw\right\|_{H}, \left\|P_sw\right\|_{H} \right\}$.\\
	\item For $\left\{w_k\right\}_{k\in \mathbb{Z}} \subseteq H$ we set $\left\|\left\{w_k\right\}_{k\in \mathbb{Z}}\right\|_{\Lambda_-,\Lambda_+}\coloneqq\sup \limits_{k\in \mathbb{N}_0} \max \left\{ \Lambda_+^{-k}\vertiii{w_k}, \Lambda_-^k\vertiii{w_{-k}}\right\}.$\\
	\item For $\left\{w_k\right\}_{k\in \mathbb{N}_0} \subseteq H$ we set $\left\|\left\{w_k\right\}_{k\in \mathbb{N}_0}\right\|_{\Lambda_-,+}\coloneqq\sup \limits_{k\in \mathbb{N}_0} \Lambda_-^{-k}\vertiii{w_k}.$
\end{itemize}
The corresponding Banach spaces of sequences are denoted by $\ell_{\Lambda_-,\Lambda_+}$ and $\ell_{\Lambda_-,+}$ respectively.

Our first result it the construction of the center manifold.

\begin{proposition}[Center Manifold]\label{centermanifold}
	Fix $\Lambda_- = e^{-\mu_-}$ in $\left( \Lambda_s,\Lambda_c \right)$. Let   $\varepsilon_{gap}>0$ such that
	\begin{align}\label{e4}
	\Lambda_s+\varepsilon_{gap} <\Lambda_-<\Lambda_c-\varepsilon_{gap}\quad \text{ and } \quad\Lambda_{max} +\varepsilon_{gap} < \Lambda_+.
	\end{align}
	Choose $\varepsilon\leq \varepsilon^*$ sufficiently small, such that
	\begin{align}\label{e1}
	\Lip\left(R_\varepsilon\right) \leq \varepsilon_{gap}.
	\end{align}
	(If necessary, choose $\varepsilon^0 \leq \min \left\{\varepsilon, \varepsilon_0\right\}$ even smaller according to Lemma \ref{smoothingestimate}.)
	Then there exists  a function $\theta_\varepsilon : E_c \rightarrow E_s$ with   $\theta_\varepsilon(0)=0$, that is differentiable at zero with $D\theta_\varepsilon(0)=0$, and the submanifold 
	\begin{align}
	W_\varepsilon^c \coloneqq  \left\{ w_c + \theta_\varepsilon\left(w_c\right) : w_c \in E_c \right\}
	\end{align}	
	satisfies the following conditions.
	\begin{enumerate}
		\item The function $\theta_\varepsilon$ is a contraction with $\Lip\left(\theta_\varepsilon\right)\lesssim \varepsilon_{gap}$ and $\left\|\theta_\varepsilon \left(g_c\right)\right\|_{H} \lesssim \left\|g_c\right\|_{H}^{\alpha}$  for all $g_c \in E_c$ for some $1<\alpha <\frac{\mu_{K+1}}{\mu_-}$. Moreover, it holds that $\left\|\theta_{\varepsilon}(g_c)\right\|_{W}\lesssim \vertiii{g_c}$.
		\item If the semiflow $\left\{ S_\varepsilon^t\right\}_{t\geq0}$ gets restricted to $W_\varepsilon^c$, it can be extended to an eternal Lipschitz flow on $W_\varepsilon^c$. More precisely, it holds that  $S_\varepsilon^t\left(W_\varepsilon^c\right)= W_\varepsilon^c$ for all $t\geq 0$ and for any $g \in W_\varepsilon^c$ there exists a semiflow $\left\{w(t)\right\}_{t\leq0}$ in $W_\varepsilon^c$ with $w(0)=g$.
		\item The manifold  $W_\varepsilon^c$ is characterized as follows: The point $g$ belongs to $W_\varepsilon^c$ if and only if there exists a flow $\left\{w(t)\right\}_{t\in \mathbb{R}}$ with $w(0)=g$ and 
		\begin{align}
		\left\|w(t)\right\|_{H} \leq 
		\begin{cases}
		\Lambda_+^t\vertiii{g}\quad  \text{ for all } t\geq 0 \\
		\Lambda_-^t\vertiii{g}\quad  \text{ for all }t \leq 0.
		\end{cases}
		\end{align}
	\end{enumerate}
\end{proposition}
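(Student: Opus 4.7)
The plan is the standard Lyapunov--Perron construction applied to the time-one map $S_\varepsilon = L + R_\varepsilon$. I will identify $W_\varepsilon^c$ with the set of initial values $w_0$ of complete orbits $\{w_k\}_{k\in\mathbb{Z}} \subset H$ satisfying $w_{k+1} = S_\varepsilon(w_k)$, $P_c w_0 = g_c$, and the two-sided growth bound placing $\{w_k\}$ in $\ell_{\Lambda_-,\Lambda_+}$. Splitting the recursion by $P_c$ and $P_s$, inverting the center part (using $\|L_c^{-1}\| \le \Lambda_c^{-1}$) by forward and backward Duhamel from $k=0$, and solving the stable part by Duhamel from $-\infty$ (admissible because $\Lambda_s < \Lambda_-$) casts the orbit equation as a fixed-point problem $\{w_k\} = T(\{w_k\})$ with
\begin{align*}
T(\{w_k\})_k^c &= L_c^k g_c + \sum_{j=0}^{k-1} L_c^{k-1-j} P_c R_\varepsilon(w_j) \quad (k \ge 0),\\
T(\{w_k\})_k^c &= L_c^k g_c - \sum_{j=k}^{-1} L_c^{k-1-j} P_c R_\varepsilon(w_j) \quad (k < 0),\\
T(\{w_k\})_k^s &= \sum_{j=-\infty}^{k-1} L_s^{k-1-j} P_s R_\varepsilon(w_j).
\end{align*}
Using the operator bounds \eqref{e2}, the Lipschitz estimate $\Lip(R_\varepsilon) \le \varepsilon_{gap}$, and the gap conditions \eqref{e4}, a direct geometric-series calculation yields that $T$ is a $c\varepsilon_{gap}$-contraction on $\ell_{\Lambda_-,\Lambda_+}$. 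Banach's theorem then provides a unique orbit $\{w_k(g_c)\}$ depending Lipschitz-continuously on $g_c$, and I define $\theta_\varepsilon(g_c) := P_s w_0(g_c)$, which automatically satisfies $\Lip(\theta_\varepsilon) \lesssim \varepsilon_{gap}$. Since $R_\varepsilon(0) = S_\varepsilon(0) - L(0) = 0$, uniqueness forces $\theta_\varepsilon(0) = 0$. Properties (2) and (3) then follow from the shift-invariance of $\ell_{\Lambda_-,\Lambda_+}$ combined with uniqueness: the shifted sequence $\{w_{k+1}\}$ is another admissible orbit through $S_\varepsilon(g)$, yielding $S_\varepsilon(W_\varepsilon^c) = W_\varepsilon^c$ and an eternal orbit in $W_\varepsilon^c$, while the characterization (3) simply reformulates the defining selection principle.

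The subtle points are the superlinear bound and the differentiability at the origin. The plan is to upgrade, along the backward orbit, the linear Lipschitz control on $R_\varepsilon(w_j) = R_\varepsilon(S_\varepsilon(w_{j-1}))$ to the quadratic bound $\|R_\varepsilon(w_j)\|_H \lesssim \|w_{j-1}\|_H^2$ provided by Lemma~\ref{quadratischeabschaetzung}, and to interpolate the two estimates at an exponent $\theta \in (0,1)$:
\begin{align*}
\|R_\varepsilon(w_j)\|_H \lesssim \varepsilon^{1-\theta} \|w_{j-1}\|_H^{2\theta} \|w_j\|_H^{1-\theta} \lesssim \varepsilon^{1-\theta} \Lambda_-^{-|j|(1+\theta)} \vertiii{g_c}^{1+\theta},
\end{align*}
where the last step uses $\|w_j\|_H \lesssim \Lambda_-^{-|j|} \vertiii{g_c}$ extracted from the sequence-space norm. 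Inserting this into the Duhamel representation of $w_0^s$ and summing, the series converges precisely when $\Lambda_s < \Lambda_-^{1+\theta}$, i.e.\ $1+\theta < \mu_{K+1}/\mu_-$. Setting $\alpha := 1+\theta$ yields $\|\theta_\varepsilon(g_c)\|_H \lesssim \vertiii{g_c}^\alpha$ for any $\alpha \in (1, \mu_{K+1}/\mu_-)$, which in particular forces $D\theta_\varepsilon(0) = 0$. The $W$-estimate then follows by writing $w_0 = S_\varepsilon(w_{-1})$ with $\|w_{-1}\|_H \lesssim \Lambda_-^{-1}\vertiii{g_c}$ and applying the smoothing Lemma~\ref{smoothingestimate}: $\|w_0\|_W \lesssim \|w_{-1}\|_H \lesssim \vertiii{g_c}$, whence $\|\theta_\varepsilon(g_c)\|_W \le \|w_0\|_W + \|w_0^c\|_W \lesssim \vertiii{g_c}$ by norm equivalence on the finite-dimensional $E_c$.

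The principal obstacle I anticipate is this interpolation step underpinning the superlinear estimate: Lemma~\ref{quadratischeabschaetzung} must be applied consistently at every point $w_j$ along the backward orbit rather than only at the initial time, and the admissible exponents $\theta$ from the resulting geometric series must be reconciled with the spectral gap. The standing assumption $\mu_- < \mu_{K+1} < 2\mu_-$, which leaves the range $\mu_- \in (\mu_{K+1}/2, \mu_{K+1}) \cap (\mu_K, \mu_{K+1})$ non-empty, is precisely what makes this compatibility possible and produces an admissible $\alpha > 1$.
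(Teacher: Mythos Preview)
Your proposal is correct and follows the same Lyapunov--Perron strategy as the paper, with two minor technical variations worth noting. First, you write the fixed-point map in fully summed Duhamel form, whereas the paper uses a one-step recursion $J_k(g_c,\{w_l\})$ that mixes a forward step $S_\varepsilon(w_{k-1})$ with a backward correction $L_c^{-1}P_c(w_{k+1}-R_\varepsilon(w_k))$ for $k\le -1$; the two formulations are equivalent and yield the same contraction constant governed by \eqref{e4}. Second, for the superlinear bound the paper iterates the inequality $\vertiii{P_s w_{-k}} \le C\vertiii{w_{-(k+2)}}^2 + \Lambda_s \vertiii{P_s w_{-(k+1)}}$ to obtain a finite sum with a tail term $\Lambda_s^m\vertiii{w_{-m}}$, then optimizes in $m$ to produce $\alpha = 1+\tfrac{1}{k+1}$ with $k$ chosen from the gap condition; your interpolation between the Lipschitz bound and Lemma~\ref{quadratischeabschaetzung} is a cleaner route to the same range $\alpha \in (1,\mu_{K+1}/\mu_-)$, and your explicit Duhamel representation of $\theta_\varepsilon(g_c)=w_0^s$ makes the bound $\Lip(\theta_\varepsilon)\lesssim \varepsilon_{gap}$ immediate rather than requiring the separate backward iteration the paper performs. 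The $W$-estimate and the invariance/characterization arguments are handled identically.
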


The Lipschitz constants here and in the following are to be understood for a mappings from $H$ to $H$, if both are equipped with the $\vertiii{\cdot}$ norm.

\begin{proof}
	Our proof relies on the construction in \cite{Kochoncentermanifolds} in many parts. However, with regard to the subtle regularity issues we  have to modify the argument and need to establish additional properties. For this reason, we give here a self-contained presentation.
	
	First,  we note that thanks  to Lemma \ref{lipschitzR} by choosing $\eps$ sufficiently small, the Lipschitz condition \eqref{e1} on $R_{\eps}$ is realizable.
	We define $J:E_c\times \ell_{\Lambda_-,\Lambda_+} \rightarrow \ell_{\Lambda_-,\Lambda_+} $ by
	\begin{align}
	J_k\left(g_c, \left\{w_l\right\}_{l\in \mathbb{Z}}\right) = 
	\begin{cases}
	S_\varepsilon\left(w_{k-1}\right)&\text{ if } k\geq 1\\
	P_sS_\varepsilon\left(w_{-1}\right)+g_c& \text{ if }k=0\\
	P_sS_\varepsilon\left(w_{k-1}\right)+L_c^{-1}P_c \left(w_{k+1}-R_\varepsilon\left(w_k\right) \right)&\text{ if } k\leq -1.
	\end{cases}
	\end{align}
	This mapping is well defined, as we will show
	\begin{align}\label{e3}
	\left\|J\left(g_c, \left\{w_l\right\}_{l\in \mathbb{Z}}\right)\right\|_{\Lambda_-,\Lambda_+} \leq \max\left\{ \vertiii{g_c}, \kappa \left\|\left\{w_l\right\}_{l\in\mathbb{Z}}\right\|_{\Lambda_-,\Lambda_+}\right\},
	\end{align}
	with $\kappa \coloneqq \max \left\{ \frac{\Lambda_-+\varepsilon_{gap}}{\Lambda_c}, \frac{\Lambda_{max}+\varepsilon_{gap}}{\Lambda_+}, \frac{\Lambda_s+\varepsilon_{gap}}{\Lambda_-} \right\}.$ This quantity $\kappa$ is strictly smaller than one due to \eqref{e4}.
	To prove \eqref{e3} for positive times steps, $k\ge 1$, we compute with help of the triangle inequality and properties \eqref{e2} and \eqref{e1} of $L$ and $R_\varepsilon$ 
	\begin{align*}
	\Lambda_+^{-k}\vertiii{P_sS_\varepsilon\left(w_{k-1}\right)}& \leq \left(\Lambda_+^{-k} \vertiii{L_sP_sw_{k-1}}+\vertiii{P_sR_\varepsilon(w_{k-1})}\right)\\
	&\leq \Lambda_+^{-k}\left( \Lambda_s\vertiii{w_{k-1}}+\varepsilon_{gap} \vertiii{w_{k-1}} \right)
	\leq \frac{\Lambda_s+\varepsilon_{gap}}{\Lambda_+}\left\|\left\{w_l\right\}_{l\in \mathbb{Z}}\right\|_{\Lambda_-,\Lambda_+}.
	\end{align*}
We have a similar bound on the projection onto the center manifold,
	\begin{align}
	\Lambda_+^{-k}\vertiii{P_cS_\varepsilon(w_{k-1})}\leq \frac{\Lambda_{max}+\varepsilon_{gap}}{\Lambda_+}\left\|\left\{w_l\right\}_{l\in\mathbb{Z}}\right\|_{\Lambda_-,\Lambda_+}.
	\end{align}
	The bound for negative time steps, $k\le -1$, is verified in the same manner, namely
	\begin{align*}
	\mel
	\Lambda_-^k\vertiii{P_sS_\varepsilon\left(w_{k-1}\right)+L_c^{-1}P_c \left(w_{k+1}-R_\varepsilon\left(w_k\right) \right)}\\
	&\leq \max\left\{ \frac{\Lambda_s+\varepsilon_{gap}}{\Lambda_-},\frac{\Lambda-_+\varepsilon_{gap}}{\Lambda_c}\right\}\left\|\left\{{w_l}\right\}_{l\in \mathbb{Z}}\right\|_{\Lambda_-,\Lambda_+}.
	\end{align*}
	And finally, for $k=0$ the same strategy yields 
	\[
	\vertiii{P_sS_\varepsilon(w_{-1})+g_c} \leq \max\left\{ \frac{\Lambda_s+\varepsilon_{gap}}{\Lambda_-}\left\|\left\{w_l\right\}_{l\in\mathbb{Z}}\right\|_{\Lambda_-,\Lambda_+},  \vertiii{g_c}\right\},
	\]
	which completes the proof of \eqref{e3}.

	Making use of the inequalities \eqref{e2} and \eqref{e1} again, we derive similarly that $J(g_c,\cdot)$, for fixed $g_c \in E_c$, is a contraction on $\ell_{\Lambda_-,\Lambda_+}$, that is
	\begin{align}
	\left\|J_k\left(g_c, \left\{w_l\right\}_{l\in \mathbb{Z}}\right) - J_k\left(g_c, \left\{\tilde{w}_l\right\}_{l\in \mathbb{Z}}\right)\right\|_{\Lambda_-,\Lambda_+}\leq \kappa \left\|\left\{w_l\right\}_{l\in \mathbb{Z}} - \left\{\tilde{w}_l\right\}_{l\in \mathbb{Z}}\right\|_{\Lambda_-,\Lambda_+},
	\end{align}
	for every $\left\{w_l\right\}_{l\in \mathbb{Z}}$, $\left\{\tilde{w}_l\right\}_{l\in \mathbb{Z}}$ in $\ell_{\Lambda_-,\Lambda_+}$.
	Hence, by Banach's fixed point theorem, for every element $g_c \in E_c$ there exists a unique sequence $\left\{w_k\right\}_{k\in \mathbb{Z}} \in \ell_{\Lambda_-,\Lambda_+}$ with $J\left(g_c,\left\{w_k\right\}_{k\in \mathbb{Z}}\right) = \left\{w_k\right\}_{k\in \mathbb{Z}}$. By construction this fixed point sequence is a solution to the discrete semiflow with $P_cw_0 = g_c$. By the virtue of  \eqref{e3}, we also know that $\left\|\left\{w_k\right\}_{k\in \mathbb{Z}}\right\|_{\Lambda_-,\Lambda_+}\leq \vertiii{g_c}$.
	
	Now, we define the solution mapping $\hat{\theta}_\varepsilon : E_c \rightarrow \ell_{\Lambda_-,\Lambda_+}$ by $\hat{\theta}_\varepsilon\left(g_c\right) = \left\{w_k\right\}_{k\in \mathbb{Z}}$ and consider $\theta_\varepsilon:E_c\rightarrow E_s$ given by $\theta_\varepsilon\left(g_c\right)=  P_sw_0$. In other words, the initial datum of the solution sequence decomposes into $w_0=g_c+\theta_{\varepsilon}\left(g_c\right)$. 	Since $J(0,0)=0$, we obtain, by the uniqueness of the fixed point, that $\hat{\theta}_\varepsilon(0)=0$ and thus $\theta_\varepsilon(0)=0$.

	The contraction property, in particular, entails that the solution mapping  $\hat{\theta}_\varepsilon$ is Lipschitz continuous with bound  $\Lip\left(\hat{\theta}_\varepsilon\right)\leq \frac{1}{1-\kappa}$. Thus, also its ``coordinate'' $\theta_\varepsilon$ is Lipschitz continuous with the same bound. We will need to a stronger bound, in fact, a contraction estimate.  For any $g_c$ and $\tilde{g}_c \in E_c$ we have
	\begin{align}
	\vertiii{\theta_\varepsilon\left(g_c\right)-\theta_\varepsilon\left(\tilde{g}_c\right)} =\vertiii{P_s\left( S_\varepsilon\left(w_{-1}\right)-S_\varepsilon\left(\tilde{w}_{-1}\right) \right)},
	\end{align}
	where $\left\{w_k\right\}_{k\in \mathbb{Z}} = \hat{\theta}_\varepsilon\left(g_c\right)$ and $\left\{\tilde{w}_k\right\}_{k\in \mathbb{Z}} = \hat{\theta}_\varepsilon\left(\tilde{g}_c\right)$.
	Using the triangle inequality and the properties of $L$ and $R_\varepsilon$, we get
	for any $k\ge 0$ that 
	\begin{align*}
	\mel	 \Lambda_-^k\vertiii{P_s\left(w_{-k}-\tilde{w}_{-k}\right)}\\
	& \leq \frac{\Lambda_s}{\Lambda_-}\Lambda_-^{k+1}\vertiii{P_s\left(w_{-(k+1)}-\tilde{w}_{-(k+1)}\right)} + \frac{\varepsilon_{gap}}{\Lambda_-}\Lambda_-^{k+1}\vertiii{w_{-(k+1)}-\tilde{w}_{-(k+1)}}.
	\end{align*}
	Applying this inequality iteratively, we obtain
	\begin{align*}
	\vertiii{\theta_\varepsilon\left(g_c\right)-\theta_\varepsilon\left(\tilde{g}_c\right)} 
	& = \vertiii{P_s(w_0-\tilde w_0)}\\
	&\leq \left(\frac{\Lambda_s}{\Lambda_-}\right)^m\left\|\left\{w_k\right\}_{k\in \mathbb{Z}}- \left\{\tilde{w}_k\right\}_{k\in \mathbb{Z}}\right\|_{\Lambda_-,\Lambda_+}\\
	&\quad + \frac{\varepsilon_{gap}}{\Lambda_-}\sum \limits_{l=0}^{m-1}\left(\frac{\Lambda_s}{\Lambda_-}\right)^l\left\|\left\{w_k\right\}_{k\in \mathbb{Z}}- \left\{\tilde{w}_k\right\}_{k\in \mathbb{Z}}\right\|_{\Lambda_-,\Lambda_+}
	\end{align*} 
	for every $m\in \mathbb{N}$. Sending $m$ to infinity and using the Lipschitz bound for $\hat \theta_{\eps}$ yields
	\begin{align}
	\vertiii{\theta_\varepsilon\left(g_c\right)-\theta_\varepsilon\left(\tilde{g}_c\right)} \leq  \frac{\varepsilon_{gap}}{\Lambda_--\Lambda_s} \left\|\hat\theta_{\eps}(g_c)-\hat \theta_{\eps}(\tilde g_c)\right\|_{\Lambda_-,\Lambda_+}\le \frac{\varepsilon_{gap}}{\Lambda_--\Lambda_s}\frac{1}{\kappa-1} \vertiii{g_c-\tilde{g}_c}.
	\end{align}
	This proves that $\theta_{\eps}$ is Lipschitz with constant $\Lip (\theta_{\eps})\lesssim \varepsilon_{gap}$.
	
	We continue by deriving the superlinear behavior of $\theta_\varepsilon$ near zero, which eventually implies the differentiability properties stated in the the proposition. We compute, using the quadratic bound on $R_{\eps}$ in Lemma \ref{quadratischeabschaetzung}
\[
	\vertiii{\theta_\varepsilon\left(g_c\right)} = \vertiii{P_sw_0} \leq \vertiii{P_s R_\varepsilon\left(S_\varepsilon\left(w_{-2}\right)\right)} + \vertiii{P_sLw_{-1}} \leq C \vertiii{w_{-2}}^2 + \Lambda_s\vertiii{P_sw_{-1}}.
\]
	Similarly, we get $\vertiii{P_sw_{-k}} \leq C \vertiii{w_{-(k+2)}}^2 + \Lambda_s \vertiii{P_sw_{-(k+1)}}$
	for any $k\in \mathbb{N}_0$ and thus, for any $m \in \mathbb{N}$,
	\begin{align}
	\vertiii{\theta_\varepsilon\left(g_c\right)} \le  \Lambda_s^m \vertiii{w_{-m}} + C\sum \limits_{l=1}^{m}\Lambda_s^{l-1}\vertiii{w_{-(l+1)}}^2.
	\end{align}
	Recalling the definition of $\left\|\cdot\right\|_{\Lambda_-,\Lambda_+}$ and the fact that the solution sequence is bounded via \eqref{e3}, $\left\|\left\{w_k\right\}_{k\in\mathbb{Z}}\right\|_{\Lambda_-,\Lambda_+}\leq \vertiii{g_c}$, we obtain
	\begin{align}\mel 
	\Lambda_s^m \vertiii{w_{-m}} + \sum \limits_{l=1}^{m}\Lambda_s^{l-1}\vertiii{w_{-(l+1)}}^2 \\
	& \leq \left( \frac{\Lambda_s}{\Lambda_-} \right)^m \vertiii{g_c} + \frac{C}{\Lambda_s\Lambda_-^2}\sum \limits_{l=1}^m \frac{\Lambda_s^l}{\Lambda_-^{2l}} \vertiii{g_c}^2 \\
	&= \left( \frac{\Lambda_s}{\Lambda_-} \right)^m \vertiii{g_c} + \frac{C}{\Lambda_s\Lambda_-^2}\sum \limits_{l=1}^m \left(\frac{\Lambda_-}{\Lambda_s}\right)^{lk}\left(\frac{\Lambda_s^{k+1}}{\Lambda_-^{k+2}}\right)^l\vertiii{g_c}^2
	\end{align}
	for any $k\in \mathbb{N}$. We recall that $\Lambda_->\Lambda_s$. Hence, if  there exists a $k \in \mathbb{N}$, such that $\Lambda_-^{k+2}> \Lambda_s^{k+1}$, it holds that
	\begin{align}
	\vertiii{\theta_\varepsilon\left(g_c\right)} \le  \left( \frac{\Lambda_s}{\Lambda_-} \right)^m \vertiii{g_c} + \frac{C}{\Lambda_s\Lambda_-^2}\left(\frac{\Lambda_-}{\Lambda_s}\right)^{km}\frac{\Lambda_s^{k+1}}{\Lambda_-^{k+2} - \Lambda_s^{k+1}}\vertiii{g_c}^2,
	\end{align}
	and after optimizing in $m$, this becomes
	\begin{align}
	\vertiii{\theta_\varepsilon\left(g_c\right)} \lesssim \vertiii{g_c}^{1+\frac{1}{k+1}},
	\end{align}
	provided that the right-hand side is sufficiently small. (For larger $g_c$, this bound follows trivially from the linear estimate.) 
	It remains to verify the existence of a suitable $k$. This, however, follows easily from our choice of $\Lambda_-$, more precisely, from the assumption  $\mu_- < \mu_{K+1}<2\mu_-$. Indeed, the latter enables us to pick $k > \frac{2\mu_--\mu_{K+1}}{\mu_{K+1}-\mu_-}$, which implies $\Lambda_-^{k+2}> \Lambda_s^{k+1}$ as desired. This proves the first statement with $\alpha = 1+ \frac{1}{k+1}< \frac{\mu_{K+1}}{\mu_-}$.
	
	We turn to the last inequality of the first statement. By the definition of $\theta_{\eps}$, the construction of the fixed point and the smoothing estimate from Lemma \ref{smoothingestimate},  we have that
	\begin{align}
	\left\|\theta_{\varepsilon}\left(g_c\right)\right\|_{W} \leq \left\|S_\varepsilon\left(w_{-1}\right)\right\|_{W} \lesssim \left\|w_{-1}\right\|_{H}.
	\end{align}
	It remains to notice that $\left\|w_{-1}\right\|_{H}\lesssim \vertiii{w_{-1}}\leq \Lambda_-^{-1}\vertiii{g_c}\lesssim \vertiii{g_c}$ by the equivalence of the norms and the bound \eqref{e3} applied to the solution sequence.
	
	The second part of the proof covers the properties of the center manifold 	$W_\varepsilon^c$ which is defined as the graph of $\theta_\varepsilon$.
	We commence with the invariance of $W_\varepsilon^c$. For this we consider an arbitrary point on that manifold $g=g_c+\theta_\varepsilon\left(g_c\right)$ and consider the evolution $\left\{w_k\right\}_{k\in \mathbb{Z}} = S^k_\varepsilon(g) = \hat{\theta}_{\varepsilon}(g)$ starting at that point. We have to show that  for every time step $k\in \mathbb{Z}$, the solution $w_k$ lies in $W_\varepsilon^c$, or, equivalently, that $P_sw_k=\theta_{\varepsilon}\left(P_cw_k\right)$. By iteration, it suffices to show this only for $k=1$ and $k=-1$. We set $\tilde{g}_c=P_cw_1$. Then  $S^k_\varepsilon\left(\tilde{w}_0\right) = \hat{\theta}_\varepsilon\left(\tilde{g}_c\right)$ is the unique flow in $\ell_{\Lambda_-,\Lambda_+}$ that satisfies $P_c\tilde{w}_0=\tilde{g}_c$. Since $P_cw_1 = \tilde{g}_c$, we have by uniqueness that $w_{k+1}=\tilde{w}_k$ for every $k\in \mathbb{Z}$. This yields $P_sw_1 = P_s\tilde{w}_0= \theta_{\varepsilon}\left(\tilde{g}_c\right)=\theta_{\varepsilon}\left(P_cw_1\right)$. 
	The same procedure backwards in time yields the statement for $k=-1$.
	
	It remains to prove the characterization of the center manifold. First, for a point $w_0$ on that manifold,   i.e., $w_0=g_c+\theta_{\varepsilon}\left(g_c\right)$ for some $g_c\in E_c$, we already know that $\left\|\left\{S^k_\varepsilon\left(w_0\right)\right\}\right\|_{\Lambda_-,\Lambda_+} = \left\|\hat{\theta}_\varepsilon\left(g_c\right)\right\|_{\Lambda_-,\Lambda_+} \leq \vertiii{g_c}\leq\vertiii{g}$ by the virtue of \eqref{e3}. Otherwise, if a flow $\left\{w_k\right\}_{k\in \mathbb{Z}} = \left\{S^k_\varepsilon\left(w_0\right)\right\}_{k\in\mathbb{Z}}$ satisfies this bound, it must be a fixed point of $J\left(P_cw_0,\cdot\right)$. Since this fixed point is unique, we have $\hat{\theta}_\varepsilon\left(P_cw_0\right)=\left\{w_k\right\}_{k\in \mathbb{Z}}$ and thus $\theta_{\varepsilon}\left(P_cw_0\right) = P_sw_0$. This yields $w_0\in W_\varepsilon^c$.
	
\end{proof}

The regularity of $\theta_{\varepsilon}$ allows us to deduce the equivalence of the Hilbert space norm $\vertiii{\cdot}$  and the higher-order norm $\|\cdot \|_W$ on the finite-dimensional manifold $W_{\eps}^c$.

\begin{corollary}\label{equivalenceofnorms}
	The norms	 $\vertiii{g}$ and $\|g\|_{W}$  are equivalent\ for any $g\in W_\varepsilon^c$.
\end{corollary}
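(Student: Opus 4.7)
The plan is to exploit the graph structure of the center manifold together with the two properties of $\theta_\varepsilon$ established in the first item of Proposition \ref{centermanifold}. Any $g\in W_\varepsilon^c$ is, by definition, of the form $g=g_c+\theta_\varepsilon(g_c)$ with $g_c=P_cg\in E_c$, so that in particular $P_sg=\theta_\varepsilon(g_c)$ and $\vertiii{g}=\max\{\|g_c\|_H,\|\theta_\varepsilon(g_c)\|_H\}$, which immediately gives $\|g_c\|_H\leq\vertiii{g}$.

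For one direction, I would bound $\vertiii{g}\lesssim\|g\|_W$ simply from the obvious inequality $\|g\|_H\lesssim\|g\|_{L^\infty}+\|\sqrt{\rho}\nabla g\|_{L^2}\lesssim \|g\|_{L^\infty}+\|\nabla g\|_{L^\infty}\lesssim\|g\|_W$, using the equivalent characterization \eqref{103} of the $H$-norm together with the boundedness of $B_1(0)$.

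For the reverse direction, I would split $g=g_c+\theta_\varepsilon(g_c)$ and estimate the two pieces separately. Since $E_c$ is a \emph{finite-dimensional} subspace of $H$ (it is spanned by the finitely many polynomial eigenfunctions corresponding to $\mu_0,\ldots,\mu_K$), all norms on $E_c$ are equivalent, and in particular $\|g_c\|_W\lesssim\|g_c\|_H=\vertiii{g_c}\leq\vertiii{g}$. For the complement, the last claim in item 1 of Proposition \ref{centermanifold} provides precisely the bound $\|\theta_\varepsilon(g_c)\|_W\lesssim\vertiii{g_c}$. Combining both via the triangle inequality yields $\|g\|_W\leq\|g_c\|_W+\|\theta_\varepsilon(g_c)\|_W\lesssim\vertiii{g}$.

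There is no real obstacle here; the work has already been done in the construction of $\theta_\varepsilon$, where the smoothing estimate of Lemma \ref{smoothingestimate} was used to upgrade the $H$-control of the backward orbit $w_{-1}$ to a $W$-control of $\theta_\varepsilon(g_c)=P_sw_0=P_sS_\varepsilon(w_{-1})$. The only thing to verify is that the $H$-norm on $E_c$ is comparable to $\|\cdot\|_W$ on $E_c$, and this follows immediately from finite-dimensionality together with the fact that the eigenfunctions are smooth polynomials.
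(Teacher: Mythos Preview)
Your proof is correct and follows essentially the same approach as the paper's own proof: one direction is the trivial embedding $W\hookrightarrow H$, and for the other you split $g=g_c+\theta_\varepsilon(g_c)$, invoke finite-dimensionality of $E_c$ for the first piece and the bound $\|\theta_\varepsilon(g_c)\|_W\lesssim\vertiii{g_c}$ from Proposition~\ref{centermanifold} for the second.
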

\begin{proof}
	Trivially, the embedding $W\hookrightarrow H$ is continuous on a bounded domain, that is, $\vertiii{g}\lesssim \|g\|_{W}$ for every $g \in W$.
	To show the reverse inequality, we take an element $g = g_c +\theta_{\eps}(g_c)$ in $W_\varepsilon^c$. Now, we notice that on the one hand, thanks to the regularity of $\theta_{\varepsilon}$ established in Proposition \ref{centermanifold}, we have $\left\|\theta_{\varepsilon}\left(g_c\right)\right\|_{W} \lesssim \vertiii{g_c}$. On the other hand, because $E_c$ is a finite-dimensional space, all norms on $E_c$ are equivalent, so that $\|g_c\|_W\lesssim \vertiii{g_c}$. We combine both insights and find
	\[
	\|g\|_W \le \|g_c\|_W + \|\theta_{\eps}(g_c)\|_W \lesssim \vertiii{g_c} \le \vertiii{g},
	\]
as desired.
\end{proof}

We will now construct the stable manifolds.

\begin{proposition}[Stable Manifold]\label{stablemanifold}
	Let   $\varepsilon_{gap}>0$ and $\eps$ be as in Proposition \ref{centermanifold} such that \eqref{e4} and \eqref{e1} hold.
	Then for every $g\in H$, there exists a map $\nu^\varepsilon_g:E_s\rightarrow E_c$ such that the submanifold
	\begin{align}
	M_g^\varepsilon \coloneqq g + \left\{ \nu_g^\varepsilon\left(g_s\right)+g_s : g_s\in E_s\right\}
	\end{align}
	satisfies the following conditions.
	\begin{enumerate}
		\item For every $g\in H$, the map $\nu_g^\varepsilon :E_s\rightarrow E_c$ is Lipschitz continuous with $\Lip\left(\nu_g^\varepsilon\right)\lesssim \varepsilon_{gap}$.
		\item For every $t\geq 0$ it holds that $S^t_\varepsilon\left(M_g^\varepsilon\right) \subseteq M_{S^t_\varepsilon(g)}^\varepsilon$ and $M_g^\varepsilon$ can be characterized as follows
		\begin{align}
		M_g^\varepsilon = \left\{ \tilde{g}\in H : \sup \limits_{k\in \mathbb{N}_0} \Lambda_-^{-k}\vertiii{S^k_\varepsilon\left(g\right)- S^k_\varepsilon\left(\tilde{g}\right)} \leq \vertiii{P_s\left(g-\tilde{g}\right)} \right\}
		\end{align}
		\item If $\eps_{gap}$ is sufficiently small (and $\varepsilon^0$ chosen accordingly), the following holds true: For every $g\in H$ the intersection $M_g^\varepsilon\cap W_\varepsilon^c$ consists of a single point $\tilde{g}$. This particularly yields that $\left\{M_g^\varepsilon\right\}_{g\in H}$ is a foliation of $H$ over $W_\varepsilon^c$. Moreover, it holds that
		\begin{align}
		\left\|\tilde{g}\right\|_{W} \lesssim \vertiii{g}.
		\end{align}
	\end{enumerate}
\end{proposition}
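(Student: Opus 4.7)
The plan is to adapt the Lyapunov--Perron fixed-point scheme of Proposition \ref{centermanifold} to the orbit of an arbitrary reference point $g$ in place of the origin. Let $w_k := S_\varepsilon^k(g)$ and, for a candidate $\tilde g$, set $v_k := S_\varepsilon^k(\tilde g) - w_k$. Expanding $S_\varepsilon = L + R_\varepsilon$ and using that $w_k$ satisfies the same recursion gives
\begin{align}
v_{k+1} = L v_k + \Phi_k(v_k), \qquad \Phi_k(v) := R_\varepsilon(w_k + v) - R_\varepsilon(w_k),
\end{align}
where each $\Phi_k$ is Lipschitz with constant at most $\varepsilon_{gap}$ by Lemma \ref{lipschitzR} and \eqref{e1}. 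Projecting onto $E_c$ and $E_s$, the stable component iterates forward from a prescribed $P_s v_0 = g_s$, while the center component --- which cannot be iterated forward without violating decay --- is formally determined backward by the geometric sum
\begin{align}
P_c v_k = -\sum_{m \ge 0} L_c^{-(m+1)} P_c \Phi_{k+m}(v_{k+m}).
\end{align}
I would collect both representations into an operator $T_g(g_s, \cdot) : \ell_{\Lambda_-,+} \to \ell_{\Lambda_-,+}$ and verify it is a contraction by summing the geometric series $\sum_l \Lambda_s^{k-l-1} \Lambda_-^l$ and $\sum_m \Lambda_c^{-(m+1)} \Lambda_-^{k+m}$, both convergent precisely because of the spectral gap conditions \eqref{e4}. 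Banach's theorem then produces a unique fixed-point sequence $\{v_k(g_s)\}$, and I set $\nu_g^\varepsilon(g_s) := P_c v_0(g_s)$. Comparing the fixed points for two parameters yields $\Lip(\nu_g^\varepsilon) \lesssim \varepsilon_{gap}$ by the same geometric summations, establishing (1); a direct check shows that $\tilde g := g + g_s + \nu_g^\varepsilon(g_s)$ satisfies $S_\varepsilon^k(\tilde g) - w_k = v_k$, so the fixed-point sequence corresponds to a genuine orbit.

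For (2), the semi-flow inclusion follows by a shift argument: if $\{v_k\}$ is the fixed-point sequence for $(g, g_s)$, then $\{v_{k+1}\}$ solves the analogous equation for $(S_\varepsilon(g), P_s v_1)$, so by uniqueness $P_c v_1 = \nu_{S_\varepsilon(g)}^\varepsilon(P_s v_1)$, i.e.\ $S_\varepsilon(\tilde g) \in M_{S_\varepsilon(g)}^\varepsilon$. The characterization then follows from the same Banach uniqueness: any $\tilde g$ whose discrete orbit stays $\Lambda_-^k$-close to that of $g$ produces a sequence in $\ell_{\Lambda_-,+}$, which must coincide with the constructed fixed point for $g_s = P_s(\tilde g - g)$.

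Finally, the intersection in (3) reduces to solving the scalar fixed-point equation
\begin{align}
\tilde g_c = P_c g + \nu_g^\varepsilon\bigl(\theta_\varepsilon(\tilde g_c) - P_s g\bigr), \qquad \tilde g_c \in E_c,
\end{align}
obtained by substituting $\tilde g = \tilde g_c + \theta_\varepsilon(\tilde g_c) \in W_\varepsilon^c$ into $\tilde g - g = g_s + \nu_g^\varepsilon(g_s) \in M_g^\varepsilon$ and separating the $E_c$- and $E_s$-projections. Since $\Lip(\nu_g^\varepsilon), \Lip(\theta_\varepsilon) \lesssim \varepsilon_{gap}$, the right-hand side is Lipschitz in $\tilde g_c$ with constant $\lesssim \varepsilon_{gap}^2$ and is therefore a contraction once $\varepsilon_{gap}$ is chosen sufficiently small. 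The bound $\|\tilde g\|_W \lesssim \vertiii{g}$ will then emerge by using $\nu_g^\varepsilon(0) = 0$ (the unique fixed point for $g_s = 0$ is $v_k \equiv 0$) to estimate $\vertiii{\tilde g_c}$ directly from the fixed-point equation, absorbing the quadratic $\varepsilon_{gap}^2 \vertiii{\tilde g_c}$ term, and finally invoking Corollary \ref{equivalenceofnorms} together with $\vertiii{\theta_\varepsilon(\tilde g_c)} \lesssim \varepsilon_{gap} \vertiii{\tilde g_c}$ from Proposition \ref{centermanifold}. The main technical subtlety will be justifying convergence of the backward center sum in $\ell_{\Lambda_-,+}$, which hinges on $\Lambda_-/\Lambda_c < 1$; beyond this point the argument parallels the center manifold construction very closely.
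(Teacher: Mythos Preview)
Your proposal is correct and follows essentially the same Lyapunov--Perron scheme as the paper. There are two cosmetic differences worth flagging: the paper works directly with sequences $\{w_k\}$ in a ball around the reference orbit $\{S_\varepsilon^k(g)\}$ and uses a one-step implicit map $I^g_k = P_s S_\varepsilon(w_{k-1}) + L_c^{-1}P_c(w_{k+1} - R_\varepsilon(w_k))$ rather than your resolved infinite backward sum, and for the intersection in (3) the paper runs the contraction on $E_s$ via $\chi(g_s) = \theta_\varepsilon(\nu_g^\varepsilon(g_s - P_s g) + P_c g)$ rather than on $E_c$ as you do --- both are equivalent since $\Lip(\theta_\varepsilon)\Lip(\nu_g^\varepsilon) \lesssim \varepsilon_{gap}^2$ either way.
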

\begin{proof} The existence follows again by a fixed point argument, which is similar to the one of Proposition \ref{centermanifold}. We will thus only sketch it.
	
	We fix a function $g\in H$ and a positive constant $r$ and define 
	\[
	\ell_{\Lambda_-,+}^{g,r}\coloneqq \left\{ \left\{w_l\right\}_{l\in \mathbb{N}_0}\in \left(L^2(\rho)\right)^{\mathbb{N}_0}: \left\|\left\{w_l\right\}_{l\in\mathbb{N}_0}-\left\{S_\varepsilon^l(g)\right\}_{l\in \mathbb{N}_0}\right\|_{\Lambda_{-},+}\le r\right\}.
	\]
	 Note that $\ell_{\Lambda_-,+}^{g,r}$ equipped with the metric 
	 \[
	 d_g\left(\left\{w_l\right\}_{l\in \mathbb{N}_0},\left\{\tilde{w}_l\right\}_{l\in \mathbb{N}_0}\right) = \left\|\left\{w_l\right\}_{l\in \mathbb{N}_0}-\left\{\tilde{w}_l\right\}_{l\in \mathbb{N}_0}\right\|_{\Lambda_{-},+}
	 \]
	  is a closed subset of $\left(L^2(\rho)\right)^{\mathbb{N}_0}$.
	We consider the map $I^g:E_s \times \ell_{\Lambda_-,+}^{g,r} \rightarrow \ell_{\Lambda_-,+}^{g,r}$ defined by
	\begin{align}
	I^g_k\left( g_s,\left\{w_l\right\}_{l\in \mathbb{N}_0} \right) =
	\begin{cases}
	g_s+P_sg+L_c^{-1}P_c\left( w_1-R_\varepsilon\left(w_0\right) \right)& \text{ if } k=0\\
	P_s\left( S_\varepsilon\left(w_{k-1}\right)+L_c^{-1}P_c\left(w_{k+1}-R_\varepsilon\left(w_k\right) \right) \right)& \text{ if } k\geq 1,
	\end{cases}		
	\end{align}
which has the following useful property
	\begin{align}\label{e6}
	I^g_k\left( g_s, \left\{S_\varepsilon^l(g)\right\}_{l\in \mathbb{N}_0}\right) - S_\varepsilon^k(g) = g_s\delta_{0k}.
	\end{align}
	Moreover, by similar arguments as for the operator $J$ in the proof of Proposition \ref{centermanifold}, relying on \eqref{e2} and \eqref{e1} we compute for a fixed element $g_s\in E_s$ that
\[
	\left\|I^g\left(g_s, \left\{ w_l \right\}_{l\in\mathbb{N}_0}\right)-I^g\left(g_s, \left\{\tilde{w}_l\right\}_{l\in \mathbb{N}_0} \right)\right\|_{\Lambda_-,+} \leq \kappa \left\|\left\{ w_l \right\}_{l\in\mathbb{N}_0}-\left\{\tilde{w}_l\right\}_{l\in \mathbb{N}_0}\right\|_{\Lambda_-,+}
	\]
and
\begin{align*}
\mel
\left\|I^g\left(g_s,\left\{w_l\right\}_{l\in\mathbb{N}_0}\right)-\left\{S^l_\varepsilon(g)\right\}\right\|_{\Lambda_-,+} \\
&
	\leq \max\left\{\vertiii{g_s},\kappa\left\|\left\{w_l\right\}_{l\in\mathbb{N}_0}-\left\{S^l_\varepsilon(g)\right\}\right\|_{\Lambda_-,+}\right\},
\end{align*}
	where $\kappa = \max \left\{ \frac{\Lambda_- + \varepsilon_{gap}}{\Lambda_c}, \frac{\Lambda_s+\varepsilon_{gap}}{\Lambda_-}\right\}<1$. Notice that in the latter estimate, we made use of the formula \eqref{e6}.
	 Both estimates imply $I^g(g_s,\cdot)$ is a contraction and a self-mapping on the set $ \ell_{\Lambda_-,+}^{g,r}$, if we choose  $r=\vertiii{g_s}$.
	
	Hence, by Banach's fixed point theorem there exists a unique sequence $\left\{w_k\right\}_{k\in \mathbb{N}_0}$ satisfying
	\begin{align}
	I^g\left(g_s, \left\{ w_k \right\}_{k\in\mathbb{N}_0}\right)= \left\{w_k\right\}_{k\in \mathbb{N}_0} \quad \text{ and } \quad \left\|\left\{w_k\right\}_{k\in \mathbb{N}_0} - \left\{S_\varepsilon^k(g)\right\}_{k\in \mathbb{N}_0}\right\|_{\Lambda_-,+}\leq r.
	\end{align}
	By construction, this sequence $\left\{w_k\right\}_{k\in \mathbb{N}_0}$ is a semiflow to the truncated equation with $P_sw_0= g_s+P_sg$. We may now introduce  a solution mapping $\hat{\nu}^\varepsilon_g:E_s \rightarrow \ell_{\Lambda_-,+}^g$ by $\hat{\nu}_g^\varepsilon\left(g_s\right) \coloneqq \left\{w_k\right\}_{k\in \mathbb{N}_0}$, and we define
	$\nu_g^\varepsilon\left(g_s\right)=P_c\left(w_0-g\right)$.
	Due to the construction via a fixpoint argument, we deduce that $\hat{\nu}_g^\varepsilon$ is Lipschitz continuous with $\Lip\left(\hat{\nu}^\varepsilon_g\right)\leq \frac{1}{1-\kappa}$. 
	
	We will improve the Lipschitz constant in a similar way as in the previous proof.
	For this, let $\hat{\nu}_g^\varepsilon\left(g_s\right) =  \left\{w_k\right\}_{k\in\mathbb{N}_0}$ and $\hat{\nu}_g^\varepsilon\left(\tilde{g_s}\right) = \left\{ \tilde{w}_k\right\}_{k\in\mathbb{N}_0}$ be two fixed point solution sequences. It holds that $ \nu_g^\varepsilon\left(g_s\right)-\nu_g^\varepsilon\left(\tilde{g_s}\right)  =  P_c\left(w_0- \tilde{w}_0\right) $, and we compute
	\[
	\vertiii{P_c\left(w_k-\tilde{w}_k\right)} \leq\frac{1}{\Lambda_c}\vertiii{P_c\left(w_{k+1}-\tilde{w}_{k+1}\right)} + \frac{\varepsilon_{gap}}{\Lambda_c} \vertiii{w_k-\tilde{w}_k}
	\]
	with the help of the definition of the map $I$. Therefore, for every $m\in \mathbb{N}$ it holds
	\begin{align}
	\vertiii{\nu_g^\varepsilon\left(g_s\right)-\nu_g^\varepsilon\left(\tilde{g_s}\right)}
	&\leq \left(\frac{\Lambda_-}{\Lambda_c}\right)^m \left\|\left\{w_k\right\}_{k\in\mathbb{N}_0}-\left\{\tilde{w}_k\right\}_{k\in\mathbb{N}_0}\right\|_{\Lambda_-,+} \\
	&+ \frac{\varepsilon_{gap}}{\Lambda_c}\sum \limits_{l=0}^{m-1} \left(\frac{\Lambda_-}{\Lambda_c}\right)^l \left\|\left\{w_k\right\}_{k\in\mathbb{N}_0}-\left\{\tilde{w}_k\right\}_{k\in\mathbb{N}_0}\right\|_{\Lambda_-,+}.
	\end{align}
	Since $\frac{\Lambda_-}{\Lambda_c}<1$, for $k\rightarrow \infty$, this yields 
	\begin{align}
	\vertiii{\nu_g^\varepsilon\left(g_s\right)-\nu_g^\varepsilon\left(\tilde{g_s}\right)} \leq  \frac{\varepsilon_{gap}}{\left(\Lambda_c-\Lambda_-\right)\left(1-\kappa\right)}\vertiii{g_s-\tilde{g_s}}.
	\end{align}
	
	The stable manifold $M_g^\varepsilon$ is defined as the graph of $\nu_g^\varepsilon$ shifted by $g$. We first prove its characterization as stated in the second part of the proposition.
	Let $\tilde{g}$ be in $M_g^\varepsilon$, i.e., $\tilde{g}= g+\nu_g^\varepsilon\left(g_s\right)+g_s$ for some $g_s \in E_s$. We define $\left\{w_k\right\}_{k\in \mathbb{N}_0}=\hat{\nu}_g^\varepsilon\left(\tilde{g}\right)$ as the unique semi flow with $\Lambda_-^{-k}\vertiii{w_k-S^k_\varepsilon(g)}\leq \vertiii{g_s}=\vertiii{P_s\left(g-\tilde{g}\right)}$ and $P_sw_0=g_s+P_sg$. By definition of $\nu_g^\varepsilon$, we have
	\begin{align}
	\tilde{g}=g+\nu_g^\varepsilon\left(g_s\right)+g_s = g+P_c\left(w_0-g\right)+P_sw_0-P_sg =w_0,
	\end{align}
	and thus, $w_k= S^k_\varepsilon\left(\tilde{g}\right) $ satisfies the desired bound. Let us now assume that $S^k_\varepsilon\left(\tilde{g}\right)$ satisfies this bound. We define $g_s=P_s\left( \tilde{g}-g\right)$. Then $S^k_\varepsilon\left(\tilde{g}\right)$ is the unique fixpoint of $I^g\left(g_s,\cdot\right)$ with $\left\|S^k_\varepsilon\left(\tilde{g}\right)-S^k_\varepsilon(g)\right\|_{\Lambda_-,+}\leq \vertiii{g_s}$. By definition, this yields $S^k_\varepsilon\left(\tilde{g}\right)= \hat{\nu}_g^\varepsilon\left(g_s\right)$ and $\nu_g^\varepsilon\left(g_s\right)=P_c\left(\tilde{g}-g\right)$ and thus
	\begin{align}
	g+\nu_g^\varepsilon\left(g_s\right)+g_s= g + P_c\left(\tilde{g}-g\right) + P_s\left(\tilde{g}-g\right)=\tilde{g}.
	\end{align}
	
	Next, we have to verify  that $M_g^\varepsilon$ is positive invariant. For this, we take an arbitrary point $w_0$ in $M_g^\varepsilon$ and define $\tilde{w}_0=S_\varepsilon(w_0)$. We straightforwardly compute that $S^k_\varepsilon\left(\tilde{w}_0\right)$ is a fixpoint of $I^{S_\varepsilon(w_0)}\left(0, \cdot\right)$, which implies the desired property.

	To prove that there exists a single intersection point with the center manifold $W_{\eps}^c$, we consider the mapping $\chi(g_s) = \theta_{\eps}(\nu^{\eps}_g(g_s-P_sg)+P_cg)$ on $E_s$. Since $\theta_{\eps}$ and $\nu_g^{\eps}$ are both Lipschitz continuous with constant of order $\eps_{gap}$, the mapping $\chi$ itself is Lipschitz with a constant of the order $\eps_{gap}^2$, and thus, it is  a contraction if $\eps_{gap}$ is sufficiently small. We denote by $\tilde g_s$ the unique fixed point and set $\tilde g_c = \nu_g^{\eps}(\tilde g_s - P_sg)+P_cg$. By definition, $\tilde g = \tilde g_c+\tilde g_s$ lies in the intersection of $W^c_{\eps} $ and $M_g^{\eps}$. As every point in this intersection is itself a fixed point, the uniqueness follows.
	
	To estimate the intersection point $\tilde g$ against $g$, we argue similarly. Indeed, by construction, the Lipschitz property for $\nu_g^{\eps}$, and the fact that both $\theta_{\eps}(0)=0$ and $\nu_g^{\eps}(0)=0$,  it holds that
	\begin{align*}
	\vertiii{\tilde g} & = \vertiii{\nu_g^{\eps} (\tilde g_s -P_s g) +P_c g + i_s \theta_{\eps}(\nu_g^{\eps} (\tilde g_s -r_s g) +P_c g)}\\
	&\lesssim (1+\eps_{gap}) \vertiii{\nu_g^{\eps} (\tilde g_s -r_s g) +P_c g}\\
	&\lesssim \eps_{gap}  \vertiii{\tilde g_s -P_sg}  +  \vertiii{g}\\
	&\lesssim \eps_{gap} \vertiii{\tilde g} + \vertiii{g},
	\end{align*}
	where we have used that $\eps_{gap}\le 1$ in the third inequality. We arrive at
		\[
	\vertiii{\tilde g} \lesssim \vertiii{g},
	\]
	provided that $\eps_{gap}$ is sufficiently small.
	Because $\tilde{g}$ lies on the manifold $W_\varepsilon^c$, we can make use of Corollary \ref{equivalenceofnorms} to obtain $\left\|\tilde{g}\right\|_{W} \lesssim\vertiii{g}$.  
\end{proof}

Finally we are able to show the existence of a localized invariant manifold as claimed in Theorem \ref{localmanifolds} by combining the two preceding constructions with earlier proved regularity properties of the flow map $S$.

\begin{proof}[ of Theorem \ref{localmanifolds}]
	We choose $0<\varepsilon_{gap}<\min \left\{e^{-\mu_{K+1}}-e^{-\mu},  e^{-\mu}-e^{-\mu_K}\right\}$, such that the third statement in Proposition \ref{stablemanifold} applies, and we define $\Lambda_-=e^{-\mu}$. We furthermore pick $\varepsilon\leq \varepsilon^*$ and $\varepsilon^0 \leq \min \left\{\varepsilon,\varepsilon_0\right\}$ as in the hypotheses of Propositions \ref{centermanifold} and \ref{stablemanifold}.
	The construction of $W_{loc}^c$  then  follows directly from Proposition \ref{centermanifold}.
	
	To prove the first property in the theorem, we consider $g\in W_{loc}^c$ with $\|g\|_H\le\eps_0$ and we notice that by the semi-flow property from Theorem \ref{wellposednessH1}, it holds  $\left\|S^t_\varepsilon(g)\right\|_{L^\infty\left(\left(0,\infty\right);H\right)} \leq \tilde{C} \eps_0$ for some $\tilde C\geq 1$. Moreover, $S^t_\varepsilon(g)\in W_{\eps}^c$ by construction, and thus, by the equivalence of norms in Corollary \ref{equivalenceofnorms}, it holds that $\|S^t_\varepsilon(g)\|_{W}\leq C \|S^t_\varepsilon(g)\|_{H}   \le C\tilde C \eps_0$ for some $C\geq 1$.
 Thus, for $\varepsilon_0\leq \frac{1}{C\tilde{C}}\varepsilon$, we find $S^t(g) = S^t_{\eps}(g)$ by the definition of the truncation and, in particular, $\|S^t(g)\|_H\le \eps$, for any $t\ge0$.

We turn to the proof of the second property. We know that there exists a unique point $\tilde{g}$ in $W_\varepsilon^c\cap M_g^\varepsilon$ that satisfies $\left\|\tilde{g}\right\|_{H}\lesssim \left\|\tilde{g}\right\|_{W} \lesssim \left\|g\right\|_{H}\lesssim  \|g\|_{W}\leq \varepsilon_0$, see Proposition \ref{stablemanifold}. In particular, choosing $\varepsilon_0 \leq \varepsilon$ even smaller, if necessary, it holds that $S^k_{\eps}(g) = S^k(g)$ and $S^k_{\eps}(\tilde g) = S^k(\tilde g)$. Moreover, the estimate shows that $\tilde{g}$ actually lies in $W_c^{loc}$. Now, the characterization of the stable manifold yields
	\begin{align}
	\left\|S^k(g)-S^k\left(\tilde{g}\right)\right\|_{H} \lesssim \Lambda_-^k.
	\end{align}
	Since we are allowed to drop the $\varepsilon$ at $S^t_\varepsilon(g)$ and $S^t_\varepsilon\left(\tilde{g}\right)$, and 
	since the solution to the (truncated) equation depends continuously on the initial datum with respect to the Hilbert space topology,  $\left\|S^t_\varepsilon(g)-S^t_\varepsilon\left(\tilde{g}\right)\right\|_H \lesssim \left\|g-\tilde{g}\right\|_H$
	 holds for all $t\in [0,1]$ (see  the fixed point construction of solutions in Theorem \ref{wellposednessH1}), we obtain 
	\begin{align}
		\left\|S^t(g)-S^t\left(\tilde{g}\right)\right\|_H\lesssim e^{-\mu t}
	\end{align}
	for any $t\geq 0$.
	Next, we make use of Lemma \ref{Lipschitzglaettung} and obtain
	\[
	\left\|S^{t}\left(g\right)-S^{t}\left(\tilde{g}\right)\right\|_{W} 
	\lesssim  e^{-\mu t},	
	\]
	for any $t\geq \hat{t}$ and some $\hat t\in(4/5,1)$. The statement follows.
\end{proof}

\section{Mode-by-mode asymptotics for the perturbation equation}\label{applicationinvariantmanifolds}

In this final section, we exploit our invariant manifold theorem, Theorem \ref{localmanifolds} to prove the mode-by-mode asymptotics in Theorem \ref{Whoeheremoden}.  We start with a brief comment  on the projection of a function $w \in H$ onto the subspaces spanned by the eigenfunctions of $\mathcal{L}^2+N\mathcal{L}$. Let $\psi$ be such an eigenfunction for the eigenvalue $\lambda^2 + N\lambda$, or, equivalently, $\L\psi =\lambda\psi$. We consider the $H$-projection of $w$, and find via an integration by parts
\begin{align}
\langle \psi,w\rangle_{H}&= \int \psi w \rho dz + \int \nabla\psi \cdot \nabla w \rho^2dz \\
&= \int \psi w \rho dz + \int w\mathcal{L}\psi  \rho dz = \left(1+\lambda\right)\int \psi w \rho dz = (1+\lambda)\langle \psi,w\rangle.
\end{align}
This shows that the $H$-projection coincides, up to a constant, with the $L^2(\rho)$-projection, due to the right choice of the weights. Thus, it is enough to  consider the projection with respect to $\langle \cdot,\cdot\rangle$ in the following.

We notice that  the projection of $w$ onto the space spanned by the constant eigenfunction corresponding to the eigenvalue $\mu_0=0$ is given by
\begin{align}
	P_0w= c_{0,N}\int_{B_1(0)} w\rho dz
\end{align}
and the projection $w$ onto the eigenspaces spanned by the eigenfunctions corresponding to the next eigenvalue $\mu_1$ is given by 
\begin{align}
	P_1w = c_{1,N}\int_{B_1(0)} zw\rho dz,
\end{align}
where $c_{0,N}$ and $c_{1,N}$ are two positive constants. 

Eventually we will prove Theorem \ref{Whoeheremoden} by induction and thus commence by proving the case $K=0$ in the following theorem. We remark that thanks to smoothing effects, see Equation (54) in \cite{SeisTFE}, it holds that
\[
\|w(t)\|_{W}\le \|w_0\|_{W^{1,\infty}},
\]
for some $t\gtrsim 1$, and thus, instead of considering Lipschitz initial data, we may impose slightly stronger assumptions.

\begin{theorem}\label{stabilityw}
	There exists $\varepsilon_0>0$ such that the following holds. Let $w$ be a solution to \eqref{perturbationequation} with initial datum $w_0$. We further assume that  $\|w_0\|_{W} \leq \varepsilon_0$ and
	\begin{align}\label{h11}
	\lim\limits_{t\rightarrow \infty} \int w(z)\rho(z)dz=0.
	\end{align} Then we have
	\begin{align}
	\left\|w(t)\right\|_{W} \lesssim e^{-\mu_{1}t} \quad \text{ for all } t\geq 0.
	\end{align}
\end{theorem}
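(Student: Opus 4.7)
The strategy is to combine the invariant manifold theorem (Theorem \ref{localmanifolds}) with $K=0$ with a Duhamel bootstrap that exploits the quadratic character of the nonlinearity to promote the subcritical decay into the sharp rate $\mu_1$.

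\emph{Step 1: identify the local centre manifold.} For $K=0$ the centre eigenspace $E_c$ is one-dimensional, spanned by the constant eigenfunction attached to $\mu_0=0$. Since $\mathcal{L}$ annihilates constants and every term in the nonlinearity $F[w]$ from \eqref{nonlinearityperturbationequation} contains at least one spatial derivative of $w$, each small constant $w\equiv a$ is a stationary solution of \eqref{perturbationequation}. Hence $\{w\equiv a : |a|\le \varepsilon\}$ is a one-dimensional flow-invariant Lipschitz manifold tangent to $E_c$ at the origin, and the uniqueness built into Proposition \ref{centermanifold} forces it to coincide with $W^c_{loc}$.

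\emph{Step 2: extract the correct equilibrium.} For any prescribed $\mu\in(0,\mu_1)$ Theorem \ref{localmanifolds} delivers a unique $a^*\in W^c_{loc}$ with
\[
\|w(t)-a^*\|_W \lesssim e^{-\mu t},\qquad t\ge 1.
\]
Integrating this inequality against $\rho$ over $B_1(0)$ and using the vanishing-mean hypothesis \eqref{h11},
\[
a^* \int_{B_1(0)}\rho\,dz = \lim_{t\to\infty}\int_{B_1(0)} w(t)\rho\,dz = 0,
\]
so $a^*=0$ and therefore $\|w(t)\|_W\lesssim e^{-\mu t}$ for every $\mu<\mu_1$.

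\emph{Step 3: bootstrap to the sharp rate.} Fix $\mu\in(\mu_1/2,\mu_1)$, so that the previous step yields $\|w(t)\|_W\lesssim e^{-\mu t}$ with $2\mu>\mu_1$. Denote the right-hand side of \eqref{perturbationequation} by $\mathcal{N}[w]$. Because $F[w]$ is at least quadratic in $w$ and its derivatives, the estimates of Section \ref{theoryforperturbationequation} give $\|\sqrt{\rho}\,F[w(t)]\|\lesssim \|w(t)\|_W^2 \lesssim e^{-2\mu t}$. Projecting \eqref{perturbationequation} onto $E_c$ and $E_s$: on $E_c$ the operator $\mathcal{L}^2+N\mathcal{L}$ vanishes, so $\partial_t(P_c w)=P_c\mathcal{N}[w]$ and, since $P_c w(t)\to 0$,
\[
P_c w(t) = -\int_t^\infty P_c\mathcal{N}[w(s)]\,ds, \qquad \|P_c w(t)\|_H\lesssim e^{-2\mu t}\lesssim e^{-\mu_1 t}.
\]
On $E_s$ the spectral bound $\|e^{-t(\mathcal{L}^2+N\mathcal{L})}|_{E_s}\|_H\le e^{-\mu_1 t}$ combined with Duhamel's formula produces
\[
\|P_s w(t)\|_H \le e^{-\mu_1 t}\|P_s w(0)\|_H + \int_0^t e^{-\mu_1(t-s)} e^{-2\mu s}\,ds \lesssim e^{-\mu_1 t},
\]
since $2\mu>\mu_1$. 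Summing, $\|w(t)\|_H\lesssim e^{-\mu_1 t}$, and one final application of the smoothing estimate of Lemma \ref{smoothingestimate} together with the higher-order regularity of Theorem \ref{dritteableitung} promotes this bound to the $W$-norm.

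\emph{Main obstacle.} The delicate point is Step 3, since Theorem \ref{localmanifolds} only produces rates strictly below $\mu_1$ and so the endpoint cannot be harvested directly from the invariant manifold construction. The gap is closed by the doubling inequality $2\mu>\mu_1$ available for $\mu$ close enough to $\mu_1$, which makes the quadratic forcing decay strictly faster than the linear semigroup and allows the Duhamel bootstrap to close.
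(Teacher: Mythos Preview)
Your Steps~1 and~2 coincide with the paper's argument: the centre manifold for $K=0$ is exactly the line of constants, and the mean condition~\eqref{h11} forces the matching constant $a^*$ to vanish, so $\|w(t)\|_W\lesssim e^{-\mu t}$ for a fixed $\mu\in(\tfrac12\mu_1,\mu_1)$. Your treatment of $P_cw$ in Step~3 is likewise the same as the paper's (both integrate $\partial_t P_cw=P_c\mathcal N[w]$ from $t$ to $\infty$).

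The gap is in the $P_s$-part of Step~3. Your Duhamel estimate
\[
\|P_sw(t)\|_H\le e^{-\mu_1 t}\|P_sw(0)\|_H+\int_0^t e^{-\mu_1(t-s)}\,e^{-2\mu s}\,ds
\]
tacitly assumes $\|P_s\mathcal N[w(s)]\|_H\lesssim e^{-2\mu s}$. But $\mathcal N[w]=\rho^{-1}\nabla\!\cdot(\rho^2F[w])+\rho F[w]=-2z\cdot F[w]+\rho\,\nabla\!\cdot F[w]+\rho F[w]$ contains $\rho\,\nabla F[w]$, and differentiating $F[w]$ from \eqref{nonlinearityperturbationequation} produces terms of the form $\rho^2\nabla^4w\star\nabla w$. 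These are \emph{not} controlled by $\|w\|_W$, which reaches only up to $\rho^2\nabla^3w$; the bound $\|\sqrt\rho\,F[w]\|\lesssim\|w\|_W^2$ that you quote is for $F$, not for $\mathcal N$, and Section~\ref{theoryforperturbationequation} provides no pointwise-in-time control of $\rho^2\nabla^4w$.

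The paper closes this gap by replacing Duhamel with an energy estimate: testing the equation for $P_sw$ against $P_sw$ in $H$ and integrating by parts moves the extra derivative off $F[w]$, yielding
\[
\tfrac12\tfrac{d}{dt}\|P_sw\|_H^2+\mu_1\|P_sw\|_H^2\le \|P_sw\|_W\,\|\rho F[w]\|_{L^\infty}\lesssim e^{-3\mu t},
\]
since $|\nabla\mathcal LP_sw|$ is pointwise controlled by $\|P_sw\|_W$. This needs the slightly stronger condition $\mu>\tfrac23\mu_1$ (still available), but avoids fourth-order bounds entirely. Replacing your Duhamel step with this energy argument (or, alternatively, justifying $\|\rho^2\nabla^4w(t)\|_{L^\infty}\lesssim e^{-\mu t}$ via the smoothing of \cite{SeisTFE} before applying Duhamel) makes your proof complete.
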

\begin{proof}
	We will make use of the invariant manifolds we just constructed in the case $K=0$. In this case, $E_c$ is one-dimensional and spanned by the constant eigenfunction $\psi_{1,0}$ corresponding to the eigenvalue $\mu_0=0$. Thus, we obtain $E_c \cong \mathbb{R}$.
	We fix $\mu \in (0,\mu_{1})$ and accordingly $\varepsilon$ and $\varepsilon_0$ as in Theorem \ref{localmanifolds} and claim the equality 
	\begin{align}\label{claim}
	W_{\varepsilon}^c= E_c.
	\end{align}
	To see this, we first pick a function $g \in E_c$, i.e., $g(x)= \alpha \in \mathbb{R}$. The constant function $w(t,x)\equiv \alpha$ solves equation \eqref{c3} with initial datum $g$ and satisfies the bounds
	\begin{align}
	\|w(t)\|\sim |\alpha| \lesssim 
	\begin{cases}
	\Lambda_+^t |\alpha|, \quad \text{ for } t\geq 0,\\
	\Lambda_-^t |\alpha|, \quad \text{ for } t\leq 0.		
	\end{cases}
	\end{align}
	By the characterization of the center manifold, we deduce $g \in W_{\varepsilon}^c$. Now let $g=g_c+\theta_{\varepsilon}(g_c)$ be a function in $ W_{\varepsilon}^c$. From above we know $E_c\subset W_{\eps}^c$, and thus   $g_c \in W_{\varepsilon}^c$. This forces $\theta_{\varepsilon}(g_c)=0$, which proves the claim \eqref{claim}.
	
	Let us know consider an initial datum $w_0$ with $\|w_0\|_{W} \le \eps_2$ and let $w(t)=S^t(w_0)$ be the corresponding solution to the perturbation equation. The Invariant Manifold Theorem \ref{localmanifolds} combined with the characterization \eqref{claim} yields the existence of constant $a$  with
	\begin{align}\label{g1}
	\|w(t)-a\|_H\lesssim \left\|w(t)-a\right\|_{W}\lesssim e^{-\mu t},\quad \text{ for } t\geq 1.
	\end{align}
	In particular, if $a(t)$ denotes the average of $w(t)$ or, in other words, the projection onto $E_c$, $a(t)=P_c w(t)= \fint w(t)\rho dx$, it holds that $|a(t)-a| \lesssim e^{-\mu t}$. Invoking the hypothesis \eqref{h11}, this estimate entails that $a=0$. 
	
	We want to improve on the decay rate of $a(t)$.     We  note that $a(t)$ solves the equation 
	\begin{align}
	\frac{d}{dt}a(t)= \fint \nabla \cdot \left(\rho^2F[w]\right)+\rho^2F[w]dz = \fint \rho^2F[w]dz.
	\end{align}
	The nonlinear term $\rho F[w]$ consists of a linear combination of respective two factors of $\nabla w$, $\rho \nabla^2w$ or $\rho^2 \nabla^3w$, cf.~\eqref{106}. Thus, we obtain the estimate $|\rho F[w]| \lesssim \|w\|^2_{\dot{W}}$, where we consider only the homogeneous part of the norm.  From \eqref{g1} we already know that $\|w(t)\|_{\dot{W}}\lesssim e^{-\mu t}$ for $t\geq 1.$ We conclude
	\begin{align}
	\left|\frac{d}{dt}a(t)\right|\lesssim e^{-2\mu t}
	\end{align}
	for $t\geq 1$. We integrate over the time interval $(t,\infty)$ and recall the assumption \eqref{h11} to obtain
	\begin{align}\label{110}
	|a(t)| \lesssim e^{-2\mu t} \quad \text{ for } t\geq 1.
	\end{align}
	
	As we may choose $\mu$ larger than $\frac{1}{2}\mu_{1}$, it remains to gain suitable control over the projection of $w(t)$ onto $E_s\cong H/\mathbb{R}$, namely $P_sw(t) = w(t)-a(t)$. We note that $P_sw$ solves the equation 
	\begin{align}
	\partial_tP_sw +\left( \mathcal{L}^2+N\mathcal{L} \right)P_sw = P_s\left( \frac{1}{\rho}\nabla \cdot\left(\rho^2F[w]\right)+\rho F[w] \right).
	\end{align}
	Since the eigenfunctions $\left\{\psi_i\right\}_{i\in \mathbb{N}_0}$ form  an orthogonal basis of $H$, it holds that $\langle P_sw,\left(\mathcal{L}^2+N\mathcal{L}\right)P_sw \rangle_H \geq \mu_{1} \|P_sw\|^2_{H}$ and thus, arguing similarly as in the proof of Theorem \ref{wellposednessH1}, we find that
	\begin{align}
	\mel	\frac{1}{2}\frac{d}{dt}\left\|P_sw\right\|^2_{H} +\mu_{1}\left\|P_sw\right\|^2_{H}\\
	&\leq -\la \grad P_sw,\rho F[w]\ra  - \la \grad\L P_sw,\rho F[w]\ra + \la P_sw,\rho F[w]\ra +\la \L P_s w,\rho F[w]\ra\\
	&  \le \|P_sw \|_{W} \left(\|\rho F[w]\|_{L^\infty} + \|\rho F[w]\|_{L^\infty}\right)\\
	&\leq\left( \|w\|_W + |a(t)| \right) \left(\|\rho F[w]\|_{L^\infty} + \|\rho F[w]\|_{L^\infty}\right).
	\end{align}
	Thanks to the uniform estimates on the nonlinearities that we quoted above and the bound in \eqref{g1}, we observe that the right-hand side decays with at least 
	$  e^{-3\mu t}$. Therefore, the latter estimate translates into 
	\begin{align}
	\frac{d}{dt}\left(e^{2\mu_{1} t} \left\|P_sw\right\|^2_{H}\right)\lesssim e^{\left(2\mu_{1}- 3\mu\right)t},
	\end{align}
	for any $t\ge 1$.	The right hand side is integrable, provided that we choose $\mu$ sufficiently close to $\mu_{1}$, so that $3\mu >2\mu_1$. Integration in time yields
	\begin{align}
	\left\|P_sw(t)\right\|^2_{H} \lesssim e^{-2\mu_1 t} \quad \text{ for } t\geq 1.
	\end{align}
	In combination with our estimate on the average, \eqref{100},	this bound gives
	\begin{align}
	\left\|w(t)\right\|_{H} \leq \left\|P_sw(t)\right\|_{H}+\left|a(t)\right| \lesssim e^{-\mu_1 t}\quad \text{ for } t\geq 1.	
	\end{align}
	We take into account Lemma \ref{smoothingestimate} to finally obtain the statement of the theorem, noting that the result is trivial for $t\lesssim 1$.
\end{proof}
\begin{remark}\label{g2}
	Using the final result of Theorem \ref{stabilityw} we are able to improve the convergence rate of $a(t)$ to $|a(t)|\lesssim e^{-2\mu_1t}$ for all $t\geq 0$.
\end{remark}

Having already proved the part of Theorem \ref{Whoeheremoden} concerning the smallest eigenvalue, we are now able to deduce the full statement with an analogue approach.

\begin{proof}[ of Theorem \ref{Whoeheremoden}]
	We prove this theorem by induction. The base case $K=0$ is proved in the latter theorem.
	
	Now, may assume that \eqref{h2} holds true and additionally 
	\begin{align}\label{h3}
	\left\|w(t)\right\|_{W}\lesssim e^{-\mu_{K}t} \text{ for all } t\geq 0.
	\end{align}
	This directly implies $|\rho F[w]|\lesssim e^{-2\mu_Kt}$. We will again exploit the invariant manifolds in a similar way as in the base case. The center eigenspace takes the form $E_c = \Span\big \{\psi_{k,n}\, |\, k\in\left\{0,\dots,K\right\} \text{ and } n\in \left\{1,\dots,N_k\right\} \big\}.$
	We fix $\mu \in \left(\mu_{1},\mu_{2}\right)$ and accordingly $\varepsilon$ and $\varepsilon_0$ as in Theorem \ref{localmanifolds}. We deduce the existence of $\tilde w_0 \in W_{loc}^c$ such that $\tilde{w}(t)=S^t(\tilde w_0 )\in W_{loc}^c$ satisfies 
	\begin{align}\label{g3}
	\left\|w(t)-\tilde{w}(t)\right\|_{W}\lesssim e^{-\mu t}\quad \text{ for all } t\geq 1,
	\end{align}	
	where $\tilde{w}(t) = P_c\tilde{w}(t)+ \theta_\varepsilon\left(P_c\tilde{w}(t)\right)$ with $P_c\tilde{w}(t) = \sum \limits_{n,k} \la \tilde{w}(t),\psi_{k,n}\ra \psi_{k,n}$.
	
	Now, we fix an arbitrary $k\in \left\{0,\dots,K\right\}$ and consider the projection of $w$ onto one of the eigenfunctions $\psi_{k,n}$. We obtain the ordinary differential equation
	\begin{align}
	\frac{d}{dt}\langle \psi_{k,n},w(t)\rangle +\mu_k\langle \psi_{k,n},w(t)\rangle =-\langle \nabla\psi_{k,n},\rho F[w(t)]\rangle  +\langle \psi_{k,n},F[w(t)]\rangle  \quad \text{ for all }t\geq 0,
	\end{align}
	which implies $\left|\frac{d}{dt}e^{\mu_kt}\langle \psi_{k,n},w(t)\rangle \right| \lesssim e^{-\left( 2\mu_K-\mu_k \right)t}$ due to the bound on $|\rho F[w]|$. We notice that $\lim \limits_{t\rightarrow \infty} e^{\mu_k}\la \psi_{k,n},w(t)\ra $ exists and vanishes by the virtue of assumption \eqref{h2}. We conclude that 
	\begin{align}\label{h4}
	|\langle\psi_{k,n},w\rangle |\lesssim e^{-2\mu_Kt}\quad \text{ for all }t\geq 0.
	\end{align}
	This yields $\|P_c w(t)\|_W\lesssim e^{-2\mu_Kt} $ and enables us to estimate the center part of $\tilde{w}(t)$ with help of \eqref{g3} and the triangle inequality, namely
	\begin{align}
		\left\|P_c\tilde{w}(t)\right\|_W \leq \left\|P_c \left(w(t)-\tilde{w}(t)\right)\right\|_W + \left\|P_cw(t)\right\|_W\lesssim e^{-\min\left\{2\mu_K, \mu \right\}t}
	\end{align}
	for all $t\geq 1$.
	Thanks to the regularity property of $\theta_\varepsilon$ derived in the first part of Proposition \ref{centermanifold} we deduce
	\begin{align}
		\left\|\theta_\varepsilon\left(P_c\tilde{w}(t)\right)\right\|_W\lesssim e^{-\min\left\{2\mu_K, \mu \right\}t} \quad \text{ for all }t\geq 1.
	\end{align}
	Combining the previous estimates, we have
	\begin{align}\label{p1}
		\left\|w(t)\right\|_{W}& \leq \left\|w(t)-\tilde{w}(t)\right\|_{W} + \left\|P_c\tilde{w}(t)\right\|_W +  \left\|\theta_\varepsilon\left(P_c\tilde{w}(t)\right)\right\|_W\\
		&\lesssim e^{-\min\left\{2\mu_K, \mu \right\}t} \quad \text{ for all }t\geq 1.
	\end{align}
	
	We note that \eqref{p1} gives a better rate than \eqref{h3}. Due to the structure of the eigenvalues it may happen, depending on $K$ and the space dimension $N$, that $2\mu_K <\mu$. In this case, inequality \eqref{p1} downgrades to $\left\|w(t)\right\|_W\lesssim e^{-2\mu_K t}$. Similarly, in this case the estimate for center part of $w(t)$, that is $P_cw(t)$, is also not good enough, as we want to prove $|\la \psi_{k,n},w(t)\ra | \lesssim e^{-\mu_{K+1}t}$.
	We overcome this problem by repeating the first step of this proof, now from the starting point \eqref{p1} instead of \eqref{h2}, which directly yields $|\rho F[w]|\lesssim e^{-4\mu_Kt}$. If $2\mu_K \le \mu_{K+1}$, we deduce via iteration that
		\begin{align}\label{p2}
		\left\|P_cw(t)\right\|_W  \lesssim e^{-2^m\mu_K} \quad \text{ for all } t\geq 1
	\end{align}
	and
	\begin{align}\label{p3}
		\|w(t)\|_W\lesssim e^{-\mu t} \quad \text{ for all }t\geq 1,
	\end{align}
	where $m$ is smallest natural number that satisfies $\mu_K \leq 2^{m-1}\mu_K<\mu <\mu_{K+1} \leq2^m\mu_K$.
	We remark that we are allowed to choose $\mu$ sufficiently close to $\mu_{K+1}$.
	In the case $2\mu_K \geq \mu_{K+1}$, we may directly continue from estimate \eqref{p1}, which corresponds to  $m=1$.
	
	To achieve the rate $\mu_{K+1}$, we investigate the projection of $w(t)$ onto $E_s$. Similar to the previous proof, testing the equation solved by $P_sw$ with $\rho P_sw$ yields
	\begin{align}\MoveEqLeft
	\frac{1}{2}\frac{d}{dt}\left\|P_sw(t)\right\|^2_{H}+\mu_{K+1}\left\|P_sw(t)\right\|^2_{H}\\
	&\le-  \la \grad P_sw,\rho F[w]\ra  - \la \grad\L P_sw,\rho F[w]\ra + \la P_sw,\rho F[w]\ra +\la \L P_s w,\rho F[w]\ra\\
	&  \le \|P_sw \|_{W} \left(\|\rho F[w]\|_{L^\infty} + \|\rho F[w]\|_{L^\infty}\right)\lesssim e^{-3\mu t} \quad \text{ for all }t\geq 1,
	\end{align} 
	where we used \eqref{p3} and the quadratic behavior of $\rho F[w]$. Just like in the previous proofs, choosing $\mu$ large enough such that $3\mu >2\mu_{K+1}$ we obtain
	\begin{align}
	\left\|P_sw(t)\right\|^2_{H}\lesssim e^{-2\mu_{K+1}} \quad \text{ for all }t\geq 1
	\end{align}
	and in total
	\begin{align}
	\left\|w(t)\right\|_{H}\leq \left\|P_cw(t)\right\|_{H}+\left\|P_sw(t)\right\|_{H}\lesssim e^{-2\mu_Kt}+e^{-\mu_{K+1}t}\lesssim e^{-\mu_{K+1}t}\quad \text{ for all }t\geq 1.
	\end{align}
	To carry this result over to the $W$-norm it remains to make use of the smoothing estimate in Lemma \ref{smoothingestimate}, noting again that the result is trivial for $t\lesssim 1$.
\end{proof}

\appendix
\section*{Appendices}
\addcontentsline{toc}{section}{Appendices}
\renewcommand{\thesubsection}{\Alph{subsection}}
\numberwithin{theorem}{subsection}
\subsection{Derivation of the perturbation equation}\label{appendixA}
As announced, we will re-derive the perturbation equation \eqref{perturbationequation} from the confined thin film equation \eqref{FPE} with the intention to improve on the representation of the nonlinearity \eqref{nonlinearityperturbationequation} compared to the former derivation in \cite{SeisTFE}. 

We start by recalling from \cite{SeisTFE} that the transformation mapping	
$\Phi_t(x)=z$ is a diffeomorphism as long as the solution $v$ to the thin film equation \eqref{FPE} is close to the stationary solution in the sense of \eqref{o1}, or equivalently, as long as the perturbation $w$ is small in the sense of \eqref{126}.  This can be seen by inspecting the Jacobian determinant
\begin{equation}
\label{130}
\det \nabla_x \Phi= \frac{1}{(1+w+z\cdot\nabla w)(1+w)^{N-1}}.
\end{equation}
Moreover, it was proved the following relation between $x$- and $z$-derivatives:
For an arbitrary function $f=f(t,z)$, it holds that
\begin{align}\label{8}
\partial_{x_i}\left(f(t,\Phi_t)\right)= \frac{\partial_if}{\tilde{w}}- \frac{\partial_iw}{\tilde{w}h}z\cdot \nabla f,\quad \partial_t \left(f(t,\Phi_t)\right) = \partial_t f - \frac{ \partial_t\tilde w}{h}z\cdot \grad f,
\end{align}
where $\tilde{w}=1+w$ and $h=\tilde{w}+z\cdot \nabla \tilde{w}$. The spatial derivatives appearing on the right-hand side of the two equations are taken with respect to the $z$ variable. From the two transformations \eqref{transformationz} and \eqref{transformationw}, it follows that 
\begin{align}\label{7}
\rho^2\tilde{w}^4 =   v.
\end{align}
By differentiating this identity and using the formulas from \eqref{8}, it is straightforward to derive the perturbation equation \eqref{perturbationequation} from the confined thin film equation \eqref{FPE}. We will only give intermediate results to help the reader verifying the underlying computations.

First, differentiating \eqref{7} with respect to $x_i$ gives
\begin{align*}
\partial_iv 
=-2\rho\tilde{w}^3z_i + 2\frac{\rho\tilde{w}^3\partial_i\tilde{w}}{h}.
\end{align*}
Differentiating once more and summing over $i$ yields
\begin{align}
\frac{1}2 \laplace v \frac{h}{\tilde w^2} = \left((1-(N+2)\rho\right)h - \L w + p\star R[w]\star \left((\grad w)^{2\star} +\rho\grad w\star\grad^2w\right).
\end{align}
By use of \eqref{8}, we compute for an arbitrary $f(z)$ that
\begin{align}\label{9}\mel
\partial_{x_i}\Big( \Big( \frac{\tilde{w}^2}{h} f\Big)(\Phi)\Big) \\
&= \frac{\tilde{w}}{h}\Big(\partial_i f - z\cdot \nabla \Big(\frac{\partial_i \tilde{w} f}{h}\Big)\Big)\\
&= \frac{\tilde w}h \left(\partial_i f - \frac{z\cdot\grad\partial_iw }h f + \frac{\partial_i w}{h^2}\left(2z\cdot \grad w + z\otimes z:\grad^2 w \right)f - \frac{\partial_iw}h z\cdot\grad f\right).
\end{align}
Hence, differentiating the above identity for $\laplace v$ again yields
\begin{align}
	\frac{1}2 \partial_i\laplace v \frac{h}{\tilde w}  = (N+2)z_i h -\partial_i \L w - N\partial_i w + F[w].
\end{align}
After substracting $\frac{\gamma}{2}x_i \frac{h}{\tilde{w}}= (N+2)z_ih$, we make use of \eqref{7} to obtain
\begin{align}
	\frac{1}2v(\partial_i \laplace v -\gamma x_i) \frac{h}{\tilde w^5}  = -\rho^2 \partial_i\L w - N\rho^2 \partial_i w + \rho^2 F[w].
\end{align}
We  have to take one more spatial derivative, for which we derive the transformation formula
\begin{align}\label{2}
	\partial_{x_i}\Big[ \frac{\tilde{w}^5}{h}f(\Phi)\Big] = \frac{\tilde{w}^4}{h}\Big[ \partial_if + (N+3) \frac{ \partial_i\tilde{w}f}{h} -   \nabla\cdot \Big(z\frac{\partial_i\tilde{w}f}{h}\Big) \Big]
\end{align}
for an arbitrary function $f(z)$. Applying it to the third order derivatives above gives
\begin{align}
	\frac{1}2\partial_i (v\partial_i\laplace v - \gamma vx_i) \frac{h}{\tilde w^4} = -\partial_i(\rho^2 \partial_i\L w) - N \partial_i (\rho^2 \partial_i w) +\partial_i(\rho^2 F[w]) + \rho^2 F[w].
\end{align}
Dividing by $\rho$ and summing over $i$ finally yields
\begin{align}
	\frac{1}2 \grad\cdot (v\grad\laplace v - \gamma xv) \frac{h}{\rho\tilde w^4} =\L^2w + N\L w + \rho^{-1} \grad\cdot(\rho^2 F[w]) + \rho F[w].
\end{align}
It remains to consider the time derivative. With help of 
\eqref{8} we compute
\begin{align*}
\partial_tv = 2\frac{\rho \tilde{w}^4}{h}\partial_t\tilde{w}.
\end{align*}
With regard to the previous two identities, it is now straightforward to identify the confined thin film equation \eqref{FPE} with the perturbation equation \eqref{perturbationequation}.

\subsection{Inequalities}
In this second appendix we collect some useful inequalities for weighted Sovolev spaces from various references like \cite{CaffarelliKohnNirenberg1984,Kufner1985,KochHabilitation}, 
and \cite{Kienzler13}. For further details on the proofs, see also \cite{Seis15}.

The first estimate is a Sobolev embedding result with weight. We notice that the weight becomes visible in the Sobolev numbers, where the dimension is artificially increased from $N+1$ to $N+2$.

\begin{lemma}[Sobolev inequality]\label{sobolevinequality}
	Let  $1\leq p \leq q<\infty$ be such that 
	\begin{align}
		1-\frac{N+2}{p} = -\frac{N+2}{q}.
	\end{align}
	Then it holds that
	\begin{align}
		\|w\|_{L^q\left(L^q(\rho)\right)} \lesssim \|w\|_{L^p\left(L^p(\rho)\right)} +\|\partial_t w\|_{L^p\left(L^p(\rho)\right)}+\|\nabla w\|_{L^p\left(L^p(\rho)\right)}.
	\end{align}
\end{lemma}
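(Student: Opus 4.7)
The plan is to deduce this weighted parabolic Sobolev inequality from a classical (unweighted) Sobolev embedding by absorbing the weight $\rho$ into the Lebesgue measure on a higher-dimensional Euclidean domain. The geometric intuition is that since $\rho(z) = (1-|z|^2)/2$ vanishes linearly at $\partial B_1(0)$, the measure $\rho\,dz$ has effective homogeneous dimension $N+1$: locally near the boundary, $\rho\,dz$ behaves like $s\,ds\,dz'$ on $\mathbb{R}_+\times\mathbb{R}^{N-1}$, which is the $(N+1)$-dimensional volume element of $\mathbb{R}^2\times\mathbb{R}^{N-1}$ in polar coordinates. Pairing with the time variable yields the space-time exponent $N+2$ in the target inequality.

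Concretely, the plan is to consider the smooth map
\begin{equation}
\Psi: B_1(0)\times B_1^2(0) \;\longrightarrow\; B_1^{N+2}(0),\qquad \Psi(z,y) \;=\; \bigl(z,\sqrt{2\rho(z)}\,y\bigr),
\end{equation}
which is a diffeomorphism onto the unit ball of $\mathbb{R}^{N+2}$ with Jacobian determinant $|\det D\Psi|=2\rho(z)$. Given $w=w(t,z)$, I would extend trivially in the $y$-variable to $\tilde w(t,z,y)=w(t,z)$ on $\mathbb{R}_t\times B_1^N\times B_1^2$, and transfer to the ball $B_1^{N+2}$ via $\Psi$. By the change of variables formula, the weighted norms of $w$, $\nabla w$ and $\partial_t w$ against $\rho\,dz$ are equivalent (up to universal constants) to the unweighted norms of the lifted function, and crucially $\nabla_y\tilde w\equiv 0$.

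On the lifted function the classical Sobolev embedding in $(N+2)$ spatial dimensions,
\begin{equation}
\bigl\|\tilde w(t,\cdot)\bigr\|_{L^q(B_1^{N+2})} \;\lesssim\; \bigl\|\tilde w(t,\cdot)\bigr\|_{L^p(B_1^{N+2})} + \bigl\|\nabla\tilde w(t,\cdot)\bigr\|_{L^p(B_1^{N+2})},
\end{equation}
holds with exactly the exponent $1-(N+2)/p=-(N+2)/q$ prescribed by the lemma, delivering the spatial integrability gain at each time slice. To upgrade $L^p$ integrability in time to $L^q$, I would couple this spatial estimate with the time-derivative control via a standard 1D Sobolev-Morrey argument: since $t\mapsto w(t,\cdot)$ lies in $W^{1,p}(\mathbb{R}_t;L^p(\rho))$, an interpolation between $L^\infty_tL^p_x$ (from the $\partial_tw$ bound) and $L^p_tL^q_x$ (from spatial Sobolev) produces the diagonal $L^q_tL^q_x$ estimate with the correct exponent.

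The main obstacle will be keeping the dimension accounting honest in the interpolation: a naive application of full space-time Sobolev to the lifted $\tilde w$ on $\mathbb{R}^{N+2}\times\mathbb{R}_t$ would give the wrong exponent $1/(N+3)$, so one really needs to exploit the degeneracy $\nabla_y\tilde w=0$ by first integrating spatially and only then coupling with time. An alternative, conceptually cleaner but technically heavier route would be a Whitney decomposition of $B_1(0)$ into balls on which $\rho$ is essentially constant, applying standard parabolic Sobolev on each cube and reassembling through the doubling property of $\rho\,dz$; this route is the one taken in the closely related treatment for the porous medium equation in \cite{Seis15} and the references \cite{CaffarelliKohnNirenberg1984,Kufner1985,KochHabilitation,Kienzler13} cited above, from which a complete proof can be extracted.
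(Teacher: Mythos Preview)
The paper does not prove this lemma; it is collected in the appendix with citations to the literature, so there is no in-paper argument to compare against directly.

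Your lifting map $\Psi:B_1^N\times B_1^2\to B_1^{N+2}$ with Jacobian $2\rho$ is correct and elegant: it yields, via ordinary Sobolev on $B_1^{N+2}$, the spatial estimate $\|w(t)\|_{L^q(\rho)}\lesssim\|w(t)\|_{W^{1,p}(\rho)}$ with the very exponent $1/q=1/p-1/(N+2)$, at each fixed $t$. The gap is in the time coupling. The interpolation you propose between $L^\infty_tL^p_x$ and $L^p_tL^q_x$ lands on the diagonal $L^r_tL^r_x$ at
\[
r=\frac{p(p+N+2)}{N+2},\qquad \frac{r}{q}=\frac{(N+2+p)(N+2-p)}{(N+2)^2}=1-\frac{p^2}{(N+2)^2}<1,
\]
so $r<q$ strictly whenever $p>0$; the target $L^q_tL^q_x$ is never reached this way. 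The underlying reason is that your two-dimensional lift already spends the full gain $1/(N+2)$ on space alone, whereas the sharp spatial weighted Sobolev (effective dimension $N+1$) would give $1/(N+1)$ and leave one unit for time --- but even with that sharper spatial input the same interpolation scheme reaches $q$ only for $p\le 1$.

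A correct proof treats $(t,z)$ jointly as coordinates on a space of homogeneous dimension $N+2$: either the Whitney decomposition route you mention at the end, or a direct localization near $\partial B_1(0)$ to the half-space model $\mathbb{R}_t\times\mathbb{R}^{N-1}\times\mathbb{R}_+$ with weight $s$, where the Caffarelli--Kohn--Nirenberg inequalities from the cited references apply immediately. Your fallback paragraph is thus the actual proof; the lifting-plus-interpolation plan, as written, does not close.
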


Our second estimate is a Hardy inequality. We will use it with different exponents on the weight function.

\begin{lemma}[Hardy inequality]\label{hardytypeinequality}
	For any $p\in \left(1,\infty\right)$ and $\sigma > -1/p$ it holds that
	\begin{align}
			\|w\|_{L^p\left(\rho^\sigma \right)} \lesssim \|\rho w\|_{L^p(\rho^{\sigma})} +\|\rho\nabla w\|_{L^p(\rho^{\sigma})}.
	\end{align}
	In particular, for $\sigma =0$ and $p=2$ we obtain
	\begin{align}
		\|w\|_{L^2} \lesssim \|\rho w \|_{L^2} + \|\rho \nabla w \|_{L^2} \leq \|w\|_H.
	\end{align}
\end{lemma}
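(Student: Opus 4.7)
The plan is to exploit the special structure of the weight $\rho(z) = \frac12(1-|z|^2)$, which satisfies $\nabla \rho = -z$ and $|z|^2 = 1-2\rho$. Combining these two identities yields the pointwise algebraic representation
\begin{align}
(\sigma+1)\rho^\sigma = (N+2(\sigma+1))\rho^{\sigma+1} - \nabla\cdot(z\rho^{\sigma+1}),
\end{align}
which expresses the slowly decaying weight $\rho^\sigma$ as the sum of a faster-decaying weight $\rho^{\sigma+1}$ and a pure divergence. Since the hypothesis $\sigma > -1/p$ forces $\sigma+1 > 0$, the coefficient in front of $\rho^\sigma$ on the left is strictly positive, and this identity will be the engine of the proof.

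The natural next step is to multiply by $|w|^p$, integrate over $B_1(0)$, and move the divergence onto $|w|^p$ by integration by parts. The boundary contribution vanishes because $\rho^{\sigma+1}$ vanishes on $\partial B_1(0)$, producing
\begin{align}
(\sigma+1)\int |w|^p \rho^\sigma\, dz = (N+2(\sigma+1))\int |w|^p \rho^{\sigma+1}\, dz + p\int |w|^{p-2}w\,(z\cdot\nabla w)\,\rho^{\sigma+1}\, dz.
\end{align}
On the cross term I would apply Young's inequality with the factorization $\rho^{\sigma+1} = \rho^{\sigma(p-1)/p}\cdot\rho^{(p+\sigma)/p}$, which is dictated by matching the homogeneity of $|w|^{p-1}$ with $|w|^p$ and of $|\nabla w|$ with $|\nabla w|^p$. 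This yields a bound by $\varepsilon \int |w|^p \rho^\sigma + C_\varepsilon \int |\nabla w|^p \rho^{p+\sigma}$.

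For the remaining zeroth-order term $\int |w|^p \rho^{\sigma+1}$, a parallel interpolation $\rho^{\sigma+1} = (\rho^\sigma)^{(p-1)/p}(\rho^{p+\sigma})^{1/p}$ followed by Young's inequality gives $\int |w|^p \rho^{\sigma+1} \lesssim \varepsilon \int |w|^p \rho^\sigma + C_\varepsilon \int |w|^p \rho^{p+\sigma}$. Collecting everything, choosing $\varepsilon$ small relative to $\sigma+1$ permits absorption of all $\int |w|^p \rho^\sigma$ contributions on the right-hand side into the left, leaving precisely $\|\rho w\|_{L^p(\rho^\sigma)}^p + \|\rho\nabla w\|_{L^p(\rho^\sigma)}^p$, as desired.

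The main obstacle I foresee is not any of the above algebra but justifying the integration by parts for a general $w$ in the natural weighted Sobolev space rather than merely for smooth $w$. This reduces to a density argument, and it is exactly at this step that the sharper threshold $\sigma > -1/p$ (rather than merely $\sigma > -1$) becomes essential, in accordance with the classical weighted Sobolev theory. Once smooth approximation is secured, the computation above closes the estimate.
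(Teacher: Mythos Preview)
Your proof is correct and complete at the level of formal computation. The paper itself does not give a proof of this lemma: the appendix explicitly states that these inequalities are collected from the literature (Caffarelli--Kohn--Nirenberg, Kufner, Koch, Kienzler) and refers to \cite{Seis15} for details. So there is no in-paper argument to compare against.

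Your approach --- writing $(\sigma+1)\rho^\sigma$ as a good weight plus a divergence via $\nabla\rho=-z$ and $|z|^2=1-2\rho$, then integrating by parts and absorbing --- is exactly the standard route in this setting and matches what one finds in the cited references. The identity you derived is correct, the Young splitting with exponents $p/(p-1)$ and $p$ is the right one, and your remark that $\sigma>-1/p$ (rather than the weaker $\sigma>-1$ used in the algebra) enters only through density of smooth functions in the weighted space is accurate and is precisely the content of Kufner's theory. Nothing is missing.
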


Next, we quote an interpolation inequality. Notice that, typical for interpolation inequalities, the dimension will not enter into the dimensional relation of the integrability exponents. As the weight ``increases'' the dimension of the underlying space --- as already noticed in our remark on the above Sobolev embedding --- the weight exponent $\sigma$ does not enter this  dimensional relation.

\begin{lemma}[Interpolation inequality]\label{interpolationintequality}
	For any $1\leq p,q,r\leq \infty$ such that
	\begin{align}
		\frac{2}{p}=\frac{1}{q}+\frac{1}{r}
	\end{align}
	and $\sigma > -1/p$ it holds that
	\begin{align}
		\|\nabla w\|_{L^p(\rho^\sigma)}\lesssim \|w\|^{\frac{1}{2}}_{L^q\left(\rho^\sigma\right)}\|\nabla^2w\|^{\frac{1}{2}}_{L^r\left(\rho^\sigma\right)}. 
	\end{align}
	In particular, for some integers $i<m$ we obtain
	\begin{align}\label{408}
		\|\nabla^i\xi\|^m_{L^p(\rho^{\sigma})}\lesssim \|\xi\|^{m-i}_{L^\infty}\|\nabla^m\xi\|^i_{L^r(\rho^\sigma)},
	\end{align}
	provided that $mr=pi$.
\end{lemma}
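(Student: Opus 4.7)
The first interpolation inequality is a weighted Gagliardo--Nirenberg-type estimate, and I plan to derive it via integration by parts followed by H\"older's inequality, in the spirit of the Caffarelli--Kohn--Nirenberg framework underlying the cited references. The natural starting point is
\begin{align*}
\int_{B_1(0)} |\nabla w|^p \rho^\sigma\,dz = \sum_i \int_{B_1(0)} (\partial_i w)(\partial_i w)|\nabla w|^{p-2}\rho^\sigma\,dz.
\end{align*}
Integrating by parts in the $i$-th coordinate moves one derivative off the first factor and produces two interior contributions: a main term of the form $\int |w|\,|\nabla^2 w|\,|\nabla w|^{p-2}\rho^\sigma$ and a weight-derivative correction $\int |w|\,|\nabla w|^{p-1}\rho^{\sigma-1}|\nabla\rho|$. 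The boundary contributions at $\partial B_1(0)$ vanish due to the degeneracy of $\rho^\sigma$ when $\sigma+1>0$; the borderline range $-1/p < \sigma \le 0$ is handled by a density argument approximating $w$ by smooth functions compactly supported in $B_1(0)$.

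The main term is controlled by H\"older's inequality with exponents $(q,r,p/(p-2))$, splitting the weight as $\rho^\sigma = \rho^{\sigma/q}\rho^{\sigma/r}\rho^{\sigma(p-2)/p}$. The normalization $1/q + 1/r + (p-2)/p = 1$ is precisely the hypothesis $2/p = 1/q + 1/r$, which yields
\begin{align*}
\int_{B_1(0)} |w|\,|\nabla^2 w|\,|\nabla w|^{p-2}\rho^\sigma\,dz \lesssim \|w\|_{L^q(\rho^\sigma)}\|\nabla^2 w\|_{L^r(\rho^\sigma)}\|\nabla w\|_{L^p(\rho^\sigma)}^{p-2}.
\end{align*}
The factor $\|\nabla w\|_{L^p(\rho^\sigma)}^{p-2}$ can then be absorbed into the left-hand side, producing the square of the target inequality.

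The main obstacle is the weight-derivative correction, which carries the singular extra factor $\rho^{-1}$ and therefore does not directly admit a clean H\"older splitting against the norms on the right-hand side. I plan to handle it by invoking the pointwise identity
\begin{align*}
\sigma\rho^{\sigma-1} = (N+2\sigma)\rho^\sigma - \nabla\cdot(z\rho^\sigma),
\end{align*}
which follows from $\nabla\rho = -z$ together with $|z|^2 = 1-2\rho$. Substituting this identity and performing one more integration by parts (again with vanishing boundary contributions for $\sigma > -1/p$, possibly after the density argument above) turns the correction into a linear combination of terms of the same type as the main term, which are in turn absorbable via H\"older; the Hardy inequality of Lemma \ref{hardytypeinequality} guarantees the required finiteness of the resulting $\rho^\sigma$-weighted integrals under the standing assumption $\sigma > -1/p$.

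The second assertion follows from the first by a log-convexity iteration. Setting $f_k := \|\nabla^k \xi\|_{L^{p_k}(\rho^\sigma)}$ with $1/p_k := k/(mr)$ chosen to interpolate linearly between $1/p_0 = 0$ and $1/p_m = 1/r$, the hypothesis $mr = pi$ is precisely $p_i = p$. Applying the first inequality to $\nabla^{k-1}\xi$ with the triple $(p_k,p_{k-1},p_{k+1})$, whose reciprocals satisfy $2/p_k = 1/p_{k-1} + 1/p_{k+1}$ by construction, gives the three-term bound $f_k^2 \lesssim f_{k-1}f_{k+1}$ for $1 \le k \le m-1$. A straightforward induction on $k$ then produces $f_i^m \lesssim f_0^{m-i}f_m^i$, which is exactly the claimed inequality $\|\nabla^i\xi\|_{L^p(\rho^\sigma)}^m \lesssim \|\xi\|_{L^\infty}^{m-i}\|\nabla^m\xi\|_{L^r(\rho^\sigma)}^i$.
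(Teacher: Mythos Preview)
The paper does not prove this lemma: it is collected in the appendix with pointers to \cite{CaffarelliKohnNirenberg1984,Kufner1985,KochHabilitation,Kienzler13} and to \cite{Seis15} ``for further details on the proofs.'' There is therefore no argument in the paper to compare your sketch against, so let me assess the sketch on its own.

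Your iteration for the second assertion \eqref{408} is correct: the choice $1/p_k=k/(mr)$ makes the three-term relation $2/p_k=1/p_{k-1}+1/p_{k+1}$ exact, and log-convexity of $k\mapsto\log f_k$ gives $f_i^m\lesssim f_0^{m-i}f_m^i$ as claimed. For the first inequality, the main-term step (integrate by parts, then H\"older with exponents $(q,r,p/(p-2))$ and absorb $\|\nabla w\|_{L^p(\rho^\sigma)}^{p-2}$) is the standard route and is fine. The gap is in your treatment of the weight-derivative correction. Your identity $\sigma\rho^{\sigma-1}=(N+2\sigma)\rho^\sigma-\nabla\cdot(z\rho^\sigma)$ is correct, but after substituting it and integrating by parts once more you do \emph{not} get only ``terms of the same type as the main term''. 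The product rule also produces
\[
\int_{B_1(0)} (z\cdot\nabla w)^2\,|\nabla w|^{p-2}\rho^\sigma\,dz
\quad\text{and}\quad
\int_{B_1(0)} |w|\,|\nabla w|^{p-1}\rho^\sigma\,dz,
\]
the first of which is the left-hand side itself with coefficient $+1$ (so not directly absorbable), and the second of which carries one $w$, $p-1$ gradients and no Hessian, hence does not fit your H\"older pattern with exponents $(q,r,p/(p-2))$. These terms can be handled, but it takes more than what you wrote---either a sharper bookkeeping exploiting $|z|^2=1-2\rho$ to show the LHS-type piece comes with a gain of $\rho$, or (as in the cited references) a localization to a half-space model $\{x_N>0\}$ with weight $x_N^\sigma$, where the correction has a cleaner one-dimensional structure. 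Note also that the inequality as literally stated fails for affine $w$ (take $w=z_1$: the right-hand side vanishes, the left does not), so it is implicitly meant for compactly supported $w$ or with an additive lower-order term on the right; both readings occur in the paper's applications.
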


We complete this collection with an embedding into $L^{\infty}$. 

\begin{lemma}[Morrey inequality]\label{morreyinequality}
For any $q$ large enough it holds that
		\begin{align}
			\| w\|_{L^\infty} \lesssim \|w\|_{L^q\left(\rho^\sigma\right)} + \|\nabla w\|_{L^q\left(\rho^\sigma\right)}.
		\end{align}
		
\end{lemma}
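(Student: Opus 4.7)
The statement is a weighted Morrey-type embedding. Since $\rho$ is bounded below on any compact subset of $B_1(0)$, the classical (unweighted) Morrey inequality immediately gives the bound uniformly on any such compact subdomain once $q > N$, so the real issue is the uniform control of $|w(z_0)|$ as $z_0$ approaches $\partial B_1(0)$, that is, as $\tau \coloneqq \rho(z_0) \to 0$.

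My plan is to combine the classical Morrey--Poincar\'e inequality with a Whitney-style chaining adapted to the intrinsic geometry encoded in the semi-distance \eqref{500}. For $z_0$ with $\tau$ small, I pick the Euclidean ball $B_{c\tau}(z_0)\subset B_1(0)$; by the definition of $\rho$, the weight satisfies $\rho\sim \tau$ on this ball. Morrey--Poincar\'e (valid as soon as $q>N$) together with the conversion to the weighted norm on this ball yields
\[
\bigl|w(z_0)-\overline{w}_{B_{c\tau}(z_0)}\bigr|\;\lesssim\; \tau^{1-N/q}\|\nabla w\|_{L^q(B_{c\tau}(z_0))}\;\lesssim\; \tau^{1-(N+\sigma)/q}\|\nabla w\|_{L^q(\rho^\sigma,\,B_{c\tau}(z_0))},
\]
which is uniformly bounded as $\tau\to 0$ provided that $q\ge N+\sigma$.

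The remaining task is to control the average $\overline{w}_{B_{c\tau}(z_0)}$ uniformly in $z_0$. For this I would connect $z_0$ to a fixed interior reference ball $B^{*}$ by an overlapping chain of Whitney balls $\{B_k\}$ whose Euclidean radii $r_k$ double from $\sim\tau$ up to a fixed constant, so that the chain has length $O(\log(1/\tau))$ and $\rho\sim r_k$ on $B_k$; on each consecutive pair the difference of averages is controlled by the same Morrey--Poincar\'e estimate applied at the local scale, giving a contribution bounded by $r_k^{1-(N+\sigma)/q}\|\nabla w\|_{L^q(\rho^\sigma, B_k)}$. Summing these local contributions and using a discrete H\"older inequality against the finite-overlap property of the chain yields $|w(z_0)-\overline{w}_{B^{*}}|\lesssim \|\nabla w\|_{L^q(\rho^\sigma)}$. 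Since $\rho^\sigma$ is comparable to a positive constant on $B^{*}$, the average $|\overline{w}_{B^{*}}|$ is in turn bounded by $\|w\|_{L^q(\rho^\sigma)}$ via H\"older, and combining the two estimates finishes the proof. The main technical obstacle is the convergence of this Whitney sum uniformly in $\tau$: keeping track of how the scales $r_k$, the weight factors $r_k^{-\sigma/q}$, and the local gradient norms combine forces $q$ to exceed a threshold reflecting the effective dimension of the weighted setting (morally $N+2$, consistent with Lemma \ref{sobolevinequality}), which is precisely the ``$q$ large enough'' clause in the statement.
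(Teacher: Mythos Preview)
The paper does not actually prove this lemma; it is stated in the appendix as one of several weighted Sobolev-type inequalities quoted from the literature (the introduction to Appendix~B points to \cite{CaffarelliKohnNirenberg1984,Kufner1985,KochHabilitation,Kienzler13} and to \cite{Seis15} for details). So there is no ``paper's own proof'' to compare against beyond those references.

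Your Whitney-chain argument is the standard route to such weighted Morrey embeddings and is correct as sketched. The oscillation bound on the first ball and on each link of the chain indeed produces the factor $r_k^{\,1-(N+\sigma)/q}$ after converting to the weighted norm via $\rho\sim r_k$ on $B_k$, and the geometric summation together with the bounded-overlap property gives the global estimate. One small correction: the threshold that falls out of your own computation is $q>N+\sigma$ (so that the exponent $1-(N+\sigma)/q$ is positive and the geometric sum is dominated by the interior scale), not $N+2$; the $N+2$ appearing in Lemma~\ref{sobolevinequality} reflects the additional time variable in that space-time embedding with $\sigma=1$, whereas the present lemma is purely spatial. With that adjustment your plan goes through without further difficulty.
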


\section*{Acknowledgement} 
The authors thank Linus Kramer for his help in identifying invariant vector spaces via  representation theory and Beomjun Choi for comments on a first draft of this paper.
This work is funded by the Deutsche Forschungsgemeinschaft (DFG, German Research Foundation) under Germany's Excellence Strategy EXC 2044 --390685587, Mathematics M\"unster: Dynamics--Geometry--Structure.

\medskip

\bibliography{mybib}

\begin{thebibliography}{10}

\bibitem{Angenent1988}
S.~B. Angenent.
\newblock {Large time asymptotics for the porous media equation}.
\newblock In {\em {Nonlinear diffusion equations and their equilibrium states,
  {I} ({B}erkeley, {CA}, 1986)}}, volume~12 of {\em {Math. Sci. Res. Inst.
  Publ.}}, pages 21--34. Springer, New York, 1988.

\bibitem{Barenblatt1952}
G.~I. Barenblatt.
\newblock {On self-similar motions of a compressible fluid in a porous medium}.
\newblock {\em Akad. Nauk SSSR. Prikl. Mat. Meh.}, 16:679--698, 1952.

\bibitem{MR1328475}
E.~Beretta, M.~Bertsch, and R.~Dal~Passo.
\newblock Nonnegative solutions of a fourth-order nonlinear degenerate
  parabolic equation.
\newblock {\em Arch. Rational Mech. Anal.}, 129(2):175--200, 1995.

\bibitem{MR1031383}
F.~Bernis and A.~Friedman.
\newblock Higher order nonlinear degenerate parabolic equations.
\newblock {\em J. Differential Equations}, 83(1):179--206, 1990.

\bibitem{MR1148286}
F.~Bernis, L.~A. Peletier, and S.~M. Williams.
\newblock Source type solutions of a fourth order nonlinear degenerate
  parabolic equation.
\newblock {\em Nonlinear Anal.}, 18(3):217--234, 1992.

\bibitem{BernoffWitelski02}
A.~J. Bernoff and T.~P. Witelski.
\newblock Linear stability of source-type similarity solutions of the thin film
  equation.
\newblock {\em Appl. Math. Lett.}, 15(5):599--606, 2002.

\bibitem{MR1371925}
A.~L. Bertozzi and M.~Pugh.
\newblock The lubrication approximation for thin viscous films: regularity and
  long-time behavior of weak solutions.
\newblock {\em Comm. Pure Appl. Math.}, 49(2):85--123, 1996.

\bibitem{BonforteFigalli21}
M.~Bonforte and A.~Figalli.
\newblock Sharp extinction rates for fast diffusion equations on generic
  bounded domains.
\newblock {\em Comm. Pure Appl. Math.}, 74(4):744--789, 2021.

\bibitem{tomDieck1995}
T.~Br\"{o}cker and T.~tom Dieck.
\newblock {\em Representations of compact {L}ie groups}, volume~98 of {\em
  Graduate Texts in Mathematics}.
\newblock Springer-Verlag, New York, 1995.
\newblock Translated from the German manuscript, Corrected reprint of the 1985
  translation.

\bibitem{CaffarelliKohnNirenberg1984}
L.~Caffarelli, R.~Kohn, and L.~Nirenberg.
\newblock First order interpolation inequalities with weights.
\newblock {\em Compositio Math.}, 53(3):259--275, 1984.

\bibitem{Carr83}
J.~Carr.
\newblock {\em Applications of center manifold theory}, volume~35 of {\em
  Applied Mathematical Science}.
\newblock Springer-Verlag, 1983.

\bibitem{CarrilloToscani02}
J.~A. Carrillo and G.~Toscani.
\newblock Long-time asymptotics for strong solutions of the thin film equation.
\newblock {\em Comm. Math. Phys.}, 225(3):551--571, 2002.

\bibitem{ChenHaleTan97}
X.-Y. Chen, J.~K. Hale, and B.~Tan.
\newblock Invariant foliations for {$C^1$} semigroups in {B}anach spaces.
\newblock {\em J. Differential Equations}, 139(2):283--318, 1997.

\bibitem{ChoiMcCannSeis22}
B.~Choi, R.~J. McCann, and C.~Seis.
\newblock Asymptotics near extinction for nonlinear fast diffusion on a bounded
  domain, 2022.

\bibitem{ChowLinLu91}
S.-N. Chow, X.-B. Lin, and K.~Lu.
\newblock Smooth invariant foliations in infinite-dimensional spaces.
\newblock {\em J. Differential Equations}, 94(2):266--291, 1991.

\bibitem{ChowLu88}
S.-N. Chow and K.~Lu.
\newblock Invariant manifolds for flows in {B}anach spaces.
\newblock {\em J. Differential Equations}, 74(2):285--317, 1988.

\bibitem{ConstantinFoiasNicolaenko89}
P.~Constantin, C.~Foias, B.~Nicolaenko, and R.~Temam.
\newblock {\em Integral manifolds and inertial manifolds for dissipative
  partial differential equations}, volume~70 of {\em Applied Mathematical
  Sciences}.
\newblock Springer-Verlag, New York, 1989.

\bibitem{MR1616558}
R.~Dal~Passo, H.~Garcke, and G.~Gr\"{u}n.
\newblock On a fourth-order degenerate parabolic equation: global entropy
  estimates, existence, and qualitative behavior of solutions.
\newblock {\em SIAM J. Math. Anal.}, 29(2):321--342, 1998.

\bibitem{DenzlerKochMcCann15}
J.~Denzler, H.~Koch, and R.~J. McCann.
\newblock Higher-order time asymptotics of fast diffusion in {E}uclidean space:
  a dynamical systems approach.
\newblock {\em Mem. Amer. Math. Soc.}, 234(1101):vi+81, 2015.

\bibitem{DenzlerMcCann05}
J.~Denzler and R.~J. McCann.
\newblock Fast diffusion to self-similarity: complete spectrum, long-time
  asymptotics, and numerology.
\newblock {\em Arch. Ration. Mech. Anal.}, 175(3):301--342, 2005.

\bibitem{EckmannWayne91}
J.-P. Eckmann and C.~E. Wayne.
\newblock Propagating fronts and the center manifold theorem.
\newblock {\em Comm. Math. Phys.}, 136(2):285--307, 1991.

\bibitem{MR1479525}
R.~Ferreira and F.~Bernis.
\newblock Source-type solutions to thin-film equations in higher dimensions.
\newblock {\em European J. Appl. Math.}, 8(5):507--524, 1997.

\bibitem{FoiasSaut84}
C.~Foias and J.-C. Saut.
\newblock Asymptotic behavior, as {$t\rightarrow +\infty $}, of solutions of
  {N}avier-{S}tokes equations and nonlinear spectral manifolds.
\newblock {\em Indiana Univ. Math. J.}, 33(3):459--477, 1984.

\bibitem{FoiasSellTemam88}
C.~Foias, G.~R. Sell, and R.~Temam.
\newblock Inertial manifolds for nonlinear evolutionary equations.
\newblock {\em J. Differential Equations}, 73(2):309--353, 1988.

\bibitem{GallayWayne02}
T.~Gallay and C.~E. Wayne.
\newblock Invariant manifolds and the long-time asymptotics of the
  {N}avier-{S}tokes and vorticity equations on {$\bold R^2$}.
\newblock {\em Arch. Ration. Mech. Anal.}, 163(3):209--258, 2002.

\bibitem{GGKH14}
L.~Giacomelli, M.~V. Gnann, H.~Kn\"{u}pfer, and F.~Otto.
\newblock Well-posedness for the {N}avier-slip thin-film equation in the case
  of complete wetting.
\newblock {\em J. Differential Equations}, 257(1):15--81, 2014.

\bibitem{GiacomelliKnupfer10}
L.~Giacomelli and H.~Kn\"{u}pfer.
\newblock A free boundary problem of fourth order: classical solutions in
  weighted {H}\"{o}lder spaces.
\newblock {\em Comm. Partial Differential Equations}, 35(11):2059--2091, 2010.

\bibitem{GiacomelliKnupferOtto08}
L.~Giacomelli, H.~Kn\"{u}pfer, and F.~Otto.
\newblock Smooth zero-contact-angle solutions to a thin-film equation around
  the steady state.
\newblock {\em J. Differential Equations}, 245(6):1454--1506, 2008.

\bibitem{MR1865003}
L.~Giacomelli and F.~Otto.
\newblock Variational formulation for the lubrication approximation of the
  {H}ele-{S}haw flow.
\newblock {\em Calc. Var. Partial Differential Equations}, 13(3):377--403,
  2001.

\bibitem{GiacomelliOtto03}
L.~Giacomelli and F.~Otto.
\newblock Rigorous lubrication approximation.
\newblock {\em Interfaces Free Bound.}, 5(4):483--529, 2003.

\bibitem{Gnann15}
M.~V. Gnann.
\newblock Well-posedness and self-similar asymptotics for a thin-film equation.
\newblock {\em SIAM J. Math. Anal.}, 47(4):2868--2902, 2015.

\bibitem{Gnann16}
M.~V. Gnann.
\newblock On the regularity for the {N}avier-slip thin-film equation in the
  perfect wetting regime.
\newblock {\em Arch. Ration. Mech. Anal.}, 222(3):1285--1337, 2016.

\bibitem{Grigoryan06}
A.~Grigor'yan.
\newblock Heat kernels on weighted manifolds and applications.
\newblock In {\em The ubiquitous heat kernel}, volume 398 of {\em Contemp.
  Math.}, pages 93--191. Amer. Math. Soc., Providence, RI, 2006.

\bibitem{HirschPughShub77}
M.~W. Hirsch, C.~C. Pugh, and M.~Shub.
\newblock {\em Invariant manifolds}.
\newblock Lecture Notes in Mathematics, Vol. 583. Springer-Verlag, Berlin-New
  York, 1977.

\bibitem{Humphreys1990}
J.~E. Humphreys.
\newblock {\em Reflection groups and {C}oxeter groups}, volume~29 of {\em
  Cambridge Studies in Advanced Mathematics}.
\newblock Cambridge University Press, Cambridge, 1990.

\bibitem{John15}
D.~John.
\newblock On uniqueness of weak solutions for the thin-film equation.
\newblock {\em J. Differential Equations}, 259(8):4122--4171, 2015.

\bibitem{Kienzler13}
C.~Kienzler.
\newblock {\em {Flat fronts and stability for the porous medium equation}}.
\newblock Doctoral thesis, Universit{\"a}t Bonn, Germany, 2013.

\bibitem{Kienzler16}
C.~Kienzler.
\newblock Flat fronts and stability for the porous medium equation.
\newblock {\em Comm. Partial Differential Equations}, 41(12):1793--1838, 2016.

\bibitem{Kochoncentermanifolds}
H.~Koch.
\newblock On center manifolds.
\newblock {\em Nonlinear Anal.}, 28(7):1227--1248, 1997.

\bibitem{KochHabilitation}
H.~Koch.
\newblock {\em {Non-{E}uclidean singular integrals and the porous medium
  equation}}.
\newblock Habilitation thesis, Universit{\"a}t Heidelberg, Germany, 1999.

\bibitem{Koch2012}
H.~Koch and T.~Lamm.
\newblock Geometric flows with rough initial data.
\newblock {\em Asian J. Math.}, 16(2):209--235, 2012.

\bibitem{KochLamm15}
H.~Koch and T.~Lamm.
\newblock Parabolic equations with rough data.
\newblock {\em Math. Bohem.}, 140(4):457--477, 2015.

\bibitem{KOCH200122}
H.~Koch and D.~Tataru.
\newblock Well-posedness for the {N}avier-{S}tokes equations.
\newblock {\em Adv. Math.}, 157(1):22--35, 2001.

\bibitem{Kufner1985}
A.~Kufner.
\newblock {\em Weighted {S}obolev spaces}.
\newblock A Wiley-Interscience Publication. John Wiley \& Sons, Inc., New York,
  1985.
\newblock Translated from the Czech.

\bibitem{MatthesMcCannSavare09}
D.~Matthes, R.~J. McCann, and G.~Savar\'{e}.
\newblock A family of nonlinear fourth order equations of gradient flow type.
\newblock {\em Comm. Partial Differential Equations}, 34(10-12):1352--1397,
  2009.

\bibitem{McCannSeis15}
R.~J. McCann and C.~Seis.
\newblock The spectrum of a family of fourth-order nonlinear diffusions near
  the global attractor.
\newblock {\em Comm. Partial Differential Equations}, 40(2):191--218, 2015.

\bibitem{Mendes1975}
R.~D. M.~N. Mendes.
\newblock Symmetries of spherical harmonics.
\newblock {\em Trans. Amer. Math. Soc.}, 204:161--178, 1975.

\bibitem{BurnettMeyer1954}
B.~Meyer.
\newblock On the symmetries of spherical harmonics.
\newblock {\em Canad. J. Math.}, 6:135--157, 1954.

\bibitem{Mukai2003}
S.~Mukai.
\newblock {\em An introduction to invariants and moduli}, volume~81 of {\em
  Cambridge Studies in Advanced Mathematics}.
\newblock Cambridge University Press, Cambridge, 2003.
\newblock Translated from the 1998 and 2000 Japanese editions by W. M. Oxbury.

\bibitem{MR1642807}
T.~G. Myers.
\newblock Thin films with high surface tension.
\newblock {\em SIAM Rev.}, 40(3):441--462, 1998.

\bibitem{RevModPhys.69.931}
A.~Oron, S.~H. Davis, and S.~G. Bankoff.
\newblock Long-scale evolution of thin liquid films.
\newblock {\em Rev. Mod. Phys.}, 69:931--980, 1997.

\bibitem{Otto98}
F.~Otto.
\newblock Lubrication approximation with prescribed nonzero contact angle.
\newblock {\em Comm. Partial Differential Equations}, 23(11-12):2077--2164,
  1998.

\bibitem{Pattle1959}
R.~E. Pattle.
\newblock {Diffusion from an instantaneous point source with a
  concentration-dependent coefficient}.
\newblock {\em Quart. J. Mech. Appl. Math.}, 12:407--409, 1959.

\bibitem{Rainville71}
E.~D. Rainville.
\newblock {\em Special functions}.
\newblock Chelsea Publishing Co., Bronx, N.Y., first edition, 1971.

\bibitem{Seis14}
C.~Seis.
\newblock Long-time asymptotics for the porous medium equation: the spectrum of
  the linearized operator.
\newblock {\em J. Differential Equations}, 256(3):1191--1223, 2014.

\bibitem{Seis15}
C.~Seis.
\newblock Invariant manifolds for the porous medium equation, 2015.

\bibitem{SeisTFE}
C.~Seis.
\newblock The thin-film equation close to self-similarity.
\newblock {\em Anal. PDE}, 11(5):1303--1342, 2018.

\bibitem{SmythHill}
N.~Smyth and J.~Hill.
\newblock Higher order nonlinear diffusion.
\newblock {\em IMA J. Appl. Math.}, 40:73--86, 1988.

\bibitem{Stanley1979}
R.~P. Stanley.
\newblock Invariants of finite groups and their applications to combinatorics.
\newblock {\em Bull. Amer. Math. Soc. (N.S.)}, 1(3):475--511, 1979.

\bibitem{VanderbauwhedeIooss92}
A.~Vanderbauwhede and G.~Iooss.
\newblock Center manifold theory in infinite dimensions.
\newblock In {\em Dynamics reported: expositions in dynamical systems},
  volume~1 of {\em Dynam. Report. Expositions Dynam. Systems (N.S.)}, pages
  125--163. Springer, Berlin, 1992.

\bibitem{Wayne97}
C.~E. Wayne.
\newblock Invariant manifolds for parabolic partial differential equations on
  unbounded domains.
\newblock {\em Arch. Rational Mech. Anal.}, 138(3):279--306, 1997.

\bibitem{ZeldovicBarenblatt58}
Y.~B. Zel'dovi\v{c} and G.~I. Barenblatt.
\newblock {Asymptotic properties of self-preserving solutions of equations of
  unsteady motion of gas through porous media}.
\newblock {\em Dokl. Akad. Nauk SSSR (N.S.)}, 118:671--674, 1958.

\bibitem{Zelcprimedovic1950}
Y.~B. Zel'dovi\v{c} and A.~S. Kompaneec.
\newblock {On the theory of propagation of heat with the heat conductivity
  depending upon the temperature}.
\newblock In {\em {Collection in honor of the seventieth birthday of
  academician {A}. {F}. {I}offe}}, pages 61--71. Izdat. Akad. Nauk SSSR,
  Moscow, 1950.

\end{thebibliography}
\bibliographystyle{abbrv}

\end{document}